\numberwithin{equation}{section}
\newtheorem{maintheorem}{Theorem}
\newtheorem{theorem}{Theorem}[section]
\newtheorem*{theorem*}{Theorem}
\newtheorem{lemma}[theorem]{Lemma}
\newtheorem{claim}[theorem]{Claim}
\newtheorem{proposition}[theorem]{Proposition}
\newtheorem{corollary}[theorem]{Corollary}
\theoremstyle{definition}{

\newtheorem*{definition*}{Definition}

\newtheorem*{question*}{Question}
\newtheorem*{example*}{Example}
\newtheorem*{examples*}{Examples}
\newtheorem{remark}[theorem]{Remark}
\newtheorem*{remark*}{Remark}

}
\newcommand{\cA}{{\mathcal{A}}}
\newcommand{\cB}{{\mathcal{B}}}
\newcommand{\cC}{{\mathcal{C}}}
\newcommand{\cF}{{\mathcal{F}}}
\newcommand{\cG}{{\mathcal{G}}}
\newcommand{\cI}{{\mathcal{I}}}
\newcommand{\cL}{{\mathcal{L}}}
\newcommand{\cN}{{\mathcal{N}}}
\newcommand{\cP}{{\mathcal{P}}}
\newcommand{\cS}{{\mathcal{S}}}
\newcommand{\cU}{{\mathcal{U}}}
\newcommand{\fM}{{\mathfrak{M}}}
\newcommand{\Z}{\mathbb{Z}}
\newcommand{\R}{\mathbb{R}}
\newcommand{\N}{\mathbb{N}}
\renewcommand{\P}{\mathbb{P}}
\newcommand{\Q}{\mathbb{Q}}
\newcommand{\E}{\mathbb E}
\newcommand{\one}{\mathbbm{1}}
\renewcommand{\d}{\,\mathrm{d}}
\newcommand{\given}{\;\big|\;}
\newcommand\abs[1]{\left|#1\right|}
\newcommand\pth[1]{\left ( #1 \right )}
\newcommand\brak[1]{\left [ #1 \right ]}
\newcommand{\floor}[1]{\left\lfloor#1\right\rfloor}
\newcommand{\usim}{\mathop{\sim}_{\scriptscriptstyle(\mathrm{u})}}
\newcommand{\uasymp}{\mathop{\asymp}_{\scriptscriptstyle(\mathrm{u})}}
\newcommand{\ulesssim}{\mathop{\lesssim}_{\scriptscriptstyle(\mathrm{u})}}
\newcommand{\tl}{\tilde{t}}
\newcommand{\tll}{\tilde{t}-\ell}
\newcommand{\zint}{[L^{1/6}, L^{2/3}]}
\newcommand\UB[2]{\overline{\cB}_{#1}^{#2}}
\newcommand\LB[2]{\underline{\cB}_{#1}^{#2}}
\newcommand{\B}{\mathcal{B}^{\Bumpeq}}
\newcommand{\bB}{\overline{\mathcal{B}}^{\Bumpeq}}
\newcommand{\x}{\mathbf{x}}
\newcommand{\y}{\mathbf{y}}
\newcommand{\fc}{\mathfrak c}
\newcommand{\fT}{\mathfrak T}
\newcommand{\win}{{\normalfont\texttt{win}}}
\newcommand{\bT}{{\bar{\Theta}^*}}
\newcommand{\rx}{r_{1,\x}}
\newcommand{\ry}{r_{2,\y}}
\newcommand{\fo}{\mathfrak{o}}
\newcommand{\ep}{\mathfrak{b}}
\newcommand{\bep}{\bar{\mathfrak{b}}}
\newcommand{\fm}{\mathfrak{m}}
\begin{document}

\begin{frontmatter}
\title{The maximum of branching Brownian motion in~$\R^{\lowercase{d}}$}
\runtitle{The maximum of BBM in~{$\R^d$}}

\begin{aug}
  \author{\fnms{Yujin H.} \snm{Kim}\ead[label=e1]{yujin.kim@cims.nyu.edu}},\; 
  \author{\fnms{Eyal}  \snm{Lubetzky}\corref{}\ead[label=e2]{eyal@courant.nyu.edu}}
  \and
  \author{\fnms{Ofer} \snm{Zeitouni}\ead[label=e3]{ofer.zeitouni@weizmann.ac.il}}

  \runauthor{Y.H. Kim, E. Lubetzky and O. Zeitouni}

  \affiliation{New York University and Weizmann Institute}

 \address{Yujin H. Kim\\
Courant Institute of Mathematical Sciences
\\ New York University\\
New York, NY 10012, USA.\\
\printead{e1}}

  \address{
Eyal Lubetzky\\
Courant Institute of Mathematical Sciences
\\ New York University\\
New York, NY 10012, USA.\\
\printead{e2}}

  \address{Ofer Zeitouni\\
Department of Mathematics\\
Weizmann Institute of Science\\
Rehovot 76100, Israel.\\
\printead{e3}}
 \end{aug}

\begin{abstract}
We show that in branching Brownian motion (BBM) in $\R^d$, $d\geq 2$, the law of $R_t^*$, the maximum distance of a particle from the origin at time $t$, converges as $t\to\infty$ to the law of a randomly shifted Gumbel random variable. 
\end{abstract}



\end{frontmatter}

\section{Introduction}

Let $\{X_t^{(v)}\}_{t\geq 0,v\in \cN_t}$ 
denote $d$-dimensional branching Brownian motion (BBM), where $d \geq 1$; here, $\cN_t$ denotes the particles existing at time $t$ (a formal definition 
of the BBM model appears in Section \ref{sec-subsecdef} below). 
For $v\in \cN_t$, let  $R_t^{(v)} := \| X_t^{(v)} \|$ denote the $\ell^2$-modulus of
the location of 
the particle $v$ at time $t$, and set $R_t^* := \sup_{v \in \cN_t} R_t^{(v)} $.
Further, define 
\begin{equation}
  \label{def:m_t-c_d}
   \alpha_d := (d-1)/2, \quad m_t(d) := \sqrt{2} t + \fc_d \log t, 
  \quad \mbox{\rm where}\quad  
  \fc_d := \frac{d-4}{2\sqrt{2}}\,.
\end{equation}
(When the dimension $d$ is clear from the context, 
we omit it from the notation, writing e.g.\ $m_t$ for $m_t(d)$, etc.) 

When $d=1$, Bramson~\cite{Bramson83} proved the convergence in distribution 
of $\max_{v\in \cN_t} X_t^{(v)}-m_t(1)$, and the limit was identified by Lalley and Selke~\cite{LS87} to be the limit of a certain \textit{derivative martingale}. It is not hard to deduce from their results and methods
(see, e.g.,~\cite[Thm.~1.1]{SBM20})
that, when $d=1$,
\begin{equation}
  \label{eq-limd=1}
    \lim_{t\to \infty} \P(R_t^* - m_t(1) \leq y) = \E\big[ \exp ( - y Z)\big]\,,
\end{equation}
where $Z$ is an appropriate random variable (determined in terms of the limit of two 
a-priori dependent derivative martingales.)

We are interested in the case $d\geq 2$, where far less is known. See Figure~\ref{fig:my_label} for a simulation of BBM for $d=2$. Mallein~\cite{Mallein15} proved that for~$d\geq 2$, the collection $\{R_t^*-m_t(d)\}_{t>0}$ is tight, and that
there exists some $C>0$ such that 
for any $t \geq 1$ and  $y \in [1,t^{1/2}]$,
\begin{align}
    \frac{ye^{-\sqrt{2}y}}{C}
    \leq 
    \P( R_t^* \geq m_t(d) + y) 
    \leq 
    C y e^{-\sqrt{2}y}\,.
    \label{eq.mallein}
\end{align}
Mallein's result contrasts with  the classical G\"{a}rtner
propagation estimate for multidimensional KPP~\cite{Gartner82},
which gives a different constant in front of the
logarithmic correction term in $m_t$.

\begin{figure}
    \centering
    \includegraphics[width=0.6\textwidth]{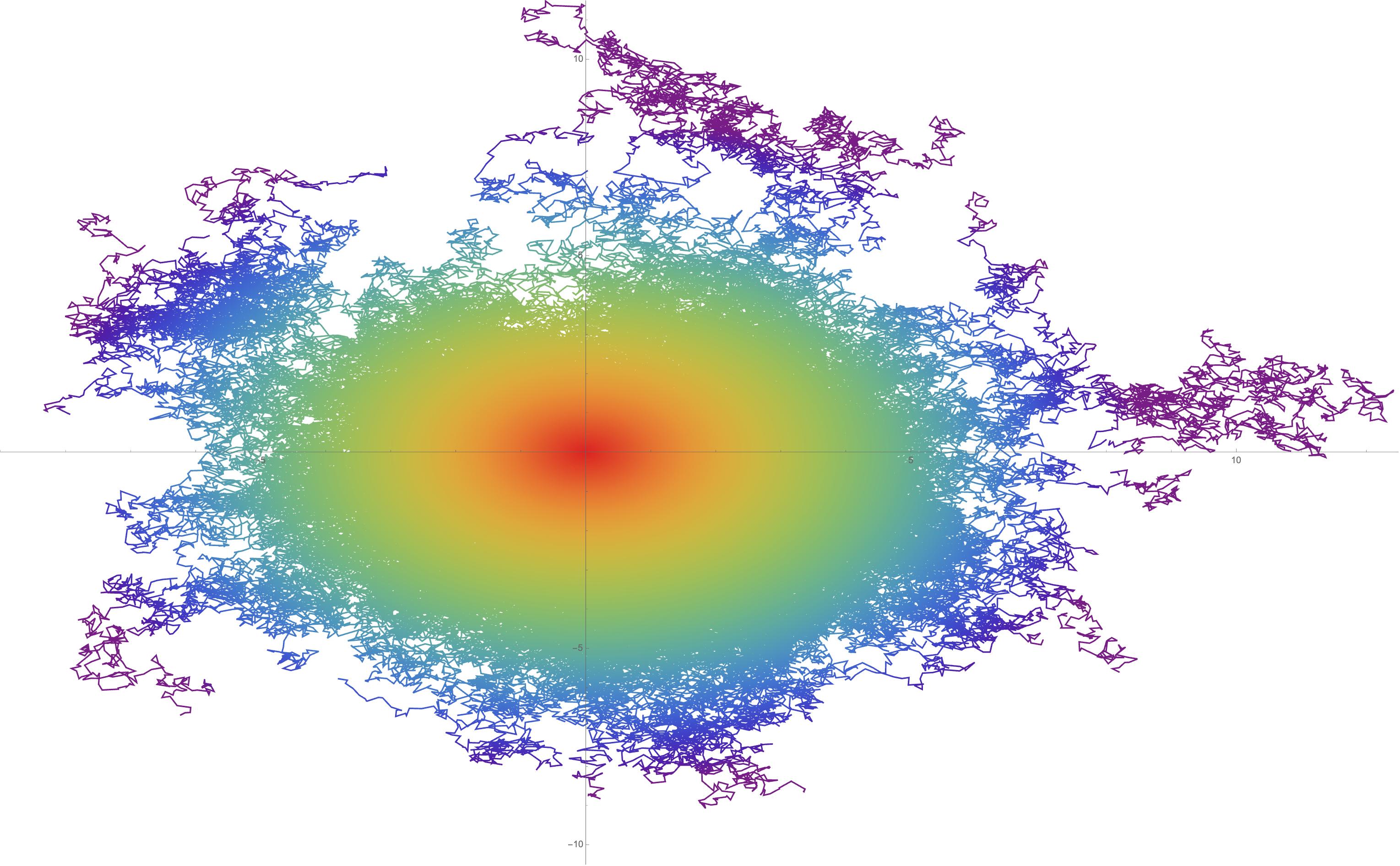}
    \raisebox{0.4in}{\includegraphics[width=0.39\textwidth]{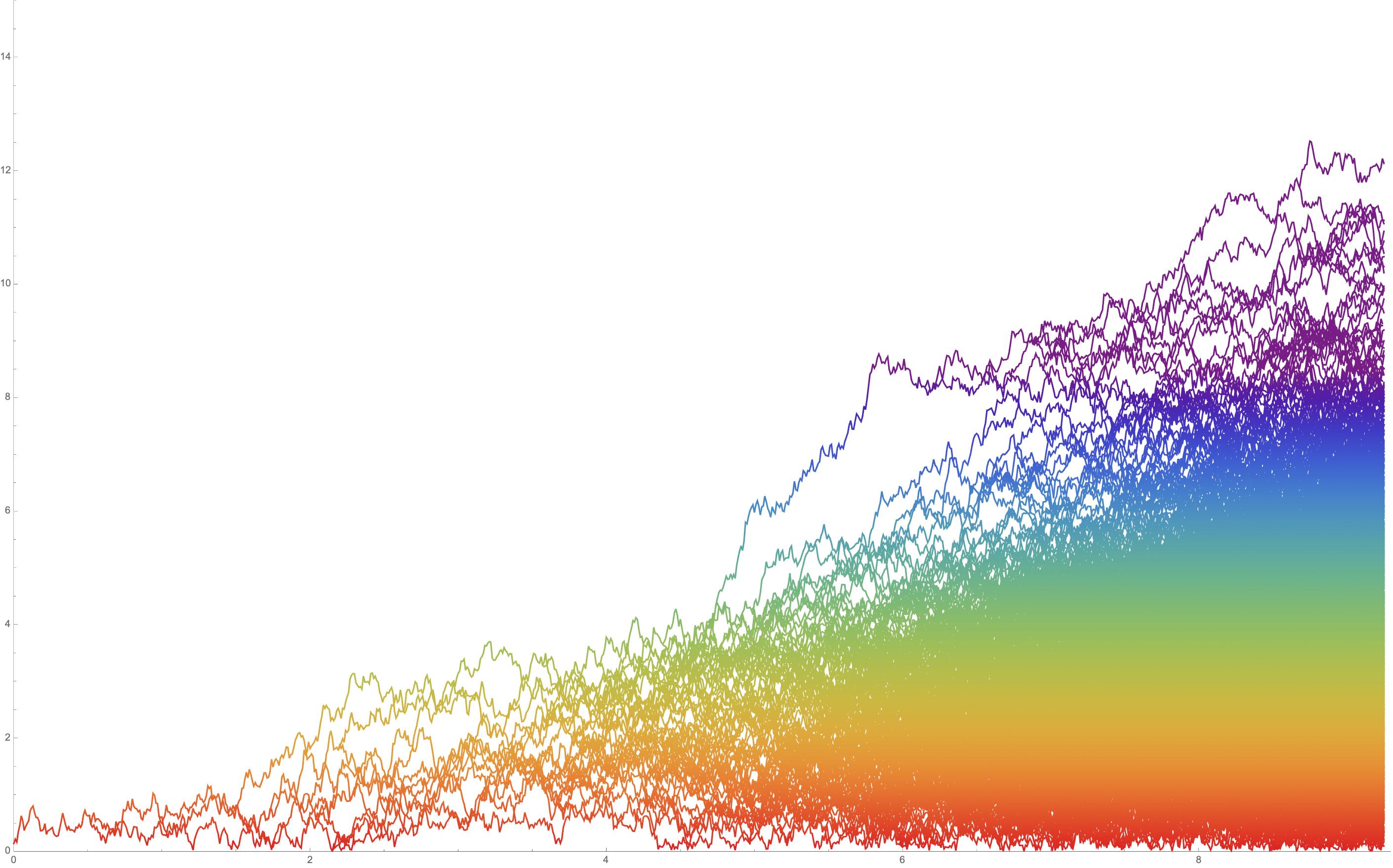}}
    \caption{Simulation of 2-dimensional BBM (left) and its modulus as a function of time (right) for BBM until the moment the population is $10,000$ particles. Color denotes the modulus.}
    \label{fig:my_label}
\end{figure}

Our goal in this paper is to  complement~\eqref{eq.mallein} and 
prove an analogue of~\eqref{eq-limd=1} for $d\geq 2$, thereby establishing convergence in distribution of $R_t^* -m_t$. Our main result reads as follows.
\begin{maintheorem}\label{thm:limiting-law}
Fix $d\geq 2$. Let $R_t^*$ denote the maximum modulus of the location 
at time $t$ of particles in a binary BBM in $\R^d$.
Let
 $m_t$ be as in~\eqref{def:m_t-c_d}. Then there exists a
 non-degenerate positive random variable $Z_\infty$ and a constant $\gamma^*>0$ so that 
$R_t^*-m_t$ converges in distribution as $t \to \infty$, to a Gumbel law shifted
by $-\log (\gamma^* Z_\infty/\sqrt{2})$, namely
\begin{align}
  \lim_{t\to \infty} \P(R_t^* - m_t \leq y) = \E\Big[ \exp \Big( - \gamma^* Z_{\infty}
  e^{-y\sqrt{2}} \Big)\Big]\,.
\end{align}
\end{maintheorem}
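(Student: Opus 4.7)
My plan is to promote Mallein's tightness plus tail bound~\eqref{eq.mallein} to full convergence in distribution by proving that near its right edge, the point process $\sum_{v\in\cN_t}\delta_{R_t^{(v)}-m_t}$ converges to a Poisson point process with random intensity $\gamma^* Z_\infty e^{-\sqrt 2 y}\,\d y$, for a deterministic $\gamma^*>0$ and the a.s.\ limit $Z_\infty$ of a \emph{radial derivative martingale} for BBM in $\R^d$. Given this, the maximum of such a Poisson process conditional on $Z_\infty$ has a Gumbel law shifted by $(\log(\gamma^* Z_\infty))/\sqrt 2$, so averaging over $Z_\infty$ yields the stated identity.

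The heart of the argument is a truncated first/second moment computation, implemented as follows. Fix a large intermediate time $L$ and use the branching property to decompose the BBM at time $L$ into independent subtrees rooted at the particles $u\in\cN_L$, located at $X_L^{(u)}$ with radii $r_u:=R_L^{(u)}$. Because $t\gg L$, each subtree is confined, up to negligible correction, to a narrow cone around the ray through $X_L^{(u)}$: the tangential spread is $O(\sqrt t)$, whereas the radial advance is $\sqrt 2 t+O(\log t)$. Consequently, the subtree's contribution to $R_t^*$ is governed by its modulus, which evolves as a one-dimensional branching process perturbed by the Bessel drift $(d-1)/(2R)$; integrating this drift against $R\sim\sqrt 2 s$ over $s\in[L,t]$ produces exactly the $\fc_d\log t$ correction that distinguishes $m_t(d)$ from $m_t(1)$. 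A Bramson-type sharp tail estimate adapted to this Bessel-perturbed branching system should yield, as $t\to\infty$ and then $L\to\infty$, that the probability $p_u(t,y)$ that $u$'s subtree contains a particle of modulus $\geq m_t+y$ satisfies
\[
p_u(t,y)\sim\gamma^*\,(\sqrt 2 L-r_u+y)\,e^{-\sqrt 2(\sqrt 2 L-r_u+y)}
\]
uniformly for $u$ in a natural "front" window. Summing over $u$ yields $\sum_u p_u(t,y)\to\gamma^* Z_\infty e^{-\sqrt 2 y}$, where $Z_L:=\sum_u(\sqrt 2 L-r_u)e^{-\sqrt 2(\sqrt 2 L-r_u)}$ (possibly weighted by a Bessel prefactor read off the traveling-wave PDE) is the radial derivative martingale, for which one must establish a.s.\ convergence to a non-degenerate positive $Z_\infty$ — the radial analogue of the Lalley--Selke derivative martingale. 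A second-moment (many-to-two) estimate, ruling out two distinct subtrees simultaneously producing extremal particles and also a single subtree producing two, then upgrades $\sum_u p_u(t,y)$ to Poisson convergence of the count $N_t(y):=\#\{v:R_t^{(v)}\geq m_t+y\}$. This gives $\P(R_t^*\leq m_t+y\mid\cF_L)\to\exp(-\gamma^* Z_L e^{-\sqrt 2 y})$, and the theorem follows upon letting $L\to\infty$.

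The hardest step is the Bessel-corrected sharp Bramson asymptotic together with the construction and non-degeneracy of $Z_\infty$. Both require quantitative control of the radial branching process near its frontier: entropic-repulsion and ballot estimates accounting for the $(d-1)/(2R)$ drift, identification of the relevant traveling wave of the radial KPP equation on $\R^d$, and extraction of the prefactor $\gamma^*$ from its large-distance asymptotics. This is where all the genuinely $d$-dimensional geometry enters — in particular, where the extra $(d-1)/(2\sqrt 2)\log t$ in $m_t(d)-m_t(1)$ emerges rigorously, and where one must rule out the possibility that tangential excursions inject additional order-one randomness into the limiting shift. Once this radial reduction is in place, the Poissonization argument and the final conclusion should parallel the one-dimensional Bramson--Lalley--Selke programme.
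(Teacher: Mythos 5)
Your overall scaffolding---decompose at time $L$, obtain a Bramson-type sharp tail asymptotic for each subtree via a truncated first/second-moment (many-to-few) argument with ballot and entropic-repulsion barriers adapted to the Bessel drift, then aggregate via the branching property and identify the Gumbel mixture---is essentially the skeleton of the paper's proof. The paper also confirms the correct scaling of the per-subtree tail, $p_u(t,y)\sim\gamma^*(R_L^{(u)})^{-\alpha_d}(\sqrt 2L-R_L^{(u)})e^{-\sqrt 2(\sqrt 2L-R_L^{(u)})}e^{-\sqrt 2 y}$, uniformly over particles in a window $I_L^{\win}$ (your ``natural front window''). The observation that moduli are Bessel processes, hence Markov, so that Girsanov takes over from cone-discretization, is the paper's central idea and you state it in spirit.

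The genuine gap is in the final limiting step. You posit that $Z_L$ (your un-prefactored version, or even the prefactored one) is ``the radial derivative martingale,'' for which one must ``establish a.s.\ convergence to a non-degenerate positive $Z_\infty$.'' But $Z_L$, as defined over moduli alone, is \emph{not} a martingale---the paper states this explicitly---and no a.s.\ or in-probability convergence is known; this is exactly the open part of the Stasinski--Berestycki--Mallein conjecture. The theorem does not require it. The paper instead proves only \emph{distributional} convergence of $Z_L$, and does so indirectly: it sandwiches $\P(R_t^*\le m_t+y)$ between $\E[\exp(-(1\pm\epsilon_L)\gamma^*e^{-y\sqrt 2}Z_L)]$, then uses tightness of $R_t^*-m_t$ (from Mallein) to extract subsequential limits in $t$, and the $t$-independence of $\E[\exp(-\gamma^*e^{-y\sqrt 2}Z_L)]$ together with its $L$-independence in the limit to conclude all subsequential limits agree and $Z_L\Rightarrow Z_\infty$ via the Laplace transform. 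If you pursue ``$Z_L\to Z_\infty$ a.s.'' you will be stuck; you need to replace that step by this subsequence/uniqueness argument (or prove the SBM conjecture). Two smaller points: the prefactor you hedge about is precisely $(R_L^{(v)})^{-\alpha_d}$ (equivalently $(\sqrt 2L)^{-\alpha_d}$), and the restriction of the sum defining $Z_L$ to particles with $R_L^{(v)}\in I_L^{\win}$ must itself be justified (the paper's Theorem~\ref{thm:outside_window}), since the bare sum over all of $\cN_L$ does not manifestly converge.
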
 
\noindent Theorem~\ref{thm:limiting-law} resolves some of the open questions
in~\cite[Page 5]{SBM20}.

We now make some amplifying remarks on Theorem~\ref{thm:limiting-law}.
\begin{remark}   
The structure of $Z_\infty$ is reminiscent of the construction
of the limit of the derivative martingale in the theory of one 
dimensional Branching Brownian Motion. Namely, 
for $L>0$, let $\cF_L=\sigma\{ X_t^{(v)}, t\leq L, v\in \cN_L\}$, 
set 
\begin{align}
    I_L^{\win} := [
\sqrt{2} L - L^{2/3}, \sqrt{2}L - L^{1/6}]\,,\qquad
\cN_L^{\win}:= \{ v \in \cN_L : R_L^{(v)} \in I_L^{\win} \}\,.
\label{eq:def-window}
\end{align}
Introduce then
\begin{align}
    Z_L := \sum_{v \in \cN_L^{\win}} (R_L^{(v)})^{-\alpha_d} (\sqrt{2}L - R_L^{(v)}) e^{-(\sqrt{2}L - R_L^{(v)}) \sqrt{2}} \,.
    \label{def:Z_L}
\end{align}
The proof of Theorem~\ref{thm:limiting-law} will show that $Z_L$ (which is \textit{not} a martingale)
converges in distribution to a limiting random variable denoted $Z_\infty$, and that
conditionally on $\cF_L$, $R_t^*-m_t-2^{-1/2}\log(\gamma^* Z_L)$ converges in distribution,
as first $t\to\infty$ and then $L\to\infty$, to a Gumbel random variable. 
We emphasize that we do not prove the convergence in probability of
$Z_L$ to $Z_\infty$.

It is worthwhile to note that the restriction to $v \in \cN_L^{\win}$
in the definition of $Z_L$ is an artifact of our proof. In fact,
we show that particles $v\in\cN_L$ with $R_L^{(v)}\not\in I_L^{\win}$
are unlikely to produce a descendent  at time $t$ that has modulus larger than $m_t+y$, if $L$ is large enough (depending on $y$ but not on $T$).
Further, our calculations actually imply that the exponents $1/6$ and $2/3$ in~\eqref{eq:def-window} need only be smaller than $1/4$ and larger than $1/2$ respectively,
though we expect that one should be able to take any exponents smaller and larger than $1/2$, respectively.

We also note that in the definition
\eqref{def:Z_L}, one obviously can replace the factor $(R_L^{(v)})^{-\alpha_d}$ by $(\sqrt{2}L)^{-\alpha_d}$. 
\end{remark}
\begin{remark} 
  In a recent very interesting preprint, Stasi\'{n}ski, Berestycki, and Mallein
 ~\cite{SBM20} discuss the functional  derivative martingale
  \[ \widetilde Z_t(\theta)=\sum_{v\in \cN_t}(\sqrt{2}t-X_t^{(v)}\cdot \theta)
  e^{-(\sqrt{2}t-X_t^{(v)}\cdot\theta)}\,, \quad \theta\in S^{d-1}\,,\]
  where 
  $X_t^{(v)}\cdot \theta$ denotes the projection of $X_t^{(v)}$ in the direction $\theta$.
  They show (see their Theorem 1.3) that for almost every $\theta$, $\widetilde
  Z_t(\theta)
  \to \widetilde 
  Z_\infty(\theta)>0$ almost surely, and that $\langle \widetilde Z_t, f\rangle$
  converges almost surely, for bounded measurable $f$ on $S^{d-1}$, to
  $\langle \widetilde
  Z_\infty, f\rangle$, 
  where $\langle f,g\rangle$ is the standard inner 
  product with respect to surface measure on $S^{d-1}$. 
  They conjecture then (as a consequence of 
  their more general Conjecture 1.4) that $R_t^*-m_t-\log \gamma^*-
  \log \langle \widetilde Z_\infty,1\rangle$ converges in distribution to
  a Gumbel law. This conjecture is equivalent to stating that
  $Z_\infty\stackrel{d}{=} \langle\widetilde Z_\infty,1\rangle$.
  Of course, for $t$ large, $\langle \widetilde Z_t,1\rangle$ is formally
  dominated by a neighborhood of those $\bar \theta$
  which are local maxima of 
  $\widetilde Z_t(\cdot)$ so that $\widetilde Z_t(\bar \theta)$ has
  near maximal  value. In fact, a \textit{formal}
  Laplace asymptotic (expanding $Z_t$ quadratically in a neighborhood of $\theta^*$) yields
  that $\langle \widetilde Z_t, 1\rangle \sim C \widetilde Z_t(\theta^*)/
  t^{-\alpha_d}$, where $\theta^*$ is the global maximizer. In particular, Theorem~\ref{thm:limiting-law} seems compatible
  with the conjecture of~\cite{SBM20}. We note that the latter was recently proved in ~\cite{BKLMZ21}, building on the results of this paper and on \cite{SBM20}.
We refer to  \cite{BKLMZ21} for further details.

\end{remark}
\begin{remark} Theorem \ref{thm:limiting-law} can be understood as a statement on branching Bessel processes, and in fact our proof proceeds through that  prism. As such, it makes sense to ask for the analogue for arbitrary positive real $d$. An inspection of our proof reveals that in the context of
branching Bessel processes, it continues to
hold for $d\geq 2$, even if $d$ is not an integer. The situation for $d\in (0,2)$ is slightly different, because one has to properly define what happens to the Bessel process after it hits $0$,
and the result may depend on that definition: different definitions would result in different 
laws  of $Z_\infty$.
\end{remark}

\subsection{Model definition}
\label{sec-subsecdef}
The BBM model (with binary branching and branching rate $1$) is defined as follows. Start from a particle at the origin of $\R^d$. The particle performs a standard Brownian motion, and after an exponentially distributed time $\tau$ (independent of the motion of the particle), gives birth to two particles, and dies (we refer to this event as \textit{branching}). The process now repeats itself: all particles alive at time $t$ perform independent Brownian motion,
with their own (independent) exponential clocks determining 
their branching. The notions of ancestors and descendants of a particle
are defined in a self-evident way.

Let $\cN_t$ denote the collection of particles at time $t$. For a particle
$v\in\cN_t$ we let $X_t^{(v)}\in \R^d$ denote its location, and let
$R_t^{(v)} =\|X_t^{(v)}\|$ denote its (Euclidean) norm. For $v\in \cN_t$ we let $X_s^{(v)}$, $s\in[0,t]$, denote the continuous function obtained by
concatenating the trajectories of all ancestors of $v$. Note that
$X_\cdot^{(v)}$ is a Brownian motion in $\R^d$. We define similarly
$R_s^{(v)}$.

\subsection{Structure of the proof}
\label{sec-proofstructure}
Previous approaches to the analysis of multi-dimensional BBM involved looking at projections on given directions. Specifically,
Mallein~\cite{Mallein15} considers a discretization of angles (with mesh size increasing in $t$) en route to the proof of tightness; it seems hard to improve directly this approach for studying the convergence in distribution of the centered maximal modulus. Similarly,
Stasi\'{n}ski, Berestycki and Mallein~\cite{SBM20} consider, for fixed $t$,
the whole projection process as a function of the angle, and then prove convergence as $t\to\infty$,
but at a topology that is not strong enough for deducing results on the maximal modulus $R_t^*$. 

The crucial observation in the approach discussed in this paper 
is that, since $X_s^{(v)}$, $0\leq s\leq t$, is a Brownian motion  for any $v\in \cN_t$, the process $R_s^{(v)}$ is a $d$-dimensional Bessel process, and hence Markovian. In particular, one
can run through the proof of convergence of the maximum of branching random walks (see e.g.~\cite{Aidekon13}) in the version discussed in~\cite{BDZ16}. This involves a modified second moment method, coupled with
appropriate conditions on $\{R_L^{(v)}\}_{v\in\cN_L}$, for large $L$ that
goes to infinity only after $t$ does. That approach needs to be adapted to accommodate the fact that one is dealing with Bessel processes, and therefore increments are not independent; to handle that,
one rewrites probabilities in terms of (one-dimensional) Brownian motion, taking into account a Girsanov factor (see~\eqref{eq.girsanov}). 
To control the latter, we need to use slightly different barriers
than those used in~\cite{BDZ16}, and this requires developing appropriate barrier estimates. 

\begin{figure}
     \begin{tikzpicture}[>=latex,font=\small]
 \draw[->] (0, 0) -- (10, 0); 
  \draw[->] (0, 0) -- (0, 5.1);
   \coordinate  (b0) at (9.55,0);
   \coordinate  (L) at (2.2,0);
  \fill[color=orange!15] (2.2,0.15) to[bend right=20] (8.4,2.75) -- (8.4,3.88) -- (2.2,1.05);
   \node[circle,fill=blue!45!purple,inner sep=1.5pt] (o) at (0, 0) {};
   \node[circle,fill=blue!75!black,label={[blue!75!black]right:$m_t+y$},inner sep=1.5pt] (b) at ($(b0)+(0,4.4)$) {};
   \draw[blue!75!black, thick] ($(o)+(0.05,0.05)$) -- (b);
   \node[circle, fill=blue!50, inner sep=1.5pt, opacity=0.75] (y1) at ( $(b)-(1.15,0.9)$) {};
   \node[circle, fill=blue!50, inner sep=1.5pt, opacity=0.75] (y2) at ( $(y1)-(0,0.4)$) {};
   \node[circle, fill=blue!50, inner sep=1.5pt, opacity=0.75] (x1) at ( $(L)+(0,0.75)$) {};
   \node[circle, fill=blue!50, inner sep=1.5pt, opacity=0.75] (x2) at ( $(x1)-(0,0.4)$) {};
  \node[circle, fill=orange, inner sep=1.5pt] (ba1) at ( $(x2)-(0,0.2)$ ) {};
  \node[circle, fill=orange, inner sep=1.5pt] (ba2) at ( $(y2)-(0,0.35)$ ) {};
  \draw[orange, thick] (ba1) to[bend right=20] (ba2);
  \node (fig1) at (4.82,2.35) {
  \includegraphics[width=0.67\textwidth]{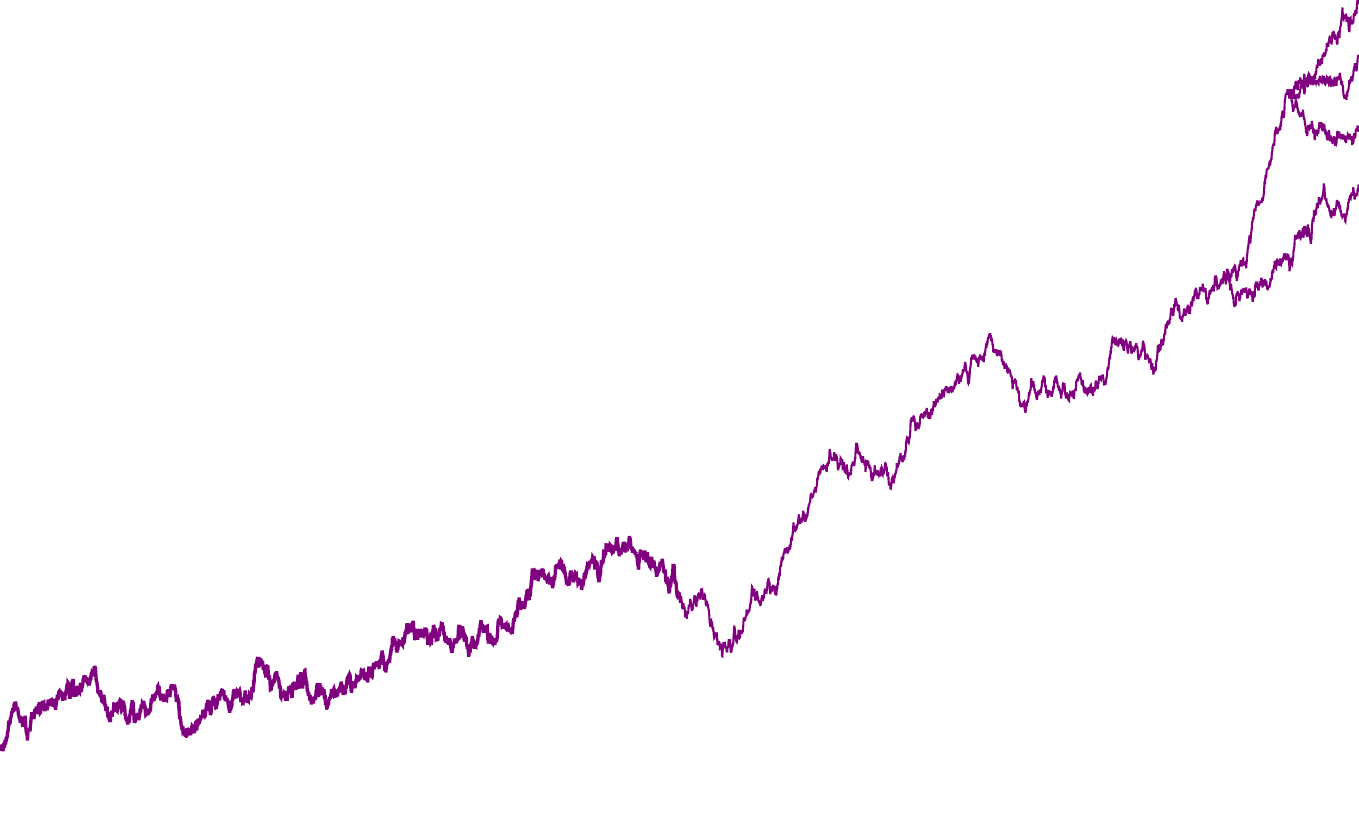}};
  \draw[blue!50, line width=3pt, opacity=0.5] (y1) -- (y2);
    \draw[blue!50, line width=3pt, opacity=0.5] (x1) -- (x2); 
  \node[below] at (0,-0.05) {$0$};
  \draw ($(L)+(0,0.05)$) -- ($(L)+(0,-0.05)$) node[below] {$L$};
  \draw ($(b0)+(-1,0.05)$) -- ($(b0)+(-1,-0.05)$) node[below] {$ t - \ell$};
  \draw ($(b0)+(0,0.05)$) -- ($(b0)+(0,-0.05)$) node[below] {$ t$};
  \end{tikzpicture}
     \caption{Particles that contribute to the maximizing event: at time $L$ are at height in $I_L^\win$, stay in the shaded (orange) region up to time $t-\ell$, at time $t-\ell$ are located in another ``window", and then produce a descendant that reaches $m_t+y$ at time $t$.}
     \label{fig:intro-lb}
 \end{figure}

In more detail: fix $L$ large enough (possibly dependent on $y$, which throughout this sketch is considered a fixed parameter). 
In the first step of the proof, see Theorem~\ref{thm:outside_window},
we 
show that the only particles in $\cN_L$ that will produce a descendant $v \in \cN_t$ with
$R_t^{(v)}\geq m_t+y$ are those in $I_L^{\win}$; the proof uses
the Bessel density, together with a-priori barrier estimates for the Bessel process, which are developed in Section  \ref{sec:window}. 
As a result of Theorem~\ref{thm:outside_window} and the Markov property, it will suffice to consider particles that start (at time $0$) in $I_L^{\win}$. This reduction allows us to control the rational factor in the Girsanov transform from Bessel to one-dimensional Brownian motion (i.e., $W_0^{-\alpha_d}$  in~\eqref{eq.girsanov}), which may now be uniformly approximated by $\sqrt{2}L^{-\alpha_d}$. We note that the one dimensional case has no need for this technical step, for several reasons. 
First, there is no Girsanov factor to consider,
and the (spatial) shift invariance of Brownian motion makes the ensuing barrier estimates in the following steps simpler to perform without the need to localize the starting point. Further, in dimension $d= 1$,  a simple union bound shows that with high probability, for $L$ large and for \textit{all}
$v\in\cN_L $ we have $R_L^{(v)}\ll \sqrt{2}L$, and therefore all particles stay below the linear barrier  at time $L$.

The most important step of the proof is Theorem~\ref{thm:right_tail_asymp},
which gives the precise tail asymptotics for the maximum of branching Bessel
processes started at time $L$ within $I_L^{\win}$, uniformly over $I_L^{\win}$. The proof of the latter, which is 
given in Section~\ref{sec:mod2ndmom} with key estimates proved in subsequent sections, is based on a  modified second moment approach. 
We employ first moment estimates (the many-to-one lemma, Lemma~\ref{lem:many-to-one})
together with a Girsanov transformation to show (within the proof of Proposition~\ref{prop:right_tail_equiv}) that particles $R_s^{(v)}$,
$s\in [L,t-\ell]$ (where $\ell \in [1, L^{1/6}]$ is going to infinity with $L$, see~\eqref{eqn:def-ell})
that do not stay within a domain determined by a lower barrier and 
 the linear line 
$m_t s/t$ are unlikely to contribute to the event
$R_t^*\geq m_t+y$; see Figure~\ref{fig:intro-lb} for an illustration. 
Proposition~\ref{prop:right_tail_equiv} further shows (by a second moment computation) that for a given particle
in $I_L^\win$, the expected number of its descendants at time $t-\ell$
that stay within the barriered domain up to that time and produce a descendant $w\in \cN_{t}$ with $R_t^{(w)}\geq m_t+y$, henceforth referred to as ``good particles,''
is
a good proxy for the probability of creating a descendant there
(this is carried out in Lemma~\ref{lem:2ndmom}, employing
a many-to-two lemma, see Lemma~\ref{lem:many-to-two},
and a truncation at level $t-\ell$ whose purpose is to ensure decorrelation.
The details are given in Section~\ref{sec:2ndmom}.).
Once these barriers are in place, the Girsanov factor can be controlled; and for a fixed $v \in \cN_L^\win$ with $R_L^{(v)}$ given, 
a precise estimate on the expected 
number of good particles that  descend from $v$,
can be obtained. This is the content of Proposition 
\ref{prop:1stmom-asymp}, whose proof is based in turn on the barrier
computations of Lemma~\ref{lem:barrier-equiv}; the proof of the latter 
for $d \geq 3$
takes up Section \ref{sec:1stmom}. Here, $d>2$ is useful because a certain term in the 
Girsanov exponent has a definitive sign and can be omitted from the computation. The case of $d=2$ requires a small modification, in order to control 
in the Girsanov transformation for the 
first moment an exponential term which is now positive instead of negative.
This requires an a-priori step where an extra barrier is introduced, which
gives an a-priori control of that term. Once this is carried out, the rest of the proof is as for $d>2$. The details are spelled out in Section \ref{sec-2d}. 

Once Theorem~\ref{thm:right_tail_asymp} holds, the proof of Theorem 
\ref{thm:limiting-law}, obtained by conditioning on $\mathcal{F}_L=\sigma(X_L^{(v)}, v\in \cN_L)$, is standard, and carried out in Section \ref{sec-3}.

In Section \ref{sec:prelim} below, we provide some a-priori material, including a description of the classical modified second moment method in dimension $1$, the many-to-few lemmas, the Girsanov transform, and barrier estimates for Brownian motion that will be used extensively
in the rest of the paper.
\\[0.8em]

\noindent
{\bf Acknowledgements} We thank the referees for a careful reading of the manuscript and useful comments and suggestions.
Y.K. and E.L.\ were supported by NSF grants DMS-1812095 and DMS-2054833.
O.Z.\ was partially supported by the European Research Council (ERC) under the European Union's Horizon 2020 research and innovation programme (grant agreement No.~692452).
This research was further supported in part by BSF grant 2018088.

\section{Preliminaries}
\label{sec:prelim}
We collect in this section preliminary material concerning the modified second moment method for standard (one-dimensional) BBM, the many-to-few lemmas, the Bessel process, and barrier estimates for Brownian motion. Section~\ref{subsec-BDZ-method} recalls the one dimensional  version of the modified second moment method; it informs the exposition in Section~\ref{sec:mod2ndmom}, where our version of the modified second moment method is described in full detail. The results of Sections~\ref{subsec:many-to-few}---\ref{subsec:hittingP} will be key technical tools used repeatedly throughout the rest of the article.

\subsection{The classical one-dimensional modified second moment method}
\label{subsec-BDZ-method}
The second moment method has played an oversized role in the study of
the maximum of BBM since its introduction in~\cite{Bramson78}. In the present article, we mostly follow the version described 
in~\cite{BDZ16} (done in the context of one-dimensional branching random walks).
As an introduction to our proof, we briefly summarize
their method in the language of one-dimensional BBM, which we denote by $\{W_s^{(v)}\}_{s\geq 0, v\in \cN_s}$, setting
$W_s^*:= \max_{v \in \cN_s} W_s^{(v)}$ 

By a standard reduction, see e.g.
\cite[Section~4]{BDZ16}, the proof of convergence in distribution of $W_t^*-m_t(1)$ follows from the following key step:
    there exists some constant $\xi^* >0 $ such that
    \begin{align}
        \lim_{x \to \infty} \liminf_{t \to \infty} \frac{\P( W_t^* > m_t(1) + x) }{ xe^{-x\sqrt{2}} }
        =
        \lim_{x \to \infty} \limsup_{t \to \infty} \frac{\P( W_t^* > m_t(1) + x) }{ xe^{-x\sqrt{2}} } = \xi^* \,.
        \label{eqn:1dcase-goal}
    \end{align}
 The analogue of~\eqref{eqn:1dcase-goal} in our case is Theorem~\ref{thm:right_tail_asymp}.
We now explain how~\eqref{eqn:1dcase-goal} is classically shown.
For any $r, s\geq 0$ and some $v\in \cN_s$, let
$\cN_r^v$ denote the particles in $\cN_{r+s}$ that are descendants of~$v$. 
Fix a parameter $\ell := \ell(x)$ such that 
\begin{equation*}
    1\leq \ell(x) \leq x 
    \qquad\mbox{and}\qquad 
    \lim_{x\to\infty}\ell(x)=\infty \,.
\end{equation*}
The exact choice of $\ell$ does not matter as long as the above conditions are satisfied.
For each $v\in \cN_{t -\ell}$,
define the $\cF_{t}$-measurable event 
\begin{align}
    \cA_{v,t}(x) := 
    \Big \{
    \Big \{
        W_s^{(v)} \leq 
        \frac{m_t(1)}{t}s+x, ~\forall s \in [ 0, t -\ell] 
    \Big \}
    \cap 
    \Big \{
        \max_{v' \in \cN_{\ell}^v}
        W_{t}^{(v')} > m_t(1) +x
    \Big \}
    \Big \} \, .
    \label{eqn:def-A_v,t,ell}
\end{align}
The central idea of~\cite{BDZ16} is that the random variable
$
    \Xi_{t,\ell}(x) = \sum_{v \in \cN_{t- \ell}} \one_{\cA_{v,t}(x)}
$
satisfies two important properties: first,
\begin{equation}
    \lim_{x\to \infty} \liminf_{t\to \infty} 
    \E [\Xi_{t,\ell}(x)] / \E [\Xi_{t,\ell}(x)^2] =
    \lim_{x\to \infty} \limsup_{t\to \infty} 
    \E [\Xi_{t,\ell}(x)] / \E [\Xi_{t,\ell}(x)^2]
    =1 \, ,
    \label{eqn:bdz-1st=2ndmom}
\end{equation}
from which the Paley--Zygmund inequality 
(and some additional technical estimates) yields
\begin{align*}
    &\lim_{x\to \infty} \liminf_{t\to \infty}
    \P ( W_{t}^* > m_t(1) +x) / \E[\Xi_{t,\ell}(x)]\\
    &=
    \lim_{x\to \infty} \limsup_{t\to \infty}
    \P ( W_{t}^* > m_t(1) +x) / \E[\Xi_{t,\ell}(x)] =  1 \, ;
\end{align*}
and second, the first moment $\E[\Xi_{t,\ell}(x) ]$ is amenable to precise computations due to the many-to-one lemma (Lemma~\ref{lem:many-to-one}), the Brownian ballot theorem (Lemma~\ref{lem:brownian-ballot-explicit}), and known estimates on $W_s^*$. 
The key condition that dictates the choice of $\cA_{v,t}(x)$ is~\eqref{eqn:bdz-1st=2ndmom}, which
 will be satisfied if the $\cA_{v,t}
$ are sufficiently decorrelated. This is the role of $\ell$: 
``cutting" the tree at a time that is order $1$ from the ending time $t$ provides crucial decorrelation. Indeed,
note that the events
\[
    \fT_{\ell}^{(1)}(v) := \Big \{
        \max_{v' \in \cN_{\ell}^v}
        W_{t}^{(v')} > m_t(1) +y
    \Big \} \, , ~v\in \cN_{t - \ell}
\]
are all independent conditional on $\cF_{t-\ell}$. 
The linear barrier up to time $t-\ell$ in ~\eqref{eqn:def-A_v,t,ell} provides further  decorrelation by disregarding particles that at any time rise above $\frac{m_t(1)}{t}(\cdot) + x$. 

\subsection{Many-to-few lemmas}
\label{subsec:many-to-few}
We describe in this section the many-to-few lemmas, based on~\cite{HaRo17}; we do not describe the historical source of the term and the important role
that these lemmas played in the study of branching random walks and Brownian motion. The interested reader is referred to~\cite{HaRo17} and~\cite{Maillard12} for background.
For our purposes, we are interested in a binary branching process $\{X_s^{(v)}\}_{s\geq 0, v \in \cN_s}$ with branching rate $1$, where the
$X_.^{(v)}$ are diffusion processes. The following two lemmas appear
(in a more general form\footnote{
Lemmas~\ref{lem:many-to-one} and~\ref{lem:many-to-two} come from p.~$230$ of Sec.~$4.1$ and p.~$231$ of Sec.~$4.2$ of~\cite{HaRo17}, resp., as follows. 
We always take their $\zeta(v,t)$ to be $1$, whence their measure $\P_x^k$ is identical to their $\Q_x^k$, and we denote its expectation by~$\E_x$. Their $\alpha_n(y)$ is  $2^n-1$ in our situation, since the $n$-th moment of the offspring distribution (their $m_n(y)$) is equal to $2^n$ for us and the branching rate (their $R(y)$) is $1$. Fixing a time $T\geq 0$, we have taken their random variable~$Y$, defined to be measurable w.r.t.\ the entire process up to time $T$, to be a product of measurable functions of paths of single particles in both lemmas. Lastly, in Lemma~\ref{lem:many-to-two}, we have denoted  their $T(1,2)$ by $\tau$.
}) in~\cite{HaRo17}.
\begin{lemma}[Many-to-one lemma]
For any $T\geq 0$, $x \in \R$, and a measurable function $f:C[0,T] \to \R$,  we have
\label{lem:many-to-one}
\begin{align}
    \E_x\bigg[ \sum_{v \in \cN_T} f\big((X_s^{(v)})_{s \leq T}\big)\bigg]
    =
    e^T \E_x \brak{f\big((X_s^{(v)})_{s \leq T}\big)} \,.
    \label{eqn:many-to-one}
\end{align}
\end{lemma}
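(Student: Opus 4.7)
The plan is to decompose the branching process into its genealogical tree together with the spatial trajectories attached to its edges. Let $\cT_T$ denote the (random) binary tree produced by the branching events in $[0,T]$, whose leaves correspond bijectively to $\cN_T$. Conditionally on $\cT_T$, the spatial process consists, along each edge, of an independent Brownian motion segment, glued continuously at the internal vertices.

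The key observation is that, conditionally on $\cT_T$, the trajectory $(X_s^{(v)})_{s \leq T}$ of any fixed leaf $v$ is itself distributed as a Brownian motion started at $x$. Indeed, by construction this trajectory is the concatenation, along the lineage of $v$, of finitely many independent Brownian segments, each starting at the previous branching position; since the concatenation of independent Brownian motions over fixed (deterministic, given $\cT_T$) time intervals is again a Brownian motion on the total interval, the conditional law of $(X_s^{(v)})_{s \leq T}$ given $\cT_T$ is the law of a standard Brownian motion from $x$, independently of where the branchings along the lineage occur. Consequently,
\[
    \E_x\!\left[f\!\left((X_s^{(v)})_{s \leq T}\right) \,\big|\, \cT_T\right] \;=\; \E_x\!\left[f\!\left((B_s)_{s \leq T}\right)\right]
\]
for every leaf $v$, where $(B_s)$ is standard Brownian motion started at $x$.

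Summing over $v \in \cN_T$ and using linearity of conditional expectation gives
\[
    \E_x\!\left[\sum_{v \in \cN_T} f\!\left((X_s^{(v)})_{s \leq T}\right) \,\Big|\, \cT_T \right] \;=\; |\cN_T| \cdot \E_x\!\left[f\!\left((B_s)_{s \leq T}\right)\right].
\]
Taking expectation over $\cT_T$ and invoking the classical Yule-process identity $\E[|\cN_T|] = e^T$ (which follows e.g.\ from $\tfrac{d}{dT}\E|\cN_T| = \E|\cN_T|$ with $|\cN_0|=1$) yields the stated equation. A standard monotone-class argument extends the identity from nonnegative bounded $f$ to general measurable $f$ with integrable right-hand side.

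The main conceptual point --- and the only ``obstacle'' to speak of --- is recognizing that, although the trajectories of distinct leaves share ancestral segments and are therefore highly correlated, the marginal law of any single leaf's trajectory (given the tree) is unaltered by this correlation and is exactly Brownian motion from $x$. Only this marginal enters the many-to-one identity, so the joint law of the spatial motions plays no role, and the final count reduces cleanly to a factor of $\E[|\cN_T|]$. An alternative and slightly more pedestrian route, should one wish to avoid conditioning on the entire tree, is induction on the number of branching events in $[0,T]$, conditioning on the first branching time $\tau\sim\mathrm{Exp}(1)$ and its position; this produces an integral equation whose unique solution is readily seen to be $e^T\,\E_x[f((B_s)_{s \le T})]$.
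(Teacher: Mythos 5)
Your proof is correct. The paper does not actually prove this lemma: it imports it, together with the many-to-two lemma, directly from Harris and Roberts~\cite{HaRo17} (as spelled out in the footnote to Lemma~\ref{lem:many-to-one}), where the general statement is established via a spine decomposition and a Girsanov-type change of measure on the underlying branching tree. Your argument is a self-contained, more elementary route: you condition on the genealogical tree $\cT_T$, observe that the spatial motion factors independently from the branching clocks so that, given $\cT_T$, the trajectory along any fixed lineage is just a concatenation of independent Brownian segments and hence a Brownian motion from $x$, and then reduce the identity to the Yule-process count $\E|\cN_T| = e^T$. This is cleaner and requires no measure change, and in the setting of this paper (binary branching at constant rate $1$, with a single diffusion driving each edge) it is entirely sufficient. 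What the spine/change-of-measure machinery in~\cite{HaRo17} buys you, and what your direct argument does not immediately give, is a unified proof of the whole family of many-to-few identities (including Lemma~\ref{lem:many-to-two}) and the extension to state-dependent branching rates, offspring distributions, and weight functionals $\zeta(v,t)$; your tree-conditioning argument would need to be rerun and becomes less transparent in those generalizations. One small remark on rigor: as you note at the end, the displayed identity should be read for nonnegative (or bounded) $f$ first, with the general case following by the usual monotone-class or dominated-convergence step; it is worth being explicit that the conditional independence of the edge-segments from $\cT_T$ is what licenses the passage from the joint law of all leaf trajectories (which are highly correlated) to the product $|\cN_T|\cdot \E_x[f((B_s)_{s\le T})]$.
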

Lemma~\ref{lem:many-to-one} will be used repeatedly, oftentimes in the following situation:
suppose we wish to bound from above $\P_x(\exists v \in \cN_T: (X_s^{(v)})_{s \leq T} \in A)$, for some event $A \in \cF_T$. A union bound gives the upper bound $\E_x[ \sum_{v \in \cN_T} \mathds{1}_{(X_s^{(v)})_{s \leq T} \in A } ]$, which by an application of the
many-to-one lemma reduces to the study of the path of a single particle, which is a Bessel process. The many-to-two lemma below will only be used at the start of Section~\ref{sec:2ndmom} for a second moment computation.
\begin{lemma}[Many-to-two lemma]
\label{lem:many-to-two}
Fix any $T \geq 0$, and let $f$ and $g$ be measurable real functions on $C[0,T]$. Distinguish two particles $v, v' \in \cN_T$, and let $\tau$ denote the time at which $v$ and $v'$ branched from each other. Then for any $x \in \R$,  
\begin{eqnarray}
    &&\!\!\!\!\!\!\!\!\!\!\!\!\!\!\!\!\!\!\E_x\Big[
    \sum_{v_1, v_2 \in \cN_T} f\big((X_s^{(v_1)})_{s \leq T}\big) g\big((X_s^{(v_2)})_{s\leq T}\big)
    \Big] =
    e^T \E_x \Big[f\big((X_s^{(v)})_{s \leq T}\big) g\big((X_s^{(v)})_{s \leq T}\big) \Big] \nonumber \\
    &&+2
\int_{0}^{T} e^{T+\sigma}\mathbb
E_{x} \Big[f \big((X_{s}^{(v)}
)_{s \leq T} \big) g \big((X_{s}^{(v')}
)_{s
\leq T} \big) | \tau = T-\sigma \Big] \,\mathrm{d}\sigma,.
    \label{eqn:many-to-two}
\end{eqnarray}
\end{lemma}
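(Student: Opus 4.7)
The plan is to decompose the double sum into the diagonal terms ($v_1 = v_2$) and the off-diagonal terms ($v_1 \neq v_2$), and to analyze the two parts separately. For the diagonal, a direct application of the many-to-one lemma (Lemma~\ref{lem:many-to-one}) with test function $fg$ yields $e^T\,\E_x[f((X_u^{(v)})_{u\leq T})\,g((X_u^{(v)})_{u\leq T})]$, which is exactly the first term on the right-hand side of~\eqref{eqn:many-to-two}.

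The off-diagonal contribution is the heart of the matter. The key observation is that every ordered distinct pair $(v_1,v_2) \in \cN_T\times\cN_T$ has a unique most recent common ancestor (MRCA), whose branching event occurs at some time $T-s \in [0,T)$; conversely, each branching of a particle $u$ at time $T-s$ produces exactly $2\,|\mathcal{D}_L(u)|\cdot|\mathcal{D}_R(u)|$ such ordered pairs at time $T$, where $\mathcal{D}_L(u),\mathcal{D}_R(u) \subset \cN_T$ are the time-$T$ descendants of $u$'s two children. Conditional on $\cF_{T-s}$ and the MRCA path $(X_r^{(u)})_{r\leq T-s}$, the two subtrees evolve as independent BBMs of duration $s$ started from $X_{T-s}^{(u)}$; applying the many-to-one lemma to each subtree (with the subtree path concatenated to the common MRCA trajectory) contributes a factor $e^s$ apiece, and by independence of the subtrees post-branching, the joint conditional expectation of $\big(\sum_{v_1 \in \mathcal{D}_L(u)} f\big)\big(\sum_{v_2 \in \mathcal{D}_R(u)} g\big)$ factorizes into
\[
  e^{2s}\,\E\bigl[f((X_u^{(v)})_{u\leq T})\,g((X_u^{(v')})_{u\leq T}) \bigm| \tau = T-s,\ (X_r^{(u)})_{r\leq T-s}\bigr].
\]

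To conclude, one integrates over all possible MRCA branching events. Since each particle branches independently at rate $1$, the intensity of branching events at time $T-s$ (conditional on $\cN_{T-s}$) is $|\cN_{T-s}|\,\d s$; taking expectation and applying the many-to-one lemma once more on $\cN_{T-s}$ produces the factor $e^{T-s}$ while averaging the conditional expectation above over the ancestral Brownian path. Combining $e^{T-s}\cdot e^{2s} = e^{T+s}$ with the factor $2$ for the two orderings of the pair and integrating in $s \in [0,T]$ yields the second term of~\eqref{eqn:many-to-two}. The main technical obstacle is making this Palm/Campbell-type calculation rigorous: one must interchange sum, integral, and conditional expectation with respect to the random point process of branching events, and justify the independence of post-branching subtrees given the MRCA trajectory. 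This can be carried out either by induction on a truncated branching tree, or by conditioning on the first branching time and iterating, as done in the more general framework of~\cite{HaRo17}.
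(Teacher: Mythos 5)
The paper does not actually prove Lemma~\ref{lem:many-to-two}; it cites it directly from~\cite{HaRo17} (see the footnote explaining how the statement there specializes to the present binary, rate-$1$ setting), so there is no paper proof with which to align. Your sketch is the standard first-principles derivation via the most-recent-common-ancestor decomposition, and its main steps are correct: the diagonal gives the $e^T\E[fg]$ term by many-to-one; conditional on the MRCA path up to time $T-s$, the two post-branching subtrees are independent BBMs of duration $s$, so the double sum over the two subtrees factorizes and two applications of many-to-one contribute $e^{2s}$ and turn the product of conditional means into the conditional expectation $\E[f(v)g(v')\mid \tau=T-s,\text{path}]$; the factor $2$ accounts for the two orderings of which subtree carries $v_1$ and which carries $v_2$; and averaging the branching-event intensity $|\cN_{T-s}|\,\d s$ by many-to-one supplies $e^{T-s}$, giving $e^{T+s}$ in the integrand. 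You correctly flag the only genuine obstacle: the Palm/Campbell-type interchange of sum, integral, and conditional expectation over the random set of branching events, together with integrability to justify Fubini. This is exactly the bookkeeping that~\cite{HaRo17} carries out rigorously via a size-biased spine change of measure, which the paper leans on rather than reproving. Your proposal is therefore a correct alternative route (a direct combinatorial/Palm derivation) to the paper's route (citation of the spine-based result), and the trade-off is clear: the direct route is self-contained and transparent about the mechanism, whereas the cited spine framework handles general offspring laws and weights uniformly and avoids redoing the measure-theoretic interchanges each time.
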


As an example, taking $f$ and $g$ to be identically $1$, the many-to-one lemma tells us that
$\E[\cN_T] =e^T$, and the many-to-two lemma gives $\E[\cN_T^2] = 2e^{2T} - e^T$.

\subsection{Bessel processes}
\label{subsec:bessel}
We will frequently use $R_t$ to denote the process given by the norm of standard $d$-dimensional Brownian motion at time $t$. 
It is well known that $R$ is a \textit{Bessel process} of dimension $d$; therefore, when $R_0 >0$, it satisfies the following SDE  (see~\cite[Chapter~XI]{ReYo99} for a treatment of Bessel processes):
\begin{align}
    \d R_t = \frac{\alpha_d}{R_t} \d t + \d W_t, 
    \label{eqn:bessel_SDE}
\end{align}
where we recall that $\alpha_d = (d-1)/2$ (as defined in~\eqref{def:m_t-c_d}) and that $W_t$ denotes a standard Brownian motion\footnote{
For general $d \in \R$, the SDE~\eqref{eqn:bessel_SDE} is only satisfied up to time $\tau := \inf \{t >0 : R_t = 0 \}$. However, since we take $d\geq 2$ an integer and $R_0 >0$, $\tau = \infty$ almost surely. 
}. In  particular, the Girsanov transform gives us 
\begin{align}
    \d P_x^R \big |_{\cF_t} = \pth{\frac{W_t}{W_0}}^{\alpha_d} 
    \exp \pth{\frac{\alpha_d - \alpha_d^2}{2} \int_0^t \frac{1}{W_u^2} \d u} \one_{\{ W_u > 0, ~u \in [0,t]\}} \d P_x^W \big |_{\cF_t} \, ,
    \label{eq.girsanov}
\end{align}
where $P_x^R$ and $P_x^W$ are the laws of a $d$-dimensional Bessel process and a one-dimensional Wiener process respectively, each started from $x>0$. Henceforth, $W_t$ will always denote a one-dimensional Wiener process.

\subsection{Hitting probabilities of Brownian motion}
\label{subsec:hittingP}
In this subsection, we record several important results pertaining to hitting probabilities of Brownian motion.
We begin by defining some notation that will be used throughout the article. 

For functions $f,g: [0,\infty) \to \R$, a set  $I \subset [0,\infty)$,
and a real-valued process $X_\cdot$
we call events of the following form \emph{barrier events}:
\begin{align}
    \UB{I}{f}(X_\cdot) &:= \{ X_u \leq f(u), ~\forall u \in I \} \quad \text{ and } 
    \quad 
    \LB{I}{f} (X_\cdot) := \{ X_u \geq f(u), ~\forall u \in I \}
    \, . \label{eqn:notation-barrier}
\end{align}
In each instance, we will take $I$ to be a union of intervals and $X_\cdot$ to be one of $W_\cdot$, $R_\cdot$, $W_.^{(v)}$ or~$R_.^{(v)}$. An important barrier function will be the linear function $f_a^b(s;T): [0,T] \to \R$ whose graph is the line segment connecting $(0,a)$ to $(T,b)$; that is,
\begin{equation}\label{eq:def-f_a^b}
f_a^b(s;T) := a + (b-a) \frac{s}T \qquad (0\leq s \leq T)\,.
\end{equation}
When clear, we will write $\P_x(\cdot)$ to denote the law of a process started from $x$ at time $0$. Further, for fixed $T> 0$ and  $x, y \in \R$, we will write
$\P_{x,T}^y(\cdot)$ to denote the law of a process started from $x$ at time $0$ and ending at $y$ at time $T$.
Our first result of this subsection is the classical ballot theorem for the  Brownian bridge.

\begin{lemma}[Brownian ballot Theorem]
\label{lem:brownian-ballot-explicit}
Let $a \geq x$ and $b \geq y$. For any $T > 0$, 
\begin{align}
    \P_{x,T}^y \Big( \UB{[0,T]}{f_a^b(\cdot;T)}(W_\cdot)  \Big) = 1 - \exp \Big(\! -2 \frac{(a-x)(b-y)}{T} \Big) \, .
    \label{eqn:brownian-ballot-explicit}
\end{align}
Consequently, the following holds uniformly over $(a-x)(b-y) \leq g(T)$, for any $g(T) = o(1)$:
\begin{align}
    \P_{x,T}^y \Big( \UB{[0,T]}{f_a^b(\cdot;T)}(W.)  \Big)
    \sim
    2 \frac{(a-x)(b-y)}{T} \, .
    \label{eqn:brownian-ballot-sim}
\end{align}
\end{lemma}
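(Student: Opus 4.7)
The plan is to reduce the linear-barrier event to a constant-barrier event by a deterministic shear, then invoke the classical reflection-principle formula for the maximum of a Brownian bridge; the concluding asymptotic is a one-line Taylor expansion.

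For the first step, I would introduce $Y_s := W_s - (b-a)s/T$ on $[0,T]$. Under $\P_{x,T}^y$, subtracting a deterministic affine function from a Brownian bridge produces a bridge with the same conditional Gaussian structure but with shifted endpoint: $Y$ is distributed as a Brownian bridge from $x$ at time $0$ to $y' := y - (b-a)$ at time $T$. The event $\UB{[0,T]}{f_a^b(\cdot;T)}(W_\cdot)$ is precisely $\{\max_{s\leq T} Y_s \leq a\}$, a constant-barrier event for the new bridge. Note that by hypothesis $a\geq x$ and $b\geq y$, so $a\geq \max(x,y')$, since $y' = a - (b-y) \leq a$.

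For the second step, I would apply the reflection principle for Brownian motion started at $x$: for $a \geq x$ and $y'\leq a$, $\P_x(\max_{s\leq T} W_s \geq a,\, W_T \in dy') = \P_x(W_T \in d(2a - y'))$. Dividing by the unconditional Gaussian density $\P_x(W_T \in dy')$ yields, after a direct algebraic simplification of the ratio of the two exponentials,
\[
\P_{x,T}^{y'}\!\left(\max_{s\leq T} W_s \leq a\right) = 1 - \exp\!\left(-\tfrac{2(a-x)(a-y')}{T}\right).
\]
Substituting back $a - y' = a - y + (b-a) = b - y$ recovers \eqref{eqn:brownian-ballot-explicit}.

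The consequence \eqref{eqn:brownian-ballot-sim} then follows from $1 - e^{-u} = u + O(u^2)$ applied to $u = 2(a-x)(b-y)/T$, which tends to $0$ uniformly under the hypothesis $(a-x)(b-y) \leq g(T)$ with $g(T) = o(1)$, making the error term negligible relative to $u$. There is no serious obstacle here: the only mildly subtle point is verifying that the linear shear of a Brownian bridge is again a bridge with shifted endpoint, which is immediate from the Gaussian characterization of the bridge (or, equivalently, from the invariance of the conditional law of Brownian motion given its endpoints under addition of deterministic affine functions, up to the obvious endpoint change); and checking that the hypotheses $a\geq x$, $b\geq y$ translate correctly into $a \geq \max(x, y')$, which was already noted above.
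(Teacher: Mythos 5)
Your proof is correct and is exactly the standard derivation of the Brownian bridge ballot formula: a linear shear reduces the sloped barrier to a constant one, the reflection principle for the running maximum of a pinned Brownian motion gives the closed form, and a first-order Taylor expansion of $1-e^{-u}$ yields the asymptotic. The paper states this lemma as a classical fact without proof, so there is no alternative argument in the text to compare against; your argument fills that gap along the expected route, with the algebra ($a-y' = b-y$ and $(2a-y'-x)^2-(y'-x)^2 = 4(a-x)(a-y')$) checked correctly.
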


Lemma~\ref{lem:brownian-ballot-explicit} computes the hitting probability of a Brownian bridge w.r.t.\ a straight line. 
Lemma~$2.6$, Lemma~$2.7$, and Proposition~$6.1$ of~\cite{Bramson83} estimate hitting probabilities for a much more general family of barriers.
These results are stated in~\cite{Bramson83} for a Brownian bridge starting and ending at $0$ on the interval $[0,T]$.
The latter  may easily be generalized to 
general Brownian bridges using the process-level equivalence
\begin{equation}
\P_{x,T}^y \big (\big( W_s \big)_{s \in [0,T]} \in \cdot \big) = \P_{0,T}^0 \big( \big ( W_s+ f_x^y(s;T) \big)_{s\in[0,T]} \in \cdot \big) \,.
    \label{eqn:bridge-stoch-equiv}
\end{equation}
In Lemmas~\ref{lem:Bram2.6}---\ref{lem:Bram6.1}, we record these generalized results and demonstrate how they follow from~\cite{Bramson83} by proving Lemma~\ref{lem:Bram2.6}; Lemmas~\ref{lem:Bram2.7} and~\ref{lem:Bram6.1} are proved in the same way. Figures~\ref{fig:lemma-Bram2.7} and~\ref{fig:lemma-Bram6.1} complement these results.

\begin{lemma}[From {\cite[Lemma~2.6]{Bramson83}}]
\label{lem:Bram2.6}
Fix $x,y \in \R$ and $T >0$.
Consider the (possibly infinite-valued) functions $\cL(s)$, $\cU_1(s)$, and $\cU_2(s)$ satisfying $\cL(s) < \cU_1(s) \leq \cU_2(s)$  for $s \in [0,T]$ and $\P_x^y (W_s \leq \cU_1(s)) > 0$.  Then we have
\begin{align}
    \P_{x,T}^y \Big( \LB{[0,T]}{\cL}(W_\cdot) \given
    \UB{[0,T]}{\cU_2}(W_\cdot) \Big )
    \geq 
    \P_{x,T}^y \pth{ \LB{[0,T]}{\cL}(W_\cdot) \given
    \UB{[0,T]}{\cU_1}(W_\cdot)}\,,
    \label{eqn:Bram2.6-upper}
\end{align}
\begin{align}
    \P_{x,T}^y \pth{ \UB{[0,T]}{\cL}(W_\cdot) \given
    \UB{[0,T]}{\cU_1}(W_\cdot)}
    \geq 
    \P_{x,T}^y \pth{ \UB{[0,T]}{\cL}(W_\cdot) \given
    \UB{[0,T]}{\cU_2}(W_\cdot)}  \,.
    \label{eqn:Bram2.6-lower}
\end{align}

\begin{proof}
\cite[Equation~2.17]{Bramson83} and the reflection principle give
\begin{align}
    \P^0_{0,T} \pth{ \LB{[0,T]}{\cL}(W_\cdot) \given 
    \UB{[0,T]}{\cU_2}(W_\cdot) }
    \geq 
    \P^0_{0,T} \pth{ \LB{[0,T]}{\cL}(W_\cdot) \given 
    \UB{[0,T]}{\cU_1}(W_\cdot)
    }.
\end{align}
Adding $f_x^y(\cdot; T)$ to both sides, we may rewrite our barrier events as
\[
\LB{[0,T]}{\cL}(W_\cdot)
= 
\LB{[0,T]}{\cL+f_x^y(\cdot; T)}(W_\cdot+f_x^y(\cdot; T)) \, ,
\]
and similarly for $\UB{[0,T]}{\cU_1}(W_\cdot)$ and $\UB{[0,T]}{\cU_2}(W_\cdot)$. It then follows from~\eqref{eqn:bridge-stoch-equiv} that
\begin{align}
    \P^0_{0,T} \pth{ \LB{[0,T]}{\cL}(W_\cdot)
    \given 
    \UB{[0,T]}{\cU_2}(W_\cdot) }
    =
    \P^x_{y,T} \pth{ 
    \LB{[0,T]}{\cL+f_x^y(\cdot; T)}(W_\cdot)  \given 
    \UB{[0,T]}{\cU_2+f_x^y(\cdot; T)}(W_\cdot) 
    }\,,
\end{align}
and 
\begin{align}
    \P_{0,T}^0 \Big( \LB{[0,T]}{\cL}(W_\cdot)
    \given 
    \UB{[0,T]}{\cU_1}(W_\cdot) \Big)
    =
    \P_{x,T}^y \Big(
    \LB{[0,T]}{\cL+f_x^y(\cdot; T)}(W_\cdot)  \given 
    \UB{[0,T]}{\cU_1+f_x^y(\cdot; T)}(W_\cdot) 
    \Big)\,.
\end{align}
Renaming $\cL := \cL+f_x^y(\cdot; T)$, $\cU_1 := \cU_1+f_x^y(\cdot; T)$, and $\cU_2 := \cU_2+f_x^y(\cdot; T)$, we find that 
\begin{align}
    \P_{x,T}^y \Big(
    \LB{[0,T]}{\cL}(W_\cdot)  \given 
    \UB{[0,T]}{\cU_2}(W_\cdot) 
    \Big)
    \geq 
    \P_{x,T}^y\Big(
    \LB{[0,T]}{\cL}(W_\cdot)  \given 
    \UB{[0,T]}{\cU_1}(W_\cdot) 
    \Big)  \,.
\end{align}
This gives~\eqref{eqn:Bram2.6-upper}. Equation~\eqref{eqn:Bram2.6-lower} follows in the same way.
\end{proof}
\end{lemma}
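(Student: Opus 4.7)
The plan is to reduce both inequalities to the pinned case $x=y=0$, which is the setting of the original Bramson Lemma~2.6 of~\cite{Bramson83}. The reduction is driven entirely by the process-level identity~\eqref{eqn:bridge-stoch-equiv}, which expresses the general bridge as an affine shift of the bridge pinned at $0$.

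First I would invoke~\eqref{eqn:bridge-stoch-equiv}: letting $\widetilde W$ denote a standard Brownian bridge on $[0,T]$ pinned at $0$ at both endpoints, $(W_s)_{s \in [0,T]}$ under $\P_{x,T}^y$ has the same law as $(\widetilde W_s + f_x^y(s;T))_{s \in [0,T]}$ under $\P_{0,T}^0$. Since $f_x^y$ is a deterministic function, any barrier event transforms linearly: for a continuous $h:[0,T]\to\R$,
\[
\P_{x,T}^y\big(\UB{[0,T]}{h}(W_\cdot)\big) = \P_{0,T}^0\big(\UB{[0,T]}{h-f_x^y}(\widetilde W_\cdot)\big),
\]
and similarly for lower barriers. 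Applying this to $\cL, \cU_1, \cU_2$ yields shifted barriers $\widetilde \cL := \cL - f_x^y$ and $\widetilde \cU_i := \cU_i - f_x^y$ that still satisfy $\widetilde \cL < \widetilde \cU_1 \leq \widetilde \cU_2$ pointwise (the same deterministic shift is subtracted from all three), and the hypothesis $\P_{x,T}^y(W_s \leq \cU_1(s))>0$ transfers verbatim to $\P_{0,T}^0(\widetilde W_s \leq \widetilde \cU_1(s))>0$.

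With this reduction in hand, both~\eqref{eqn:Bram2.6-upper} and~\eqref{eqn:Bram2.6-lower} for $\P_{x,T}^y$ become instances of Bramson's lemma in its pinned-at-zero form applied to $\widetilde\cL,\widetilde\cU_1,\widetilde\cU_2$ under $\P_{0,T}^0$. The pinned-at-zero case in turn rests on the reflection-principle identity~(2.17) of~\cite{Bramson83}: enlarging the upper barrier makes the conditioned bridge more likely to stay above a given lower barrier, and (by the symmetric reflection argument) less likely to stay below it. No additional analytic work is needed beyond citing that result.

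I expect the only real obstacle to be notational bookkeeping---verifying that the deterministic shift by $f_x^y$ preserves the strict/weak barrier orderings and the positivity of the conditioning event, and that each barrier event transforms consistently under~\eqref{eqn:bridge-stoch-equiv}. Because $f_x^y$ acts identically on both sides of each inequality and on each of the three barriers, no reordering can occur, and the symmetry between~\eqref{eqn:Bram2.6-upper} and~\eqref{eqn:Bram2.6-lower} passes directly through the shift, so handling one inequality yields the other immediately.
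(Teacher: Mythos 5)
Your proposal is correct and follows essentially the same route as the paper: both reduce to the pinned $\P_{0,T}^0$ case via the process-level identity~\eqref{eqn:bridge-stoch-equiv} (you subtract $f_x^y$ from the barriers while the paper phrases it as adding $f_x^y$ and renaming, but these are the same reduction read in opposite directions), and both then invoke Bramson's Lemma~2.6 / Eq.~(2.17) with the reflection principle for the pinned case. The bookkeeping points you flag — that the deterministic shift preserves the pointwise orderings $\cL < \cU_1 \leq \cU_2$ and the positivity of the conditioning event, and that the symmetry carries the first inequality to the second — are exactly the checks the paper's argument performs.
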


\begin{figure}
 \begin{tikzpicture}[>=latex,font=\small]
 \draw[->] (0, 0) -- (10, 0); 
  \draw[->] (0, 0) -- (0, 5.5);
   \coordinate (x) at (-0.1,1);
   \coordinate (y) at (9.2,5.2);
  \fill[color=orange!15] (1.6,1.75) -- (7.6,4.47) to (7.6,2.57) to[bend left=25] (1.6, 0.55);
  \fill[color=blue!5] (7.6,0.01) -- (7.6,2.57) to[bend left=25] (1.6, 0.55) -- (1.6,0.01);
  \draw[thick,orange] ($(x)+(0.1,0.05)$) to [bend right=50] node[midway,below] {$\mathbf{f}(s)$} ($(y)+(-0.1,-0.05)$);
  \node (fig1) at (4.55,3.15) {
  \includegraphics[width=.66\textwidth]{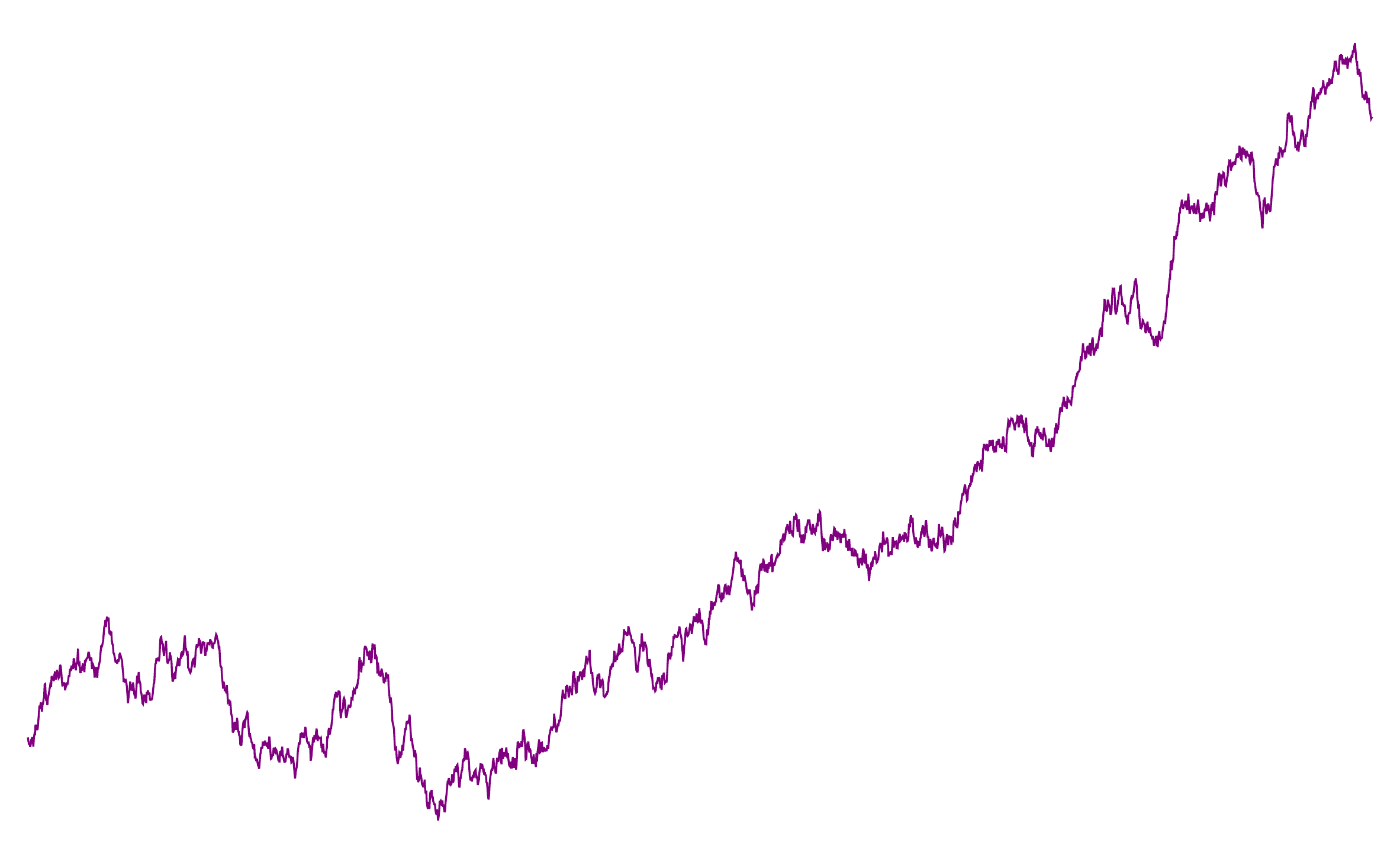}};
  \draw[blue, thick,*-*] (x) -- node[above,midway,sloped]{$f_x^y(s;T)$} (y);
  \draw[dashed, thick,gray] (1.6,5.5) -- (1.6,0);
  \draw[dashed, thick,gray] (7.6,5.5) -- (7.6,0);
  \node[blue,left] at (x) {$x$};
  \node[blue,right] at (y) {$y$};
  \draw (1.6,0.05) -- (1.6,-0.05) node[below] {$r$};
  \draw (7.6,0.05) -- (7.6,-0.05) node[below] {$T-r$};
  \draw (9.2,0.05) -- (9.2,-0.05) node[below] {$T$};
  \end{tikzpicture}
     \caption{The event from Lemma~\ref{lem:Bram2.7}: for $T\geq 2r$, the probability that Brownian motion is above $\mathbf{f}(s)$ (orange) on $[r,T-r]$, given that it is below $f_x^y(s;T)$ (blue) on $[r,T-r]$, goes to $1$ as $r \to \infty$.}
     \label{fig:lemma-Bram2.7}
 \end{figure}

\begin{lemma}[From {\cite[Lemma~2.7]{Bramson83}}]
\label{lem:Bram2.7}
Fix $C > 0$, $\epsilon > 1/2$, and $x,y \in \R$. Define
\[ \mathbf{f}(s) 
:= \mathbf{f}(s;x,y,T) 
= f_x^y(s;T)-C(s\wedge(T-s))^\epsilon\, .\]
Then, uniformly over $T \geq 2r$, we have
\begin{align}
    \P_{x,T}^y \pth{ \LB{[r, T-r]}{\mathbf{f}}( W_\cdot ) 
    \given 
    \UB{[r,T-r]}{f_x^y(\cdot;T)} ( W_\cdot )
    }
    \to 1 \text{ as } r \to \infty \, .
    \label{eqn:lem-bram2.7}
\end{align}
Furthermore, the left-hand side of~\eqref{eqn:lem-bram2.7} is constant in $x$ and $y$.
\end{lemma}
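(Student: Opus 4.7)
The plan is to mimic precisely the reduction used in the proof of Lemma~\ref{lem:Bram2.6}: transfer the statement from a bridge between $x$ and $y$ to the canonical bridge from $0$ to $0$ using the process-level equivalence~\eqref{eqn:bridge-stoch-equiv}, and then invoke Bramson's original lemma verbatim. The subtraction $W_s - f_x^y(s;T)$ turns a bridge from $x$ to $y$ into a bridge from $0$ to $0$, and this subtraction is precisely what converts both of the barrier events appearing in~\eqref{eqn:lem-bram2.7} into shifted versions with the straight line $f_x^y(\cdot;T)$ replaced by the zero function.

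Concretely, I would first rewrite the two barrier events in the conclusion as
\[
  \LB{[r,T-r]}{\mathbf{f}}(W_\cdot) = \LB{[r,T-r]}{\mathbf{f}-f_x^y(\cdot;T)}\bigl(W_\cdot-f_x^y(\cdot;T)\bigr),
\]
\[
  \UB{[r,T-r]}{f_x^y(\cdot;T)}(W_\cdot) = \UB{[r,T-r]}{0}\bigl(W_\cdot-f_x^y(\cdot;T)\bigr),
\]
and note that by the definition of $\mathbf f$, the lower barrier in the first event becomes the explicit function $-C(s\wedge(T-s))^\epsilon$, which depends on $T$ but not on $x$ or $y$. Applying~\eqref{eqn:bridge-stoch-equiv}, the conditional probability on the left-hand side of~\eqref{eqn:lem-bram2.7} is thus equal to
\[
  \P_{0,T}^0\!\left(\LB{[r,T-r]}{-C(s\wedge(T-s))^\epsilon}(W_\cdot)\;\Big|\;\UB{[r,T-r]}{0}(W_\cdot)\right),
\]
which has no $x,y$ dependence at all; this immediately yields the ``furthermore'' assertion.

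The remaining task is to prove that this last quantity tends to $1$ as $r\to\infty$, uniformly in $T\geq 2r$. But this is exactly the content of Bramson's Lemma~2.7 in~\cite{Bramson83}, stated there for the bridge from $0$ to $0$ and the lower barrier $-C(s\wedge(T-s))^\epsilon$ (with $\epsilon>1/2$) conditioned on staying below $0$. So we simply cite that result and we are done.

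No step here is genuinely hard; the only thing to be careful about is making sure the reduction is applied consistently on both sides of the conditional probability (so that the normalizing event after translation is the correct ``stays below $0$'' event rather than some tilted version), and that the lower barrier after translation is exactly the one handled by Bramson, which it is by construction of $\mathbf f$. The uniformity in $T\geq 2r$ is inherited directly from Bramson's statement.
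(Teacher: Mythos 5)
Your proposal is correct and follows essentially the same route the paper indicates (the paper proves Lemma~\ref{lem:Bram2.6} by this exact translation argument and states that Lemma~\ref{lem:Bram2.7} ``is proved in the same way''). You reduce via subtraction of $f_x^y(\cdot;T)$ and cite Bramson, whereas the paper's model proof adds $f_x^y(\cdot;T)$ to Bramson's statement and reads off the generalization; these are the same argument in opposite directions, and your observation that the reduced expression is manifestly $x,y$-independent cleanly delivers the ``furthermore'' clause.
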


In the following result, we will consider a fixed family of (not necessarily finite-valued) functions $\{\cL_T\}_{ T > 0}$, each defined at least on $[0,T]$. Mimicking the notation of~\cite[Section~6]{Bramson83}, we will write  $\overline{\cL}_T(s) := \cL_T(s) + C\pth{ s \wedge (T-s) }^{\delta}$ for fixed constants $C >0$ and $\delta \in (0,1/2)$.

\begin{lemma}[From  {\cite[Lemma~6.1]{Bramson83}}]
\label{lem:Bram6.1}
Fix constants $a,b \in \R$, $C>0$, $\delta \in (0,1/2)$, and fix a family of (not necessarily finite-valued) functions $\{\cL_T\}$ as in the previous paragraph such that there exists a fixed $r_0 >0$ for which
\begin{align}
    \overline{\cL}_T(s) \geq f_a^b(s;T)~\text{for all } s \in [r_0, T- r_0],
    \label{eqn:bram6.1-cond}
\end{align}
holds 
for all $ T \geq 2 r_0$. 
Then, uniformly over $\cL_T$, $x \leq a$, $y \leq b$, and $T \geq 2r$, we have
\begin{align}
    \frac{
        \P_{x,T}^y \Big(\UB{[r,T-r]}{\overline{\cL}_T} (W_\cdot) \Big)
    }
    {
        \P_{x,T}^y 
        \Big( \UB{[r,T-r]}{\cL_T}(W_\cdot) \Big)
    } 
    \to 1, \text{ as } r\to\infty \, .
    \label{eqn:bram6.1-eqn}
\end{align}
\end{lemma}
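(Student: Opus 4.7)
The plan is to mimic the proof of Lemma~\ref{lem:Bram2.6}: shift by $f_x^y(\cdot;T)$ to reduce from a general Brownian bridge to the standard bridge from $0$ to $0$, verify that the generalized hypothesis~\eqref{eqn:bram6.1-cond} remains valid for the translated barrier family, and then invoke the original Lemma~6.1 of~\cite{Bramson83}, which handles precisely the bridge from $0$ to $0$.

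Concretely, by the equivalence~\eqref{eqn:bridge-stoch-equiv}, under $\P_{x,T}^y$ the path $(W_s)_{s \in [0,T]}$ has the same law as $(\tilde W_s + f_x^y(s;T))_{s \in [0,T]}$ under $\P_{0,T}^0$. Writing
\[ \cL_T'(s) := \cL_T(s) - f_x^y(s;T), \qquad \overline{\cL}_T'(s) := \overline{\cL}_T(s) - f_x^y(s;T) = \cL_T'(s) + C\,(s \wedge (T-s))^\delta, \]
one immediately obtains
\[ \P_{x,T}^y\!\left(\UB{[r,T-r]}{\cL_T}(W_\cdot)\right) = \P_{0,T}^0\!\left(\UB{[r,T-r]}{\cL_T'}(W_\cdot)\right), \]
and the analogous identity with $\cL$ replaced by $\overline{\cL}$. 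Thus the ratio on the left-hand side of~\eqref{eqn:bram6.1-eqn} equals the corresponding ratio for the standard bridge and the translated family $\{\cL_T'\}$.

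Next I would check the hypothesis of Bramson's original lemma for $\{\cL_T'\}$. Since $x \leq a$ and $y \leq b$, the linear functions satisfy $f_x^y(\cdot;T) \leq f_a^b(\cdot;T)$ pointwise; combined with~\eqref{eqn:bram6.1-cond} this yields, for every $s \in [r_0, T - r_0]$ and every $T \geq 2r_0$,
\[ \overline{\cL}_T'(s) = \overline{\cL}_T(s) - f_x^y(s;T) \geq f_a^b(s;T) - f_x^y(s;T) \geq 0 = f_0^0(s;T). \]
This is exactly the hypothesis~\eqref{eqn:bram6.1-cond} for the bridge from $0$ to $0$, with the same constants $C, \delta, r_0$, and with $a' = b' = 0$. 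Applying the original Lemma~6.1 of~\cite{Bramson83} then gives~\eqref{eqn:bram6.1-eqn} for the translated problem, hence the desired statement.

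The main thing to check is the uniformity. Since the constants $r_0, C, \delta$ in the verified condition for $\overline{\cL}_T'$ are independent of $x, y, T$, the uniformity in $\cL_T$ and $T \geq 2r$ provided by Bramson's original lemma automatically upgrades to uniformity in $x \leq a$, $y \leq b$ as well. There is no substantive new analytic obstacle beyond this translation-and-verify step; the work is entirely contained in the original Bramson lemma, which in turn is proved by a delicate entropic-repulsion/reflection-principle argument handling the polynomial gap $C(s \wedge (T-s))^\delta$ between $\cL_T$ and $\overline{\cL}_T$.
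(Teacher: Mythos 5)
Your proposal is correct and takes essentially the same approach the paper intends: the paper states that Lemma~\ref{lem:Bram6.1} is proved ``in the same way'' as Lemma~\ref{lem:Bram2.6}, namely by the process-level shift~\eqref{eqn:bridge-stoch-equiv}, and you carry out exactly that translation, with the one non-trivial check (that $x\leq a$, $y\leq b$ implies $f_x^y\leq f_a^b$, so the shifted family $\overline{\cL}_T' = \overline{\cL}_T - f_x^y \geq 0$ still satisfies Bramson's hypothesis with the same $C,\delta,r_0$) done correctly.
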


 \begin{figure}
     \begin{tikzpicture}[>=latex,font=\small]
 \draw[->] (0, 0) -- (10, 0); 
  \draw[->] (0, 0) -- (0, 5.5);
   \coordinate (a) at (-0.1,2.38);
   \coordinate (b) at (9.2,5.2);
   \node[circle,fill=orange,inner sep=1.75pt] (a0) at (0.0,1.65) {};
   \node[circle,fill=orange,inner sep=1.75pt] (b0) at (9.14,4.48) {};
   \node[circle,fill=blue!45!purple,label={[blue!45!purple]left:$x$},inner sep=1.75pt] (x) at (0.0,2) {};
   \node[circle,fill=blue!45!purple,label={[blue!45!purple]right:$y$},inner sep=1.75pt] (y) at (9.13,4.75) {};
   \fill[color=orange!15] (1.6,1.86) to[bend right=7] (7.5,3.65) -- (7.5,0.01) -- (1.6,0.01);
   \fill[color=blue!5] (7.5,3.65) to[bend left=7] (1.6, 1.86) -- (1.6,3.1) to[bend left=20] (7.5,4.76);
  \node (fig1) at (4.55,3.2) {
  \includegraphics[width=.63\textwidth]{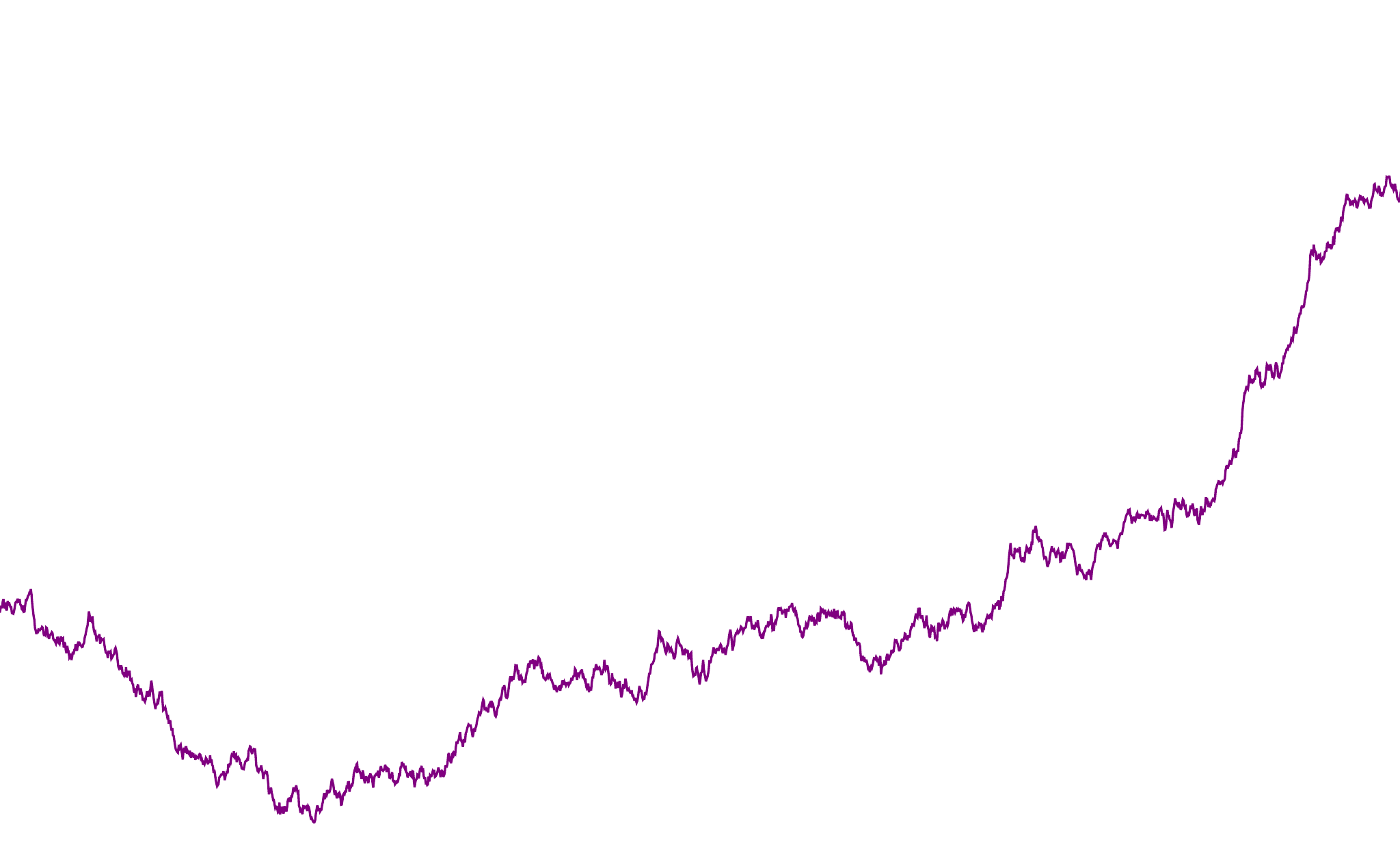}};
  \path[blue!75!black,thick] (a0) edge [bend left=30]
  node[midway,above] {$\overline{\mathcal{L}}_T(s)$}
   (b0);
   \draw[thick,orange] (a0) edge [bend right=10] node[midway,above] {$\mathcal{L}_T(s)$} (b0);
  \draw[blue!75, thin, dashed, *-*] ($(a)+(0.02,0.02)$) -- node[above,midway,sloped,font=\tiny,inner sep=1pt]{$f_a^b(s;T)$} (b);
  \draw[dashed, thick,gray] (1.6,5.5) -- (1.6,0);
  \draw[dashed, thick,gray] (7.5,5.5) -- (7.5,0);
  \node[blue,left] at (a) {$a$};
  \node[blue,right] at (b) {$b$};
  \draw (1.6,0.05) -- (1.6,-0.05) node[below] {$r_0$};
  \draw (7.5,0.05) -- (7.5,-0.05) node[below] {$T-r_0$};
  \draw (9.2,0.05) -- (9.2,-0.05) node[below] {$T$};
  \end{tikzpicture}
     \caption{The event from Lemma~\ref{lem:Bram6.1}: a Brownian bridge on $[0,T]$ from $x\leq a$ to $y\leq b$ (purple). The curve  $\overline{\cL}_T$ (blue) satisfies~\eqref{eqn:bram6.1-cond}. The probability of staying below the blue curve on $[r,T-r]$ is asymptotically equivalent to the probability of staying below the orange curve on $[r,T-r]$.}
     \label{fig:lemma-Bram6.1}
 \end{figure}

Two consequences of Lemmas \ref{lem:Bram2.6}---\ref{lem:Bram6.1} are~\cite[Lemma~2.1]{BRZ19}, reproduced next as Lemma~\ref{lem:brz-lem2.1-bridge}, and Lemma~\ref{lem:brz-modified}, which is a slight modification of Lemma~\ref{lem:brz-lem2.1-bridge}. These results may be thought of as extensions of the ballot theorem (Lemma~\ref{lem:brownian-ballot-explicit}) to more general barriers. See Figure~\ref{fig:lemma-BRZ-2.1} for a visual description of the events in question in Lemma~\ref{lem:brz-lem2.1-bridge} 

\begin{lemma}[{\cite[Lemma~2.1]{BRZ19}}]
\label{lem:brz-lem2.1-bridge}
Fix any $\epsilon \in (0,1/2)$, $\eta > 1$, and $C>0$.
Then the following holds uniformly in $x \leq a$, $y\leq b$, $(a-x)(b-y) \leq \eta T$, and $T$ large enough:
\begin{align}
    &\P_{x,T}^y \pth{
    \LB{[1,T-1]}{f_x^y(s;T) -  
    C(s\wedge (T-s))^{\frac{1}{2}+ \epsilon}}(W_{\cdot})
    \cap
    \UB{[1,T-1]}{f_a^b(s;T) - C(s\wedge (T-s))^{\frac{1}{2}-\epsilon}}(W_{\cdot})
     }
   \nonumber \\
  & \asymp
    \P_{x,T}^y \pth{\UB{[1,T-1]}{
    f_a^b(s;T) + C(s\wedge (T-s))^{\frac{1}{2}-\epsilon}
    } (W_{\cdot})}
       \asymp 
    \frac{(1+a-x)(1+b-y)}{T} \, .
    \label{eqn:brz2.1-bridge}
\end{align}
\end{lemma}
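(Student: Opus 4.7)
The plan is to establish the two asymptotic equivalences separately, using the Bramson-style barrier estimates recorded in Lemmas~\ref{lem:Bram2.6}--\ref{lem:Bram6.1}, together with the basic Brownian ballot identity (Lemma~\ref{lem:brownian-ballot-explicit}).

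For the first $\asymp$ (joint lower-plus-upper event versus the pure upper-barrier event), the upper bound is immediate since the joint event is contained in the upper-barrier event. For the matching lower bound, I would apply Lemma~\ref{lem:Bram2.7} with $\mathbf{f}(s)=f_x^y(s;T)-C(s\wedge(T-s))^{1/2+\epsilon}$ (whose exponent $1/2+\epsilon>1/2$ meets the hypothesis of that lemma) to conclude that, conditional on the upper barrier, the lower barrier on $[r,T-r]$ holds with probability tending to $1$ as $r\to\infty$. To reconcile this with the fixed cutoff $[1,T-1]$ appearing in~\eqref{eqn:brz2.1-bridge}, I would split the interval into $[1,r]\cup[r,T-r]\cup[T-r,T-1]$ for a large but \emph{fixed} $r=r(\epsilon,C,\eta)$, use Lemma~\ref{lem:Bram2.6} (monotonicity of the conditional probability of a lower event in the strength of the upper barrier) to replace the curved upper barrier $f_a^b+C(s\wedge(T-s))^{1/2-\epsilon}$ by the straight line $f_a^b$ during the conditioning, and apply Lemma~\ref{lem:Bram2.7} on the middle block. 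The two end blocks have bounded length, so the barrier deviations there cost only a multiplicative constant and are controlled by standard finite-time Brownian-bridge calculations.

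For the second $\asymp$ (pure upper-barrier probability versus $(1+a-x)(1+b-y)/T$), I would apply Lemma~\ref{lem:Bram6.1} with $\cL_T(s)\equiv f_a^b(s;T)$, whose companion $\overline{\cL}_T(s)=f_a^b(s;T)+C(s\wedge(T-s))^{1/2-\epsilon}$ trivially satisfies~\eqref{eqn:bram6.1-cond} for a suitable $r_0$ since $\overline{\cL}_T\geq f_a^b=\cL_T$; this exchanges the curved upper barrier for the straight line $f_a^b$ at the cost of asymptotic equivalence, uniformly in the stated parameter range. Using the process-level shift~\eqref{eqn:bridge-stoch-equiv}, the event $\UB{[1,T-1]}{f_a^b}(W_\cdot)$ under $\P_{x,T}^y$ is equivalent to a standard Brownian bridge from $x-a\leq 0$ to $y-b\leq 0$ staying non-positive on $[1,T-1]$. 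To convert this to an unconstrained Brownian-bridge ballot probability on $[0,T]$, I would integrate against the Brownian-bridge transition density at times $1$ and $T-1$, apply~\eqref{eqn:brownian-ballot-explicit} on the middle interval, and use the Gaussian decay of that density to recognize that the endpoint contributions furnish exactly the $O(1)$ cushion that converts $(a-x)(b-y)/T$ into $(1+a-x)(1+b-y)/T$.

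The main obstacle, and the reason this is not an immediate corollary of the ballot theorem, is obtaining the stated comparability \emph{uniformly} across the full range $(a-x)(b-y)\leq \eta T$ and \emph{uniformly} down to the edge cases $a=x$ or $b=y$, where the naive ballot quantity $2(a-x)(b-y)/T$ degenerates to $0$ but the probability under study is strictly positive. The ``$1+$'' in $(1+a-x)(1+b-y)/T$ is precisely the $O(1)$ slack at times $1$ and $T-1$, and identifying how this slack emerges from the interplay between the curved exponents $\tfrac12\pm\epsilon$ (the lower barrier bends away faster, the upper barrier more gently) and the end-interval integration above is the delicate bookkeeping step. Once this uniform replacement is achieved, chaining together the two $\asymp$ relations produces the desired estimate.
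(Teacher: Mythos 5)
The paper does not give its own proof of Lemma~\ref{lem:brz-lem2.1-bridge}: it reproduces it verbatim from~\cite[Lemma~2.1]{BRZ19}, with only the remark that it is a consequence of Lemmas~\ref{lem:Bram2.6}--\ref{lem:Bram6.1}. So there is no in-paper argument to compare against; the question is whether your sketch would succeed with those tools, and I believe there is a genuine gap in the first~$\asymp$.

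Your treatment of the second $\asymp$ (Lemma~\ref{lem:Bram6.1} with $\cL_T=f_a^b$, the shift~\eqref{eqn:bridge-stoch-equiv}, integration over the positions at the boundary times, and reading the ``$1+$'' off the unit-length end blocks) is sound in outline. The problem is with the first $\asymp$. Note that the two sides do \emph{not} share the same upper barrier: the joint event on the left carries $f_a^b(s;T)-C(s\wedge(T-s))^{1/2-\epsilon}$, whereas the middle term carries $f_a^b(s;T)+C(s\wedge(T-s))^{1/2-\epsilon}$. Your chain (Lemma~\ref{lem:Bram2.7} gives $\P(\LB{}{\mathbf f}\mid\UB{}{f_x^y})\to 1$; Lemma~\ref{lem:Bram2.6} then lets you \emph{raise} the conditioning barrier to $f_a^b$ or to $f_a^b+C(\cdot)^{1/2-\epsilon}$) produces a lower bound on $\P\big(\LB{}{\mathbf f}\cap\UB{}{f_a^b+C(\cdot)^{1/2-\epsilon}}\big)$ -- that is, the \emph{weak} upper barrier sits in the intersection, not the strong one. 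To conclude you would need $\P\big(\LB{}{\mathbf f}\mid\UB{}{f_a^b-C(\cdot)^{1/2-\epsilon}}\big)\gtrsim 1$, and here Lemma~\ref{lem:Bram2.6} points the wrong way: for the regime $a-x=O(1)$, $b-y=O(1)$, the barrier $f_a^b-C(\cdot)^{1/2-\epsilon}$ drops an \emph{unbounded} amount ($\approx C(T/2)^{1/2-\epsilon}$ near the midpoint) below $f_x^y$, so~\eqref{eqn:Bram2.6-upper} only yields the useless inequality $\P(\LB{}{\mathbf f}\mid\UB{}{f_a^b-C(\cdot)^{1/2-\epsilon}})\le\P(\LB{}{\mathbf f}\mid\UB{}{f_x^y})$. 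Lemma~\ref{lem:Bram2.7} is only formulated with the straight line $f_x^y$ as the conditioning barrier, so it does not transfer directly, and an additional repulsion or integration argument for the \emph{lowered} barrier is needed. Your final paragraph attributes the delicacy to the ``$1+$'' bookkeeping and the degeneracy at $a=x,\,b=y$; that is a symptom. The actual obstacle is that the $-$ sign in the upper barrier on the left cannot be turned into a $+$ by the monotonicity of Lemma~\ref{lem:Bram2.6} in the direction you need.
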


\begin{figure}
         \begin{tikzpicture}[>=latex,font=\small]
 \draw[->] (0, 0) -- (10, 0); 
  \draw[->] (0, 0) -- (0, 5.5);
   \fill[color=blue!7]  (0.75,2.85) to[bend left=10] (8.38,5.1) -- (8.28,4.8) to[bend left=10] (0.75,2.6);
   \fill[color=orange!15] (8.28,4.79) to[bend left=9.5] (0.75,2.6) -- (0.75,1.15) to[bend right=45] (8.28,2.9);
   \fill[color=blue!7] (0.71,1.15) to[bend right=30] (8.28,2.9) to (8.28,0.01) to (0.75,0.01);
  \node (fig1) at (4.6,2.65) {
  \includegraphics[width=.64\textwidth]{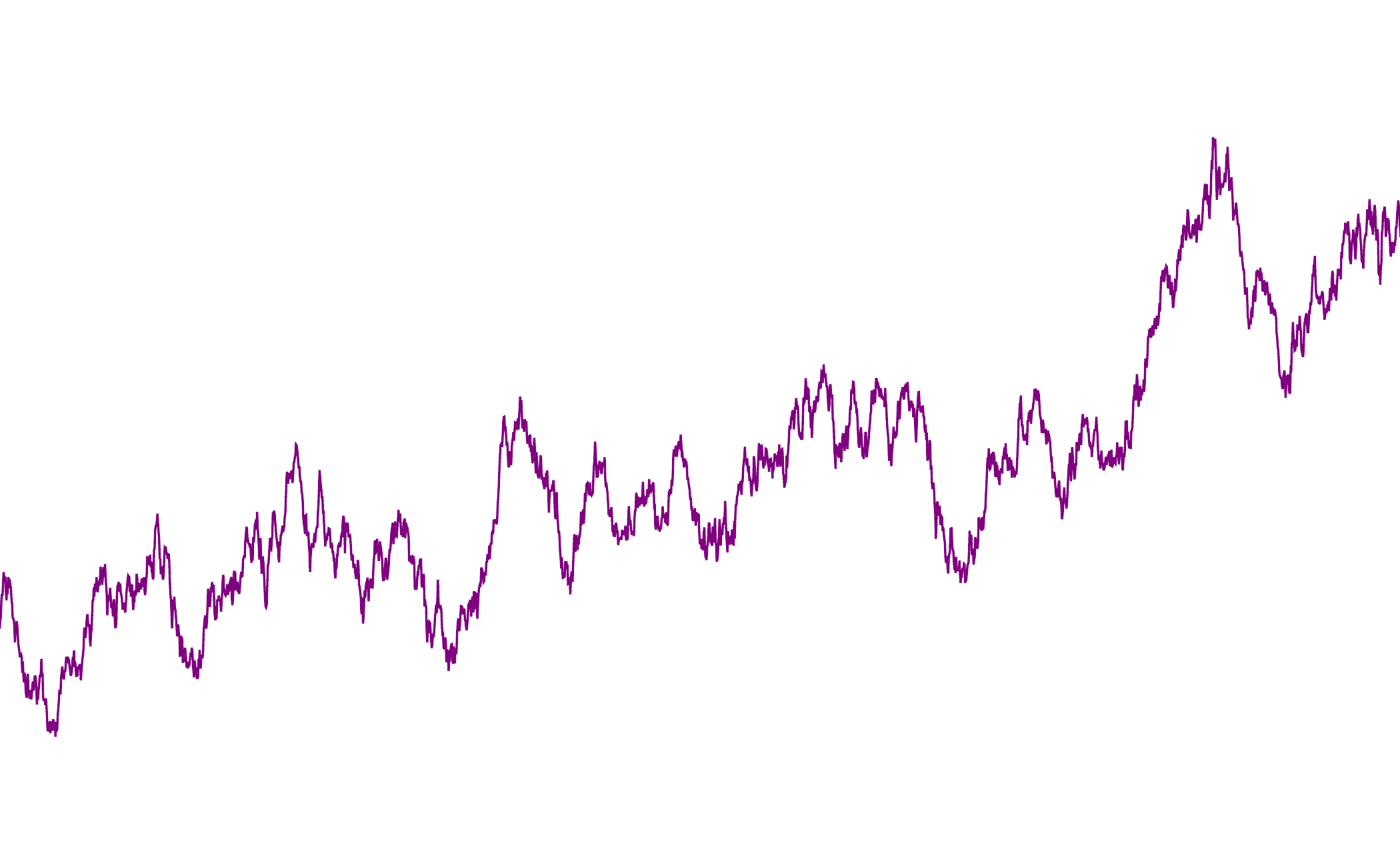}};
   \node[circle,fill=blue!75,label=left:$a$,inner sep=1.75pt] (a) at (0.0,2.5) {};
   \node[circle,fill=blue!75,label=right:$b$,inner sep=1.75pt] (b) at (9.143,5.2) {};
   \node[circle,fill=orange,label=left:$x$,inner sep=1.75pt] (x) at (0.0,1.45) {};
   \node[circle,fill=orange,label=right:$y$,inner sep=1.75pt] (y) at (9.143,4) {};
  \draw[blue!75!black,thick] (a) edge [bend left=10] node[pos=0.55,above,font=\tiny,sloped,inner sep=0pt] {$f_a^b(s;T)+C(s\wedge(T-s))^{1/2-\epsilon}$} (b);
  \path[blue!75!black,thick] (a) edge [bend right=10] 
  node[pos=0.6,below,font=\tiny,sloped,inner sep=0pt] {$f_a^b(s;T)-C(s\wedge(T-s))^{1/2-\epsilon}$} 
  (b);
   \draw[thick,orange] (x) to [bend right=40] 
   node[below,sloped,font=\tiny,pos=0.3,inner sep=0pt] {$f_x^y(s;T)-C(s\wedge(T-s))^{1/2+\epsilon}$} (y);
  \draw[blue!75, thin, dashed] ($(a)+(0.02,0.02)$) -- (b);
  \draw[orange, thin, dashed] ($(x)+(0.02,0.02)$) -- (y);
  \draw[dashed, thick,gray] (0.75,5.5) -- (0.75,0);
  \draw[dashed, thick,gray] (8.3,5.5) -- (8.3,0);
  \draw (.75,0.05) -- (.75,-0.05) node[below] {$1$};
  \draw (8.3,0.05) -- (8.3,-0.05) node[below] {$T-1$};
  \draw (9.2,0.05) -- (9.2,-0.05) node[below] {$T$};
 \end{tikzpicture}
 
     \caption{Lemma~\ref{lem:brz-lem2.1-bridge}: the probability that a Brownian bridge on $[0,T]$ from $x$ to $y$ (purple) stays below the blue curve on $[1,T-1]$ is asymptotically equivalent (in the sense of $\uasymp$) to the probability of staying in the orange region on $[1,T-1]$. In Lemma~\ref{lem:brz-modified}, we show that if $a-x$ and $b-y$ are bounded away from $0$, the interval can be extended to $[0,T]$.}
     \label{fig:lemma-BRZ-2.1}
 \end{figure}
 
Lemma~\ref{lem:brz-lem2.1-bridge} gives a ballot theorem-type result for the truncated time interval $[1,T-1]$. To derive a result for the entire time interval $[0,T]$, we of course must assume that $a-x$ and $b-y$ are bounded away from $0$, and that the lower barrier is bounded away from $x$ and $y$ at time $0$. These necessary assumptions are sufficient to obtain a ballot-type result on $[0,T]$:
\begin{lemma}
\label{lem:brz-modified}
Fix any $\epsilon \in (0,1/2)$, $\eta > 1$, $C>0$, $\delta_1>0$, $\delta_2 >0$, and $\delta_3 >0$.
Then the following holds uniformly in $x+ \delta_1  \leq a$, $y+\delta_2 \leq b$, $(a-x)(b-y) \leq \eta T$, and $T$ large enough:
\begin{align}
    &\P_{x,T}^y \pth{
    \LB{[0,T]}{f_x^y(s;T) - \delta_3 - 
    C(s\wedge (T-s))^{\frac{1}{2}+ \epsilon}}(W_{\cdot})
    \cap
    \UB{[0,T]}{f_a^b(s;T) - C(s\wedge (T-s))^{\frac{1}{2}-\epsilon}}(W.)}
    \nonumber \\
    &\asymp
    \P_{x,T}^y \pth{\UB{[0,T]}{
    f_a^b(s;T) + C(s\wedge (T-s))^{\frac{1}{2}-\epsilon}
    }(W_{\cdot})
     } 
  \asymp 
    \frac{(a-x)(b-y)}{T} \, .
    \label{eqn:brz-modified}
\end{align}

\end{lemma}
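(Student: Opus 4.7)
The plan is to reduce to Lemma~\ref{lem:brz-lem2.1-bridge} (which controls the truncated interval $[1, T-1]$) via a Markov decomposition of the Brownian bridge at times $1$ and $T-1$; the hypotheses $\delta_1, \delta_2, \delta_3 > 0$ will be used to absorb the two unit-length endpoint intervals with constants uniform in $x, y, a, b, T$.

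For the upper direction ($\ulesssim$) of both $\uasymp$'s in \eqref{eqn:brz-modified}, I use monotonicity: restricting each barrier event from $[0, T]$ to $[1, T-1]$ only weakens it, and the upper barrier $f_a^b(s;T) - C(s \wedge (T-s))^{1/2-\epsilon}$ is below the corresponding one with a $+C$. Hence both probabilities in \eqref{eqn:brz-modified} are bounded above by
\[
\P_{x,T}^y\bigl(\UB{[1,T-1]}{f_a^b(s;T) + C(s \wedge (T-s))^{1/2-\epsilon}}\bigr),
\]
which by Lemma~\ref{lem:brz-lem2.1-bridge} is $\uasymp (1+a-x)(1+b-y)/T$. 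Since $a - x \geq \delta_1$ and $b - y \geq \delta_2$, this is $\uasymp (a-x)(b-y)/T$, completing the upper bound in both $\uasymp$'s.

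For the matching lower bound it suffices to lower-bound the LHS of \eqref{eqn:brz-modified}, as the RHS is even larger. I plan to fix intervals $J_1, J_2 \subset \R$ of constant length (depending only on $C, \epsilon, \delta_1, \delta_2, \delta_3$) such that (i)~$J_1$ and $J_2$ are contained in the barrier corridors at $s = 1$ and $s = T-1$ respectively, and (ii)~for any $w_1 \in J_1$ (resp.\ $w_2 \in J_2$), a Brownian bridge from $x$ at time $0$ to $w_1$ at time $1$ (resp.\ from $w_2$ at time $T-1$ to $y$ at time $T$) satisfies both barriers on its interval with probability $\geq c > 0$. Restricting to $\{W_1 \in J_1, W_{T-1} \in J_2\}$ and invoking the Markov property---the bridge then decomposes into three independent bridges on $[0,1]$, $[1, T-1]$, $[T-1, T]$---the lower bound factorizes: the two end factors are $\geq c$ by (ii), while the middle factor $\P_{w_1, T-2}^{w_2}\bigl(\text{barriers on } [1, T-1]\bigr)$ is $\uasymp (a-w_1)(b-w_2)/(T-2) \uasymp (a-x)(b-y)/T$ by Lemma~\ref{lem:brz-lem2.1-bridge} applied with endpoints $(w_1, w_2)$ and time-length $T-2$. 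Integrating over $(w_1, w_2) \in J_1 \times J_2$ against the joint Gaussian density of $(W_1, W_{T-1})$ under $\P_{x,T}^y$---which places mass $\ugtrsim 1$ on $J_1 \times J_2$ by a direct calculation---yields the matching lower bound.

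The main obstacle is step~(ii): establishing a uniform positive lower bound on the endpoint-bridge probabilities. On $[0,1]$ (and similarly on $[T-1, T]$) the H\"older corrections $C(s \wedge (T-s))^{1/2 \pm \epsilon}$ are bounded by $C$, and one must ensure they do not pinch the corridor between the two barriers at any $s \in [0,1]$. When $\delta_1 \geq C$, one may take $J_1$ to be a small interval around $x$; when $\delta_1 < C$, the upper barrier dips below $x$ within $[0, 1]$, and $J_1$ must instead be placed below $x$ at a location determined by $\delta_1, C, \epsilon$ (and analogously for $J_2$ and $\delta_2$). In either case, since all the parameters $\delta_1, \delta_2, \delta_3, C, \epsilon$ are fixed, the resulting constants are uniform in $x, y, a, b, T$, as required.
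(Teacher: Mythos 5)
Your outline correctly identifies the strategy---reduce to Lemma~\ref{lem:brz-lem2.1-bridge} on a truncated interval and control the unit-length endpoint intervals separately---and the upper direction of both $\asymp$'s is handled exactly as in the paper. The lower bound, however, has a gap. Having decomposed the bridge at times $1$ and $T-1$ and restricted to $\{W_1\in J_1,\,W_{T-1}\in J_2\}$, you invoke Lemma~\ref{lem:brz-lem2.1-bridge} for the middle factor $\P_{w_1,T-2}^{w_2}(\text{barriers on }[1,T-1])$. But Lemma~\ref{lem:brz-lem2.1-bridge} applied to a bridge of time-length $T-2$ controls only the barrier event on that bridge's own truncated interval $[1,(T-2)-1]$, i.e.\ original-time $[2,T-2]$; the boundary layers $[1,2]$ and $[T-2,T-1]$ are left uncovered, and since the full-interval event is strictly smaller, the lemma gives you an \emph{upper} bound on the middle factor but not the lower bound you need. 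Pushing the cut times outward does not escape this, because Lemma~\ref{lem:brz-lem2.1-bridge} always omits the first and last unit; a lower bound on a full-interval barrier probability for the sub-bridge is exactly what the present lemma asserts, so the argument becomes circular.

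The paper avoids this by never re-applying Lemma~\ref{lem:brz-lem2.1-bridge} to a sub-bridge. It applies the lemma once to the original bridge on $[0,T]$, obtaining $\P_{x,T}^y(\cA([1,T-1]))\geq c_1\,\P_{x,T}^y(\bar{\cA}([0,T]))$, and then establishes directly that $\P_{x,T}^y\big(\cA([0,1]\cup[T-1,T])\mid\cA([1,T-1])\big)\geq c_2$ for a constant $c_2>0$ depending only on $\delta_1,\delta_2,\delta_3$; the product of the two factors yields the lower bound without ever needing a pointwise-in-$(w_1,w_2)$ asymptotic for the middle factor. To repair your argument you should replace the second application of Lemma~\ref{lem:brz-lem2.1-bridge} with this conditioning step (your $J_1\times J_2$ restriction is one way to make $c_2$ explicit, but the middle factor must then be controlled by the conditional/averaged argument rather than pointwise). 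Your remark in step~(ii) about the corridor pinching when $\delta_1<C$ is a useful observation---positivity of $c_2$ does require the corridor to remain nondegenerate on $[0,1]$, a point the paper leaves implicit---but it is secondary to the structural issue above.
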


\begin{proof}[Proof of Lemma~\ref{lem:brz-modified}]
For any $\cS \subset [0,T]$, define the events 
\begin{align*}
    \cA(\cS) &:= 
    \LB{\cS}{f_x^y(s;T) - \delta_3 - 
    C(s\wedge (T-s))^{\frac{1}{2}+ \epsilon}}(W_{\cdot})
    \cap
    \UB{\cS}{f_a^b(s;T) - C(s\wedge (T-s))^{\frac{1}{2}-\epsilon}}(W_{\cdot})
    \\
    \overline{\cA}(\cS) &:= 
    \UB{\cS}{
    f_a^b(s;T) + C(s\wedge (T-s))^{\frac{1}{2}-\epsilon} 
    }(W_\cdot) \, .
\end{align*}
It is trivial that $\P_{x,T}^y(\cA([0,T])) \leq \P_{x,T}^y( \bar{\cA}([0,T]) )$. For the other direction, we see from Lemma~\ref{lem:brz-lem2.1-bridge} that there exists a constant $c_1>0$ such that 
\begin{equation}
\P_{x,T}^y \pth{ \cA([1,T-1]) }
\geq c_1 \P_{x,T}^y\pth{ \bar{\cA}([1,T-1]) }
\geq 
c_1 \P_{x,T}^y\pth{ \bar{\cA}([0,T])  }
\, ,
    \label{eqn:brz-modified.1}
\end{equation}
for all $x,a,y,b,$ and $T$ satisfying the given conditions. 
Since $f_x^y(0;T)-\delta_{1} \leq x \leq a-\delta_{2}$ and $y\leq b-\delta_3$, 
there exists a constant $c_2 >0$ (depending on $\delta_{1}$, $\delta_{2}$, and $\delta_3$)
\[
\P_{x,T}^y \pth{\cA\big ([0,1] \cup [T-1,T] \big ) \given  \cA([1,T-1]) } \geq c_2 \, ,
\]
for all $x,a,y,b,$ and $T$ satisfying the given conditions. Combining this with~\eqref{eqn:brz-modified.1} yields 
\[
\P_{x,T}^y\pth{ \cA([0,T])  }
\geq c_1c_2   \P_{x,T}^y\pth{ \bar{\cA}([0,T])  } \, .
\]
This proves the first $\asymp$ statement in~\eqref{eqn:brz-modified}. The second statement then follows from~\eqref{eqn:brownian-ballot-explicit} and the inequality
$
\P_{x,T}^y ( \cA([0,T]) )
\leq 
\P_{x,T}^y \Big( \UB{[0,T]}{f_a^b(s;T)}(W_\cdot) \Big)
\leq 
\P_{x,T}^y\pth{ \bar{\cA}([0,T]) }
$.
\end{proof}

\section{Proof of the main theorem,  Theorem~\ref{thm:limiting-law}}
\label{sec-3}

Our goal is to understand $\P(R_t^* \leq m_t +y )$ as $t \to \infty$. Towards this end, we will argue that particles exceeding $m_t+y$ at time $t$ will typically follow a prescribed trajectory. In particular---a fact which will significantly simplify our analysis---it will suffice to reduce to particles that at a sufficiently large, fixed (independently of $t$) time $L$ lie in a certain \emph{window} of order roughly 
$\sqrt{L}$ below  $\sqrt 2 L$. 
Namely, recall $I_L^{\win}$ and $\cN_L^{\win}$ from~\eqref{eq:def-window}.
Theorem~\ref{thm:outside_window} below states that the effect of particles $v\in \cN_L \setminus \cN_L^\win$ (outside the window) is negligible to $\P(R_t^* > m_t+y)$.
\begin{theorem} \label{thm:outside_window}
For all $y \in \R$, we have
\begin{align}
        \lim_{L\to \infty} 
        \limsup_{t\to\infty}
        \P
        \pth{\exists v \in \cN_t :
              ~R_L^{(v)} \not \in I_L^{\win}\,,\,
            R_t^{(v)} > m_t + y
            }
    = 0 \,.
    \label{eq.rough_window.main}
\end{align}
In particular, 
\begin{equation}
\lim_{L \to \infty} \limsup_{t\to\infty} \abs{\P(  R^*_t > m_t+y) - \P \pth{ \exists v \in \cN_t : R_L^{(v)} \in I_L^{\win}\,,\, R_t^{(v)} >m_t+ y} } = 0 \, .
    \label{eq.rough_window.main2}
\end{equation}
\end{theorem}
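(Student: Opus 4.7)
The plan is to split $\R_+ \setminus I_L^\win = \Lambda \cup H$, where $\Lambda := [0, \sqrt{2}L - L^{2/3})$ is the low region and $H := (\sqrt{2}L - L^{1/6}, \infty)$ is the high region, bound each contribution to \eqref{eq.rough_window.main} separately, and then deduce \eqref{eq.rough_window.main2} from \eqref{eq.rough_window.main} using the set identity
\[
\{R_t^* > m_t+y\} \setminus \{\exists v : R_L^{(v)} \in I_L^\win, R_t^{(v)} > m_t+y\} \subseteq \{\exists v : R_L^{(v)} \notin I_L^\win, R_t^{(v)} > m_t+y\}
\]
together with the reverse inclusion of probabilities (the left event being a subset of $\{R_t^*>m_t+y\}$).

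For $\Lambda$ a direct first-moment argument suffices. Union-bounding over $v_L \in \cN_L$ with $R_L^{(v_L)} \in \Lambda$ and applying the Markov property at time $L$, the probability of interest is bounded by $\E\bigl[\sum_{v_L} \P(\exists w \in \cN_{t-L}^{v_L} : R_t^{(w)} > m_t + y \mid \cF_L)\bigr]$. By rotational invariance of BBM and the triangle inequality $R_t^{(w)} \leq R_L^{(v_L)} + \|X_t^{(w)} - X_L^{(v_L)}\|$, each conditional probability is at most $\P(R^*_{t-L} > m_t + y - R_L^{(v_L)})$. Writing $r = \sqrt 2 L - a$ with $a \geq L^{2/3}$ and letting $t \to \infty$ for fixed $L$ gives $m_t + y - r - m_{t-L} \to y + a$, so Mallein's upper bound \eqref{eq.mallein} gives $C(y+a) e^{-\sqrt 2 (y+a)}$. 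Taking expectation via the many-to-one lemma and the Bessel density $p_L^R(0,r) \propto r^{d-1} L^{-d/2} e^{-r^2/(2L)}$, the factors $e^L$ and $e^{\sqrt 2 a}$ cancel, leaving the Gaussian tail
\[
C L^{(d-2)/2} e^{-\sqrt 2 y} \int_{L^{2/3}}^{\infty} (y+a) e^{-a^2/(2L)} \, da \;\lesssim\; L^{d/2} e^{-L^{1/3}/2},
\]
which vanishes as $L \to \infty$.

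The high region is the main obstacle: the analogous first-moment bound diverges, because the Bessel density at $\sqrt 2 L - a$ is of order $L^{(d-2)/2} e^{\sqrt 2 a}$, which exactly cancels the $e^{-\sqrt 2 a}$ decay from Mallein's bound, leaving the non-integrable weight $(y+a)$ on $a \in [0, L^{1/6}]$. The fix is to combine the first moment with an a-priori upper barrier on the ancestor paths, requiring $R_s^{(v)} \leq \overline f(s) := \sqrt 2 s + \fm(s)$ for all $s \leq L$ and all $v \in \cN_s$, with $\fm$ a slowly growing function. The probability that the barrier is violated tends to $0$ as $L \to \infty$ by tightness of $R_s^*-m_s$ (Mallein) and a discrete union bound. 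On the complementary high-probability event, a Bessel ballot estimate---proved in Section~\ref{sec:window} via the Girsanov transform \eqref{eq.girsanov} and the Brownian bridge barrier estimates of Section~\ref{subsec:hittingP}---supplies an additional factor proportional to $a$ in the expected count of paths ending at $\sqrt 2 L - a$. The integrand then becomes of the schematic form $L^{-\alpha_d}\, a(y+a)\, e^{-\sqrt 2 (y+a)}$, yielding a total contribution of order $L^{-\alpha_d}$, which vanishes since $\alpha_d \geq 1/2$ for $d \geq 2$. The main technical difficulty here is controlling the Girsanov exponential $\exp\bigl(\tfrac{\alpha_d - \alpha_d^2}{2} \int_0^L du / W_u^2\bigr)$: this factor is trivially bounded by $1$ when $d \geq 3$ (since $\alpha_d - \alpha_d^2 \leq 0$), but is exponentially large when $d=2$, requiring an additional auxiliary barrier step to tame before the ballot-type suppression can be extracted.
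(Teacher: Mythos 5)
Your splitting into a low region $\Lambda = [0,\sqrt 2 L - L^{2/3})$ and a high region $H = (\sqrt 2 L - L^{1/6},\infty)$ is a reasonable reorganisation (the paper handles both in one integral via Lemma~\ref{lem:bessel_time_L}), and your $\Lambda$-analysis and your deduction of~\eqref{eq.rough_window.main2} from~\eqref{eq.rough_window.main} are both correct. The problem is in the high region, where the power counting you quote does not hold up and the approach as written fails for $d\geq 3$.

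The root difficulty is that the triangle-inequality-plus-Mallein step is too lossy precisely where you need it most. By Theorem~\ref{thm:right_tail_asymp} (and its proof), the true tail from a starting radius $\sqrt 2 L - a$ satisfies $\P_{\sqrt 2 L - a}(R^*_{t-L}>m_t+y)\asymp L^{-\alpha_d}\,a\,e^{-\sqrt 2(a+y)}$, whereas $\P(R^*_{t-L}>m_{t-L}+y+a)\asymp (y+a)\,e^{-\sqrt 2(y+a)}$. The $L^{-\alpha_d}$ is the key suppression coming from the Girsanov denominator $R_L^{-\alpha_d}$ applied on $[L,t]$, and once you have coarsened to $\P(R^*_{t-L}>m_t+y-R_L^{(v)})$ this factor is gone for good. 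With the (correct) Brownian ballot factor $\approx (a+\mathfrak m(L))/L$ from a barrier on $[0,L]$, and the Bessel density $e^L p_L^R(0,\sqrt 2 L-a)\approx C_d L^{(d-2)/2}e^{\sqrt 2 a}$, the high-region first moment becomes
\begin{equation*}
L^{(d-2)/2}\int_0^{L^{1/6}} e^{\sqrt 2 a}\cdot \frac{a+\mathfrak m(L)}{L}\cdot (y+a)e^{-\sqrt 2 (y+a)}\,\d a
\;\asymp\; L^{(d-4)/2}\int_0^{L^{1/6}} a(y+a)\,\d a
\;\asymp\; L^{(d-3)/2}\,,
\end{equation*}
not $L^{-\alpha_d}$ as you assert. (Note that the $e^{\sqrt 2 a}$ from the density cancels the $e^{-\sqrt 2 a}$ from Mallein, so there is no exponential cutoff in $a$ left over; your schematic $L^{-\alpha_d}a(y+a)e^{-\sqrt 2(y+a)}$ retains a factor $e^{-\sqrt 2 a}$ that is not actually there, and drops the $L^{(d-2)/2}$ from the density.) The result $L^{(d-3)/2}$ tends to $0$ only for $d=2$; for $d=3$ it is $O(1)$ and for $d\geq 4$ it diverges. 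Two further issues: (i) for $d\geq 5$ one has $\fc_d>0$, so $m_s>\sqrt 2 s$ and the tightness of $R_s^*-m_s$ does not control a barrier of the form $\sqrt 2 s + \mathfrak m(s)$ with $\mathfrak m$ slowly growing; and (ii) the Bessel ballot estimate on $[0,L]$ you invoke for a bridge from $R_0=0$ is not available from~\eqref{eq.girsanov} as stated (the factor $W_0^{-\alpha_d}$ is singular at the origin) and is not what Section~\ref{sec:window} of the paper proves---the paper's barrier $B(s)$ in Lemma~\ref{lem:rough_parabola} lives on $[L,t]$, not on $[0,L]$.

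This last point is exactly how the paper escapes the trap: it never touches Girsanov before time $L$. Instead it imposes the barrier $B(s)$ on $[L,t]$, uses the exact Bessel density at time $L$ (Lemma~\ref{lem:bessel_time_L}), and applies Girsanov on $[L,t]$ where the starting point $R_L\approx\sqrt 2 L$ is macroscopic, so the factor $R_L^{-\alpha_d}\approx(\sqrt 2 L)^{-\alpha_d}$ precisely cancels the $L^{\alpha_d-1/2}$ from the Bessel density and leaves $O((\log L)L^{-1/6})$, uniformly in $d$. To rescue your high-region analysis you would need to replace the Mallein-via-triangle-inequality step by a genuine estimate of $\P_{\sqrt 2 L-a}(R^*_{t-L}>m_t+y)$ that exposes the $L^{-\alpha_d}$ factor---but that is essentially recomputing the tail via Girsanov and barrier estimates on $[L,t]$, i.e., the paper's route.
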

The proof of Theorem~\ref{thm:outside_window} is outlined at the beginning of Section~\ref{sec:window}, and given in Section~\ref{subsec:rough_window}.
As mentioned before, our calculations there actually imply that the exponents $1/6$ and $2/3$ in the definition of $I_L^{\win}$~\eqref{eq:def-window} need only be smaller than $1/4$ and larger than $1/2$ respectively,
though it should be true for any exponents smaller and larger than $1/2$, respectively.

In light of Theorem~\ref{thm:outside_window} and the Markov property, our main effort in the analysis will be devoted to studying
Bessel particles $v \in \cN_{t-L}$ satisfying  $R_0^{(v)}\in I_L^{\win}$ in addition to $R_{t-L}^{(v)}> m_t+y$.
We can immediately see the advantage of the window here: the factor $W_0^{-\alpha_d}$ in the Girsanov transform~\eqref{eq.girsanov} (which will be employed repeatedly) is bounded uniformly away from $0$ over such particles.
Thus, 
we fix any
\begin{align}
    z \in [L^{1/6}, L^{2/3}]
    \label{eqn:def-z}
\end{align}
(so that $\sqrt{2}L - z \in I_L^{\win}$) and
consider the probability that a branching Bessel process started (at time $0$) from $\sqrt{2}L-z$ has maximum at time $t-L$ exceeding the target $m_t+y$.
 Theorem~\ref{thm:right_tail_asymp} gives a precise asymptotic for the probability of this event, \textit{uniformly} over $z$.
\begin{theorem}\label{thm:right_tail_asymp}
Define the quantity
\begin{equation}
    \fM_{L,z} := (\sqrt{2}L-z)^{-\alpha_d} z e^{-(z+y)\sqrt{2}} \, .
    \label{def:M_Lz}
\end{equation}
There exists an absolute constant $\gamma^* >0 $
such that for all $y \in \R$,
\begin{align}
    \lim_{L \to \infty} 
    \limsup_{t\to\infty}
    \sup_{z \in [L^{1/6}, L^{2/3}]}
    \abs{ \frac{\P_{\sqrt{2}L-z}( R_{t-L}^* > m_t + y) }
    { \gamma^*\fM_{L,z}} -1 }
    =
    0 \,.
    \label{eqn:right_tail_exact}
\end{align}
\end{theorem}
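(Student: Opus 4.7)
The plan is to implement the modified second moment method of Section~\ref{subsec-BDZ-method}, now adapted to the branching Bessel setting via the Girsanov transform~\eqref{eq.girsanov}. Write $T := t-L$ and fix a cutoff $\ell = \ell(L) \in [1, L^{1/6}]$ with $\ell \to \infty$ as $L\to\infty$. For each $v \in \cN_{T-\ell}$, define in analogy with~\eqref{eqn:def-A_v,t,ell} the event $\cA_v$ that the Bessel trajectory $(R_s^{(v)})_{s \leq T-\ell}$ stays below the linear interpolant $f_{\sqrt{2}L-z}^{m_t+y}(\cdot;T)$ (and above a matching curved lower barrier of the form used in Lemma~\ref{lem:brz-modified}), and that $v$ spawns a descendant $v' \in \cN_\ell^v$ with $R_T^{(v')} > m_t+y$. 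Set $\Xi := \sum_{v\in \cN_{T-\ell}} \one_{\cA_v}$. The proof reduces to three sub-tasks: (i) show $\P(R_T^* > m_t+y,\ \Xi = 0) = o(\fM_{L,z})$ uniformly in $z$, using first-moment bounds together with the Bessel barrier estimates developed in Section~\ref{sec:window}; (ii) prove $\E[\Xi] \sim \gamma^* \fM_{L,z}$ uniformly in $z$; and (iii) prove $\E[\Xi]^2/\E[\Xi^2] \to 1$ in the iterated limit $t\to\infty$, then $L\to\infty$. Paley--Zygmund together with $\E[\Xi]^2/\E[\Xi^2] \le \P(\Xi>0)/\E[\Xi] \le 1$ and sub-task (i) then closes the argument.

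For the first moment (sub-task (ii), which is the content of Proposition~\ref{prop:1stmom-asymp}), apply Lemma~\ref{lem:many-to-one} and then~\eqref{eq.girsanov} to rewrite $\E[\Xi]$ as a one-dimensional Brownian expectation, picking up the factors $(W_{T-\ell}/W_0)^{\alpha_d}$ and $\exp(\tfrac{\alpha_d-\alpha_d^2}{2}\int_0^{T-\ell} W_u^{-2}\,\d u)$. Because $W_0 = \sqrt{2}L-z \asymp L$, the Girsanov prefactor $W_0^{-\alpha_d}$ produces the factor $(\sqrt{2}L-z)^{-\alpha_d}$ appearing in $\fM_{L,z}$. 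For $d\ge 3$, where $\alpha_d \ge 1$ and hence $\alpha_d-\alpha_d^2 \le 0$, the Girsanov exponential is bounded above by $1$; combined with the lower barrier (which keeps $W_u$ comparable to $\sqrt{2}u$, so that $\int W_u^{-2}\,\d u$ is $O(1)$), this contributes a bounded multiplicative factor to be absorbed into $\gamma^*$. Conditioning on $W_{T-\ell}$, the inner factor $\P_r(\max_{v\in\cN_\ell}R_\ell^{(v)} > m_t+y) \sim C(m_t+y-r)e^{-\sqrt{2}(m_t+y-r)}$ follows from the one-dimensional BBM right-tail asymptotics, valid uniformly over the barrier-restricted range of $r$. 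Combining this with the Brownian ballot estimate of Lemma~\ref{lem:brz-modified} extracts the factor $z e^{-\sqrt{2}(z+y)}$ and an absolute constant $\gamma^*$ that arises as the $\ell \to \infty$ limit of the normalized BBM tail. Uniformity in $z \in [L^{1/6},L^{2/3}]$ is secured by Lemmas~\ref{lem:Bram2.7} and~\ref{lem:Bram6.1}, which show that the asymptotic shape of the conditioned Brownian bridge is insensitive to the starting height.

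For the second moment (sub-task (iii)), apply Lemma~\ref{lem:many-to-two} and decompose according to the most-recent-common-ancestor split time $\tau = T-\ell - s$. The diagonal term equals $\E[\Xi]$, which is $o(\E[\Xi]^2)$ by the first-moment asymptotic. For $s>0$, on $\cA_{v_1}\cap\cA_{v_2}$ the two trajectories coincide on $[0,\tau]$, stay below the linear interpolant through time $T-\ell$, and then spawn conditionally independent Bessel processes on $[\tau,T-\ell]$ followed by independent BBMs over the final window of length $\ell$. Using the Bessel density together with Lemma~\ref{lem:brz-modified} and the BBM tail, the $s$-integrand is dominated by $s=O(1)$ and bounded by a constant times $\fM_{L,z}^2/s$ (up to logarithmic corrections) for larger $s$, yielding $\E[\Xi^2] \leq \E[\Xi]^2(1+o(1))$. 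The choice to truncate at $T-\ell$ rather than $T$ is crucial: it ensures the final BBMs are conditionally independent and provides the decorrelation needed to suppress the off-diagonal contribution, exactly as in Section~\ref{subsec-BDZ-method}.

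The main obstacle is the first-moment asymptotic when $d=2$: there $\alpha_d - \alpha_d^2 = 1/4 > 0$, so the Girsanov exponential is not bounded by $1$, and neither of the one-shot estimates available for $d\ge 3$ applies. As outlined for Section~\ref{sec-2d}, the remedy is an additional a priori upper barrier that forces $W_u$ to remain comparable to $\sqrt{2}u$ uniformly along the trajectory; this caps the positive exponential by a constant depending only on the barrier and $d$, after which the rest of the first-moment computation reduces to the $d\ge 3$ template. With Proposition~\ref{prop:1stmom-asymp} and the matching second-moment bound in hand, the Paley--Zygmund step then delivers the uniform-in-$z$ asymptotic~\eqref{eqn:right_tail_exact}.
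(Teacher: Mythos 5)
Your overall architecture — many-to-one and many-to-two lemmas, a Girsanov transform, a linear upper barrier with a curved lower barrier, a truncation at time $T-\ell$, and Paley--Zygmund — matches the paper's approach. But the barrier you actually write down cannot work. The upper barrier $f_{\sqrt{2}L-z}^{m_t+y}(\cdot;T)$ starts at the process's own starting height $\sqrt{2}L-z$, so in the ballot-type estimates (Lemma~\ref{lem:brownian-ballot-explicit}, Lemma~\ref{lem:brz-modified}) the time-$0$ gap $a-x$ vanishes and the probability collapses to zero. The upper barrier must sit a distance $\asymp z$ above the starting point at time zero --- that gap is precisely what produces the factor $z$ in $\fM_{L,z}$. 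The paper uses $\frac{m_t}{t}(\cdot+L)+y$, whose value at time $0$ is $\approx\sqrt{2}L+y$, i.e.\ about $z+y$ above $\sqrt{2}L-z$; compare the one-dimensional barrier $\frac{m_t(1)}{t}s+x$ in~\eqref{eqn:def-A_v,t,ell}, which starts at $x>0$ above the origin --- your construction drops this offset.

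Second, you claim the inner factor $\P_r(\max R_\ell>m_t+y)$ is $\sim C(m_t+y-r)e^{-\sqrt{2}(m_t+y-r)}$ uniformly over the barrier-restricted range of $r$. In that range $m_t+y-r=\sqrt{2}\ell+w$ with $w$ up to order $\ell^{2/3}$, i.e.\ deviations of order $\ell^{2/3}$ above $m_\ell(1)$, where the pure $xe^{-x\sqrt{2}}$ asymptotic fails: the Gaussian factor $e^{-w^2/(2\ell)}$ (visible in Corollary~\ref{cor:bram8.2} and implicit in the restriction $y\le t^{1/2}$ of~\eqref{eq.mallein}) cannot be dropped. Without it, the integral $\int_{\ell^{1/3}}^{\ell^{2/3}} w e^{w\sqrt{2}}\,\P(W_\ell^*>\sqrt{2}\ell+w)\,\d w$ would grow like $\ell^{1/2}$ instead of converging, and no finite $\gamma^*$ would emerge. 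The paper sidesteps this by keeping $\P(W_\ell^*>\sqrt{2}\ell+w)$ as an exact quantity and extracting the limiting constant by a Cauchy-criterion argument from the $\ell$-independence of the left-hand side of~\eqref{eqn:right_tail_exact}, not from a pointwise BBM tail expansion. (A smaller slip: since $\E[\Xi]\to 0$, the second-moment criterion should read $\E[\Xi]/\E[\Xi^2]\to 1$, not $\E[\Xi]^2/\E[\Xi^2]\to 1$.)
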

In Section~\ref{sec:mod2ndmom}, we will describe in detail the modified second moment method that drives the proof of Theorem~\ref{thm:right_tail_asymp} and forms the technical heart of 
the paper. In Section~\ref{subsec:proof-thm1}, we will show how Theorem~\ref{thm:limiting-law} follows from Theorems~\ref{thm:outside_window} and~\ref{thm:right_tail_asymp}. First, we establish some notation that will be used throughout the remainder of this paper.

\subsection{Asymptotic Notation}
\label{subsec:asymp-notation}

Throughout the paper, we will compare the asymptotic behavior of two positive functions with dependence on $t$, $L$, and $z$, where  
$z$ is defined as in~\eqref{eqn:def-z}, and we always send $L \to \infty$ after $t\to \infty$. 
It will be convenient to establish notation for asymptotic comparisons of such functions. For functions $f:=f(t,L,z)$ and $g:=g(t,L,z)$, 
we write
$f \usim g$ to denote the relation
\begin{align*}
    \lim_{L \to \infty} 
    \liminf_{t\to\infty}
    \inf_{z \in [L^{1/6}, L^{2/3}]}
    \frac{f}{g} =
    \lim_{L \to \infty}  \limsup_{t\to\infty}
    \sup_{z \in [L^{1/6}, L^{2/3}]}
    \frac{f}{g} =
    1 \,.
\end{align*}
For instance,~\eqref{eqn:right_tail_exact} is equivalent to the relation 
$
    \P_{\sqrt{2}L-z}( R_{t-L}^* > m_t + y) 
    \usim \gamma^*\fM_{L,z}
$.
We  write
$f= o_u(g)$ if 
\[
    \limsup_{L \to \infty} \limsup_{t \to \infty} \sup_{z \in [L^{1/6}, L^{2/3}]}  \frac{f}{g} = 0 \,.
\]
For instance, the probability in the left-hand side of~\eqref{eq.rough_window.main} is $o_u(1)$. We will write
$f \ulesssim g$ if there exists some constant $C> 0$ such that
    \begin{equation*}
        \limsup_{L \to \infty} \limsup_{t\to \infty} 
        \sup_{z \in [L^{1/6}, L^{2/3}]}
        \frac{f}g  < C\,. 
    \end{equation*}
Lastly, we will write $f \uasymp g$ if $f \ulesssim g$ and $g \ulesssim f$.

\subsection{Proof of  Theorem~\ref{thm:limiting-law} (assuming Theorems~\ref{thm:outside_window} and~\ref{thm:right_tail_asymp})}
\label{subsec:proof-thm1}
From Theorem~\ref{thm:outside_window}, we have
\begin{align*}
    \P(R_t^* > m_t+y) -o_u(1) 
    &=
    1- 
    \P\bigg(
        \bigcap_{v \in \cN_L^{\win}}
        \bigcap_{u \in \cN_{t-L}^v}
        \{  R^{(u)}_t \leq m_t+y \}
    \bigg)
    \nonumber \\
    &=
    1- \E \bigg [
        \prod_{v \in \cN_L^{\win}}
        \P \bigg(
        \bigcap_{u \in \cN_{t-L}^v}
        \{  R^{(u)}_t \leq m_t+y \} \given \cF_L
    \bigg)
    \bigg ].
\end{align*}
Let $g_t(y):= \P( R_t ^* \leq m_t+y)$. Then rearranging terms in the previous display gives
\begin{align*}
    g_t(y) =
    \E \bigg[
        \prod_{v \in \cN_L^{\win}}
        \bigg( 1-
        \P \bigg(
        \bigcup_{u \in \cN_{t-L}^v}
        \{  R^{(u)}_t > m_t+y \} \given \cF_L
        \bigg)
    \bigg)
    \bigg]
    + o_u(1).  
\end{align*}
Define $z_v := \sqrt{2}L - R_L^{(v)}$ and  $A_v := (R_L^{(v)})^{-\alpha_d} z_v e^{-(z_v +y) \sqrt{2}}$.
By the Markov property, 
\begin{align*}
    \P \bigg(
        \bigcup_{u \in \cN_{t-L}^v}
        \{  R^{(u)}_t > m_t+y \} \given \cF_L
        \bigg) =
    \P_{\sqrt{2}L-z_v}(R_{t-L}^* > m_t+y)\,.
\end{align*}
Thus, Theorem~\ref{thm:right_tail_asymp} 
implies that there exists a sequence $\epsilon_L\geq 0$ satisfying $\epsilon_L \to_{L \to \infty} 0$ such that for every $L$ and all $t$ sufficiently large (compared to $L$), we have
\begin{align}
    \E \bigg[
        \prod_{v \in \cN_L^{\win}}
        \!\!\!\pth{ 1-
        \pth{ 1+ \epsilon_L}
        \gamma^*
        A_v
    }
    \bigg]
    &\leq 
    g_t(y) + o_u(1)
    \leq 
    \E \bigg[
        \prod_{v \in \cN_L^{\win}}
        \!\!\!\pth{ 1-
        \pth{ 1- \epsilon_L}
        \gamma^*
        A_v
    }
    \bigg] \,.
    \label{eqn:thm1-preineq}
\end{align}
For all particles $v \in \cN_L^{\win}$,  we have $z_v \in [L^{1/6}, L^{2/3}]$, and thus $A_v \to_{L \to \infty} 0$ deterministically. Note that for all $x \in [0,1)$, we have 
$1-x \geq  e^{-x + \log (1-x^2 e^x)} \geq e^{-x(1+ \frac{xe^x}{1-x^2e^x})}$ and 
$1-x \leq e^{-x}$ for all $x \in \R$. Thus,
for all $L$ sufficiently large and $v \in \cN_L^{\win}$,
we have the bounds
    \[ 
    1-(1 - \epsilon_L)\gamma^* A_v 
    \leq 
    e^{-(1- \epsilon_L) \gamma^* A_v} 
\quad \text{and} \quad 
    1-(1 + \epsilon_L)\gamma^* A_v \geq
    e^{-(1+ \epsilon_L') \gamma^* A_v}
\]
for another nonnegative sequence $\epsilon_L'$ such that $\epsilon_L \leq \epsilon_L' \to 0$ as $L \to \infty$. 
Applying these inequalities to~\eqref{eqn:thm1-preineq} yields
\begin{align*}
    \E \brak{
        \exp \pth{- (1+ \epsilon_L')
        \gamma^* e^{- y \sqrt{2}} 
        Z_L
    }
    }
    \leq
g_t(y) + o_u(1)
    \leq 
    \E \brak{
        \exp \pth{- (1- \epsilon_L')
        \gamma^* e^{- y \sqrt{2}} 
        Z_L
    }
    } \,,
\end{align*}
for all $L$ sufficiently large and for all $t$ sufficiently large (compared to $L$),
where we recall $Z_L$ from~\eqref{def:Z_L}.
Subtract $\E[ \exp(- \gamma^* e^{-y \sqrt{2}} Z_L)]$ from each side of the above, and 
note that the differences with the left-hand side and right-hand side both go to $0$ as $L \to \infty$, since $\exp\pth{ -e^{-x}}$ is uniformly continuous in $x$. Thus, there exists a non-negative sequence $\epsilon_L'' \to_{L \to \infty} 0 $ such that for all $L$ sufficiently large and for all $t$ sufficiently large (compared to $L$)
    \begin{align}
        \abs{ g_t(y) - \E[ \exp(- \gamma^* e^{-y \sqrt{2}} Z_L)] } = \epsilon_L'' \,. \label{eq.fast.3} 
    \end{align}
Now, since $R_t^* - m_t$ is tight, there exists a subsequence $\{t_i\}_{i \in \N}$ along which $g_{t_i}(y)$ converges to a distribution function $g(y)$. Since $g_t(y)$ has no $L$ dependence while $\E[ \exp(- \gamma^* e^{-y \sqrt{2}} Z_L)]$ has no $t$ dependence, taking the limit of the right-hand side of~\eqref{eq.fast.3} first as $t_i \to \infty$,  then as $L \to \infty$, yields 
\begin{align}
    \lim_{L \to \infty} \E[ \exp(- \gamma^* e^{-y \sqrt{2}} Z_L)]
     = g(y)\,. 
     \label{eqn:thm1-convL}
\end{align}
It follows that every subsequential limit of $\P(R_t^*\leq m_t +y)$ is equal to $g(y)$, and thus, by Prokhorov's theorem,  $ \lim_{t \to \infty} g_t(y) = g(y)$ as well. Thus, we have proved that $R_t^*-m_t$ converges in distribution. 
Further, via the Laplace transform, Eq.~\eqref{eqn:thm1-convL} shows $Z_L$ converges to $Z_{\infty}$ in law and thus $g(y) = \E[\exp( -\gamma^* e^{-y\sqrt{2}}Z_{\infty})]$.
\qed

\section{Proof of Theorem~\ref{thm:outside_window}: confinement to \texorpdfstring{$I_L^\win$}{I-L-win}}
\label{sec:window}
Unless otherwise stated, we fix $d \geq 3$ in this section. See Section~\ref{sec-2d} for the $d=2$ case.

In Section~\ref{subsec:rough_window}, we prove Theorem~\ref{thm:outside_window} via the following strategy. We are interested in bounding the probability
$\P( \exists v \in \cN_t : R_L^{(v)} \not\in I_L^{\win}; R_t^{(v)} > m_t+y )$ from above. Naively applying a union bound and the many-to-one lemma (Lemma~\ref{lem:many-to-one}) yields the upper bound
\[
e^t\P (R_L \not \in I_L^{\win}, R_t > m_t+y)\,,
\]
where $R_.$ denotes a $d$-dimensional Bessel process; however, it can be checked that the above probability does not exhibit enough decay to overpower the $e^t$ term, indicating that the union bound was too rough. The fix is Lemma~\ref{lem:rough_parabola}, which states that, with high probability, no particle ever crosses the ``barrier function" $B(s)$, defined in~\eqref{def:B(s)}, on $[L, t]$. Once this upper barrier on particle trajectories is established, we can apply the union bound and many-to-one lemma to $\P \big( \exists v \in \cN_t :  R_L^{(v)} \not \in I_L^{\win}, \UB{[L, t]}{B} (R^{(v)}_\cdot), R^{(v)}_t \in (m_t+y, B(t)) \big)$, which can be estimated by integrating over the Bessel density at time $L$ and applying the Girsanov transform~\eqref{eq.girsanov} on $[L,t]$, which in turn allows us to consider Brownian motion on $[L,t]$, and thus the barrier estimates of Section~\ref{subsec:hittingP} can be applied. 
One point of concern is the~$W_0^{-\alpha_d}$ term in~\eqref{eq.girsanov} (for our application, this will become $R_L^{-\alpha_d}$), which of course blows up at $0$; however, the integral will eventually reduce to Lemma~\ref{lem:bessel_time_L}, which gives sufficient decay. Lemmas~\ref{lem:bessel_time_L} and \ref{lem:rough_parabola} are proved in Section~\ref{subsec:parabola-bounds}.
In Section~\ref{subsec:B0}, we prove Lemma~\ref{lem:parabola_B0}, which is needed for the proof of Proposition~\ref{prop:right_tail_equiv}; its proof is similar to that of  Lemma~\ref{lem:rough_parabola}, and so we give it here.

\subsection{Preliminaries: an upper barrier}
\label{subsec:parabola-bounds}
Define the barrier function
\begin{align}
    B(s) := B(s; t,L)  = \frac{m_t}{t}s + \mathcal{C}_d \log \pth{s\wedge (t-s)}_+ + \log L\, , ~\text{for } s \in [L,t]\,,
    \label{def:B(s)}
\end{align}
where $\mathcal{C}_d > 0$ is a sufficiently large constant (precisely, we fix any $\mathcal{C}_d$ satisfying the condition of Lemma~\ref{lem:rough_parabola}). 
We begin with Lemma~\ref{lem:bessel_time_L}, a technical estimate on a certain integral over the Bessel density; the lemma
will show up in our proofs of both Lemma~\ref{lem:rough_parabola} and Theorem~\ref{thm:outside_window}.
For a time-homogeneous Markov process $X_\cdot$, we  write $p^X_s(x,y)$ to denote the transition density of $X_s$ given $X_0 =x$. Further,
let 
\begin{equation}\label{eq:fo_t-def}
\mathfrak{o}_t := \tfrac{m_t}{t}- \sqrt{2} = \fc_d \tfrac{\log t}{t}\,,
\end{equation}
where we recall $\fc_d$ from~\eqref{def:m_t-c_d}, and
define the intervals
\begin{align*}
    I_L &:= [0,\mathfrak{o}_t L + L^{1/6}+ (\mathcal{C}_d+1)  \log L ] \,,\\
    J_L &:= B(L) - I_L^{\win} = [\mathfrak{o}_t  L + L^{1/6}+(\mathcal{C}_d+1) \log L, \mathfrak{o}_t L +  L^{2/3}+(\mathcal{C}_d+1) \log L ]\,,\\
    I'_L&:=[\mathfrak{o}_t L + L^{2/3}+(\mathcal{C}_d+1) \log L, B(L)] \,.
\end{align*}
Note that $[0,B(L)] \setminus J_L = I_L \cup I_L'$.

\begin{lemma}
\label{lem:bessel_time_L}
Let $R_s$ be a $d$-dimensional Bessel process, for any $d \in \Z_{\geq 1}$, and define $B(s)$ as in~\eqref{def:B(s)}. Then
\begin{align}
    e^L L^{\mathcal{C}_d \sqrt{2}}
    \int_{[0,B(L)]\setminus J_L} \frac{p_L^R(0, B(L)-w)}{\pth{B(L) - w}^{\alpha_d}}
    (1+w)e^{-w \sqrt{2}}  \d w
    &\ulesssim L^{-\frac{6\sqrt{2}+1}{6}} \, , \text{ and} 
    \label{eq.bessel.c}
    \\
    e^L L^{\mathcal{C}_d \sqrt{2}}
    \int_{J_L} \frac{p_L^R(0, B(L)-w)}{\pth{B(L) - w}^{\alpha_d}}
    (1+w)e^{-w \sqrt{2}} \d w
    &\uasymp 
    L^{-\frac{2\sqrt{2}-1}{2}} \, .
    \label{eq.bessel.window}
\end{align}

\begin{proof}
Let $R_s = \|(B_1(s), \dots, B_d(s))\|$ for i.i.d.\ standard  Brownian motions $B_1, \dots, B_d$. 
Let $\chi_d$ have the chi distribution of $d$ degrees of freedom. Then 
$R_L \overset{(\mathrm{d})}{=}  L^{1/2} \chi_d.$ 
Letting $p^{\chi_d}(y)$ denote the density of $\chi_d$, it follows that we have the following estimates uniformly over all $w \in [0, B(L)]$:
\begin{align}
    p_L^R \pth{  0, B(L)-w  }
    &= L^{-1/2} 
    p^{\chi_d} (  L^{-1/2} (B(L) - w) )
    \nonumber \\
    &\uasymp
    L^{-1/2}
    \frac{(B(L) -w)^{d-1}}{L^{\frac{d-1}{2}}}
    e^{ - \frac{(B(L)- w)^2}{2L}} 
    \nonumber \\
    &
    \uasymp 
    \frac{(B(L) -w)^{d-1}}{L^{\frac{d-1}{2}}}
    L^{-\frac{1}{2} -(1+\mathcal{C}_d) \sqrt{2}} e^{- \frac{w^2}{2L} + (\sqrt{2}+ \mathbf{f}_L)w -L} \,,
    \label{eq.bessel.density}
\end{align}
where $\mathbf{f}_L :=     ((\mathcal{C}_d+1)\log L)/L$.
Recalling $\alpha_d$ from~\eqref{def:m_t-c_d}, we then have that for any $I \subseteq [0, B(L)]$:
\begin{align}
    \Upsilon_I &:= e^L L^{\mathcal{C}_d \sqrt{2}}
    \int_I \frac{p_L^R \pth{ 0, B(L)-w }}{\pth{B(L) - w}^{\alpha_d}}
    (1+w)e^{-w \sqrt{2}} \d w
    \nonumber \\
    &\uasymp
    L^{-\frac{1+2\sqrt{2}}{2}}
    \int_I
    \big( \frac{B(L) - w}{L} \big)^{\frac{d-1}{2}} (1+w)
    e^{ \mathbf{f}_L w}
    e^{- \frac{w^2}{2L}} \d w \, .
    \label{eq.bessel.sub}
\end{align}
Note that for $w \in [\mathfrak{o}_t L +L^{2/3}+(\mathcal{C}_d \log L+1), B(L)]$, we have
    $e^{ \mathbf{f}_Lw} \ulesssim L^{C}$, where $C= \sqrt{2}(\cC_d +1)$, and
    $B(L) - w \ulesssim L$. 
Thus, for some positive constants $C$ and $C'$, we have
\begin{align}
  \Upsilon_{I_L'} &\ulesssim  L^{C'} 
    \int_{\mathfrak{o}_t L + L^{2/3}+ (\mathcal{C}_d+1) \log L}^{B(L)} (1+w) e^{-\frac{w^2}{2L}} \d w
    \ulesssim 
    L^{C''} e^{-L^{1/3}/2}\,. 
    \label{eq.bessel.4}
\end{align} 
For $w < \mathfrak{o}_t L  + L^{2/3}+ (\mathcal{C}_d+1) \log L$, we have 
\begin{align}
e^{ \mathbf{f}_Lw} \uasymp 1\,, \quad \text{and} \quad 
B(L) - w \uasymp L \,, \label{eq.bessel.2}
\end{align}
so that 
\begin{align}
    \Upsilon_{I_L} &\uasymp 
    L^{-\frac{1+2\sqrt{2}}{2}} \int_0^{\mathfrak{o}_t L + L^{1/6}+(\mathcal{C}_d+1) \log L} (1+w) e^{-\frac{w^2}{2L}} \d w \ulesssim L^{-\frac{6\sqrt{2}+1}{6}} \, . \label{eq.bessel.5}
\end{align}
Equations~\eqref{eq.bessel.4} and~\eqref{eq.bessel.5} then give equation~\eqref{eq.bessel.c}. Finally, equation~\eqref{eq.bessel.sub} and ~\eqref{eq.bessel.2} give
\begin{align*}
    \Upsilon_{J_L}
    \uasymp L^{-\frac{1+2\sqrt{2}}{2}} \int_{J_L} (1+w) e^{-\frac{w^2}{2L}} \d w \uasymp L^{-\frac{2\sqrt{2}-1}{2}}\,.
\end{align*}
This shows equation~\eqref{eq.bessel.window}.
\end{proof}
\end{lemma}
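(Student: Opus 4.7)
Plan: The starting point is the explicit chi form of the Bessel density at time $L$ started from $0$: using that $R_L \overset{d}{=} \sqrt{L}\,\chi_d$ with $\chi_d$ chi-distributed with $d$ degrees of freedom (density proportional to $y^{d-1}e^{-y^2/2}$), one has $p_L^R(0,r) \uasymp r^{d-1} L^{-d/2} e^{-r^2/(2L)}$. Setting $r = B(L) - w$ and using $\alpha_d = (d-1)/2$, the ratio $r^{d-1}/r^{\alpha_d}$ in the integrand becomes $(B(L)-w)^{(d-1)/2}$, which I would pair with the $L^{-(d-1)/2}$ from the density to form the scale-invariant factor $((B(L)-w)/L)^{(d-1)/2}$.

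The central bookkeeping step is to expand
\[
-\frac{(B(L)-w)^2}{2L} \;=\; -\frac{B(L)^2}{2L} \,+\, \frac{B(L)}{L}\,w \,-\, \frac{w^2}{2L}.
\]
Writing $B(L)/L = \sqrt{2} + \fo_t + \mathbf{f}_L$ with $\mathbf{f}_L := (\mathcal{C}_d+1)\log L / L$, the $\fo_t$ contributions are harmless, since $\fo_t L = \fc_d L\log t/t \to 0$ and $\fo_t w \leq \fo_t B(L) \to 0$ uniformly as $t \to \infty$ with $L$ fixed. Expanding $B(L)^2/(2L)$ then yields $L + \sqrt{2}(\mathcal{C}_d+1)\log L + o(1)$, so $e^{-B(L)^2/(2L)} \uasymp e^{-L} L^{-\sqrt{2}(\mathcal{C}_d+1)}$ and $e^{B(L)w/L} \uasymp e^{(\sqrt{2}+\mathbf{f}_L)w}$. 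Combining these with the lemma's prefactor $e^L L^{\mathcal{C}_d\sqrt{2}}$ and the weight $(1+w)e^{-w\sqrt{2}}$, the $e^{\pm w\sqrt{2}}$ factors cancel and the entire integrand is $\uasymp L^{-(1+2\sqrt{2})/2}\bigl((B(L)-w)/L\bigr)^{(d-1)/2}(1+w)\,e^{\mathbf{f}_L w - w^2/(2L)}$.

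With this clean form the rest is a routine split of $[0,B(L)] = I_L \cup J_L \cup I'_L$. On $I'_L$ (where $w \geq L^{2/3}$), the Gaussian $e^{-w^2/(2L)} \leq e^{-L^{1/3}/2}$ decays faster than any polynomial in $L$ and dominates the $(B(L)-w)/L$ and $e^{\mathbf{f}_L w}$ factors, making this contribution super-polynomially small. On $I_L \cup J_L$ one has $(B(L)-w)/L \uasymp 1$ (since $w = o(L)$ there) and $\mathbf{f}_L w = O(L^{-1/3}\log L) \to 0$, so the integrand reduces further to $\uasymp L^{-(1+2\sqrt{2})/2}(1+w)e^{-w^2/(2L)}$. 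On $I_L$ the crude bound $\int_0^{O(L^{1/6})}(1+w)\,dw \uasymp L^{1/3}$ yields an $I_L$-contribution $\uasymp L^{1/3 - (1+2\sqrt{2})/2} = L^{-(6\sqrt{2}+1)/6}$, which together with the negligible $I'_L$-piece gives~\eqref{eq.bessel.c}. On $J_L$, the integrand $(1+w)e^{-w^2/(2L)}$ peaks at $w \asymp \sqrt{L}$, which lies inside $J_L$ for large $L$; a standard Gaussian estimate gives $\int_{J_L}(1+w)e^{-w^2/(2L)}\,dw \uasymp L$, yielding $L^{1 - (1+2\sqrt{2})/2} = L^{-(2\sqrt{2}-1)/2}$ and hence~\eqref{eq.bessel.window}.

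The main obstacle is really the second step: carefully verifying that every $t$-dependent correction (through $\fo_t$) is uniformly $o(1)$ in the $\uasymp$ sense as $t \to \infty$ with $L$ fixed, so that the Bessel density genuinely factors as a product of a prefactor depending only on $L$ (and not on $t$) and a $w$-dependent Gaussian. Once the integrand is reduced to the polynomial-times-Gaussian form above, the sub-region estimates are elementary and the two bounds read off directly from the location of the Gaussian peak relative to $I_L$, $J_L$, and $I'_L$.
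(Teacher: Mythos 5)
Your proposal is correct and follows essentially the same route as the paper's proof: express $p_L^R(0,\cdot)$ via the chi distribution, expand $-\,(B(L)-w)^2/(2L)$ to factor out $e^{-L}L^{-(\cC_d+1)\sqrt{2}}e^{(\sqrt{2}+\mathbf{f}_L)w}$, cancel against the prefactor $e^L L^{\cC_d\sqrt{2}}$ and the weight $e^{-w\sqrt{2}}$, and then split the resulting polynomial-times-Gaussian integral over $I_L$, $J_L$, $I'_L$. Your tracking of the $\fo_t$-corrections and the observation that the Gaussian peak $w\asymp\sqrt{L}$ lies in $J_L$ correctly recover the exponents $L^{-(6\sqrt{2}+1)/6}$ and $L^{-(2\sqrt{2}-1)/2}$; the only cosmetic difference is that you assert the $I_L$-integral is $\uasymp L^{1/3}$ where the paper only records $\ulesssim$ (both are true, and only the upper bound is needed for~\eqref{eq.bessel.c}).
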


\begin{lemma} \label{lem:rough_parabola}
There exists a constant $C := C(d) >0$ such that for all $\mathcal{C}_d \geq C$, 
we
have
\begin{align}
    \P\bigg( 
        \bigcup_{s \in [L, t]}
        \bigcup_{v\in \cN_s} 
        \{R_s^{(v)} > B(s) \}
    \bigg) \ulesssim L^{-\frac{2\sqrt{2}-1}{2}}.
    \label{eqn:no_parabola_UB}
\end{align}
\end{lemma}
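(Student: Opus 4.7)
The plan is to apply a first-crossing decomposition on a unit-time discretization of $[L,t]$, combined with the many-to-one lemma (Lemma~\ref{lem:many-to-one}), the Girsanov transform~\eqref{eq.girsanov} from the Bessel path to a Brownian path, and a Brownian ballot estimate (Lemma~\ref{lem:brz-modified}) to extract the ``ballot factor'' that tames the naive first moment near time $t$. Partition $[L,t]$ into unit intervals $[k,k+1]$: by path continuity and the concatenation of ancestor trajectories, if any particle first crosses $B$ in $[k,k+1]$, then some descendant $v' \in \cN_{k+1}$ has a trajectory staying below $B$ on $[L,k]$ and exceeding $B_{\min}^k := \inf_{s\in[k,k+1]}B(s)$ at some point of $[k,k+1]$. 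Union bounding and applying Lemma~\ref{lem:many-to-one} reduces the probability of interest to
\[
\sum_{k=L}^{\lfloor t \rfloor} e^{k+1}\,\P\pth{R_s \leq B(s)\ \forall s\in[L,k],\ \sup_{s\in[k,k+1]} R_s > B_{\min}^k},
\]
where $R$ is a single $d$-dimensional Bessel process started at $0$.

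The single-particle probability is then bounded in two stages. First, apply the Markov property at time $k$ to separate the barrier event on $[L,k]$ from a one-unit tail $\P_u(\sup_{s \in [0,1]} R_s \geq B_{\min}^k)$, which by sub-Gaussian control on the Bessel fluctuations over a unit interval (e.g.\ via the representation $R_s = \|W_s^{(d)}\|$) is $\lesssim \exp(-(B_{\min}^k - u)^2/C)$. Second, for the barrier event on $[L,k]$, integrate against the Bessel density $p_L^R(0,x)$ at time $L$ and apply the Girsanov transform~\eqref{eq.girsanov} on $[L,k]$: since $\alpha_d(1-\alpha_d) \leq 0$ for $d \geq 3$, the exponential factor in the Girsanov density is $\leq 1$, and what remains is a Brownian-motion path probability. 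The latter is handled by Lemma~\ref{lem:brz-modified}, which produces a Brownian ballot factor of order $(1 + B(L) - x)(1 + B(k) - u)/(k-L)$; for typical $x$ and $u \approx B(k)$ this is the crucial factor of order $1/k$ that cancels the excess first-moment growth appearing for $k$ close to $t$.

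Combining all factors and invoking the key identity $(d-2)/2 - \sqrt{2}\fc_d = 1$ (a direct consequence of $\fc_d = (d-4)/(2\sqrt{2})$), each summand takes the form
\[
e^{k+1}\,\P(\cdots) \ulesssim L^{-\sqrt{2}} \cdot (\min(k, t-k))^{(d-2)/2 - \sqrt{2}\mathcal{C}_d} \cdot (\text{ballot factor}),
\]
where the average over the Bessel density $p_L^R(0, \cdot)$ at time $L$ is controlled by Lemma~\ref{lem:bessel_time_L} (applied with the variable $w$ representing the gap to the linear part of $B(L)$, so that the residual $(1+w)e^{-w\sqrt{2}}$ encodes both the Girsanov factor $x^{-\alpha_d}$ and the Brownian-ballot factor). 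Splitting the sum at $k = t/2$ using the symmetry of $\min(k, t-k)$, and choosing $\mathcal{C}_d = \mathcal{C}_d(d)$ large enough that $\sqrt{2}\mathcal{C}_d > d/2$ so that the polynomial $(\min(k, t-k))^{(d-2)/2 - \sqrt{2}\mathcal{C}_d}$ is summable, both halves of the sum are $\ulesssim L^{-(2\sqrt{2}-1)/2}$.

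The main obstacle is the naive first-moment blowup of order $t$ for $k$ close to $t$: without the barrier constraint on $[L,k]$, one finds $e^{k+1}\,\P(R_{k+1} > B(k+1)) \uasymp t L^{-\sqrt{2}}$, which cannot be summed uniformly in $t$. The essential technical point is therefore the extraction of the Brownian ballot factor via the Girsanov transform, which relies on the favorable sign of $\alpha_d(1-\alpha_d)$ in the Girsanov exponent; this is precisely where the restriction $d \geq 3$ enters, with the case $d = 2$ requiring an extra barrier argument as in Section~\ref{sec-2d}.
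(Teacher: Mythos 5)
Your proposal follows essentially the same skeleton as the paper's argument — unit-interval first-crossing decomposition, many-to-one, Girsanov with the sign of $\alpha_d(1-\alpha_d)$ to drop the exponential, a Brownian ballot factor to tame the first-moment growth near $t$, and Lemma~\ref{lem:bessel_time_L} to integrate out the Bessel density at time $L$ — and you correctly identify the key obstruction (the naive many-to-one bound grows like $t^{|\sqrt{2}\fc_d|}$ for $k$ near $t$) and the lever that fixes it (the ballot factor $\sim 1/(k-L)$). One structural difference worth noting: you propose imposing the barrier constraint on $[L,k]$ for \emph{every} $k$, whereas the paper deliberately avoids this for $s\le t/2$ (where $\Psi_s$ grows at most like $t^{1/2}$ and Gaussian decay from the $\mathcal{C}_d\log(s\wedge(t-s))$ offset alone suffices), and only bootstraps to the barrier event for $s>t/2$. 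Your uniform version can be made to work, but as stated it leaves two genuine loose ends. First, Lemma~\ref{lem:brz-modified} only produces the ballot factor $(1+B(L)-x)(1+B(k)-u)/(k-L)$ for $k-L$ large, so for $k$ close to $L$ you must fall back to bounding the barrier probability by~$1$ and rely on Gaussian decay — otherwise the $1/(k-L)$ factor is both uncontrolled and unjustified. Second, your first-crossing decomposition over $[k,k+1]$ with $k\ge L$ presumes $R^{(v)}_L\le B(L)$; the event $\{\exists v\in\cN_L: R^{(v)}_L>B(L)\}$ is simply missing from the union and must be treated as a separate boundary term (the paper bounds it by $\ulesssim L^{-1/2}$). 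Finally, the ``key identity'' $(d-2)/2-\sqrt{2}\fc_d=1$ you invoke is an algebraic rearrangement of $\fc_d=(d-4)/(2\sqrt{2})$, but the cancellation that actually zeroes out the power of $t$ near $s=t$ in the paper is $(-3/2+\alpha_d)+(3/2-\alpha_d)=0$ with $3/2-\alpha_d=-\sqrt{2}\fc_d$; your phrasing is equivalent but does not make the mechanism transparent.
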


\begin{proof}[Proof of Lemma~\ref{lem:rough_parabola}]
We consider separately three distinct time intervals during which a particle might exceed the barrier $B(s)$: $[L, t/2], [t/2, t-t^{1/8}]$, and $[t-t^{1/8}, t]$.

For this first interval, we consider separately the case $s =L$, and then employ a union bound over $s \in (L, t/2]$. 
For brevity, write 
\[ \Xi_s(v) := \{ R_s^{(v)}> B(s)\} \quad \text{and} \quad  \Xi^*_{s}(v) :=  \{ R_L^{(v)} \leq B(L) \} \cap \{ \sup_{s' \in [s, s+1]} R_{s'}^{(v)} > B(s) \}\,.\]
Then we find
\begin{align}
    \P \bigg( \bigcup_{s \in [L, t/2]} \bigcup_{v \in \cN_{s}} \Xi_s(v)\bigg)
    &\leq 
    \P \bigg(
        \bigcup_{v \in \cN_L} \Xi_L(v) 
    \bigg)
    + 
    \P \bigg( \bigcup_{s \in (L, t/2]} \bigcup_{v \in \cN_{s}} \{ R_L^{(v)} \leq B(L) \} \cap \Xi_s(v) \bigg)
    \nonumber \\
    &\leq 
    \P \bigg(
        \bigcup_{v \in \cN_L} \Xi_L(v) 
    \bigg)
    + 
    \sum_{s-L = 0}^{\floor{ \frac{t}{2} - L}} 
    \P \bigg( \bigcup_{v \in \cN_{s+1}} \Xi^*_s(v) \bigg ) \, .
    \label{eqn:parab1-s<t/2-1}
\end{align}
We begin with the first term on the last line of~\eqref{eqn:parab1-s<t/2-1}. A trivial upper bound and the many-to-one lemma yield the following upper bound
\begin{align}
    \P \bigg (
        \bigcup_{v \in \cN_L} \Xi_L(v) 
    \bigg )
    &\leq 
    \E \bigg [
    \sum_{v \in \cN_L} \one_{\Xi_L(v)}
    \bigg ] =
    e^L \P ( R_L > B(L) ) \, ,
    \nonumber 
\end{align}
where $R_s$, as usual, is a Bessel process of dimension $d$. Analyzing the chi-distribution with $d$ degrees of freedom bounds the previous expression from above by 
\begin{align}
    e^L q(L) e^{- \frac{B(L)^2}{2L}} 
    \ulesssim q(L) L^{-(1+\mathcal{C}_d) \sqrt{2}} 
    \, , 
    \nonumber 
\end{align}
where $q(\cdot)$ is a polynomial of degree  depending only on $d$.  Let $\deg(q)$ denote the degree of $q$. Then for all $\mathcal{C}_d > (\frac{1}{2}+ \deg(q))/\sqrt{2}-1$, we have the desired bound
\begin{align}
    \P \bigg(
        \bigcup_{v \in \cN_L} \Xi_L(v)
    \bigg) \ulesssim L^{-1/2} \, ,
    \label{eqn:parab1-s=L}
\end{align}
as desired. 

We next turn our attention to the summation in the last line of~\eqref{eqn:parab1-s<t/2-1}. Fix any $s \in L +  \llbracket 0, \floor{ \frac{t}{2} - L} \rrbracket $, and employ the usual upper bound and many-to-one lemma to find
\begin{align*}
    \P \bigg (  \bigcup_{v \in \cN_{s+1}} \Xi^*_s(v) 
    \bigg )
    \leq
    e^{s+1} \P \pth{  \Xi^*_{s+1}(v)
    } \, .
\end{align*}
Expanding the right-hand side of the above by integrating over $w := B(L) - R_L$, applying the Markov property, and then employing the Girsanov transform (where we bound the $\exp(\cdot) \one_{\{\cdot\}}$ term in the right-hand side of~\eqref{eq.girsanov} by $1$) yields
\begin{align}
    &e^{s+1} 
    \int_0^{B(L)} 
    p_L^R(0, B(L)-w)
    \P_{B(L)-w} \Big ( \sup_{s' \in [s, s+1]} R_{s'-L} > B(s)
    \Big )
    \d w
    \nonumber \\
    &\leq e^{s+1}
    \int_0^{B(L)} 
    \frac{p_L^R(0, B(L)-w)}{\pth{B(L) - w}^{\alpha_d}}
    \E_{B(L)-w} \brak{ 
    W_{s+1-L}^{\alpha_d} 
    \one_{ \{ \sup_{s' \in [s-L, s+1-L]} W_{s} > B(s) \} }
    } 
    \d w \, .
    \label{eqn:parab1-s<t/2-gir}
\end{align}
Now, define $M_{s+1 - L} := \sup_{s' \in [0, s+1-L]} W_{s'}$, and recall that the density function of $M_{s+1-L}$ is twice that of $W_{s+1-L}$ on $[0, \infty)$. Then the final display of~\eqref{eqn:parab1-s<t/2-gir} is bounded above by 
\begin{align}
    &e^{s+1}
    \int_0^{B(L)} 
    \frac{p_L^R(0, B(L)-w)}{\pth{B(L) - w}^{\alpha_d}}
    \E_{B(L)-w} \brak{ 
    M_{s+1-L}^{\alpha_d} 
    \one_{ \{ M_{s+1-L} > B(s) \} }
    } \d w 
    \nonumber \\
    &=
    e^{s+1}\int_0^{B(L)} 
    \frac{p_L^R(0, B(L)-w)}{\pth{B(L) - w}^{\alpha_d}}
    \E \brak{ 
    (B(L)-w+ M_{s+1-L})^{\alpha_d} 
    \one_{ \{ M_{s+1-L} > B(s) - B(L) + w \} }
    } 
    \d w 
    \nonumber \\
    &\uasymp 
    e^s (s+1-L)^{-1/2} 
    \int_0^{B(L)} \d w~
    \frac{p_L^R(0, B(L)-w)}{\pth{B(L) - w}^{\alpha_d}}
    \int_0^{\infty}
    \d x~
    (B(s)+x)^{\alpha_d} 
    e^{ - \frac{ (B(s)-B(L) + x+w)^2 }{2(s+1-L)} } 
    \label{eqn:parab1-s<t/2-gaussint}
\end{align}
Recall that $\fc_d := \frac{d-4}{2\sqrt{2}}$, which for $d\geq 2$ is of course minimized by $c_2 = -1/\sqrt{2}$. Then expansion of the Gaussian density in the previous display gives
\begin{align}
    e^{ - \frac{ (B(s)-B(L) + x+w)^2 }{2(s+1-L)} }
    \ulesssim
    e^{-s+L} (s/L)^{-\mathcal{C}_d \sqrt{2}} 
    \Psi_s e^{-x\sqrt{2}} e^{-w \sqrt{2}} \, ,
    \label{eqn:parab1-s<t/2-gaussian}
\end{align}
where $\Psi_s := e^{(s-L) \log t /t }$. Note that
\begin{align}
    \int_0^{\infty} (B(s)+x)^{\alpha_d} e^{-x\sqrt{2}} \d x \ulesssim s^{\alpha_d} \, ,
    \label{eqn:parab1-s<t/2-intx}
\end{align}
and from Lemma~\ref{lem:bessel_time_L} (and the fact that $1+w \geq 1$), we have
\begin{align}
    \int_0^{B(L)} 
    \frac{p_L^R(0, B(L)-w)}{\pth{B(L) - w}^{\alpha_d}} e^{-w \sqrt{2} } \d w
    \ulesssim 
    e^{-L} L^{-\mathcal{C}_d \sqrt{2}-\frac{2\sqrt{2}-1}{2}} \, .
    \label{eqn:parab1-s<t/2-lem4.1}
\end{align}
Substituting~\eqref{eqn:parab1-s<t/2-gaussian}---\eqref{eqn:parab1-s<t/2-lem4.1} into~\eqref{eqn:parab1-s<t/2-gaussint}, we find 
\begin{align}
    \P \bigg ( \bigcup_{v \in \cN_{s+1}} \Xi^*_s(v) \bigg )
    \ulesssim L^{-\frac{2\sqrt{2}-1}{2}} s^{\alpha_d-\mathcal{C}_d\sqrt{2}} \Psi_s  \, .
    \label{eqn:parab1-s<t/2-cup-s}
\end{align}
Now, for $s \leq L+ t/\log t$, we have $\Psi_s \leq e$. Take $\mathcal{C}_d > \alpha_d/\sqrt{2}$; then 
\begin{align}
    \sum_{s-L = 0}^{\floor{t/\log t}} 
    \P \bigg ( \bigcup_{v \in \cN_{s+1}} \Xi^*_s(v) \bigg ) \ulesssim L^{-\frac{2\sqrt{2}-1}{2}} \, .
    \label{eqn:parab1-<t/logt}
\end{align}
For $s \in [L + \floor{t/\log t}, t/2]$, $\Psi_s \leq t^{1/2}$, but $s^{-\mathcal{C}_d \sqrt{2} } \ulesssim t^{-\mathcal{C}_d}$. It then follows from~\eqref{eqn:parab1-s<t/2-cup-s} that 
\begin{align}
    \sum_{s-L = \floor{t/\log t}}^{\floor{ \frac{t}{2} - L}} 
    \P \bigg ( \bigcup_{v \in \cN_{s+1}} \Xi^*_s(v) \bigg )
    &\ulesssim L^{-\frac{2\sqrt{2}-1}{2}} t^{\frac{1}2} (\tfrac{t}{\log t})^{\alpha_d + 1-\mathcal{C}_d}
    \ulesssim L^{-\frac{2\sqrt{2}-1}{2}} \, ,
    \label{eqn:parab1-t/logt<} 
\end{align}
where the last line follows for all $\mathcal{C}_d >\alpha_d + \frac{3}{2}$.
Thus, equations~\eqref{eqn:parab1-s<t/2-1},~\eqref{eqn:parab1-s=L},~\eqref{eqn:parab1-<t/logt}, and~\eqref{eqn:parab1-t/logt<} imply that for all $\mathcal{C}_d$ sufficiently large (depending only on $d$),
\begin{align}
    \P \bigg ( \bigcup_{s \in [L, t/2]} \bigcup_{v \in \cN_s} \Xi_s(v) \bigg )
    \ulesssim L^{-\frac{2\sqrt{2}-1}{2}} \,.
    \label{eqn:parab1-s<t/2-final}
\end{align}

We next turn our attention to $s \in [t/2,  t]$.
For a real-valued process $X_{\cdot}$ and an interval $[a,b] \subset \R_{\geq 0}$,  define the event
\[
\Theta_{[a,b]}^*(X_\cdot) := \UB{[a,b]}{B}(X_\cdot) \cap \Big \{ \sup_{s' \in [b,b+1]} X_{s'} > B(b) \Big \} \, .
\]
Instead of a union bound as in~\eqref{eqn:parab1-s<t/2-1}, we use the result of~\eqref{eqn:parab1-s<t/2-final} to obtain the following bound:
\begin{align}
    \P \bigg ( 
        \bigcup_{s \in [t/2, t]} \bigcup_{v \in \cN_s} \Xi_s(v)
    \bigg )
    &\ulesssim
    L^{-\frac{1}{2}} 
    +
    \P \bigg (
    \bigcup_{s\in [t/2, t]}
    \bigcup_{v \in \cN_s}
    \UB{[L, t/2]}{B}(R^{(v)}_\cdot)
    \cap \Xi_s(v)
    \bigg )
    \nonumber \\
    &\leq
    L^{-\frac{1}2} + 
    \sum_{s- \frac{t}{2} = 0}^{ \lfloor t/2\rfloor}
    \P \bigg ( 
        \bigcup_{v \in \cN_{s+1}} 
        \Theta_{[L,s]}^*(R^{(v)}_\cdot)
    \bigg ) \, .
    \label{eqn:parab1-s>t/2-1}
\end{align}
It suffices to consider the second term in the second line of~\eqref{eqn:parab1-s>t/2-1}; in particular, we have reduced to a union over particles $v \in \cN_s$ that stay bounded by $B(u)$ for all $u \in [L, s]$ before exceeding $B(s)$ at some time in $[s,s+1]$. This barrier event will give extra decay.\footnote{We did not need this extra decay in the range $s \in [L, t/2]$. Here, it is needed essentially because  $\frac{\d}{\d s}( B(s) - \frac{m_t}{t}(s) )$ is negative when $s > t/2$.} 
Fix any $s \in \frac{t}{2} + \llbracket 0, \floor{\frac{t}{2} }\rrbracket$. 
A union bound and the many-to-one lemma (Lemma~\ref{lem:many-to-one}) give
\begin{align}
    \P \bigg ( 
        \bigcup_{v \in \cN_{s+1}} 
        \Theta_{[L,s]}^*(R^{(v)}_\cdot)
    \bigg )
    &\leq
    e^{s+1} \P ( \Theta_{[L,s]}^*(R_\cdot) ) \, .
    \label{eqn:parab1-s>t/2-m21}
\end{align}
Now, similar to~\eqref{eqn:parab1-s<t/2-gir}, we integrate over $w_1 := B(L) - W_L$ and apply the Markov property at time $L$ to find
\begin{align}
    \P ( \Theta_{[L,s]}^*(R_\cdot) ) 
    &= 
    \int_0^{B(L)} p_L^R \big ( B(L) - w_1) 
    \P( \Theta_{[L,s]}^*(R_\cdot) \given R_L = B(L) - w_1 \big )\d w_1 \, .
    \label{eqn:parab1-s>t/2-int}
\end{align}
Next, we apply the Girsanov transform, this time bounding the $\exp(\cdot) \one_{\{\cdot\}}$ term in the right-hand side of~\eqref{eq.girsanov} by $\one_{\{W_{s-L} > 0 \} }$:
\begin{align}
    \P( \Theta_{[L,s]}^*(R_\cdot) \given R_L = B(L) - w_1 \big )
    &= 
    \pth{ B(L) - w_1 }^{-\alpha_d} \Upsilon
    \label{eqn:parab1-s>T/2-gir} \, ,
\end{align}
where
\begin{align*}
    \Upsilon := 
    \E \big [
    W_{s+1}^{\alpha_d} \one_{ \{ W_{s} > 0 \} \cap \Theta^*_{[L,s]}(W_\cdot) \} }
    \given W_L = B(L) - w_1
    \big ] \, .
\end{align*}
Let $M_{s+1-L}^1 := \sup_{s' \in [s-L, s+1-L]} W_{s'}$. Then using the Markov property to shift time by $L$ and conditioning on $W_{s-L}$ yields
\begin{align}
    \Upsilon
    &= \E_{B(L) - w_1} \Big[ 
        \E \Big [ W_{s+1-L}^{\alpha_d} \one_{ \{ M_{s+1-L}^1 > B(s) \} } \given W_{s-L} \Big ]\nonumber\\
       &\qquad \qquad\qquad\qquad \qquad \times \P_{B(L)-w_1, s-L}^{W_{s-L}} \Big ( 
        \UB{[0,s-L]}{B(\cdot+L)}(W_\cdot)
        \Big ) 
        \one_{ \{W_{s-L} >0 \} }
    \Big ] 
    \nonumber \\
    &\leq 
    \E_{B(L) - w_1} \Big[ 
        \E \Big [ W_{s+1-L}^{\alpha_d} \one_{ \{ M_{s+1-L}^1 > B(s) \} } \given W_{s-L} \Big ]\nonumber\\
        &\qquad \qquad\qquad\qquad \qquad
      \times \P_{B(L)-w_1, s-L}^{W_{s-L}} \Big ( 
        \UB{[0,s-L]}{B(\cdot+L)+1}(W_\cdot)
        \Big ) 
        \one_{ \{W_{s-L} \in (0, B(s)] \} }
    \Big ] 
    \nonumber \\
    &\uasymp 
    \frac{1+w_1}{s-L}
    \E_{B(L) - w_1} \Big [
        W_{s+1-L}^{\alpha_d} (1+ B(s) - W_{s-L}) 
        \one_{ \{ M_{s+1-L}^1 > B(s), W_{s-L} \in (0, B(s)] \} }
    \Big ]
    \nonumber \\
    &\ulesssim 
    \frac{1+w_1}{s-L}
    \E_{B(L) - w_1} \Big [
        (M_{s+1-L}^1)^{\alpha_d}
        (1+ B(s) - W_{s-L}) 
        \one_{ \{ M_{s+1-L}^1 > B(s), W_{s-L} \in (0, B(s)] \} }
    \Big ] \, ,
    \label{eqn:parab1-s>t/2-exp}
\end{align}
where in the second line we have used~\eqref{eqn:brz-modified}, which holds uniformly over $w_1 \geq 0$, and thus the above bound does as well. 
Write $M_1 := \sup_{s' \in [0,1]} W_{s'}$.
Integrating over $w_2:= B(s) - W_{s-L}$ and using the Markov property and shift-invariance gives
\begin{align}
    \Upsilon 
    &\ulesssim 
    \frac{1+w_1}{(s-L)^{3/2}}
    \int_0^{B(s)} (1+w_2) e^{- \frac{(B(s) - B(L) + w_1 - w_2)^2 }{2(s-L)}}
    \E_{B(s) - w_2} \big [ M_1^{\alpha_d} \one_{\{ M_1 > B(s) \} } \big ]
    \d w_2
    \nonumber \\
    &\uasymp 
    \frac{1+w_1}{t^{3/2}}
    \int_0^{B(s)} (1+w_2) B(s)^{\alpha_d} e^{- \frac{(B(s) - B(L) + w_1 - w_2)^2 }{2(s-L)}}
    e^{-\tfrac{w_2^2}{2}}
    \d w_2
    \, .
    \label{eqn:parab1-s>t/2-int2}
\end{align}
Since $s\geq t/2$, we have $B(s) \ulesssim t$ and so 
\begin{align}
    e^{ - \frac{ (B(s)-B(L)  - w_2 + w_1)^2 }{2(s+1-L)} }
    \ulesssim
    e^{-s+L} 
    L^{\mathcal{C}_d \sqrt{2}} 
    \Big((t-s)^{-\mathcal{C}_d \sqrt{2}} \wedge 1\Big)
    \bar{\Psi}_s 
    e^{w_2\sqrt{2}} e^{-w_1 \sqrt{2}} \, ,
\end{align}
uniformly over the relevant ranges of $w_1$ and $w_2$, where 
$
\bar{\Psi}_s := e^{- \sqrt{2} \fc_d   (\log t)\frac{s-L}{t} } \, .
$
Substituting these bounds into~\eqref{eqn:parab1-s>t/2-int2}
gives
\begin{align}
    \Upsilon \ulesssim 
    (1+w_1)
    e^{-w_1 \sqrt{2}}
    L^{\mathcal{C}_d \sqrt{2}} 
    e^{-s+L} 
    t^{- \frac{3}{2}+ \alpha_d}
    \Big((t-s)^{-\mathcal{C}_d \sqrt{2}} \wedge 1\Big)
    \bar{\Psi}_s  \, ,
    \label{eqn:parab1-s>t/2-upsilon}
\end{align}
which holds uniformly over  $w_1 \in [0,B(L)]$. 
Equations~\eqref{eqn:parab1-s>t/2-m21},~\eqref{eqn:parab1-s>T/2-gir},~\eqref{eqn:parab1-s>t/2-upsilon}, and Lemma~\ref{lem:bessel_time_L} leave us with 
\begin{align}
    \P \bigg ( 
        \bigcup_{v \in \cN_{s+1}} 
        \Theta_{[L,s]}^*(R^{(v)}_\cdot)
    \bigg )
    \ulesssim 
    L^{-\frac{2\sqrt{2}-1}{2}}
    t^{- \frac{3}{2}+ \alpha_d}
    \Big((t-s)^{-\mathcal{C}_d \sqrt{2}} \wedge 1\Big) 
    \bar{\Psi}_s  \, .
    \label{eqn:parab1-s>t/2-unionv}
\end{align}
 Now, for $s \in [t/2, t- t^{1/8}]$, we may take $\bar{\Psi}_s \ulesssim t$ (because, as noted before, $\fc_d \geq -1/\sqrt{2}$) and $t-s \geq t^{1/8}$. Then the bound in~\eqref{eqn:parab1-s>t/2-unionv} becomes 
 \begin{align}
     \P \bigg ( 
        \bigcup_{v \in \cN_{s+1}} 
        \Theta_{[L,s]}^*(R^{(v)}_\cdot)
    \bigg )
    \ulesssim 
    L^{-\frac{2\sqrt{2}-1}{2}}
    t^{\alpha_d- \frac{1}{2} - \frac{\mathcal{C}_d \sqrt{2}}{8}} \, ,
    \nonumber
 \end{align}
 from which we find 
 \begin{align}
     \sum_{s- \frac{t}{2}= 0}^{ \floor{ \frac{t}{2}- t^{1/8} }} \P \bigg ( 
        \bigcup_{v \in \cN_{s+1}} 
        \Theta_{[L,s]}^*(R^{(v)}_\cdot)
    \bigg )
    \ulesssim L^{-\frac{2\sqrt{2}-1}{2}} t^{\alpha_d+ \frac{1}{2}- \frac{\mathcal{C}_d \sqrt{2}}{8}}
    \ulesssim L^{-\frac{2\sqrt{2}-1}{2}} \, ,
    \label{eqn:parab1-s>t/2-middle}
 \end{align}
for any $\cC_d \geq 4d/\sqrt{2}$.
For $s \in \big [\lfloor t-t^{1/8}\rfloor, t \big ]$, we have $\bar{\Psi}_s \usim t^{- \sqrt{2}{\fc_d}} =  t^{\frac{3}{2}-\alpha_d}$, so that the bound in~\eqref{eqn:parab1-s>t/2-unionv} becomes 
 \begin{align}
     \P \bigg ( 
        \bigcup_{v \in \cN_{s+1}} 
        \Theta_{[L,s]}^*(R^{(v)}_\cdot)
    \bigg )
    \ulesssim 
    L^{-\frac{2\sqrt{2}-1}{2}}
    (t-s)^{- \mathcal{C}_d \sqrt{2}} \, .
    \nonumber
 \end{align}
Thus,
\begin{align}
    \sum_{s -  \frac{t}{2}= \floor{ \frac{t}{2} - t^{1/8}} + 1}^{ \floor{ t/2}}
    \P \bigg ( 
        \bigcup_{v \in \cN_{s+1}} 
        \Theta_{[L,s]}^*(R^{(v)}_\cdot)
    \bigg ) \ulesssim 
    L^{-\frac{2\sqrt{2}-1}{2}}
    \, ,
    \label{eqn:parab1-s>t/2-last}
\end{align}
for any $\mathcal{C}_d >0$.
The lemma then follows from
\eqref{eqn:parab1-s>t/2-1},~\eqref{eqn:parab1-s>t/2-middle}, and~\eqref{eqn:parab1-s>t/2-last}. 
\end{proof}

\subsection{Proof of Theorem~\ref{thm:outside_window}}
\label{subsec:rough_window}

Note that for all $t\geq 1$ and for all $y \leq t^{1/2}$, Theorem $1.1$ of~\cite{Mallein15} shows $\P(R_t^* > m_t + y) > 0$.
From this and Lemma~\ref{lem:rough_parabola}, we see that it suffices to show 
\begin{align}
    \P \pth{ \exists v \in \cN_t :  R_L^{(v)} \not \in I_L^{\win},
    \UB{[L, t]}{B} (R^{(v)}_\cdot),
    R^{(v)}_t \in (m_t+y, B(t))   
    } 
    = o_u(1)\, .
    \label{eq.window.goal}
\end{align} 
We show~\eqref{eq.window.goal} by upper bounding the left-hand side above by
\begin{align*}
    \E \bigg[
    \sum_{v \in \cN_t} \one_{
        \{ R_L^{(v)} \not \in I_L^{\win} \} \cap 
        \{ \UB{[L,t]}{B}(R^{(v)}_\cdot) \}
        \cap 
        \{ R^{(v)}_t \in (m_t+y, B(t)) \}
    }
    \bigg] \,,
\end{align*}
whence the many-to-one lemma (Lemma~\ref{lem:many-to-one}) gives the above as equal to    
\begin{align*}
    e^t \P \pth{
    R_L \not \in I_L^{\win};
    ~\UB{[L, t]}{B(\cdot+L)} (R_\cdot); 
    ~R_t \in (m_t+y, B(t))} \,,
\end{align*}
where $R.$ denotes a $d$-dimensional Bessel process.
Integrating over $w_L:= B(L) -R_L$ and applying the Markov property at time $L$ gives the above as equal to
\begin{align}
    e^t \int_0^{B(L)} p_L^R(0, B(L)-w_L) 
    \cP(w_L)\,,
    \label{eqn:window-markov}
\end{align}
where 
\begin{align}
\cP(w_L) &:= \P_{B(L)-w_L}\Big( \UB{[0,t-L]}{B(\cdot+L)}(R_.), R_{t-L} \in (m_t+y, B(t)) \Big) \d w_L \nonumber \\
    &=\E_{B(L)-w_L} \bigg[ 
    \E_{B(L)-w_L} \Big[ \one_{\UB{[0, t-L]}{B(\cdot+L)} (R_\cdot)} \given R_{t-L} \Big] \one_{\{R_{t-L} \in (m_t+y, B(t))\}}
    \bigg] \,.
    \label{eqn:window-condexp}
\end{align}
We now apply the Girsanov transform to the conditional expectation to replace the Bessel process with a Brownian motion $(W_s)_{s \in[0,t-L]}$ conditioned to end at $R_{t-L}$. Bounding the indicator function and the exponential term in~\eqref{eq.girsanov} by $1$ gives the following upper bound on the expression for $\cP(w_L)$ in~\eqref{eqn:window-condexp}:
\begin{align}
    \E_{B(L)-w_L} \bigg[ \Big(\frac{W_{t-L}}{B(L)-w_L}\Big)^{\alpha_d}
    \P_{B(L)-w_L, t-L}^{W_{t-L}} \Big(\UB{[0, t-L]}{B(\cdot+L)} (W_\cdot) \Big) \one_{\{W_{t-L} \in (m_t+y, B(t))\}}
    \bigg] \,.
    \label{eqn:window-gir}
\end{align}
Increase the barrier event above to $\UB{[0, t-L]}{B(\cdot+L)+1} (W_\cdot)$, then apply~\eqref{eqn:brz-modified}, so uniformly on $\mathds{1}_{\{W_{t-L} \in (m_t+y, B(t))\}}$, we have
\begin{align*}
    \P_{B(L)-w_L, t-L}^{W_{t-L}} \Big(\UB{[0, t-L]}{B(\cdot+L)} (W_\cdot) \Big) \ulesssim \frac{(w_L+1)(B(t)-W_{t-L}+1)}{t-L}\,.
\end{align*}
Substituting the above into~\eqref{eqn:window-gir}, noting that $W_{t-L} \uasymp t$ uniformly on $\mathds{1}_{\{W_{t-L} \in (m_t+y, B(t))\}}$,  and then
integrating over $w_t:= B(t) - W_{t-L}$ yields
\begin{align}
    \cP(w_L) &\ulesssim
    \frac{(w_L+1)t^{\alpha_d -\frac{3}{2}}}{(B(L)-w_L)^{\alpha_d}}
    \int_0^{\log L -y} (w_t+1) e^{- \frac{(B(t)-B(L) + w_L-w_t)^2}{2(t-L)}} \d w_t \nonumber \\
    &\uasymp L^{\cC_d \sqrt{2}} e^{-(t-L)} \frac{(w_L+1)e^{- \sqrt{2}w_L}} 
    {(B(L)-w_L)^{\alpha_d}}
    \int_0^{\log L -y} (w_t+1) e^{\sqrt{2}w_t} \d w_t \nonumber \\
    &\uasymp 
    (\log L) L^{(\cC_d+1) \sqrt{2}} e^{-(t-L)} \frac{(w_L+1)e^{- \sqrt{2}w_L}} 
    {(B(L)-w_L)^{\alpha_d}} \,,
    \label{eqn:window-cPbound}
\end{align}
where in moving from the first line to the second line of~\eqref{eqn:window-cPbound} we have used the fact that $w_L, w_t \ulesssim L$ to greatly simplify the exponential in the integrand of the first line. 
Substituting the result of~\eqref{eqn:window-cPbound} into~\eqref{eqn:window-markov} and computing the resulting integral via equation~\eqref{eq.bessel.c} of Lemma~\ref{lem:bessel_time_L} simply yields $(\log L) L^{-1/6}$, thereby showing~\eqref{eq.window.goal}. \qed 
\subsection{Another upper barrier} 
\label{subsec:B0}
We present in this short section a lemma, whose proof
is very similar in strategy and method to the proof of Lemma~\ref{lem:rough_parabola} above. It will be used in the proof of Proposition~\ref{prop:right_tail_equiv}. Let $\mathcal{K}_d$ be a large positive constant depending only on $d$, and define
\begin{align}
    B_0(s) := B_0(s; t,L) = \frac{m_t}{t}(s+L) + y +  \log \ell + \mathcal{K}_d \log \pth{ s\wedge (\tl-\ell-s)}_+ \,,
    \label{def:B0}
\end{align}
where $\tilde{t}$ and $\ell$ will be chosen as functions of $t$ and $L$ (see~\eqref{eqn:def-ttilde} and~\eqref{eqn:def-ell} respectively).
The following lemma shows that we can find a $\mathcal{K}_d$ such that the probability that a branching Bessel process started somewhere in $I_L^{\win}$ exceeds $B_0(s)$ at time $s$ for some $s \in [0, \tl- \ell]$ is negligible. 
\begin{lemma}
\label{lem:parabola_B0}
There exists a constant $\mathcal{K}_d$ depending only on $d$ such that 
\begin{align}
        \P_{\sqrt{2} L -z} \bigg( 
        \bigcup_{s \in [0, \tl - \ell]} 
        \bigcup_{v \in \cN_{s}} 
        \{ R_s^{(v)} > B_0(s) \} \bigg)
    \ulesssim 
    \ell^{- \sqrt{2}} \fM_{L,z}\,. 
    \label{eq.lem:parabola_b0}
\end{align}
\end{lemma}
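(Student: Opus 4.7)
The plan is to follow closely the template developed for Lemma~\ref{lem:rough_parabola}. I would discretize the interval $[0,\tl-\ell]$ into unit pieces and, for each integer $k$, introduce the first-crossing event
\[
    \Xi^*_k(v) := \UB{[0,k]}{B_0}(R^{(v)}_\cdot) \cap \Big\{\sup_{s' \in [k, k+1]} R^{(v)}_{s'} > B_0(k)\Big\}.
\]
Partitioning by the first unit interval on which a particle crosses $B_0$ (noting that $B_0$ varies by $O(1)$ on any unit interval, so the uniform exceedance is captured up to a harmless additive constant), a union bound combined with the many-to-one lemma (Lemma~\ref{lem:many-to-one}) will yield
\[
    \P_{\sqrt{2}L-z}\bigg(\bigcup_{s \in [0,\tl-\ell]}\bigcup_{v \in \cN_s}\{R^{(v)}_s > B_0(s)\}\bigg)
    \leq \sum_{k=0}^{\lfloor\tl-\ell\rfloor-1} e^{k+1}\,\P_{\sqrt{2}L-z}(\Xi^*_k),
\]
reducing the proof to a single-particle estimate for each $k$.

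For each $k$, I would apply the Girsanov transform~\eqref{eq.girsanov} to pass from Bessel to Brownian motion. Since $\sqrt{2}L-z \in I_L^\win$, the Girsanov prefactor $(\sqrt{2}L-z)^{-\alpha_d}$ is exactly the Bessel factor in $\fM_{L,z}$; for $d \geq 3$ the remaining exponential factor in~\eqref{eq.girsanov} is at most~$1$ and can be dropped (the $d=2$ case would rely on the auxiliary-barrier device of Section~\ref{sec-2d}). Conditioning on $W_k$ and writing $w_k := B_0(k)-W_k$, the barrier portion of $\Xi^*_k$ on $[0,k]$ contributes the ballot factor $\asymp z(1+w_k)/k$ via Lemma~\ref{lem:brz-modified} (since the initial gap $B_0(0)-(\sqrt{2}L-z) \asymp z$ for $z \geq L^{1/6}$), while the one-step exceedance on $[k,k+1]$ contributes $\asymp B_0(k)^{\alpha_d} e^{-w_k^2/2}$ via the reflection principle. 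Squaring out the Gaussian density $p_k^W(\sqrt{2}L-z,B_0(k)-w_k)$ produces the decomposition
\[
    e^{-k}\cdot e^{-\sqrt{2}(z+y)}\cdot \ell^{-\sqrt{2}}\cdot \bigl(k \wedge (\tl-\ell-k)\bigr)^{-\sqrt{2}\,\mathcal{K}_d}\cdot e^{-(z+y+\log\ell)^2/(2k)}\cdot e^{\sqrt{2}\,w_k},
\]
in which the first factor cancels $e^{k+1}$ from many-to-one, the $\ell^{-\sqrt{2}}$ arises from the explicit $\log\ell$ shift in $B_0$, the $\mathcal{K}_d$-power is produced by the logarithmic correction in $B_0$, and the multiplier $e^{-(z+y+\log\ell)^2/(2k)}\le 1$ is the crucial suppression for short times $k \ll z^2$. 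Integrating over $w_k \geq 0$ (which absorbs $e^{\sqrt{2} w_k - w_k^2/2}$ into a constant) leaves, for a single $k$,
\[
    e^{k+1}\P_{\sqrt{2}L-z}(\Xi^*_k) \ulesssim \fM_{L,z}\cdot \ell^{-\sqrt{2}}\cdot (k+L)^{\alpha_d}\cdot k^{-3/2}\cdot \bigl(k\wedge(\tl-\ell-k)\bigr)^{-\sqrt{2}\,\mathcal{K}_d}\cdot e^{-z^2/(2k)}.
\]

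Summing over $k$ will dictate the choice of $\mathcal{K}_d$. On the bulk $k \in [L,\tl-\ell-1]$, the prefactor is $\asymp k^{\alpha_d - 3/2 - \sqrt{2}\mathcal{K}_d}$, which is summable once $\mathcal{K}_d > (\alpha_d - 1/2)/\sqrt{2}$. On the short-time segment $k \in [1,L]$, the prefactor $(k+L)^{\alpha_d} \asymp L^{\alpha_d}$ is more delicate, but the Gaussian suppression $e^{-z^2/(2k)}$ (using $z \geq L^{1/6}$) makes this contribution super-polynomially small in $L$ once $k$ falls below $z^2/\mathrm{polylog}(L)$; for the remaining short-time $k$ the extra polynomial decay from $k^{-3/2-\sqrt{2}\mathcal{K}_d}$ finishes the bound (for $\mathcal{K}_d$ large enough). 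The endpoint region $k$ close to $\tl-\ell$, where the log correction is trivial, will be treated by the same argument used for $[t - t^{1/8}, t]$ in the proof of Lemma~\ref{lem:rough_parabola}, exploiting that the linear part of $B_0$ has already pushed the barrier well above the typical trajectory. The hardest part of this program will be the uniform bookkeeping at the crossover $k \sim z^2$ between the Gaussian-dominated regime and the bulk ballot regime: one must verify that the bound is uniform in $z \in [L^{1/6},L^{2/3}]$ and in $t$, in the sense of $\ulesssim$, before the limits $t \to \infty$ and $L \to \infty$ are taken.
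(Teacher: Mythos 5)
Your strategy --- first-crossing decomposition in unit steps, many-to-one, Girsanov with the exponential term dropped for $d\geq 3$, the ballot estimate from Lemma~\ref{lem:brz-modified}, a reflection bound on the one-step overshoot, and a split of the time axis into short / bulk / endpoint regimes --- is the same one the paper uses, with a slightly coarser case analysis (the paper splits $[0,\tl-\ell]$ into seven pieces, including $s=0$ on its own) but the same ingredients.

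There is, however, a concrete omission in your decomposition of the Gaussian density which, as written, makes the bulk estimate fail for $d=2,3$. Write $B_0(k)-w_k-(\sqrt{2}L-z)=\sqrt{2}k+\mathcal{E}(w_k)$ with $\mathcal{E}(w_k)=\fo_t(k+L)+z+y+\log\ell+\mathcal{K}_d\log\bigl(k\wedge(\tl-\ell-k)\bigr)_+-w_k$, where $\fo_t=\frac{m_t}{t}-\sqrt{2}=\fc_d\frac{\log t}{t}$. The cross term $\sqrt{2}\,\mathcal{E}(w_k)$ in $\frac{(\sqrt{2}k+\mathcal{E}(w_k))^2}{2k}$ produces, besides the factors you list, the factor $e^{-\sqrt{2}\fo_t(k+L)}=t^{(\frac{3}{2}-\alpha_d)(k+L)/t}$, which the paper denotes $\widehat{\Psi}_k$ (up to a harmless $O(1)$ correction). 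This factor is $\leq 1$ when $d\geq 4$, but for $d=2$ (resp.\ $d=3$) it can grow up to $t$ (resp.\ $\sqrt{t}$) across the bulk. Your displayed product omits it, and consequently your claim that the bulk sum is controlled once $\mathcal{K}_d>(\alpha_d-\tfrac12)/\sqrt{2}$ is not correct: one needs $\mathcal{K}_d$ large enough that $k^{-\sqrt{2}\mathcal{K}_d}$ defeats $\widehat{\Psi}_k$ on the range where the latter is polynomially large in $t$ but $\tl-\ell-k$ is still of order $t$ (the paper imposes, e.g., $\mathcal{K}_d\geq \frac{4\alpha_d+15}{4\sqrt{2}}$ on $[t^{1/3},(\tl-\ell)/2]$). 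Since the lemma only asserts the existence of some $\mathcal{K}_d$, the architecture of your argument survives once the missing factor is carried through; but the bookkeeping and the threshold for $\mathcal{K}_d$ must be redone, and as stated the bulk bound does not close for $d=2,3$.
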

\begin{proof}
For a real-valued process $X_\cdot$ and an interval $[a,b] \subset \R_{\geq 0}$,  define the event
\[
\bT_{[a,b]}(X_\cdot) := \UB{[a,b]}{B_0}(X_\cdot) \cap \Big \{ \sup_{s' \in [b,b+1]} X_{s'} > B_0(b) \Big \} \, .
\]
Then, similar to~\eqref{eqn:parab1-s>t/2-1}, we find 
\[
    \P_{\sqrt{2} L -z} \bigg( 
        \bigcup_{s \in [0, \tl - \ell]} 
        \bigcup_{v \in \cN_{s}} 
        \{ R_s^{(v)} > B_0(s) \} \bigg)
    \leq 
    \P_{\sqrt{2}L -z} \bigg(  \bigcup_{s \in \llbracket 0, \tl-\ell\rrbracket} 
    \bigcup_{v \in \cN_{s+1}}
    \bT_{[0,s]}\big(R^{(v)} \big)
    \bigg) \, .
\]
A union bound over $s$ shows that the last expression is at most
\[ \sum_{s=0}^{\lfloor\tilde t - \ell\rfloor}  \E_{\sqrt{2}L-z} \bigg[ \sum_{v\in \cN_{s+1}} 
\one_{\bT_{[0,s]}( R^{(v)} ) }
    \bigg ]\,,
    \]
whence we may conclude from the many-to-one lemma and a Girsanov transform (where we upper bound the term $\exp(\cdot)\one_{\{\cdot \}}$ in the right-hand of~\eqref{eq.girsanov} by $\one_{\{W_s>0\}}$, valid for $d\geq 3$)
that the left-hand side of~\eqref{eq.lem:parabola_b0} is at most
\begin{align}
    &
    \sum_{s=0}^{\lfloor\tilde t - \ell\rfloor} e^{s+1}
    \big (\sqrt{2}L -z \big )^{-\alpha_d}
    \E_{\sqrt{2}L-z}
    \brak{
        W_{s+1}^{\alpha_d} 
    \one_{
    \{ W_{s} >0 \} \cap 
    \bT_{[0,s]}(W_\cdot) }
    } \,.   
    \nonumber 
\end{align}
Let $M_{s+1}^1 := \sup_{s' \in [s, s+1]} W_{s'}$. Then from the above, we find
\begin{align}
    \P_{\sqrt{2} L -z} \bigg( 
        \bigcup_{s \in [0, \tl - \ell]} 
        \bigcup_{v \in \cN_{s}} 
        \{ R_s^{(v)} > B_0(s) \} \bigg)
    &\leq 
    \sum_{s=0}^{\lfloor\tilde t - \ell\rfloor} e^{s+1}
    \big (\sqrt{2}L -z \big )^{-\alpha_d}
    \Upsilon_s\, ,
    \label{eqn:parab2-gir}
\end{align}
where
\begin{align}
    \Upsilon_s :=
    \E_{\sqrt{2}L-z}
    \Big [
        (M^1_{s+1})^{\alpha_d} 
    \one_{
    \{ W_{s} >0 \} \cap 
    \{ 
    M_{s+1}^1 > B_0(s)
    \} \cap 
    \UB{[0,s]}{B_0}(W_\cdot)
    }
    \Big ] \,.    
\end{align}
Let's first consider $s > 0$. Integrating over $w:= B_0(s) -W_s \in [0, B_0(s)]$ gives
\begin{align}
    \Upsilon_s 
    &=
    \int_0^{B_0(s)}  p_s^W \big ( \sqrt{2}L-z, B_0(s) -w \big )
    \P_{\sqrt{2}L -z, s}^{B_0(s)-w} \Big ( 
    \UB{[0,s]}{B_0(\cdot)+1}(W_\cdot) \Big )
    \nonumber \\
    &\times
    \E \big [
    (M_{s+1}^1)^{\alpha_d} \one_{M_{s+1}^1 > B_0(s)} 
    \given W_s = B_0(s)-w
    \big ] \d w
    \nonumber \\
    &\ulesssim 
    \frac{z B_0(s)^{\alpha_d}}{s^{3/2}} 
    \int_0^{B_0(s)} 
    (1+w)
    e^{- (B_0(s)-w - \sqrt{2}L+ z)^2/(2s)} 
     e^{-w^2/2}
    \d w \, ,
    \label{eqn:parab2-int} 
\end{align}
where the above bound comes from~\eqref{eqn:brz-modified} and the bound
\begin{align*}
\E \big [
    (M_{s+1}^1)^{\alpha_d} \one_{M_{s+1}^1 > B_0(s)} 
    \given W_s = B_0(s)-w
    \big ] 
    &= \int_0^{\infty} (B_0(s) +x)^{\alpha_d} e^{-(x+w)^2/2} \d x \nonumber \\
    &\ulesssim B_0(s)^{\alpha_d} e^{-w^2/2} \, ,
\end{align*}
which holds uniformly over $w \in [0, B_0(s)]$. Let $\mathcal{E}(w) := B_0(s)-w - \sqrt{2}L+ z - \sqrt{2}s$, and consider $s \in [1, \lfloor (\tl-\ell)/2 \rfloor ]$. Then expanding $(\sqrt{2}s + \mathcal{E}(w))^2$ in the last line of~\eqref{eqn:parab2-int} gives
\begin{align}
    \frac{e^{s+1} (\sqrt{2}L - z)^{-\alpha_d} \Upsilon_s}{\ell^{-\sqrt{2}} \fM_{L,z}}
    \ulesssim 
    s^{ - \frac{3}{2} - \mathcal{K}_d\sqrt{2}}
    B_0(s)^{\alpha_d} 
    \widehat{\Psi}_s 
    \int_0^{B_0(s)} (1+w)e^{w \sqrt{2} - \frac{w^2}{2}}
    e^{- \frac{\mathcal{E}(w)^2}{2s} }
    \d w \, ,
    \label{eqn:parab2-expanded}
\end{align}
where 
$
\widehat{\Psi}_s := t^{ ( \frac{3}{2}-\alpha_d)s/t} 
$.

For $s \in [1, z^{1/2}]$, we have $\mathcal{E}(w) \uasymp z$, $B_0(s) \ulesssim L \ulesssim z^6$, and $\widehat{\Psi}_s  \usim 1$. Then~\eqref{eqn:parab2-expanded} gives 
\begin{align}
    \frac{e^{s+1} (\sqrt{2}L - z)^{-\alpha_d} \Upsilon_s}{\ell^{-\sqrt{2}} \fM_{L,z}}
    \ulesssim 
    z^{6\alpha_d} e^{-z^{3/2}/2} \, ,
    \nonumber 
\end{align}
so that 
\begin{align}
    \frac{\sum_{s=1}^{\floor{z^{1/2}}}
    e^{s+1} (\sqrt{2}L - z)^{-\alpha_d} \Upsilon_s}{\ell^{-\sqrt{2}} \fM_{L,z}}
    \ulesssim z^{6 \alpha_d + z^{1/2}} e^{-z^{3/2}/2}
    \ulesssim 1 \, .
    \label{eqn:parab2-z1/2}
\end{align}

For $s \geq z^{1/2}$, we will always take the trivial upper bound $\exp (- \mathcal{E}(w)^2/(2s) ) \leq 1$. 
For $s \in [z^{1/2}, L]$, we again have $B_0(s) \ulesssim L \ulesssim z^6$ and $\widehat{\Psi}_s  \usim 1$, so that 
\begin{align}
    \frac{e^{s+1} (\sqrt{2}L - z)^{-\alpha_d} \Upsilon_s}{\ell^{-\sqrt{2}} \fM_{L,z}}
    \ulesssim 
    \big (z^{1/2} \big )^{ - \frac{3}{2} - \mathcal{K}_d \sqrt{2}} z^{6\alpha_d} \, .
    \nonumber 
\end{align}
Since $L \ulesssim z^6$, it follows from the previous display that for all $\mathcal{K}_d \geq \frac{24 \alpha_d +21}{2 \sqrt{2}}$, we have
\begin{align}
    \frac{\sum_{s=\floor{z^{1/2}} +1}^{\floor{L}}
    e^{s+1} (\sqrt{2}L - z)^{-\alpha_d} \Upsilon_s}{\ell^{-\sqrt{2}} \fM_{L,z}}
    \ulesssim 
    z^{6+ 6\alpha_d - \frac{3}{4} - \frac{\mathcal{K}_d \sqrt{2}}{2}}
    \ulesssim 1 \, .
    \label{eqn:parab2-L}
\end{align}

For $s \in [L, t^{1/3}]$, we have $B_0(s) \uasymp s$ and $\widehat{\Psi}_s \usim 1$. Then
\begin{align*}
    \frac{e^{s+1} (\sqrt{2}L - z)^{-\alpha_d} \Upsilon_s}{\ell^{-\sqrt{2}} \fM_{L,z}}
    &\ulesssim 
    s^{- \frac{3}{2}- \mathcal{K}_d\sqrt{2} + \alpha_d } \, ,
\end{align*}
so that for all $\mathcal{K}_d \geq \frac{2\alpha_d-1}{2 \sqrt{2}}$, we have
\begin{align}
    \frac{\sum_{s=\floor{L} +1}^{\floor{t^{1/3}}}
    e^{s+1} (\sqrt{2}L - z)^{-\alpha_d} \Upsilon_s}{\ell^{-\sqrt{2}} \fM_{L,z}}
    \uasymp
    L^{- \frac{1}{2}- \mathcal{K}_d\sqrt{2} + \alpha_d }
    \ulesssim 1 \, .
    \label{eqn:parab2-t1/3}
\end{align}

For $s \in [t^{1/3}, (\tl- \ell)/2]$, we have $\widehat{\Psi}_s \ulesssim t^{3/4}$; thus, for all $\mathcal{K}_d \geq \frac{4\alpha_d + 15}{4 \sqrt{2}}$,  we have
\begin{align}
    \frac{\sum_{s=\floor{t^{1/3}} +1}^{\floor{(\tl - \ell)/2}}
    e^{s+1} (\sqrt{2}L - z)^{-\alpha_d} \Upsilon_s}{\ell^{-\sqrt{2}} \fM_{L,z}}
    \ulesssim 
    t \big (t^{1/3} \big)^{- \frac{3}{2}- \mathcal{K}_d\sqrt{2} + \alpha_d }  t^{3/4} 
    \ulesssim 1
    \, .
    \label{eqn:parab2-t/2}
\end{align}

For $s \geq (\tl-\ell)/2$, 
instead of~\eqref{eqn:parab2-expanded}, we have 
\begin{align}
    \frac{e^{s+1} (\sqrt{2}L - z)^{-\alpha_d} \Upsilon_s}{\ell^{-\sqrt{2}} \fM_{L,z}}
    \ulesssim 
    t^{\alpha_d}
    s^{- \frac{3}{2}}
    \big( (\tl-\ell-s) \vee 1 \big)^{-\mathcal{K}_d\sqrt{2}}
    \widehat{\Psi}_s  \, ,
    \label{eqn:parab2-expanded2}
\end{align}
where we have used $B_0^{\alpha_d}(s) \usim 1$ and $e^{-\frac{\mathcal{E}(w)^2}{2s}} \leq 1$. 

For $s \in [ (\tl-\ell)/2, \tl-t^{1/8} ]$, we have $s^{-3/2} \widehat{\Psi}_s \ulesssim 1$. Thus, for $\mathcal{K}_d \geq \frac{8(1+ \alpha_d)}{\sqrt{2}}$, \eqref{eqn:parab2-expanded2} gives us
\begin{align}
    \frac{\sum_{s=\floor{(\tl-\ell)/2} +1}^{\floor{\tl-t^{1/8}}}
    e^{s+1} (\sqrt{2}L - z)^{-\alpha_d} \Upsilon_s}{\ell^{-\sqrt{2}} \fM_{L,z}}
    \ulesssim 
    t^{1 + \alpha_d - \frac{\mathcal{K}_d\sqrt{2}}{8}  }  
    \ulesssim 1 \, .
    \label{eqn:parab2-1/8}
\end{align}

For $s \in [\tl-t^{1/8}, \tl -\ell]$, we have $s \sim t$, so that~\eqref{eqn:parab2-expanded2} gives us 
\begin{align}
    \frac{\sum_{s=\floor{\tl - t^{1/8}} +1}^{\floor{\tl-\ell}}
    e^{s+1} (\sqrt{2}L - z)^{-\alpha_d} \Upsilon_s}{\ell^{-\sqrt{2}} \fM_{L,z}}
    \ulesssim 
    \big( (\tl-\ell-s) \vee 1 \big)^{-\mathcal{K}_d\sqrt{2}} 
    \ulesssim 1 \, ,
    \label{eqn:parab2-ell}
\end{align}
for any $\mathcal{K}_d > 0$.

Lastly, we consider the summand in~\eqref{eqn:parab2-gir} corresponding to $s = 0$:
\begin{align}
    &e
    \big (\sqrt{2}L -z \big )^{-\alpha_d}
    \E_{\sqrt{2}L-z}
    \brak{
        (M^1_{1})^{\alpha_d} 
    \one_{
    \{ 
    M_{1}^1 > B_0(0)
    \} 
    }}
    \nonumber \\
    &\uasymp 
    \big (\sqrt{2}L -z \big )^{-\alpha_d} \int_{B_0(s)}^{\infty} x^{\alpha_d} e^{- (z+y+\log \ell + x)^2/2 } \d x
    \nonumber \\
    &\ulesssim 
    \big (\sqrt{2}L -z \big )^{-\alpha_d} e^{- (z+y)^2/2} 
    e^{ -(\log \ell)^2 /2} 
    \big (B_0(0) \big )^{\alpha_d} 
    e^{- B_0(0)^2/2} 
\ulesssim \ell^{- \sqrt{2} } \fM_{L,z} \, .
    \label{eqn:parab2-0}
\end{align}
Lemma~\ref{lem:parabola_B0} then follows from~\eqref{eqn:parab2-gir} and the bounds in
\eqref{eqn:parab2-z1/2}--
\eqref{eqn:parab2-t/2},\eqref{eqn:parab2-1/8}, and \eqref{eqn:parab2-ell}--\eqref{eqn:parab2-0}.
\end{proof}

\section{Proof of Theorem~\ref{thm:right_tail_asymp}: a modified second moment method for \texorpdfstring{$d\geq 2$}{d at least 2}}
\label{sec:mod2ndmom}

We are in the situation of a branching Bessel process started at height $\sqrt{2} L -z$ and run for time $t- L$.  For brevity, we will write
\begin{align}
\tl := t- L \,.
\label{eqn:def-ttilde}
\end{align}
In this section, we detail the proof of Theorem~\ref{thm:right_tail_asymp} for all fixed $d\geq 2$, which gives the exact asymptotics of the right-tail probability $\P_{\sqrt{2}L-z}(R_{\tl}^* > m_t+y)$. 
The model for our proof comes from the modified second moment method of~\cite{BDZ16}, which took place in the setting of a one-dimensional non-lattice random walk. 
We detailed this method in Section~\ref{subsec-BDZ-method} in the case of one-dimensional branching Brownian motion.
We will refer to the content of that section to highlight the similarities and differences between the one-dimensional method and ours.

Adopting the modified second moment method of~\cite{BDZ16} to the case of $d$-dimensional Brownian motion (for $d \geq 2$) requires several additional technical results to handle the differences between one-dimensional Brownian motion and the $d$-dimensional Bessel process--- for us, these differences are essentially captured by the Girsanov transform~\eqref{eq.girsanov}.
Our analogue of $\cA_{v,t}(x)$ is given by $G_{L,t}(v)$, defined below in~\eqref{def:G-event}; however, before going into further details, we must first establish some notation.

\subsection{Notation}
Fix a parameter $\ell := \ell(L)$ such that 
\begin{equation}
    1\leq \ell(L) \leq L^{1/6}  
    \qquad\mbox{and}\qquad 
    \lim_{L\to\infty}\ell(L)=\infty \,.
    \label{eqn:def-ell}
\end{equation}
Any choice of $\ell(L)$ satisfying~\eqref{eqn:def-ell} is valid\footnote{In fact, the condition $\ell \leq L^{1/6}$ is not imperative; however, enforcing it abbreviates several calculations.}.
Note that $\ell(L) \leq z(L)$ for all $L \geq 1$.

Next, we discuss the relevant barriers and associated notation. Recall the barrier events given in equation~\eqref{eqn:notation-barrier}. 
We now define our ``lower barrier" function:
as mentioned at the start of this subsection, this is needed to handle the $\exp(\cdot)$ term in the Girsanov transform~\eqref{eq.girsanov}.
Recall from Section \ref{subsec:hittingP}
that we write $f_{a}^b(s;T)$ to denote the function of $s \in [0,T]$ whose graph is the straight line segment in $\R^2$ connecting $(0,a)$ to $(T,b)$. 
Let\footnote{This choice of $\ell_1$ will be made clear in the proof of Claim~\ref{claim:bridge-barrier-2}. Really, we just require $\ell_1 := \ell^{\epsilon}$ for any $\epsilon >0$ such that $\ell^{1/3} - \ell_1 \sqrt{2} - \ell_1^{2/3} \gg 0$.}
\begin{align}
\ell_1 := \ell^{1/4} \,.
\label{eqn:def-ell1}
\end{align}
It will also be useful at this point to define the quantities
\begin{align}
    \x(a):= \sqrt{2}L - a, \quad \y(b) := \frac{m_t}{t}(t-\ell) +y - b\, ,
    \label{def:xz-yw}
\end{align}
for any $a, b \in \R$.
The function $Q_z(\cdot) := Q_{y, z,L,t}(\cdot)$, defined below, will be the most important ``lower barrier function":
\begin{align}
    Q_z(s) =
    \begin{cases}
        \sqrt{2}L - 2L^{2/3}
        &s \in [ 0, \ell_1 ]
        \\
        f_{\x(2L^{2/3})}^{\y(2\ell^{2/3})}(s; \tl-\ell) - ( s\wedge (\tl-\ell-s) )^{2/3}
        &s \in (\ell_1, \tl - \ell - \ell_1)
        \\
        \y(2 \ell^{2/3})
        &s \in [\tl - \ell - \ell_1, \tl -  \ell]
    \end{cases} \, .
    \label{eq.lowerbanana.Qdefn}
\end{align} 
See Figure~\ref{fig:G-Lt-event} for a depiction of $Q_z$.
Note that for all $L$ and $t$ sufficiently large, we have
$Q_z(\ell_1) > \lim_{s \downarrow \ell_1} Q_z(s)$ 
and 
$Q_z(\tl- \ell  - \ell_1)>
\lim_{s \downarrow \tl-\ell-\ell_1} Q_z(s)$.
We will also make use of the upper-barrier function $B_0$, defined in~\eqref{def:B0}.
With a view towards the $\exp(\cdot)$ term in Girsanov transform~\eqref{eq.girsanov}, note  that for all $\ell > 0$,
\begin{align}
     \exp  \Big( \frac{\alpha_d -\alpha_d^2}{2} \int_0^{\tl - \ell}\frac{1}{Q_z(s)^2} \d s \Big)
     \usim 
     \exp  \Big( \frac{\alpha_d -\alpha_d^2}{2} \int_0^{\tl - \ell}\frac{1}{B_0(s)^2} \d s \Big)
     \usim 1 \,.
    \label{eqn:trivial-exp-gir}
\end{align}
A crucial barrier event used throughout Sections \ref{sec:mod2ndmom}--\ref{sec:2ndmom} will be the event
that a process $X_{\cdot}$ is bounded above by the linear barrier $\frac{m_t}{t}(\cdot+L)+y$ (where we write $f(\cdot+r)$ to denote the function $u \mapsto f(u+r)$) and bounded below by $Q_z(\cdot)$ on a certain time interval $I \subset \R_{\geq 0}$. We will denote this event by $\B_I(X_\cdot) := \B_{I,y,z,L,t}(X_\cdot)$; note that
\begin{align}
    \B_I(X_\cdot) =
    \UB{I}{\frac{m_t}{t}(\cdot+L)+y}(X_\cdot)
    \cap 
    \LB{I}{Q_z}(X_\cdot)
    \, .
    \label{eqn:def-upper/lowerbarrier}
\end{align}
For any $s, r \geq 0$ and $v \in \cN_s$, we also define
\begin{align}
    \mathfrak{T}_r(v) &:= \mathfrak{T}_{r, t, y}(v) =   \left \{ \max_{v' \in \cN_{r}^v} R_{r+s}^{(v')} > m_t+y
    \right \} \, .
    \label{def:T-tailevent}
\end{align}
 \begin{figure}
     \begin{tikzpicture}[>=latex,font=\small]
 \draw[->] (0, 0) -- (10, 0); 
  \draw[->] (0, 0) -- (0, 6.5);
  \node (fig1) at (4.55,3.8) {
  \includegraphics[width=.65\textwidth]{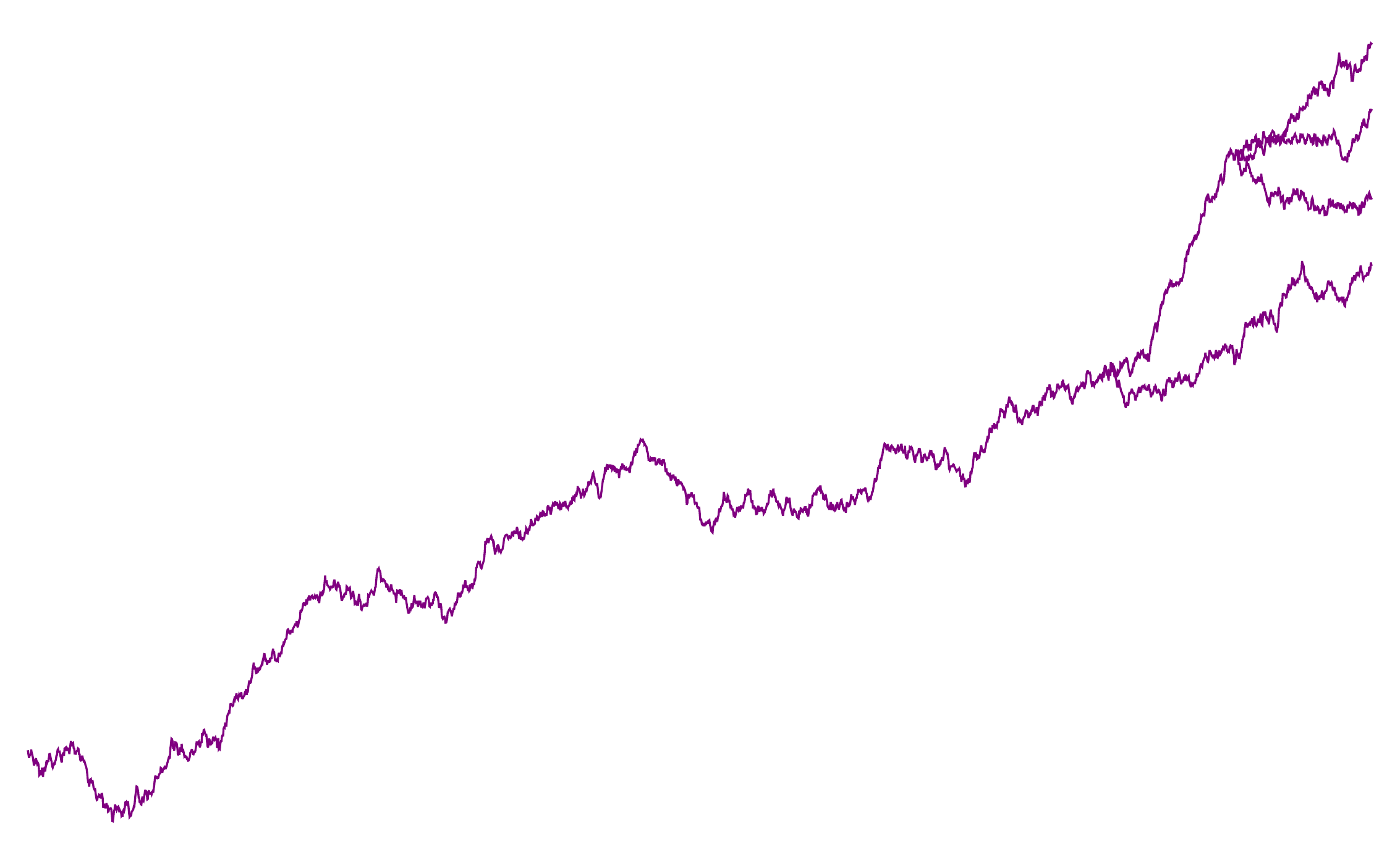}};
   \node[circle,fill=blue!75!black,label={[blue!75!black]left:$\frac{m_t}tL+y$}, inner sep=1.75pt] (a) at (0.0,2.5) {};
   \node[circle,fill=blue!75!black,label={[blue!75!black]right:$m_t+y$},inner sep=1.75pt] (b) at (9.12,5.6) {};
   \draw (-0.05,2.1) -- (0.05,2.1) node [left,font=\tiny] {$\sqrt 2 L - L^{1/6}$};
   \draw (-0.05,1.3) -- (0.05,1.3) node [left,font=\tiny] {$\sqrt 2 L - L^{2/3}$};
   \draw (-0.05,.6) -- (0.05,.6) node [left,font=\tiny] {$\sqrt 2 L - 2L^{2/3}$};
   \node[circle,fill=blue!45!purple,label={[blue!45!purple]left:$\x(z)$},inner sep=1.75pt] (x) at (0, 1.7) {};
   \node[circle, fill=blue!75!black, label={[blue!75!black, font=\tiny,label distance=-3pt]above left:$\y(0)$}, inner sep=1.75pt] (y0) at ( $(a)!0.74!(b)$) {};
   \node[circle, fill=black, label={[black, font=\tiny,label distance=-3pt]left:$\y(\ell^{1/3})$}, inner sep=1.75pt] (y1) at ( $(y0)-(0,0.5)$) {};
   \node[circle, fill=black, label={[black,font=\tiny]right:$\y(\ell^{2/3})$}, inner sep=1.75pt] (y2) at ( $(y0)-(0,1.5)$) {};
   \node[circle, fill=orange, label={[orange,font=\tiny]right:$\y(2\ell^{2/3})$}, inner sep=1.75pt] (y3) at ( $(y0)-(0,3)$) {};
   \node[circle, fill=orange, inner sep=1.75pt] (a0) at ( 0.75, 0.6 ) {};
   \node[circle, fill=orange, inner sep=1.75pt] (a00) at ( 0, 0.6 ) {};
   \node[circle, color=orange,fill=orange!25, draw, inner sep=1.75pt] (a1) at ( 0.75, 0.3 ) {};
   \node[circle, color=orange,fill=orange!25, draw, inner sep=1.75pt] (b1) at ( 5.5, 1.1 ) {};
   \node[circle, fill=orange, inner sep=1.75pt] (b2) at ( $(y3)-(1.25,0)$ ) {};
  \draw[orange, thick] (a00) -- (a0);
  \draw[orange, thick, dashed] (a00) to[bend right=10] (a1);
  \draw[orange, thick] (a1) to[bend right=10] node[midway,above,sloped] {$Q_z(s)$} (b1);
  \draw[orange, thick, dashed] (b1) to[bend right=10] (y3);
  \draw[orange, thick] (b2) -- (y3);
  \draw[blue!75!black, thick] ($(a)+(0.02,0.02)$) -- node[midway,above,sloped] {$\frac{m_t}t(s+L)+y$} (y0);
  \draw[blue!75!black, thick, dashed] (y0) -- (b);
  \draw (.75,0.05) -- (.75,-0.05) node[below] {$\ell_1$};
  \draw (5.5,0.05) -- (5.5,-0.05) node[below] {$\tilde t-\ell-\ell_1$};
  \draw (6.75,0.05) -- (6.75,-0.05) node[below] {$\tilde t - \ell$};
  \draw (9.12,0.05) -- (9.12,-0.05) node[below] {$\tilde t$};
  \end{tikzpicture}
     \caption{The event $G_{L,t}(v)$ from~\eqref{def:G-event}: for a fixed $v\in \cN_{\tll}$, $R^{(v)}_.$ is bounded above by the blue line and below by the solid orange curves that comprise $Q_z$ on $[0,\tll]$, $R^{(v)}_{\tll}$ lies in the ``window" $[\y(\ell^{2/3}), \y(\ell^{1/3})]$, and $v$ produces a descendent in $\cN_{\tl}$ that exceeds $m_t+y$.}
     \label{fig:G-Lt-event}
 \end{figure}
Now, for particles $v \in \cN_{\tl - \ell}$, define the events
\begin{align}
    F_{L,t}(v)
    &:= 
        \UB{[0, \tl - \ell]}{B_0}(R^{(v)}_\cdot)
        \cap 
        \Big \{R^{(v)}_{\tll} > \frac{t}{\sqrt{d}} \Big \}
        \cap 
        \mathfrak{T}_\ell(v)
    \, , \text{ and} \label{def:F-event}\\
    G_{L,t}(v)
    &:= 
    \B_{[0, \tl-\ell]}(R^{(v)}_\cdot)
        \cap 
        \left\{
            \y\big( R^{(v)}_{\tl - \ell} \big)  \in [ \ell^{1/3}, \ell^{2/3}]
        \right\}
        \cap
        \mathfrak{T}_\ell(v) \,.  \label{def:G-event}
\end{align}
Figure~\ref{fig:G-Lt-event} depicts the event $G_{L,t}(v)$ for some fixed $v \in \cN_{\tll}$. 
Lastly, define the simple random variables
\begin{align}
    \Gamma_{L,t}
    &:= 
    \sum_{v \in \cN_{\tl -\ell}}
    \one_{ 
        F_{L,t}(v)
    } \quad \text{and} \quad
    \bar{\Lambda}_{L,t} := 
    \sum_{v \in \cN_{\tl -\ell}}
    \one_{ 
        G_{L,t}(v)
    } \, .
    \label{def:simplefunctions}
\end{align}
Note that 
$\bar{\Lambda}_{L,t}  \leq \Gamma_{L,t}$ holds trivially for all $t, L, \ell > 0$. As we will later show (see Lemma~\ref{lem:barrier-equiv} below), in terms of first moment $\E_{\x(z)}[\cdot]$, these random variables are in fact asymptotically equivalent (in the sense of $\usim$) to each other, and are each asymptotically equivalent (in the sense of $\uasymp$) to $\fM_{L,z}$ (defined in~\eqref{def:M_Lz}).

\subsection{The modified second moment method}
The event $G_{L,t}(v)$ is the suitable analogue of $\cA_{v,t}(x)$ (see~\eqref{eqn:def-A_v,t,ell} and the discussion that follows it), which had the key properties that \textbf{(a) } leading-order asymptotics for $\E[\Xi_{t,\ell}(x)]$ were precisely computable, and \textbf{(b)} the second moment $\E[\Xi_{t,\ell}(x)^2]$ was asymptotically equivalent to the first moment squared in the sense of~\eqref{eqn:bdz-1st=2ndmom}. Towards step~\textbf{(a)}, we have the following. 

\begin{lemma} \label{lem:barrier-equiv}
For any $y\in \R$, we have 
\begin{align}                          
    \E_{\x(z)}\brak{\bar{\Lambda}_{L,t}}
    \usim \E_{\x(z)}\brak{\Gamma_{L,t}}\,,
    \label{eq.1stmom.bananaequiv}
\end{align}
and
\begin{align}
            \frac{
        \E_{\x(z)} \brak{\bar{\Lambda}_{L,t}}
            }
            {\fM_{L,z}}
    &\usim 
            \frac{2^{ \frac{1+\alpha_d}{2} }}
            {\sqrt{\pi}}
            \int_{\ell^{1/3}}^{\ell^{2/3}} w e^{w \sqrt{2}} 
            \P\big( W_{\ell}^* > \sqrt{2}\ell + w \big) \d w 
            \uasymp 1
            \,.
            \label{eq.1stmom.finalintegral}
\end{align}
\end{lemma}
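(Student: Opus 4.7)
I will reduce both claims to a single-particle computation by combining the many-to-one lemma (Lemma~\ref{lem:many-to-one}) with a Girsanov transformation on $[0,\tll]$, then extract the leading-order asymptotics using the ballot estimates from Section~\ref{subsec:hittingP}.

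For~\eqref{eq.1stmom.finalintegral}, applying the many-to-one lemma together with the Markov property at time $\tll$, the first moment $\E_{\x(z)}[\bar\Lambda_{L,t}]$ equals $e^{\tll}\,\E_{\x(z)}^R\bigl[\one_{\B_{[0,\tll]}(R_\cdot)}\,\one_{\y(R_\tll)\in[\ell^{1/3},\ell^{2/3}]}\,\phi_\ell(R_\tll)\bigr]$, where $\phi_\ell(r):=\P_r(R_\ell^*>m_t+y)$ is the tail probability of a branching Bessel process started at~$r$. The Girsanov identity~\eqref{eq.girsanov} converts the Bessel law on $[0,\tll]$ into a Brownian law with density $(W_\tll/W_0)^{\alpha_d}\exp\bigl(\tfrac{\alpha_d-\alpha_d^2}{2}\int_0^\tll W_u^{-2}\,du\bigr)$; since $\B_{[0,\tll]}(W)$ forces the path to stay above $Q_z$, the exponential factor is $\usim 1$ by~\eqref{eqn:trivial-exp-gir}. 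Integrating over the endpoint via $w=\y(W_\tll)$ then gives
\[
\E_{\x(z)}[\bar\Lambda_{L,t}]\usim e^\tll\,\x(z)^{-\alpha_d}\int_{\ell^{1/3}}^{\ell^{2/3}} p^W_\tll\bigl(\x(z),\y^{-1}(w)\bigr)\,\Pi(w)\,\bigl(\y^{-1}(w)\bigr)^{\alpha_d}\,\phi_\ell\bigl(\y^{-1}(w)\bigr)\,dw,
\]
where $\Pi(w)$ denotes the Brownian-bridge probability of $\B_{[0,\tll]}$ between $\x(z)$ and $\y^{-1}(w)$.

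I then evaluate each factor. The upper barrier $\tfrac{m_t}{t}(\cdot+L)+y$ is the affine interpolant between its endpoints $\frac{m_t}{t}L+y$ and $\frac{m_t}{t}(t-\ell)+y$, so the exact ballot identity~\eqref{eqn:brownian-ballot-explicit} gives $\P^W_{\x(z),\tll}^{\y^{-1}(w)}(\UB{[0,\tll]}{\cdot})=1-\exp(-2(\mathfrak{o}_t L+y+z)w/\tll)\usim 2zw/\tll$; the conditional Bramson estimate (Lemma~\ref{lem:Bram2.7}) then shows that imposing the lower barrier $Q_z$ contributes a factor $\usim 1$, yielding $\Pi(w)\usim 2zw/\tll$. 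A direct expansion gives $(\y^{-1}(w)-\x(z))^2/(2\tll)=\tll+\sqrt 2\,\fc_d\log t\,(1+o(1))+\sqrt 2(y+z-w)+o(1)$; combined with $(\y^{-1}(w))^{\alpha_d}\usim 2^{(d-1)/4}t^{(d-1)/2}$ and the Gaussian normalization $1/\sqrt{2\pi\tll}$, the exponentials in $e^\tll$, the Gaussian density, and the powers of $t$ cancel precisely. For the tail factor, since $\y^{-1}(w)\to\infty$ while the Bessel drift $\alpha_d/r$ is negligible over a horizon of length $\ell\ll \y^{-1}(w)$, a coupling to one-dimensional branching Brownian motion gives $\phi_\ell(\y^{-1}(w))\usim\P(W_\ell^*>m_t+y-\y^{-1}(w))=\P(W_\ell^*>\sqrt 2\ell+w+o(1))$. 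Collecting these with $\x(z)^{-\alpha_d}\usim 2^{-(d-1)/4}L^{-\alpha_d}$ and grouping against $\fM_{L,z}\usim 2^{-(d-1)/4}L^{-\alpha_d}\,z\,e^{-(z+y)\sqrt 2}$ reproduces the constant $\frac{2^{(1+\alpha_d)/2}}{\sqrt\pi}$ in~\eqref{eq.1stmom.finalintegral}. The $\uasymp 1$ claim follows from the classical right-tail asymptotic $\P(W_\ell^*>m_\ell(1)+u)\asymp u e^{-u\sqrt 2}$ applied with $u=w+\tfrac{3}{2\sqrt 2}\log\ell$, supplying the Gaussian correction appropriate for $u=\Theta(\sqrt\ell)$, and evaluating the resulting integral via the substitution $w=u\sqrt\ell$.

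Finally, for~\eqref{eq.1stmom.bananaequiv}, the inclusion $G_{L,t}(v)\subseteq F_{L,t}(v)$ is immediate (after noting $B_0\geq\tfrac{m_t}{t}(\cdot+L)+y$ and $\y^{-1}([\ell^{1/3},\ell^{2/3}])>t/\sqrt d$ for $L$ large), so it suffices to show $\E_{\x(z)}[\Gamma_{L,t}-\bar\Lambda_{L,t}]=o_u(\fM_{L,z})$. The event $F_{L,t}(v)\setminus G_{L,t}(v)$ splits into three contributions: the path crosses above $\tfrac{m_t}{t}(\cdot+L)+y$ somewhere in $[0,\tll]$, the path dips below $Q_z$, or the endpoint $\y(R_\tll^{(v)})$ falls outside $[\ell^{1/3},\ell^{2/3}]$. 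Each is controlled by rerunning the many-to-one plus Girsanov computation above, using the Bramson upper-barrier comparison (Lemma~\ref{lem:Bram6.1}) together with the BRZ-type estimates (Lemmas~\ref{lem:brz-lem2.1-bridge}--\ref{lem:brz-modified}) to extract an extra negative power of $\ell$ from each bridge probability. The principal obstacle throughout is the delicate bookkeeping of Gaussian prefactors and logarithmic corrections in the Girsanov exponent, where several $t$-dependent terms must cancel exactly; a more subtle secondary issue is ensuring the Bessel-to-Brownian tail comparison for $\phi_\ell$ holds uniformly over starting heights $r=\y^{-1}(w)$ with $w\in[\ell^{1/3},\ell^{2/3}]$, since at the upper edge of this window one is probing the branching-Bessel tail at fluctuation scale $\Theta(\sqrt\ell)$.
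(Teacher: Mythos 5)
Your proposal is correct in substance and follows the same strategy as the paper's proof: many-to-one plus the Markov property at time $\tll$ to reduce to a single Bessel lineage carrying the weight $\phi_\ell(R_\tll)=\P_{R_\tll}(R_\ell^*>m_t+y)$, Girsanov to pass to Brownian motion, exact ballot/barrier estimates for the bridge, and a coupling to one-dimensional BBM for $\phi_\ell$ (this is exactly the paper's Claim~\ref{clm:coupling} and Corollary~\ref{cor:F_ell_1d}). The end-game bookkeeping --- the expansion of $(\y(w)-\x(z))^2/(2\tll)$, the cancellation of $e^{\tll}$ against the Gaussian, the powers of $t$ against $\y(w)^{\alpha_d}$, and the extraction of $2^{(1+\alpha_d)/2}/\sqrt{\pi}$ --- matches~\eqref{eq.lowerbanana.gaussianest}--\eqref{eqn:Phi-sim-main} exactly.

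There is one structural difference. You compute $\E_{\x(z)}[\bar\Lambda_{L,t}]$ directly, using the two-sided event $\B$ to force the Girsanov exponential to be $\usim 1$ via~\eqref{eqn:trivial-exp-gir}, and then bound $\E[\Gamma-\bar\Lambda]$; the paper instead starts from $\Gamma$, bounds the exponential above by $1$ (valid since $d\ge 3$ makes $\alpha_d-\alpha_d^2\le 0$), obtains the upper bound $e^{\tll}\int\Phi$, shows this is $\usim\E[\bar\Lambda]$ by reverse-Girsanov plus Lemma~\ref{lem:bridge-barrier-equiv}, and sandwiches using the trivial $\E[\bar\Lambda]\le\E[\Gamma]$. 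Both routes rest on the same estimates; the paper's sandwich is marginally leaner because it reuses $\int\Phi$ for both inequalities instead of decomposing $F\setminus G$ into three contributions.

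One step is stated too casually: you assert that the two-sided bridge probability satisfies $\Pi(w)\usim 2zw/\tll$ ``via Lemma~\ref{lem:Bram2.7}.'' That lemma alone does not deliver this. The upper barrier $\frac{m_t}{t}(\cdot+L)+y$ is the affine interpolant of $(0,\frac{m_t}{t}L+y)$ and $(\tll,\y(0))$, \emph{not} of the bridge's own endpoints $(\x(z),\y(w))$, so Lemma~\ref{lem:Bram2.6} is needed to compare; $Q_z$ has flat segments on $[0,\ell_1]$ and $[\tll-\ell_1,\tll]$ and is not of the form $f-C(s\wedge(T-s))^\epsilon$, which is why the paper's Claim~\ref{claim:bridge-barrier-2} exists; and for the $\Gamma$ side the true upper barrier is $B_0$ with its $\log\ell+\mathcal{K}_d\log(\cdot)$ bump, handled via Lemma~\ref{lem:Bram6.1}. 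The paper bundles all of this into Lemma~\ref{lem:bridge-barrier-equiv}. You do cite the right toolbox later (Lemmas~\ref{lem:Bram6.1},~\ref{lem:brz-lem2.1-bridge}--\ref{lem:brz-modified}) for the $\E[\Gamma-\bar\Lambda]$ step, but the same orchestration is already required to justify $\Pi(w)\usim 2zw/\tll$ in your main computation.
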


Lemma~\ref{lem:barrier-equiv} is proved in Section~\ref{sec:1stmom} (for
$d \geq 3$) as follows.
First, in light of the many-to-one lemma (Lemma~\ref{lem:many-to-one}), we need precise estimates for $\P_{\x(z)}(G_{L,t}(v))$ in order to show \eqref{eq.1stmom.finalintegral}. These can be obtained because 
on $[0, \tll]$, we have good control over the path of $R^{(v)}_.$, as equation~\eqref{eqn:trivial-exp-gir} and the endpoint $R_{\tll}^{(v)} \usim \frac{m_t}{t}(\tll)$ show that the Girsanov transform to Brownian motion~\eqref{eq.girsanov} can be precisely estimated (in the sense of $\usim$), and thus we will be able to import Brownian motion estimates on $[0, \tll]$. Furthermore, the important result Corollary~\ref{cor:F_ell_1d} and the Markov property will allow us to estimate the probability of $\fT_{\ell}(v)$ by the tail of 1-dimensional branching Brownian motion. Thus, we will be able to precisely calculate $\P_{\x(z)}(G_{L,t}(v))$. 
On the other hand,
the work of Section~\ref{subsec:barrier-equiv-bridge} will allow us to equate the barrier events of~\eqref{def:F-event} and~\eqref{def:G-event} uniformly over certain Brownian bridge laws which appear after some technical estimates (see the Brownian bridge probability of the barrier event in~\eqref{eqn:bar-equiv:Phi-smallint}); thus, we will find~\eqref{eq.1stmom.bananaequiv}. See Section~\ref{sec-2d} for the two-dimensional case.

Towards Step~\textbf{(b)}, we have the following.
\begin{lemma}
\label{lem:2ndmom}
For any $y \in \R$, 
\begin{align}
    \E_{\x(z)}[ \bar{\Lambda}_{L,t} ] \usim
    \E_{ \x(z)} [ \bar{\Lambda}_{L,t}^2 ].
\end{align}
\end{lemma}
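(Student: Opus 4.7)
The plan is to apply the many-to-two lemma (Lemma~\ref{lem:many-to-two}) to $\bar{\Lambda}_{L,t}^2$ and show that the off-diagonal contribution is negligible. Decomposing
\begin{align*}
    \bar{\Lambda}_{L,t}^2 = \bar{\Lambda}_{L,t} + \sum_{\substack{v_1, v_2 \in \cN_{\tll} \\ v_1 \ne v_2}} \one_{G_{L,t}(v_1)} \one_{G_{L,t}(v_2)},
\end{align*}
Lemma~\ref{lem:many-to-two} with $T = \tll$ yields
\begin{align*}
    \E_{\x(z)}[\bar{\Lambda}_{L,t}^2] = \E_{\x(z)}[\bar{\Lambda}_{L,t}] + 2 \int_0^{\tll} e^{\tll + s}\, \cI(s)\, \d s,
\end{align*}
where $\cI(s) := \E_{\x(z)}[\one_{G_{L,t}(v)} \one_{G_{L,t}(v')} \mid \tau = \tll - s]$; on that conditioning the two particles share an ancestral Bessel trajectory up to time $\tll - s$ and then evolve independently for the remaining time $s$. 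Since Lemma~\ref{lem:barrier-equiv} gives $\E_{\x(z)}[\bar{\Lambda}_{L,t}] \uasymp \fM_{L,z}$, it suffices to show that $\int_0^{\tll} e^{\tll + s}\, \cI(s)\, \d s = o_u(\fM_{L,z})$.

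My plan is to condition further on the common ancestor's position $q := R^{(v)}_{\tll - s} = R^{(v')}_{\tll - s}$ at the branching time, which factorizes $\cI(s)$ into the Bessel transition density $p^R_{\tll - s}(\x(z), q)$, the Brownian-bridge probability that the common path satisfies $\B_{[0, \tll-s]}$, and $\Phi(q, s)^2$, where $\Phi(q,s)$ is the probability that a Bessel process started from $q$ stays inside the appropriately shifted $\B$-window for time $s$, ends with $\y(R_s) \in [\ell^{1/3}, \ell^{2/3}]$, and generates a descendant exceeding $m_t + y$ at the final time. I estimate each factor with the tools already developed: the Girsanov transform~\eqref{eq.girsanov} (with the exponential factor absorbed via the lower-barrier bound~\eqref{eqn:trivial-exp-gir}), the Brownian-bridge ballot estimates of Lemma~\ref{lem:brz-modified}, and the sharp one-dimensional BBM tail bound applied to $\fT_\ell$. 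Squaring $\Phi(q,s)$ produces two independent Brownian-bridge cost factors (of order $s^{-3/2}$ each) and two independent $\fT_\ell$-tail factors (totaling $\ell^2 e^{-2\sqrt{2}\,\ell}$), which together should beat the $e^s$ from the many-to-two prefactor.

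The main obstacle is to carry out the $q$- and $s$-integrations uniformly in all regimes of $s$. I will split $[0, \tll]$ into three pieces: for small $s \leq \ell_1$, the two branches must simultaneously land in thin windows starting from a common point, and a Gaussian small-ball bound provides an extra $s^{-1/2}$ gain that controls this regime; for intermediate $s$, the squared Brownian-bridge costs combined with the squared $\fT_\ell$-tail outweigh the $e^s$ factor; for large $s$ (when the common-ancestor segment is short), a symmetric estimate closes the loop. A key structural feature is that the truncation at $\tll = \tl - \ell$ rather than at $\tl$ ensures that, after the split, each branch still runs for time $\ell$ before producing the rare descendant, so the two $\fT_\ell$-events are genuinely independent and squaring gives a true decorrelation gain rather than a wasted factor.

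The step I expect to require the most care is verifying that the Girsanov ratio $q^{-\alpha_d}$ from the Bessel-to-Brownian change of measure, the polynomial slack built into the lower barrier $Q_z$, and the two-point endpoint structure combine to produce the expected $\fM_{L,z}$-scaling with enough prefactor decay in $s$ to make the whole integral vanish. This bookkeeping mirrors the computations that will establish Lemma~\ref{lem:barrier-equiv}, but applied to a two-leg path geometry (common ancestor plus two independent branches) instead of a single trajectory.
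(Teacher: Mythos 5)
Your plan mirrors the paper's proof step for step: apply the many-to-two lemma, condition on the common ancestor's position at the split time $j_s = \tll - s$, factor the joint probability into a common-path cost times $\Phi(q,s)^2$, and control the $s$-integral by splitting into small/intermediate/large regimes using the Girsanov transform, the ballot estimates, and the one-dimensional BBM right-tail bound (the paper uses cutoffs at $\ell^{2/3}$ and $\tll-\ell^{2/3}$ rather than $\ell_1$, but either choice closes). One correction to your heuristic bookkeeping, so it does not derail the actual computation: each $\fT_\ell$-tail factor is of order $w_\ell\,\ell^{-3/2}e^{-\sqrt{2}\,w_\ell}$ with $w_\ell\in[\ell^{1/3},\ell^{2/3}]$, not $\sim\ell\,e^{-\sqrt{2}\,\ell}$; the $e^{s}$ from the many-to-two prefactor is cancelled \emph{exactly} by the Gaussian normalizations ($e^{-\frac12(m_t/t)^2 j_s}$ from the common leg and two copies of $e^{-\frac12(m_t/t)^2 s}$ from the squared branch), so the vanishing of the $s$-integral relative to $\fM_{L,z}$ is driven by purely polynomial factors — the squared bridge cost $s^{-3}$ against the single-leg $j_s^{-3/2}$ — exactly as in~\eqref{eqn:2mom:P2} and the subsequent integral estimate.
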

Lemma~\ref{lem:2ndmom} is proved in Section~\ref{sec:2ndmom}, also via the Girsanov transform and barrier estimates. Its validity follows from the fact that our events $G_{L,t}(v)$ are suitably de-correlated.
Lemmas~\ref{lem:barrier-equiv},~\ref{lem:2ndmom}, and~\ref{lem:parabola_B0} yield Propositions~\ref{prop:right_tail_equiv} and~\ref{prop:1stmom-asymp} below, which are proved in Sections \ref{subsec-righttailasymp-equiv} and \ref{subsec-righttail-1stmom}, respectively.

\begin{proposition}
\label{prop:right_tail_equiv}
For any $y \in \R$, 
\begin{align}
    \P_{\x(z)} \pth{ 
            R^*_{\tl} > m_t+y
    } 
    \usim \E_{\x(z)}[ \bar{\Lambda}_{L,t}] \, .
    \label{eq.right_tail_equiv}
\end{align}
\end{proposition}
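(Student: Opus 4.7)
The plan is to sandwich $\P_{\x(z)}(R^*_{\tl} > m_t + y)$ between matching upper and lower bounds, both asymptotically equal in the sense of $\usim$ to $\E_{\x(z)}[\bar{\Lambda}_{L,t}]$. The two principal inputs are Lemmas~\ref{lem:barrier-equiv} and~\ref{lem:2ndmom}; Lemma~\ref{lem:parabola_B0} together with a Gaussian tail estimate for the Bessel process will absorb the error terms. Throughout, I will use that $\E_{\x(z)}[\bar{\Lambda}_{L,t}] \uasymp \fM_{L,z}$ (by Lemma~\ref{lem:barrier-equiv}), so that any contribution of order $o_u(\fM_{L,z})$ is negligible.

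For the lower bound, observe that $G_{L,t}(v) \subseteq \mathfrak{T}_\ell(v)$ by construction, whence $\{\bar{\Lambda}_{L,t} \geq 1\} \subseteq \{R^*_{\tl} > m_t + y\}$. The Paley--Zygmund inequality combined with Lemma~\ref{lem:2ndmom} then yields
\begin{equation*}
\P_{\x(z)}(R^*_{\tl} > m_t + y)
\;\geq\; \P_{\x(z)}(\bar{\Lambda}_{L,t} \geq 1)
\;\geq\; \frac{(\E_{\x(z)}[\bar{\Lambda}_{L,t}])^2}{\E_{\x(z)}[\bar{\Lambda}_{L,t}^2]}
\;\usim\; \E_{\x(z)}[\bar{\Lambda}_{L,t}].
\end{equation*}

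For the upper bound, decompose $\{R^*_{\tl} > m_t+y\} = \bigcup_{v\in\cN_{\tl-\ell}} \mathfrak{T}_\ell(v)$. On the complement of $\{\Gamma_{L,t} \geq 1\}$, any $v \in \cN_{\tl-\ell}$ with $\mathfrak{T}_\ell(v)$ must fail at least one of the defining conditions of $F_{L,t}(v)$ in~\eqref{def:F-event}; this forces the event $\cE_1 \cup \cE_2$, where
\begin{align*}
\cE_1 &:= \{\exists\, s \in [0,\tl-\ell],\; v \in \cN_s : R^{(v)}_s > B_0(s)\}, \\
\cE_2 &:= \{\exists\, v \in \cN_{\tl-\ell} : R^{(v)}_{\tl-\ell} \leq t/\sqrt{d} \text{ and } \mathfrak{T}_\ell(v)\}.
\end{align*}
Markov's inequality on $\Gamma_{L,t}$ therefore yields
\begin{equation*}
\P_{\x(z)}(R^*_{\tl} > m_t+y) \;\leq\; \E_{\x(z)}[\Gamma_{L,t}] + \P_{\x(z)}(\cE_1) + \P_{\x(z)}(\cE_2),
\end{equation*}
and Lemma~\ref{lem:barrier-equiv} converts the leading term into $(1+o_u(1))\E_{\x(z)}[\bar{\Lambda}_{L,t}]$. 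Lemma~\ref{lem:parabola_B0} directly gives $\P_{\x(z)}(\cE_1) \ulesssim \ell^{-\sqrt{2}}\fM_{L,z} = o_u(\fM_{L,z})$.

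The main remaining obstacle is $\P_{\x(z)}(\cE_2)$. The plan is to apply the many-to-one lemma after conditioning on $\cF_{\tl-\ell}$, and then apply it once more inside $\mathfrak{T}_\ell(v)$, so as to reduce the bound to $\P_{\x(z)}(\cE_2) \leq e^{\tl} \sup_{r \leq t/\sqrt{d}} \P_r(R_\ell > m_t + y)$. Realizing the Bessel process as the norm of a $d$-dimensional Brownian motion yields the pointwise comparison $R_\ell \leq r + \|B_\ell^{(d)}\|$ under $\P_r$, and the strict inequality $\sqrt{2} > 1/\sqrt{d}$ (valid for every $d \geq 2$) ensures that the required displacement $m_t + y - r$ is at least $ct$ for some $c>0$ and all sufficiently large $t$. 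A standard Gaussian (chi) tail bound then produces a factor $\exp(-\Omega(t^2/\ell))$ which, since $\ell \leq L^{1/6}$ remains bounded once $L$ is fixed, easily overwhelms the $e^{\tl}$ prefactor; as $\fM_{L,z}$ has no $t$-dependence, this gives $\P_{\x(z)}(\cE_2) = o_u(\fM_{L,z})$, which closes the argument.
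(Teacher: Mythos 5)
Your proposal is correct and follows essentially the same route as the paper: Paley--Zygmund with Lemma~\ref{lem:2ndmom} for the lower bound; a first-moment/Markov bound plus Lemma~\ref{lem:barrier-equiv}, Lemma~\ref{lem:parabola_B0}, and a Gaussian tail estimate on $\cE_2$ for the upper bound. The only cosmetic difference is in controlling $\cE_2$, where you bound $R_\ell$ via the triangle inequality and a chi tail, whereas the paper projects onto a single coordinate and union-bounds; both yield the needed $e^{-\Omega(t^2/\ell)}$ decay for fixed $L$.
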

\begin{proposition}
\label{prop:1stmom-asymp}
There exists a constant $\gamma^* > 0$ such that for all $y \in \R$, we have 
\begin{align}
    \E_{\x(z)}\brak{\bar{\Lambda}_{L,t}}
    \usim
    \gamma^* \fM_{L,z} \, .
    \label{eq.1stmom.exact}
\end{align}
\end{proposition}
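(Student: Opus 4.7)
\textbf{Proof plan for Proposition~\ref{prop:1stmom-asymp}.} By Lemma~\ref{lem:barrier-equiv}, the proposition reduces to showing that the one-dimensional quantity
\begin{equation*}
I(\ell) := \int_{\ell^{1/3}}^{\ell^{2/3}} w\, e^{w\sqrt{2}}\, \P\big(W_\ell^* > \sqrt{2}\ell + w\big)\, \d w
\end{equation*}
converges to a finite, strictly positive limit $I_\infty$ as $\ell \to \infty$. Indeed, since $\ell(L) \to \infty$ by~\eqref{eqn:def-ell}, composing this convergence with the first $\usim$ in Lemma~\ref{lem:barrier-equiv} then yields $\E_{\x(z)}[\bar{\Lambda}_{L,t}] \usim \gamma^*\, \fM_{L,z}$ with $\gamma^* := (2^{(1+\alpha_d)/2}/\sqrt{\pi})\, I_\infty$; strict positivity of $\gamma^*$ is forced by the $\uasymp 1$ assertion in Lemma~\ref{lem:barrier-equiv}.

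To establish the convergence of $I(\ell)$, the plan is to prove a uniform tail asymptotic for one-dimensional BBM of the form
\begin{equation*}
\P\big(W_\ell^* > \sqrt{2}\ell + w\big) \;=\; \frac{K\, w}{\ell^{3/2}}\, e^{-\sqrt{2}\, w \,-\, w^2/(2\ell)}\, \big(1 + o(1)\big) \quad\text{as } \ell \to \infty,
\end{equation*}
uniformly for $w \in [\ell^{1/3}, \ell^{2/3}]$, for some explicit constant $K > 0$. This formula interpolates between the Bramson--Lalley--Selke moderate-deviation tail $\P \asymp w\, e^{-\sqrt{2}w}/\ell^{3/2}$ (accurate when $w \ll \sqrt{\ell}$) and a Gaussian large-deviation correction $e^{-w^2/(2\ell)}$ that becomes active once $w$ is comparable to $\sqrt{\ell}$; with $c := \sqrt{2} + w/\ell$ the formula matches the Chauvin--Rouault large-deviation asymptotic $C(c)\ell^{-1/2} e^{-\ell\psi(c)}$ after noting that the prefactor $C(c)$ must vanish linearly in $c - \sqrt{2}$ in order to be consistent with the Bramson regime. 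This uniform asymptotic can be derived by running a modified second moment method for one-dimensional BBM at time $\ell$, entirely analogous to (but technically simpler than) the program carried out in Section~\ref{sec:mod2ndmom} at the $d$-dimensional level: one considers particles at time $\ell - O(1)$ near the tilted linear ray $s \mapsto (\sqrt{2}+w/\ell)s$, truncates below by a Bramson-type curved barrier to ensure the second moment matches the square of the first, and then computes the first moment via Girsanov.

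Granted this asymptotic, the substitution $u = w/\sqrt{\ell}$ (so $\d w = \sqrt{\ell}\, \d u$, $w^2/(2\ell) = u^2/2$) converts
\begin{equation*}
I(\ell) \;=\; K \int_{\ell^{-1/6}}^{\ell^{1/6}} u^2\, e^{-u^2/2}\, \big(1+o(1)\big)\, \d u,
\end{equation*}
whence dominated convergence (with integrable dominant $u^2 e^{-u^2/2}$ on $[0,\infty)$) yields $I_\infty = K \int_0^{\infty} u^2\, e^{-u^2/2}\, \d u = K\, \sqrt{\pi/2}$ and hence the explicit identification $\gamma^* = 2^{\alpha_d/2}\, K > 0$.

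The principal technical obstacle is the uniform tail estimate in the intermediate regime $w \asymp \sqrt{\ell}$, where one must simultaneously capture the polynomial Bramson prefactor $w/\ell^{3/2}$ and the Gaussian suppression $e^{-w^2/(2\ell)}$: neither the standard Bramson picture (accurate only for $w \ll \sqrt{\ell}$) nor the Chauvin--Rouault large-deviation formula (sharp only at $w = \Theta(\ell)$) is by itself adequate. Knitting the two together requires a second-moment computation along the tilted linear barrier, in direct one-dimensional analogy with the Bessel-level arguments already developed in this paper.
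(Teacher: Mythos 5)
Your reduction of the proposition to the convergence of
\[
I(\ell)=\int_{\ell^{1/3}}^{\ell^{2/3}} w\,e^{w\sqrt 2}\,\P\big(W_\ell^*>\sqrt2\,\ell+w\big)\,\d w
\]
is correct, and so is the arithmetic in your dominated-convergence step (the substitution $u=w/\sqrt\ell$, the value $\int_0^\infty u^2 e^{-u^2/2}\d u=\sqrt{\pi/2}$, and the resulting identification $\gamma^*=2^{\alpha_d/2}K$). However, the paper proves this proposition by an entirely different and much softer route, and your route has a genuine unfilled gap.

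The paper's argument exploits that the ratio $\P_{\x(z)}(R_{\tl}^*>m_t+y)/\fM_{L,z}$ is \emph{independent of} $\ell$. Combining~\eqref{eq.right_tail_equiv} with~\eqref{eq.1stmom.finalintegral} gives that this $\ell$-free quantity is $\usim\tfrac{2^{(1+\alpha_d)/2}}{\sqrt\pi}\,g(\ell)$ for \emph{every} admissible $\ell(L)$. Comparing two admissible choices $\ell,\ell'$ then forces $g(\ell)/g(\ell')\to1$; since $g$ is uniformly bounded away from $0$ and $\infty$ by the $\uasymp1$ part of~\eqref{eq.1stmom.finalintegral}, the Cauchy criterion yields $g(\ell)\to\gamma>0$ without ever identifying the limit. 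This sidesteps your ``principal technical obstacle'' completely.

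Your approach instead requires the sharp uniform tail asymptotic
\[
\P\big(W_\ell^*>\sqrt2\,\ell+w\big)=\frac{Kw}{\ell^{3/2}}\,e^{-\sqrt2\,w-w^2/(2\ell)}\big(1+o(1)\big),\qquad w\in[\ell^{1/3},\ell^{2/3}],
\]
which is not established anywhere in the paper, is not elementary, and is not supplied by the tools the paper does use (Lemma~\ref{lem:bram8.2} and its Corollary~\ref{cor:bram8.2} give only one-sided bounds, not a $(1+o(1))$ asymptotic, and Mallein's~\eqref{eq.mallein} gives only a two-sided $\uasymp$). Your sketch---running a one-dimensional modified second moment method along a tilted ray with curved truncation---is a plausible strategy and would indeed give you the stronger conclusion of an explicit formula for $\gamma^*$, but carrying it out rigorously (in particular, controlling the second moment and identifying the constant $K$, which must be tied to the Bramson--Lalley--Selke constant for matching in the $w\ll\sqrt\ell$ regime) amounts to writing a separate, nontrivial lemma. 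As written, the proposal leaves that key step as a gap. You should recognize that the paper's argument is deliberately constructed to avoid needing this intermediate-deviations asymptotic: the $\ell$-independence of the target probability does all the work, at the cost of losing the explicit form of $\gamma^*$.
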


\begin{proof}[Proof of Theorem~\ref{thm:right_tail_asymp}]
Theorem~\ref{thm:right_tail_asymp} follows immediately from Propositions~\ref{prop:right_tail_equiv} and~\ref{prop:1stmom-asymp}.
\end{proof}

\subsection{Proof of Proposition~\ref{prop:right_tail_equiv}}
\label{subsec-righttailasymp-equiv}
A trivial first-moment bound gives 
\begin{align}
    \P_{\x(z)} \pth{ 
            R^*_{\tl} > m_t+y
    }
    \leq 
    \cP_1 
    +\cP_2 
    +\E_{\x(z)} [ \Gamma_{L,t}] \,,
    \label{eqn:right-tail-equiv:1}
\end{align}
where we define 
\begin{align*}
    \cP_1
    &:= \P_{\x(z)} \bigg( \bigcup_{v \in \cN_{\tl -\ell}} \bigcup_{s \in [0, \tl - \ell]} \{ R_s^{(v)} > B_0(s) \}\bigg) \,, \text{ and} \\
    \cP_2 &:= \P_{\x(z)} \Big( \exists v \in \cN_{\tl} \,:\; R_{\tll}^{(v)} < \frac{t}{\sqrt{d}},~R_{\tl}^{(v)} > m_t +y \Big) \,.
\end{align*}
Note that $\cP_1$ is bounded in Lemma~\ref{lem:parabola_B0}.

\begin{claim}
For all $d \geq 2$ and $L>0$, we have 
\begin{align}
    \lim_{t\to\infty} 
    \sup_{z \in [L^{1/6}, L^{2/3}]}
    \P_{\x(z)} \Big( \exists v \in \cN_{\tl} \,:\; R_{\tll}^{(v)} < \frac{t}{\sqrt{d}},~R_{\tl}^{(v)} > m_t +y \Big) = 0\,.
    \label{eqn:claim-t-rootd}
\end{align}

\begin{proof}[Proof of Claim]
By a union bound and the many-to-one lemma, we have 
\begin{align*}
    \P_{\x(z)} \Big( \exists v \in \cN_{\tl} \,:\; R_{\tll}^{(v)} < \frac{t}{\sqrt{d}},~R_{\tl}^{(v)} > m_t +y \Big)
    \leq 
    e^{\tl} \P_{\x(z)}\Big(R_{\tll} < \frac{t}{\sqrt{d}}, ~R_{\tl} > m_t+y\Big) \,,
\end{align*}
where $(R_s)$ is a $d$-dimensional Bessel process. Denote the left-hand side of the above display $\cP$.
Coupling $(R_s)$ with a $d$-dimensional Wiener process $(W_s^{(1)}, \dots, W_s^{(d)})$ such that $R_s = \|(W_s^{(1)}, \dots, W_s^{(d)}) \|$, it follows that 
\begin{align*}
    \{ R_{\tll} < \frac{t}{\sqrt{d}}, ~R_{\tl} > m_t+y \}
    \subseteq 
    \bigcup_{i=1}^d \Big \{ W_{\tilde t-\ell}^{i} < \frac{t}{\sqrt d}\,, W_{\tilde t}^{i} > \frac{m_t+y}{\sqrt{d}} \Big \}\,,
\end{align*}
whence a union bound shows that
\begin{align}
    \cP 
    \leq de^{\tl}  \P_{\x(z)} \left( W_{\tilde t-\ell} < \frac{t}{\sqrt d}\,, W_{\tilde t}> \frac{m_t+y}{\sqrt{d}}\right)
    \leq 
    t\sqrt{d}e^t \P_{\frac{t}{\sqrt{d}}}\left( W_\ell > \frac{m_t+y}{\sqrt d}\right)\,,
\end{align}
where the second inequality above holds for all $t\geq \ell^{2/3}$ (the Wiener process must travel a distance of order $t$ in a time interval of length $\ell$). That $\cP \to 0$ as $t\to\infty$ then follows by Gaussian tail estimates, concluding the proof of the claim.
\end{proof}
\end{claim}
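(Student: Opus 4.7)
The plan is to combine a union bound, the many-to-one lemma, and a single-coordinate Gaussian tail estimate. First, I would apply a union bound together with Lemma~\ref{lem:many-to-one} to reduce the event
\[
\bigl\{\exists\, v\in \cN_{\tl}:\ R^{(v)}_{\tll}<t/\sqrt d,\ R^{(v)}_{\tl}>m_t+y\bigr\}
\]
to a one-particle statement about a single $d$-dimensional Bessel process $R$ started at $\x(z)$, giving
\[
\cP \ \le\ e^{\tl}\,\P_{\x(z)}\bigl(R_{\tll}<t/\sqrt d,\ R_{\tl}>m_t+y\bigr).
\]

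Next, I would represent $R_s=\|(W^{(1)}_s,\ldots,W^{(d)}_s)\|$ for independent standard $1$D Brownian motions $W^{(i)}$ and translate the Bessel event into its coordinate version. On $\{R_{\tll}<t/\sqrt d\}$ each coordinate satisfies $|W^{(i)}_{\tll}|<t/\sqrt d$, since $\sum_i (W^{(i)}_{\tll})^2 < t^2/d$. On the other hand, $\{R_{\tl}>m_t+y\}$ forces $\sum_i (W^{(i)}_{\tl})^2>(m_t+y)^2$, so by pigeonhole at least one coordinate satisfies $|W^{(i)}_{\tl}|>(m_t+y)/\sqrt d$. A union bound over the $d$ coordinates thus reduces the problem to bounding, for a single $1$D Brownian motion, the probability
\[
\P\bigl(|W^{(i)}_{\tll}|<t/\sqrt d,\ |W^{(i)}_{\tl}|>(m_t+y)/\sqrt d\bigr).
\]
Conditioning on $W^{(i)}_{\tll}$, the increment $W^{(i)}_{\tl}-W^{(i)}_{\tll}$ is a Gaussian of variance $\ell=\tl-\tll$ that must exceed, in absolute value, $(m_t+y-t)/\sqrt d=(\sqrt 2-1)t/\sqrt d + O(\log t)$. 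Since $\sqrt 2>1/\sqrt d$ for every $d\ge 2$, this required displacement is at least $c_d\, t$ for some $c_d>0$ depending only on $d$, once $t$ is large.

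Because $\ell=\ell(L)\le L^{1/6}$ is fixed as $t\to\infty$, a standard Gaussian tail bound yields
\[
\P\bigl(|W^{(i)}_{\tl}-W^{(i)}_{\tll}|\ge c_d t\bigr) \ \le\ 2\exp\!\bigl(-c_d^2\, t^2/(2\ell)\bigr),
\]
and multiplying by the prefactor $d\,e^{\tl}\le d\,e^{t}$ still leaves a quantity that decays super-exponentially in $t$, uniformly in $z\in[L^{1/6},L^{2/3}]$. There is no real technical obstacle here: the quadratic Gaussian exponent dwarfs the linear $e^{t}$ growth for any fixed $\ell$. The only point requiring a moment's care is the uniform dimensional inequality $\sqrt 2>1/\sqrt d$ for all $d\ge 2$, which handles $d=2$ and $d\ge 3$ at once without separate case analysis.
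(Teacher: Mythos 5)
Your proof is correct and follows essentially the same route as the paper's: union bound and many-to-one to reduce to a single Bessel particle, coordinate decomposition $R_s = \|(W_s^{(1)},\ldots,W_s^{(d)})\|$, a pigeonhole argument over the $d$ coordinates, and then a Gaussian tail estimate for a one-dimensional increment over the fixed time window $\ell$, whose super-exponential decay $e^{-\Theta(t^2/\ell)}$ overwhelms the $e^{\tl}$ prefactor. You handle the signs a bit more carefully than the paper does by keeping the absolute values $|W^{(i)}_{\tl}|>(m_t+y)/\sqrt{d}$ (the paper's displayed inclusion as written silently drops them), which is a small improvement in rigor and only costs a factor of $2$. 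One nitpick: the dimensional inequality you invoke, $\sqrt{2} > 1/\sqrt{d}$, is not the one actually at play. The required displacement is $(m_t+y-t)/\sqrt{d} = (\sqrt{2}-1)t/\sqrt{d} + O(\log t)$, which is of order $t$ as soon as $\sqrt{2} > 1$, a condition with no $d$-dependence at all; your stated inequality is true but vacuous here. The choice of threshold $t/\sqrt{d}$ (rather than, say, $t$) in the event is what makes the gap positive independently of $d$, and that choice is already baked into the statement.
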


Dividing both sides of~\eqref{eqn:right-tail-equiv:1} by $\E_{\x(z)}[\bar{\Lambda}_{L,t}]$ and then applying 
the bounds given in~\eqref{eq.lem:parabola_b0}, \eqref{eq.1stmom.bananaequiv}, \eqref{eq.1stmom.finalintegral}, and~\eqref{eqn:claim-t-rootd}, we find
\begin{align}
    \lim_{L \to \infty} \limsup_{t \to \infty} \sup_{z \in \zint} \frac{\P_{\x(z)} \pth{ 
            R^*_{\tl} > m_t+y
    } }
    {
        \E_{\x(z) }[ \bar \Lambda_{L,t}] 
    } 
    \leq 1 \, .
    \label{eq.righttailequiv.1}
\end{align}
For the lower bound, we have 
\begin{align*}
    &\P_{\x(z)} \pth{ 
        R^*_{\tl} > m_t +y
    } 
    \geq 
    \P_{\x(z)}
    \bigg(
        \bigcup_{v \in \cN_{\tl}}
        G_{v,t,L}(y)
    \bigg)
    \geq 
    \frac{\E_{\x(z)}[\bar{\Lambda}_{L,t}]^2}{\E_{\x(z)}[\bar{\Lambda}_{L,t}^2]} \, ,
\end{align*}
where the Paley--Zygmund inequality was used in the last inequality. Lemma~\ref{lem:2ndmom} implies that
\begin{align}
    \lim_{L \to \infty} \liminf_{t \to \infty} \inf_{z \in \zint}
    \frac{\P_{\x(z)} \pth{ 
        R^*_{\tl} > m_t+y
    } }
    {
        \E_{\x(z) }[ \bar \Lambda_{L,t}] 
    }
    \geq 1.
    \label{eq.righttailequiv.2}
\end{align}
Equations~\eqref{eq.righttailequiv.1} and~\eqref{eq.righttailequiv.2} imply~\eqref{eq.right_tail_equiv}.
\qed

\subsection{Proof of Proposition~\ref{prop:1stmom-asymp}}
\label{subsec-righttail-1stmom}

From~\eqref{eq.1stmom.finalintegral}, it remains to show that there exists some
$\gamma >0$ such that
\begin{align*}
g(\ell):= \int_{\ell^{1/3}}^{\ell^{2/3}} w e^{w \sqrt{2}} 
            \P\big( W_{\ell}^* > \sqrt{2}\ell + w \big) \d w
            \usim \gamma.
\end{align*}
Since $\ell$ has no $z$ or $t$ dependence, this amounts to showing 
$
    \lim_{L \to \infty} g(\ell)
    = \gamma \, ,
$
for some $\gamma > 0$.
To see that the limit exists, note that~\eqref{eq.right_tail_equiv} and~\eqref{eq.1stmom.finalintegral} together imply
\begin{align}
    \frac{
        \P_{\x(z)} (R_{\tl}^* > m_t+y)
            }
            {\fM_{L,z}}
    &\usim
            \frac{2^{ \frac{1+\alpha_d}{2} }}
            {\sqrt{\pi}}
            g(\ell) \,.
            \label{eqn:1stmom-no-ell}
\end{align}
The left-hand side above has no $\ell$ dependence, and so it follows that the convergence in~\eqref{eqn:1stmom-no-ell} will hold for any sequence of $\ell:= \ell(L)$ satisfying~\eqref{eqn:def-ell}.
In particular, we find that for any two $\ell:=\ell(L)$  and $\ell':=\ell'(L)$ satisfying~\eqref{eqn:def-ell},
$\lim_{L \to \infty}
    g(\ell)/g(\ell')
    = 1 \, .$
Since \eqref{eq.1stmom.finalintegral} implies that there exist constants $C_1, C_2 >0$ such that  $0 < C_1 \leq g(\ell), g(\ell') \leq C_2$, the previous limit yields that
$\lim_{ L \to \infty}  \abs{ g(\ell) - g(\ell') } = 0$.
Because $\ell, \ell' \to \infty$ as $L \to \infty$ and they were chosen arbitrarily according to~\eqref{eqn:def-ell}, it follows by the Cauchy criterion that there exists some $\gamma >0 $ such that
$\lim_{L \to \infty} g(\ell) = \gamma$. This concludes the proof.
\qed

\begin{remark}
  The convergence of $g(\ell)$ was also proved in Lemmas~4.4 and 4.5 of \cite{ABK13} using asympotics of the solution to the F-KPP equation.
\end{remark}

\section{Proof of Lemma~\ref{lem:barrier-equiv}: a first moment analysis}
\label{sec:1stmom}

Throughout this section, we fix $d\geq 3$, so that  $\alpha_d -\alpha_d^2 \leq 0$, and therefore the $\exp(\cdot)$ factor in~\eqref{eq.girsanov} may be upper-bounded by $1$. See Section~\ref{sec-2d} for the two-dimensional case.

\subsection{Estimates on the right tail of \texorpdfstring{$R_s^*$}{R*-s}}
\label{subsec:bbm-est}

Recall the definition of $\mathfrak{T}_r(v)$ from~\eqref{def:T-tailevent}, and note that since the Bessel process is homogeneous Markov, we have for any $s, r \geq 0$ and any $v \in \cN_s$, 
\[
\P(\mathfrak{T}_r(v)) = \P_{R_s^{(v)}}(R_r^* > m_t+y) \,,
\]
conditional on $R_s^{(v)}$. In light of the events defined in~\eqref{def:F-event}--\eqref{def:G-event}, estimates on $\P_x(R_{\ell}^*>m_t+y)$ for $x > t/\sqrt{d}$ will be crucial to our analysis in Section~\ref{subsec:proof_bananaequiv}.

\begin{lemma}
\label{lem:F_w<c-ell}
For every $\epsilon< \sqrt{2}$ and for all $x \in  [\epsilon t, m_t+y]$, we have 
\begin{align}
    \P_x(R_{\ell}^* >m_t+y) \ulesssim e^{\ell - \frac{(m_t+y-x)^2}{2\ell}}\,,
    \label{eqn:tail-bd-3sqrt2}
\end{align}
where the implied constant depends on $\epsilon$ and $d$.

\begin{proof}
We use a union bound, followed by the many-to-one lemma and the Girsanov transform as usual:
\begin{align*}
    \P_x(R_{\ell}^* >m_t+y)
    &\leq 
    \E_x \bigg[ \sum_{v \in \cN_{\ell}} \one_{ \{R^{(v)}_{\ell} > m_t +y \}}  \bigg]  
    = e^{\ell} \P_x (R_{\ell} > m_t+ y) 
     \\
    &\leq 
    e^{\ell} \E_x \brak{ \pth{\frac{W_{\ell}}{x}}^{\alpha_d} 
        \one_{ \{ W_{\ell} > m_t +y \} }
    } = \frac{e^{\ell}}{\sqrt{2\pi \ell}}
    \int_{m_t+y-x}^{\infty}  \Big(\frac{w+x}{x}\Big)^{\alpha_d}  e^{-\frac{w^2}{2\ell}}\d w ,
    \end{align*}
using that, by our assumption on $x$ we have $((w+x)/x)^{\alpha_d} < C(\epsilon, \delta)$ for some $C(\epsilon, d)>0$ over the range
$w \leq m_t+y+\ell^{2/3}$, say, whereas for larger $w$, the contribution of the integral becomes negligible as first~$t\to\infty$ then $L\to\infty$. Thus, 
\begin{align*}
   \P_x(R_{\ell}^* >m_t+y)
   &\ulesssim e^{\ell}\ell^{-1/2} \int_{m_t+y-x}^{\infty} e^{-\frac{w^2}{2\ell}} \d w 
   \leq C e^{\ell- \frac{(m_t+y-x)^2}{2\ell}} 
\end{align*}
for an absolute constant $C>0$.
\end{proof}
\end{lemma}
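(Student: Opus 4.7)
The plan is to follow the familiar union-bound / many-to-one / Girsanov pipeline used throughout the paper and reduce the statement to a Gaussian tail estimate. First I would apply a union bound to write $\P_x(R_\ell^* > m_t+y) \leq \E_x\bigl[\sum_{v \in \cN_\ell} \one_{\{R_\ell^{(v)} > m_t+y\}}\bigr]$, then invoke Lemma~\ref{lem:many-to-one} to rewrite this as $e^\ell \P_x(R_\ell > m_t+y)$, where $R_\cdot$ is now a single $d$-dimensional Bessel process started from $x$.

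Next I would apply the Girsanov transform~\eqref{eq.girsanov}, dropping both the indicator $\one_{\{W_u>0\}}$ and the exponential factor (which is at most $1$ since $\alpha_d \leq \alpha_d^2$ for $d \geq 3$, the standing assumption of Section~\ref{sec:1stmom}). Setting $w = W_\ell - x$, this yields
\[
\P_x(R_\ell > m_t+y) \leq \frac{1}{\sqrt{2\pi \ell}} \int_{m_t+y-x}^{\infty} \Bigl(\frac{w+x}{x}\Bigr)^{\alpha_d} e^{-w^2/(2\ell)} \d w.
\]
The main obstacle is the Girsanov prefactor $((w+x)/x)^{\alpha_d}$, which can grow with $w$; this is precisely why the hypothesis $x \geq \epsilon t$ is needed.

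To control it I would split the domain of integration at $w_0 := m_t + y + \ell^{2/3}$. On $[m_t+y-x, w_0]$, the hypothesis $x \geq \epsilon t$ combined with $w \leq w_0 = (\sqrt{2} + o(1))t$ bounds the prefactor uniformly by a constant $C = C(\epsilon, d)$. On the tail $[w_0, \infty)$, the Gaussian decay $e^{-w^2/(2\ell)}$ crushes the polynomial prefactor and yields a contribution that is negligible compared to the target bound (it can be absorbed into $e^\ell$ trivially, since $w_0^2/(2\ell) \gg \ell + t$).

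Finally, since the hypothesis $x \leq m_t + y$ forces the lower limit $a := m_t+y-x$ to be nonnegative, the standard one-sided Gaussian tail bound $\int_a^\infty e^{-w^2/(2\ell)} \d w \leq C\sqrt{\ell}\, e^{-a^2/(2\ell)}$ applies uniformly over $a \geq 0$. Putting the pieces together yields
\[
\P_x(R_\ell^* > m_t+y) \ulesssim e^\ell \cdot \ell^{-1/2} \cdot \sqrt{\ell}\, e^{-(m_t+y-x)^2/(2\ell)} \ulesssim e^{\ell - (m_t+y-x)^2/(2\ell)},
\]
with implied constant depending only on $\epsilon$ and $d$, as desired.
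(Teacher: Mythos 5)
Your proposal reproduces the paper's proof essentially verbatim: union bound, many-to-one, Girsanov with the indicator and exponential factor dropped (using $d\geq 3$), splitting the integral at $m_t+y+\ell^{2/3}$ to bound the prefactor $((w+x)/x)^{\alpha_d}$ via $x\geq\epsilon t$, and the one-sided Gaussian tail estimate. The only cosmetic difference is that you spell out the elementary Gaussian tail bound $\int_a^\infty e^{-w^2/(2\ell)}\d w\leq C\sqrt{\ell}\,e^{-a^2/(2\ell)}$ for $a\geq 0$, which the paper leaves implicit.
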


A key result here is Corollary~\ref{cor:F_ell_1d}, which states that, uniformly over $x$ lying in an interval around $m_t+y$ of order $\ell$, 
$\P_x(R_{\ell}^*>m_t+y) \usim \P(W^*_\ell > m_t +y -x)$. This will allow us to approximate the branching Bessel process with $1$-dimensional BBM, which has the advantage of being shift-invariant and better understood.
For instance, McKean~\cite{McKean75} showed that 
$u(t,x)=\P(W^*_t > x)$ solves the F-KPP equation 
 $    u_t(t,x) = \frac{1}{2}u_{xx} - u^2+ u$ with Heaviside initial condition 
    $ u(0,x) = \one_{\{x\leq 0\}}$, a special case of the identity $1-u(t,x) = \E\left[ \prod_{v\in \cN_t} f(x+W_t^{(v)})\right]$ for initial data $u(0,x)=1-f(x)$. 
This connection was heavily exploited by Bramson~\cite{Bramson78, Bramson83} to prove convergence in distribution of $W_t^* - m_t(1)$, and will be further exploited throughout the rest of this subsection.

In what follows, we consider the natural coupling  of BBM in $\R$ and a branching $d$-dimensional Bessel process, obtained by using the same branching tree for both processes (hence the same set of particles in both processes at all times), and the same driving Brownian motion for each edge in the tree (to be used in each of the SDEs by the two processes for evaluating the location of the corresponding particle). 
Thus, for all $s >0$, each $v \in \cN_s$, is associated to a Bessel process $R^{(v)}_\cdot$ and a $1$-d Brownian motion $W^{(v)}_\cdot$ satisfying the SDE 
\[
dR^{(v)}_r = \frac{\alpha_d}{R^{(v)}_r} \d r + \d W^{(v)}_r \,.
\]
Note that $R^{(v)}_r \geq W^{(v)}_r$.

\begin{claim}\label{clm:coupling}
Consider the above coupling of $1$-dimensional BBM $\{W_s^{(v)}\}_{s \geq 0, v \in \cN_s}$ and a branching $d$-dimensional Bessel process $\{R_s^{(v)}\}_{s \geq 0, v\in \cN_s}$ started at $x$. Fix $\ell>0$, and let
\[ \cG_x = \Big\{ \min_{v\in \cN_{\ell}} \inf_{0\leq s \leq \ell}  R_s^{(v)} \geq x/4\Big\}\,.\]
Then there exists some constant $C_d>0$ such that for large enough $x$ (in terms of $\ell$),
\[ \sup_{0 \leq  s \leq \ell} \sup_{v\in \cN_s} \left|R_s^{(v)} - W_s^{(v)} \right| \one_{\cG_x} \leq C_d \ell / x \,.
\]
\end{claim}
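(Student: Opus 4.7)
The approach is to turn the coupling into a deterministic integral formula for $R^{(v)}_s - W^{(v)}_s$ and then convert a lower bound on $R$ into an upper bound on this difference. Since both processes start at $x$ and share the same driving Brownian motion on every edge of the branching tree, the Brownian increments in the Bessel SDE for particle $v$ cancel the corresponding increments of $W^{(v)}$ exactly, and concatenating the SDE along the ancestral line of $v$ yields the pathwise identity
\[ R^{(v)}_s - W^{(v)}_s \;=\; \alpha_d \int_0^s \frac{dr}{R^{(v)}_r} \qquad (0 \leq s \leq \ell,\; v \in \cN_s). \]
In particular, the difference is nonnegative and is a functional of the Bessel trajectories alone.

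Next I would propagate the lower bound $R_r \geq x/4$ from $\cN_\ell$ (where it is guaranteed on $\cG_x$) back along the tree to all ancestors on $[0,\ell]$. Because binary BBM has no spontaneous deaths, every $v \in \cN_s$ with $s \leq \ell$ has at least one descendant $v' \in \cN_\ell$; by construction, $(R^{(v)}_r)_{0 \leq r \leq s}$ is then the restriction to $[0,s]$ of $(R^{(v')}_r)_{0 \leq r \leq \ell}$. Consequently, on $\cG_x$, $R^{(v)}_r \geq x/4$ for every $r \in [0,s]$, every $s \in [0,\ell]$, and every $v \in \cN_s$.

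Substituting this bound into the integral identity gives, on $\cG_x$ and uniformly in $s \in [0,\ell]$ and $v \in \cN_s$,
\[ |R^{(v)}_s - W^{(v)}_s| \;\leq\; \alpha_d \int_0^s \frac{dr}{x/4} \;\leq\; \frac{4\alpha_d\,\ell}{x}, \]
so the claim holds with $C_d := 4\alpha_d = 2(d-1)$. There is no serious obstacle here: both ingredients (the SDE identity and the ancestral lower bound) are structural consequences of the coupling. The hypothesis that $x$ be large in terms of $\ell$ plays no role in the derivation itself; it is used downstream to ensure that the error $C_d \ell / x$ is small enough to transfer properties such as positivity and barrier-crossing behavior between $R^{(v)}$ and $W^{(v)}$.
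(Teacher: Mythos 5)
Your proof is correct and takes essentially the same route as the paper: integrate the drift term in the Bessel SDE along the ancestral path of $v$ to get $R^{(v)}_s - W^{(v)}_s = \alpha_d \int_0^s (R^{(v)}_r)^{-1}\,dr$, then bound the integrand by $4/x$ on $\cG_x$. The one place where you add useful detail is the explicit observation that the lower bound $R^{(v')}_r \ge x/4$ for $v'\in\cN_\ell$ propagates to every ancestor $v\in\cN_s$, $s\le\ell$, because $(R^{(v)}_r)_{r\le s}$ is the restriction of some $(R^{(v')}_r)_{r\le\ell}$ — the paper leaves this implicit.
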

\begin{proof}
For every particle $v \in \cN_s$, we have that $U_s := U_{s,v} = (R^{(v)}_s - W^{(v)}_s)\one_{\cG_x}$ satisfies, by definition of the coupling, the equation $\d U_s = \alpha_d (R^{(v)}_s)^{-1} \one_{\cG_x} \d s$ with initial condition~$0$, whence
$U_s \leq \int_{0}^s C_d / x \d r \leq 4 C_d s / x$ by definition of $\cG_x$, for some constant $C_d>0$. 
\end{proof}

\begin{corollary}
\label{cor:F_ell_1d}
Fix $c> 0$ and $\ell\geq 1$. Then uniformly over $x \in [m_t+y-c\ell, m_t+y + c\ell]$, we have
        \begin{align}
            \P_x(R_{\ell}^* >m_t+y)
= (1+o(1))\P(W^*_{\ell} > m_t +y-x) \,.
            \label{eqn:tailequiv}
            \end{align}
            where the $o(1)$-term goes to $0$ as $t\to\infty$.
\begin{proof}

Let $\cI := [m_t+y-c\ell, m_t+y + c\ell]$. Then since $\P_x(R_{\ell}^* >m_t+y) \geq \P_x(W^*_{\ell} > m_t +y)$, it suffices by squeezing to show that
\begin{align}
            \sup_{x \in \cI}\P_x(R^*_{\ell} > m_t +y) \leq (1+o(1)) \  \sup_{x\in I} \ \P_x(W^*_{\ell} > m_t +y)\,.
            \label{eqn:tailequiv:goal}
\end{align}
Towards this end, we first observe that the probability on the right-hand is uniformly bounded away from 0 as $t\to\infty$. Indeed,
\[    \inf_{x \in \cI} \P_x (W_{\ell}^* > m_t+y) \geq \P(W_{\ell} > c\ell) > 0
\]
independently of $t$. Next,  if
$K_{\ell,x} := x^2/(8\ell)$,
then  by Markov's inequality,
\[
    \P_x (|\cN_\ell|> e^{K_{\ell,x}})  \leq 
    e^{\ell-K_{\ell,x}} \,.
\]
Thus, recalling the definition of the event $\cG_x$ from Claim~\ref{clm:coupling}, we have that 
    \begin{align*} \P_x\left(\cG_x^c \right)
    &\leq \P_x\big(\cG_x^c \given |\cN_{\ell}| \leq e^{K_{\ell,x}} \big ) + \P_x\left(|\cN_\ell|\geq e^{K_{\ell,x}}\right) \nonumber \\ &\leq
    e^{K_{\ell,x}} \P_x \Big( \inf_{s\in [0,\ell]} R_s \leq x/4 \Big) + e^{\ell-K_{\ell,x}}
    \,,
    \end{align*}
where the last inequality used a union bound and the many-to-one lemma. 
As before, we can express $R_s$ as the norm of a $d$-dimensional Brownian motion, so
\begin{align*} 
\P_x \Big( \inf_{s\in [0,\ell]} R_s \leq x/4 \Big) \leq \P_x \Big(\inf_{s \in [0,\ell]} |W_s| \leq \frac{x}{4} \Big)\leq 
\P_x \Big(\inf_{s\in[0,\ell]} W_s \leq \frac x4\Big)  
\leq 2 e^{- \frac{9x^2}{32\ell} }\,.
\end{align*}
Combining the last two displays along with the definition of $K_{\ell,x}$, we see that
\[
    \P_x\left(\cG_x^c\right) \leq (2+e^\ell) e^{-x^2/(8\ell)}\,.
\]
For every fixed $\ell$, all $x \in \cI$ satisfy $x = \sqrt{2}t(1+o(1))$, and thus
\[  \sup_{x \in \cI} \P_x\left(\cG_x^c\right) \leq C(\ell)e^{ -(\frac14+o(1))t^2/\ell} =o(1) \,.
\]
Finally, by Claim~\ref{clm:coupling}, whenever $\cG$ holds, we have, using that $x\in\cI$,
\[ |R_s^*- \bar W_s^*| \leq \frac{C_d \ell} x \leq \frac{C_d\ell}{m_t+y-c\ell} \] 
for 1-dimensional BBM $\bar W_s^*$, coupled to $R_s^{(\cdot)}$ as in that claim. Hence, for every $x\in\cI$,
\[ \P_x(R_\ell^* > m_t+y) \leq \P_x\Big(W_\ell^* > m_t+y - \frac{C_d \ell}{m_t+y-c\ell}\Big) + \P(\cG^c)\,. 
\]
The proof is concluded by noticing that,
because the solution to the F-KPP equation is continuous, we have the following for every fixed $\ell$:
\[
\P_x \Big(m_t+y -\frac{C_d\ell}{m_t+y-c\ell} \leq W_{\ell}^* \leq  m_t+y\Big) =o(1)\,.
\]
Thus,  we have shown~\eqref{eqn:tailequiv:goal}.
\end{proof}
\end{corollary}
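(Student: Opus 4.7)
The plan is to couple the branching $d$-dimensional Bessel process $\{R_s^{(v)}\}$ with a binary $1$-dimensional BBM $\{W_s^{(v)}\}$ on the same tree and driven by the same Brownian motions, so that for every particle $v$ alive at time $s$ one has $\d R_s^{(v)} = (\alpha_d / R_s^{(v)})\,\d s + \d W_s^{(v)}$ with $R_0^{(v)} = W_0^{(v)} = x$. Because the Bessel drift is nonnegative, this coupling gives $R_s^{(v)} \geq W_s^{(v)}$ deterministically, hence $R_\ell^* \geq W_\ell^*$, which yields the trivial lower bound $\P_x(R_\ell^* > m_t+y) \geq \P(W_\ell^* > m_t+y-x)$. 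The task then is a matching upper bound.

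The key observation is that for $x$ in the prescribed window one has $x \sim \sqrt 2\, t$, so the Bessel drift $\alpha_d/R_s^{(v)}$ is only of order $1/t$ along trajectories that stay near their starting height; over the short time $\ell$ this produces only a $O(\ell/t) = o(1)$ perturbation of the driving Brownian motion. To make this rigorous, I would introduce the good event
\[
\cG_x := \Big\{|\cN_\ell| \leq e^{K_{\ell,x}}\Big\} \cap \Big\{\min_{v\in\cN_\ell}\inf_{0\leq s\leq\ell} R_s^{(v)} \geq x/4\Big\}, \qquad K_{\ell,x} := x^2/(8\ell).
\]
Markov's inequality and $\E|\cN_\ell| = e^\ell$ control the first event. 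For the second, a union bound combined with the many-to-one lemma reduces to a one-particle estimate; bounding $R_s$ below by any single Brownian coordinate and invoking a reflection-type Gaussian bound yields $\P_x(\inf_{s\leq \ell} R_s \leq x/4) \leq 2 e^{-9x^2/(32\ell)}$. With $K_{\ell,x} = x^2/(8\ell)$ the two contributions balance, and since $x \asymp t$ while $\ell$ grows much slower, $\P(\cG_x^c) = o(1)$ uniformly over the window as $t\to\infty$. On $\cG_x$, integrating the SDE gives
\[
\sup_{s\leq\ell}\sup_{v\in\cN_s} \big| R_s^{(v)} - W_s^{(v)} \big| \leq C_d\ell/x,
\]
for a constant $C_d > 0$, so combining the two bounds yields
\[
\P_x(R_\ell^* > m_t+y) \leq \P\bigl(W_\ell^* > m_t+y-x-C_d\ell/x\bigr) + \P(\cG_x^c).
\]

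The final step is a continuity/squeeze argument using the F-KPP connection: the map $z \mapsto u(\ell, z) := \P(W_\ell^* > z)$ is continuous (it solves the F-KPP PDE), and for $x$ in the window the argument $m_t+y-x$ lies in the compact set $[-c\ell, c\ell]$ while the perturbation $C_d\ell/x$ tends to $0$. Uniform continuity on compacts then gives
\[
\P(W_\ell^* > m_t+y-x-C_d\ell/x) = (1+o(1))\,\P(W_\ell^* > m_t+y-x),
\]
uniformly over $x$. Since the latter probability is bounded below by the fixed positive constant $\P(W_\ell > c\ell)$, the additive error $\P(\cG_x^c)$ can be absorbed into the $(1+o(1))$ factor, completing the squeeze. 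The main obstacle is obtaining a quantitative enough estimate on $\P(\cG_x^c)$ to be absorbed by the baseline probability; this is precisely where the gap between the scale $x \asymp t$ and the scale $\sqrt{\ell}$ of Brownian fluctuations (reflected in the quadratic Gaussian exponent and the linear growth of $\log|\cN_\ell|$) is crucial.
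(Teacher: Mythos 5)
Your proposal is correct and follows essentially the same route as the paper: the same coupling (the paper's Claim~\ref{clm:coupling}), the same good event and the same threshold $K_{\ell,x}=x^2/(8\ell)$ balancing the Markov bound on $|\cN_\ell|$ against the Gaussian bound on dipping below $x/4$, the same many-to-one/reflection estimate, and the same F-KPP continuity argument to absorb the $O(\ell/x)$ shift. The only cosmetic difference is that you fold the population-size constraint into the definition of $\cG_x$ rather than conditioning on it separately, which does not change the substance.
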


Given this connection between the right-tail of $d$-dimensional branching Bessel process and $1$-dimensional BBM for large initial values, it is apparent that bounds on the right-tail probability $\P(W_t^* > m_t+r)$ will be instrumental to the calculations that follow. 
Our first such bound comes from a result of Bramson~\cite{Bramson83}, stated there in terms of the F-KPP equation and given here in terms of BBM.
Before stating the bound, we define  
\[
\bar{m}_t(1)  := \sup \Big\{ x : \P(W_t^* > x ) \geq \frac{1}{2} \Big\} 
\] 
to be the median of $W_t^*$. 
Note that (cf.~\cite[Eq.~(8.2)]{Bramson83})
\begin{align}
    \bar{m}_t(1) = m_t(1) + O(1)
    \label{eqn:med-diff}
\end{align}
(recalling from~\eqref{def:m_t-c_d} that $m_t(1) := \sqrt{2}t - \frac{3}{2\sqrt{2}}\log t$).

\begin{lemma} [{\cite[Prop.~8.2]{Bramson83}}\footnote{We note that taking $y_0$ to be $-1$ in the notation of~\cite[Proposition~8.2]{Bramson83} gives Lemma~\ref{lem:bram8.2}.}]
\label{lem:bram8.2}
There exists a constant $C>0$ such that for all $x \geq \bar{m}_t(1)+1$ and for all $t$ sufficiently large,  we have
\begin{align}
    \P (W_t^* > x )
    \leq \frac{Ce^t}{\sqrt t} \int_{-1}^0 e^{-\frac{(x-y)^2}{2t}} \big(
    1- e^{-2(y+1)(x- \bar{m}_t(1))/t} \big) \d y \,.
    \label{eqn:bram8.2}
\end{align}
\end{lemma}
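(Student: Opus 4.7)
The plan is to obtain this bound via a truncated first moment calculation sharpened by the Brownian bridge ballot theorem. The unrestricted first moment via many-to-one gives the weak estimate $\P(W_t^* > x) \leq e^t \P(W_t > x) \lesssim \frac{e^t}{\sqrt t} e^{-x^2/(2t)}$, which already matches the right-hand side of~\eqref{eqn:bram8.2} up to the ballot-type factor $1-e^{-2(y+1)(x-\bar m_t(1))/t}$. The refinement comes from restricting to trajectories that remain below a linear barrier tied to the median $\bar m_t(1)$; for $x\geq \bar m_t(1)+1$, contributions from trajectories crossing this barrier are negligible by the kind of a-priori barrier estimates already developed in Section~\ref{subsec:hittingP}.

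Concretely, I would first condition on the BBM at time~$1$, decomposing $\{W_t^*>x\}$ over descendants of the (a.s.\ finitely many) particles alive at time~$1$, and absorb the fluctuations of $\cN_1$ into the constant~$C$. For a particle with $W_1^{(v)} = 1 + y$, apply the many-to-one lemma on $[1, t]$ combined with the barrier event that the descendant trajectory stays below the linear interpolation from $(1,1+y)$ to $(t,\bar m_t(1))$. Integrating over the endpoint $W_t \in (x,\infty)$ produces a Brownian bridge conditioned to land near~$x$, whose probability of obeying the barrier is, by Lemma~\ref{lem:brownian-ballot-explicit}, exactly $1-e^{-2(y+1)(x-\bar m_t(1))/t}$. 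Weighting by the many-to-one factor $e^t$ and by the Gaussian transition density $(2\pi t)^{-1/2} e^{-(x-y)^2/(2t)}$, and integrating over the boundary-layer values $y \in [-1, 0]$ of $W_1^{(v)}$ (with contributions from $W_1^{(v)} \notin [0,1]$ either absorbed into the constant or shown to be negligible by matching the $y = -1$ or $y=0$ endpoint), yields precisely the integral on the right-hand side of~\eqref{eqn:bram8.2}.

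The main obstacle is pinning down the correct barrier line and justifying the reduction to the boundary layer $y \in [-1, 0]$ without losing the sharp ballot factor; a naive truncation typically degrades the constant. Bramson's original proof elegantly sidesteps both difficulties using the F-KPP representation: since $u(t,x) := \P(W_t^* > x)$ satisfies $u_t = \frac12 u_{xx} + u - u^2$ with step initial data, one compares $u$ to the solution of the \emph{linearized} equation $v_t = \frac12 v_{xx} + v$ with a carefully chosen initial condition supported in $[-1, 0]$ (Bramson's parameter $y_0 = -1$). The linearized solution admits a Feynman--Kac representation $e^t\, \E_x[v_0(x + W_t); \tau_{\bar m_t(1)} > t]$, and a reflection-principle computation against an absorbing line through the median extracts exactly the ballot factor. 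Maximum-principle comparison then transfers the estimate back to~$u$, completing the proof.
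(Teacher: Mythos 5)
The paper does not actually prove this lemma: it is taken verbatim (after specializing $y_0=-1$) from Bramson~\cite[Prop.~8.2]{Bramson83}, as the footnote indicates, and no independent argument is offered. Your final paragraph correctly describes what Bramson actually does --- compare $u(t,x)=\P(W_t^*>x)$, a solution of F-KPP, against a solution of the linearized heat equation $v_t=\tfrac12 v_{xx}+v$ with initial data supported in $[-1,0]$, represent $v$ via Feynman--Kac with absorption at the line through $\bar m_t(1)$, and extract the ballot factor by reflection; the maximum principle then gives $u\le v$. That is the right route, and it matches what the cited proof does.

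The barrier/first-moment argument you propose in the first two paragraphs, however, has a real gap as stated. Introducing a barrier event inside a many-to-one first moment only \emph{decreases} the expectation: $\E[\#\{v\in\cN_t : W_t^{(v)}>x,\ \text{stays below barrier}\}]$ is a lower bound on the unrestricted expected count, not an upper bound on $\P(W_t^*>x)$. To turn this into an upper bound one must separately bound $\P(\exists v : v \text{ crosses the barrier before time } t)$ and add it back, and for a barrier anchored precisely at the median the crossing probability is order one --- by definition of the median --- so it cannot simply be ``absorbed into $C$'' or declared negligible. The bound~\eqref{eqn:bram8.2} is sharp enough that this error term would swamp the ballot factor when $x-\bar m_t(1)$ is of order $1$. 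You flag this obstacle (``the main obstacle is pinning down the correct barrier line \dots a naive truncation typically degrades the constant''), which is correct; it is exactly why Bramson works with the PDE comparison rather than a direct second-moment-style truncation. So treat the first two paragraphs as a heuristic that motivates the shape of~\eqref{eqn:bram8.2}, not as a proof, and rely on the F-KPP comparison argument you describe at the end.
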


\begin{corollary}
\label{cor:bram8.2}
Fix any constant $K>0$ such that $\abs{\bar{m}_t(1) - m_t(1)} \leq K$.
There exists a constant $C>0$ such that for all $w \geq   K+1 -\frac{3}{2\sqrt{2}}\log t$ and for all $t$ sufficiently large,  we have
\begin{align}
    \P (W_t^* > \sqrt{2}t + w )
    \leq C t^{-3/2} \big(w + \frac{3}{2\sqrt{2}} \log t\big) e^{-w\sqrt{2}} e^{- \frac{w^2}{2t}} \,.
    \label{eqn:bram8.2-cor}
\end{align}
\end{corollary}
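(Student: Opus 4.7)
The plan is a direct computation starting from Lemma~\ref{lem:bram8.2}. Set $x := \sqrt{2}t + w$ and write $\hat{w} := w + \tfrac{3}{2\sqrt{2}}\log t$, so that by~\eqref{eqn:med-diff}, $x - \bar{m}_t(1) = \hat{w} + O(1)$. The hypothesis $w \geq K+1 - \tfrac{3}{2\sqrt{2}}\log t$ guarantees both $x \geq \bar{m}_t(1)+1$ (so Lemma~\ref{lem:bram8.2} applies) and $\hat{w} \geq K+1$, which is essential in the final step because it allows the additive $O(1)$ to be absorbed into a multiplicative constant in front of $\hat{w}$.

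First I would expand the Gaussian factor. Writing $(x-y)^2 = 2t^2 + 2\sqrt{2}t(w-y) + (w-y)^2$, we get
\[
e^{-(x-y)^2/(2t)} = e^{-t}\, e^{-\sqrt{2}(w-y)}\, e^{-(w-y)^2/(2t)}.
\]
For $y \in [-1,0]$, $e^{-(w-y)^2/(2t)} = e^{-w^2/(2t)} \cdot e^{wy/t - y^2/(2t)}$, and the last factor is bounded by a universal constant uniformly in $y \in [-1,0]$: when $w \geq 0$ one has $wy/t \leq 0$, and when $w < 0$ the range $w \geq -\tfrac{3}{2\sqrt{2}}\log t + K+1$ gives $|w|/t \to 0$, hence $e^{wy/t} \leq C$. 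Thus $e^{-(x-y)^2/(2t)} \leq C\, e^{-t}\, e^{-\sqrt{2}(w-y)}\, e^{-w^2/(2t)}$ uniformly on the integration range.

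Next I would bound the bracket $1 - e^{-2(y+1)(x-\bar{m}_t(1))/t}$. Since the exponent is nonnegative, the inequality $1 - e^{-u} \leq u$ gives
\[
1 - e^{-2(y+1)(x-\bar{m}_t(1))/t} \leq \frac{2(y+1)(\hat{w}+O(1))}{t}.
\]
Substituting both estimates into~\eqref{eqn:bram8.2} makes the $e^t$ cancel with $e^{-t}$, produces a factor $t^{-1/2}\cdot t^{-1} = t^{-3/2}$, pulls out $e^{-\sqrt{2}w}$ and $e^{-w^2/(2t)}$, and leaves
\[
\P(W_t^* > \sqrt{2}t + w) \leq C'\, t^{-3/2}\,(\hat{w}+O(1))\, e^{-w\sqrt{2}}\, e^{-w^2/(2t)} \int_{-1}^0 (y+1)\, e^{\sqrt{2}y}\, dy.
\]
The remaining integral is a finite absolute constant.

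Finally, because $\hat{w} \geq K+1$ is bounded below by a positive constant, $\hat{w} + O(1) \leq C''\, \hat{w}$ for a suitable constant, yielding~\eqref{eqn:bram8.2-cor}. The only mild subtlety -- and what I would flag as the main point to verify carefully -- is the uniform control of $e^{wy/t - y^2/(2t)}$ on $y \in [-1,0]$ when $w$ is allowed to be negative of size $\log t$; this is where the specific lower bound on $w$ in the hypothesis is used.
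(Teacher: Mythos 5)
Your proof is correct and follows essentially the same route as the paper: apply Lemma~\ref{lem:bram8.2}, bound the bracket via $1 - e^{-u} \leq u$, extract the Gaussian structure, and absorb the $O(1)$ discrepancy between $x-\bar m_t(1)$ and $\hat w$ into a constant. The one place where you and the paper diverge is the Gaussian bound: you expand $(x-y)^2 = 2t^2 + 2\sqrt{2}t(w-y) + (w-y)^2$ and then control the cross-term $e^{wy/t - y^2/(2t)}$, which (as you correctly flag) requires a small case split on the sign of $w$; the paper sidesteps this entirely by observing that $y \leq 0 \leq x$ implies $e^{-(x-y)^2/(2t)} \leq e^{-x^2/(2t)}$, so the $y$-dependence can be discarded before any expansion and the integral $\int_{-1}^0 dy = 1$. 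The paper's route is a little slicker, but yours is equally valid.
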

\begin{proof}[Proof of Corollary~\ref{cor:bram8.2}]
In the notation of Lemma~\ref{lem:bram8.2}, take $x = \sqrt{2}t+w$. 
Note that for the assumed range of $w$, we have 
$ 
    x - \bar{m}_t(1) \geq K+1 - \abs{\bar{m}_t(1)-m_t(1)} \geq 1
$,
so that the conditions of Lemma~\ref{lem:bram8.2} are satisfied.
Furthermore, 
\[
x - \bar{m}_t(1) \leq w+ \frac{3}{2\sqrt{2}}\log t +K < 2(w+ \frac{3}{2\sqrt{2}}\log t)\,.
\]
It follows from the last display, as well as the inequality
$1-x\leq e^{-x} $ for all $x \in \R$, that 
\[
    1- e^{-2(y+1)(x- \bar{m}_t(1))/t} \leq \frac{4(w+ \frac{3}{2\sqrt{2}}\log t)}{t}\,,\qquad
    \mbox{for all $y \in [-1,0]$}\,.
\]
Since $x \geq 0$ and $y \leq 0$, we also have $e^{-(x-y)^2/(2t)} \leq e^{-x^2/(2t)}$. Substituting these two bounds into the right-hand side of~\eqref{eqn:bram8.2} yields~\eqref{eqn:bram8.2-cor}.
\end{proof}

\subsection{Equivalence of barriers}
\label{subsec:barrier-equiv-bridge}
Recall the time parameters $\tl:= t-L$ \eqref{eqn:def-ttilde}, $\ell$ \eqref{eqn:def-ell}, and $\ell_1:= \ell^{1/4}$ \eqref{eqn:def-ell1}; and recall that $y \in \R$ is fixed throughout.
The main result of this subsection is Lemma~\ref{lem:bridge-barrier-equiv}, which states that certain barrier events taking place within the interval $[0,\tl-\ell]$ are equivalent under the laws of a family Brownian bridges whose start and end points all lie in certain windows away from the barriers in question--- see Figure~\ref{fig:lemma-bridge-barrier-equiv}. 
As such, we will make extensive use of the  notation set forth in the second and third paragraphs of Section \ref{subsec:hittingP} and recall the definition of the event $\B_{I}(X.)$, for an interval $I$ and process $X.$, from~\eqref{eqn:def-upper/lowerbarrier}. 
Furthermore, we recall $\x(a):= \sqrt{2}L - a$ and $\y(b) := \frac{m_t}{t}(t-\ell) +y - b$ for any $a, b \in \R$ from~\eqref{def:xz-yw}.
This notation is reminiscent of the notation used in Lemmas \ref{lem:Bram2.6}--\ref{lem:Bram6.1}, which will be the key inputs in the proof of Claim~\ref{claim:bridge-barrier-1} below.

\begin{figure}
     \begin{tikzpicture}[>=latex,font=\small]
 \draw[->] (0, 0) -- (10, 0); 
  \draw[->] (0, 0) -- (0, 6.75);
  \fill[color=orange!15] (0, 2.5) -- (0, 0.75) -- (1.75, 0.75) -- (1.75, 0.3) to[bend right=10] (6.75,1.5) -- (6.75, 3.1) -- (8.95, 3.1) -- (8.95, 6.1);
  \node (fig1) at (4.45,3.3) {
  \includegraphics[width=.645\textwidth]{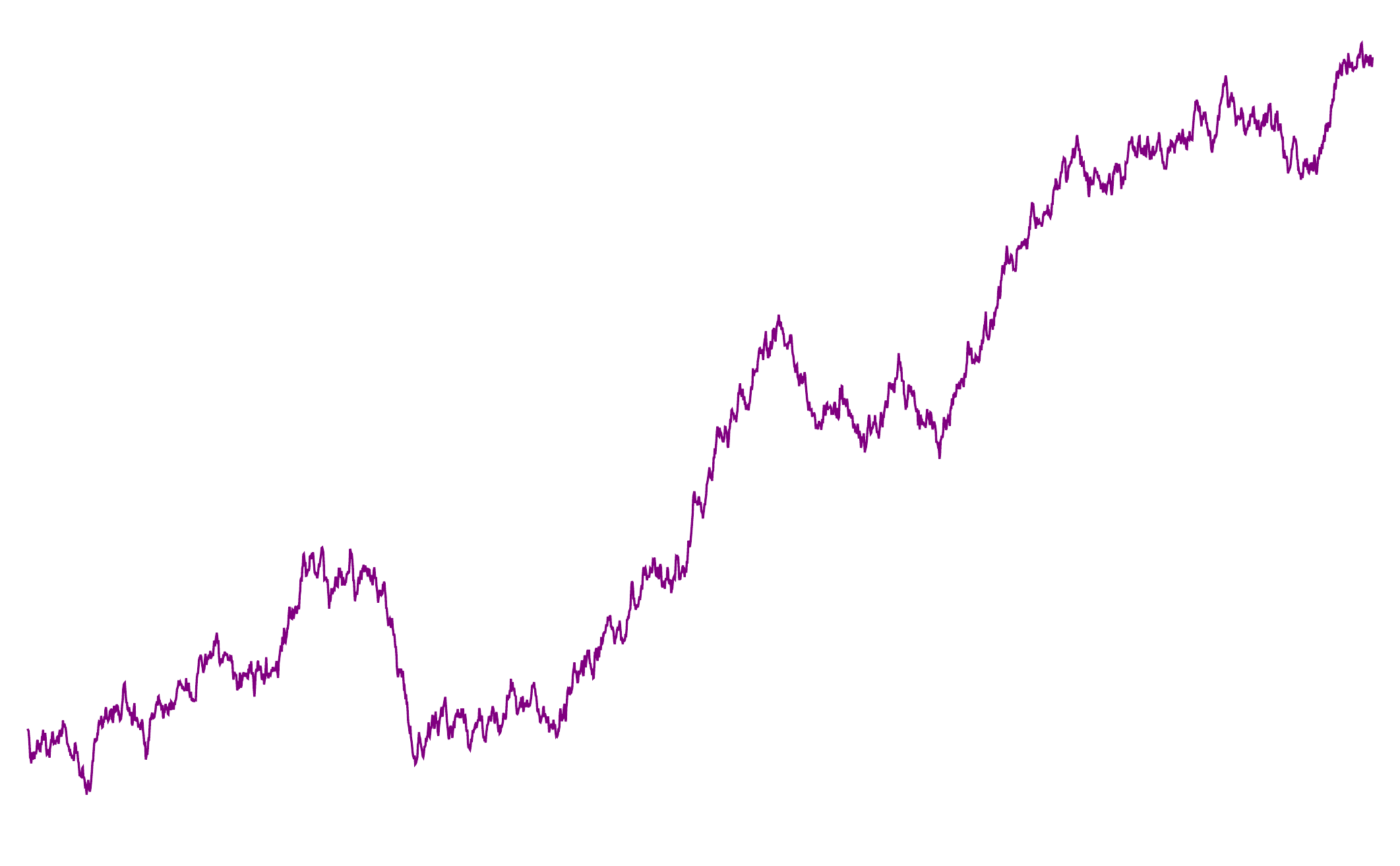}};
  \draw[dashed, thick,gray,dashed] (1.75,6.75) -- (1.75,0);
  \draw[dashed, thick,gray,dashed] (6.75,6.75) -- (6.75,0);
   \node[circle,fill=orange,label={[orange]left:$\frac{m_t}tL+y$}, inner sep=1.75pt] (a) at (0.0,2.5) {};
   \node[circle,fill=orange,label={[orange]right:$\y(0)$},inner sep=1.75pt] (y0) at (8.95,6.1) {};
   \draw[orange] (-0.05,.75) -- (0.05,.75) node [left] {$\x( 2L^{2/3})$};
   \node[circle,fill=blue!45!purple,label={[blue!45!purple]left:$\x(z)$},inner sep=1.75pt] (x) at (0, 1.35) {};
   \node[circle, fill=blue!45!purple, label={[blue!45!purple]right:$\y(w)$}, inner sep=1.75pt] (yw) at ( $(y0)-(0,0.4)$) {};
   \node[circle, fill=orange, label={[orange]right:$\y(2\ell^{2/3})$}, inner sep=1.75pt] (y3) at ( $(y0)-(0,3)$) {};
   \node[circle, fill=orange, inner sep=1.75pt] (a0) at ( 1.75, 0.75 ) {};
   \node[circle, fill=orange, inner sep=1.75pt] (a00) at ( 0, 0.75 ) {};
   \node[circle, color=orange,fill=orange!25, draw, inner sep=1.75pt] (a1) at ( 1.75, 0.3 ) {};
   \node[circle, color=orange,fill=orange!25, draw, inner sep=1.75pt] (b1) at ( 6.75, 1.5 ) {};
   \node[circle, fill=orange, inner sep=1.75pt] (b2) at ( $(y3)-(2.2,0)$ ) {};
   \draw[orange, thick] (a00) -- (a0);
   \draw[orange, thick, dashed] (a00) to[bend right=10]  (a1);
   \draw[orange, thick] (a1) to[bend right=10]  node[midway,below,sloped] {$Q_z(s)$} (b1);
   \draw[orange, thick, dashed] (b1) to[bend right=10]  (y3);
   \draw[orange, thick] (b2) -- (y3);
   \draw[orange, thick] ($(a)+(0.02,0.02)$) -- node[above,sloped,pos=0.5] {$\frac{m_t}t(s+L)+y$} (y0);
   \node[circle, fill=black,inner sep=1.75pt] (B0) at (0,3.2) {};
   \node[circle, fill=black,inner sep=1.75pt] (B1) at (1.75,4.65) {};
   \node[circle, fill=black,inner sep=1.75pt] (B2) at (6.75,6.45) {};
   \node[circle, fill=black,inner sep=1.75pt] (B3) at (8.95,6.38) {};
   \draw[dashed] (B0) to[bend left=10] (B1);
   \draw[thick] (B1)  to[bend left=10] node[above,sloped,inner sep=1pt] {$B_0(s)$} (B2);
   \draw[dashed] (B2) to[bend left=10] (B3);
  \draw (1.75,0.05) -- (1.75,-0.05) node[below] {$\ell_1$};
  \draw (6.75,0.05) -- (6.75,-0.05) node[below] {$\tilde t - \ell-\ell_1$};
  \draw (8.95,0.05) -- (8.95,-0.05) node[below] {$\tilde t-\ell$};
  \end{tikzpicture}
     \caption{The barrier events in Lemma~\ref{lem:bridge-barrier-equiv}: barriers in orange and black, and a Brownian bridge in purple. One event constrains the bridge to be below the black curve on $[\ell_1, \tll-\ell_1]$ (numerator of~\eqref{eqn:bridge-barrier-equiv}), and the other to the orange region  in $[0,\tll]$ (denominator of~\eqref{eqn:bridge-barrier-equiv}).}
     \label{fig:lemma-bridge-barrier-equiv}
 \end{figure}

\begin{lemma}
\label{lem:bridge-barrier-equiv}
Recall the barrier function $B_0(\cdot)$ from~\eqref{def:B0}. For any fixed $y \in \R$, we have
\begin{align}
    \sup_{w \in [\ell^{1/3}, \ell^{2/3}]} 
        \dfrac{
            \P_{\x(z), \tl-\ell}^{\y(w)}
            \pth{
            \UB{[\ell_1,\tl-\ell-\ell_1]}{B_0 }(W_\cdot)
            }
        }
        {
            \P_{\x(z),\tl-\ell}^{ \y(w) } 
            \pth{
                \B_{[0,\tl-\ell]}(W_\cdot)
            }
        }
        \usim 1.
        \label{eqn:bridge-barrier-equiv}
\end{align}
\end{lemma}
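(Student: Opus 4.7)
The plan is to show that both the numerator and the denominator of \eqref{eqn:bridge-barrier-equiv} are $\usim$--equivalent to the common auxiliary quantity
\[
\Pi := \P_{\x(z),T}^{\y(w)}\!\bigl(\UB{[0,T]}{\cU}(W_\cdot)\bigr),\qquad T := \tl - \ell,\ \cU(s):=\tfrac{m_t}{t}(s+L)+y,
\]
after which the ratio in \eqref{eqn:bridge-barrier-equiv} is automatically $\usim 1$.

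\textbf{Numerator $\usim \Pi$.} Note that $B_0(s)-\cU(s) = \log\ell + \mathcal{K}_d\log(s\wedge(T-s))_+$ is sandwiched, on $[\ell_1,T-\ell_1]$, between $0$ and $C(s\wedge(T-s))^{1/4}$ for any fixed constant $C>0$ and all large $\ell$, since $\ell_1^{1/4}=\ell^{1/16}\gg\log\ell$. Applying Lemma~\ref{lem:Bram6.1} (after translating the bridge downward by $\cU$, using \eqref{eqn:bridge-stoch-equiv}, to fit the reference-line hypothesis) with $\cL_T := \cU$ and $\overline{\cL}_T := \cU+C(s\wedge(T-s))^{1/4}$ yields $\P\bigl(\UB{[\ell_1,T-\ell_1]}{\overline{\cL}_T}\bigr) \usim \P\bigl(\UB{[\ell_1,T-\ell_1]}{\cU}\bigr)$, and sandwiching $\UB{\overline{\cL}_T}\subseteq\UB{B_0}\subseteq\UB{\cU}$ gives
$\P\bigl(\UB{[\ell_1,T-\ell_1]}{B_0}\bigr) \usim \P\bigl(\UB{[\ell_1,T-\ell_1]}{\cU}\bigr)$. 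To extend the time interval to $[0,T]$, I would condition on $(W_{\ell_1},W_{T-\ell_1})$ via the Markov property: the gap $\cU-f_{\x(z)}^{\y(w)}$ is positive and of order at least $\min(z,w) \ge \ell^{1/3}$ throughout $[0,T]$ (using $\ell \le L^{1/6}$), while Brownian-bridge fluctuations on the end segments have scale $O(\sqrt{\ell_1})=O(\ell^{1/8})\ll\ell^{1/3}$. A direct ballot-type estimate (e.g.\ \eqref{eqn:brownian-ballot-explicit}) then shows the events $\UB{[0,\ell_1]}{\cU}$ and $\UB{[T-\ell_1,T]}{\cU}$ are satisfied with conditional probability $1-o(1)$ uniformly, yielding $\P\bigl(\UB{[\ell_1,T-\ell_1]}{\cU}\bigr) \usim \Pi$.

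\textbf{Denominator $\usim \Pi$.} Trivially $\P(\B_{[0,T]}) \le \Pi$, so only the matching lower bound requires work, i.e.\ $\P\!\bigl(\LB{[0,T]}{Q_z}\mid \UB{[0,T]}{\cU}\bigr) \to 1$ uniformly. Since $\cU \ge f_{\x(z)}^{\y(w)}$ pointwise (as $\cU(0)-\x(z) \ge z\ge 0$ and $\cU(T)-\y(w)=w\ge 0$), the monotonicity relation \eqref{eqn:Bram2.6-upper} of Lemma~\ref{lem:Bram2.6} replaces the conditioning event by the tighter $\UB{[0,T]}{f_{\x(z)}^{\y(w)}}$, only making the lower-barrier task harder. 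Lemma~\ref{lem:Bram2.7} applied with $\mathbf{f}(s) := f_{\x(z)}^{\y(w)}(s;T) - C(s\wedge(T-s))^{\epsilon}$ for some fixed $\epsilon \in (1/2,2/3)$ and $C>0$ then gives $\P\bigl(\LB{[\ell_1,T-\ell_1]}{\mathbf{f}} \mid \UB{[\ell_1,T-\ell_1]}{f_{\x(z)}^{\y(w)}}\bigr) \to 1$ as $\ell_1\to\infty$, uniformly in $z,w$ (this uniformity is the constancy clause at the end of Lemma~\ref{lem:Bram2.7}). A direct pointwise comparison on the bulk,
\[
\mathbf{f}(s)-Q_z(s) = \bigl[f_{\x(z)}^{\y(w)} - f_{\x(2L^{2/3})}^{\y(2\ell^{2/3})}\bigr](s) + \bigl[(s\wedge(T-s))^{2/3} - C(s\wedge(T-s))^{\epsilon}\bigr] \ge \ell^{2/3} - O(1),
\]
shows $\mathbf{f} \ge Q_z$ on $[\ell_1,T-\ell_1]$ for $\ell$ large (using $\min(2L^{2/3}-z,2\ell^{2/3}-w) \ge \ell^{2/3}$), so $\LB{[\ell_1,T-\ell_1]}{\mathbf{f}} \subseteq \LB{[\ell_1,T-\ell_1]}{Q_z}$. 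The extension from $[\ell_1,T-\ell_1]$ to $[0,T]$ again uses the Markov property at $\ell_1$ and $T-\ell_1$: the gaps $\x(z)-Q_z(0)=2L^{2/3}-z \ge L^{2/3}$ and $\y(w)-Q_z(T)=2\ell^{2/3}-w \ge \ell^{2/3}$ dwarf the $O(\sqrt{\ell_1})$ bridge fluctuations, so the lower barrier is satisfied on the end intervals with conditional probability $1-o(1)$.

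\textbf{Main obstacle.} The chief delicacy is the piecewise structure of $Q_z$---in particular its downward jump of size $\ell_1^{2/3}=\ell^{1/6}$ at $s=\ell_1$, and the symmetric jump at $s=T-\ell_1$. This is precisely why the interior linear piece of $Q_z$ connects $\x(2L^{2/3})$ and $\y(2\ell^{2/3})$ rather than tighter values: the extra slack is used to absorb both the jump and the $(s\wedge(T-s))^{2/3}$ correction while still keeping $Q_z$ below the approximant $\mathbf{f}$ by a uniform margin of order $\ell^{2/3}$. The parallel constraint on the upper side---that the constant shift $\log\ell$ in $B_0-\cU$ must be absorbable into a $C(s\wedge(T-s))^{1/4}$ correction---pins the choice $\ell_1 = \ell^{1/4}$, the smallest scaling for which $\ell_1^{1/4}=\ell^{1/16}$ dominates $\log\ell$. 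Together these two constraints explain why both the construction of $Q_z$ and the definition $\ell_1:=\ell^{1/4}$ are essentially forced by the proof structure.
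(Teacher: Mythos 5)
Your plan routes both the numerator and the denominator through the common auxiliary quantity $\P_{\x(z),\tl-\ell}^{\y(w)}\bigl(\UB{[0,\tl-\ell]}{\frac{m_t}{t}(\cdot+L)+y}(W_\cdot)\bigr)$, whereas the paper chains through the middle quantity $\P_{\x(z),\tl-\ell}^{\y(w)}\bigl(\B_{[\ell_1,\tl-\ell-\ell_1]}(W_\cdot)\bigr)$ (its Claims~\ref{claim:bridge-barrier-1} and~\ref{claim:bridge-barrier-2}). The ingredients are the same (Lemmas~\ref{lem:Bram2.6},~\ref{lem:Bram2.7},~\ref{lem:Bram6.1} for the bulk, plus an endpoint analysis), and the bulk portion of your argument is essentially the paper's Claim~\ref{claim:bridge-barrier-1} reorganized: your replacement of $B_0$ by the linear barrier via Lemma~\ref{lem:Bram6.1}, and your insertion of the lower barrier via Lemma~\ref{lem:Bram2.7} after tightening the conditioning by Lemma~\ref{lem:Bram2.6}, both appear there (your inclusion chain $\UB{}{\overline{\cL}_T}\subseteq\UB{}{B_0}\subseteq\UB{}{\cU}$ has the inclusions reversed, since $\cU\le B_0\le\overline{\cL}_T$, but this does not affect the sandwich conclusion).

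The genuine gap is the pair of endpoint extensions from $[\ell_1,\tl-\ell-\ell_1]$ to $[0,\tl-\ell]$, which you dispatch with ``condition on $(W_{\ell_1},W_{\tl-\ell-\ell_1})$, a direct ballot-type estimate then shows\dots'' This is precisely the content of the paper's Claim~\ref{claim:bridge-barrier-2}, which occupies roughly two pages of work: one must integrate over the joint law of the positions at $\ell_1$ and $\tl-\ell-\ell_1$ \emph{under the barrier conditioning} on the bulk, split the ranges of the increments $r_{1,\x}$, $r_{2,\y}$ into ``deep tail'' regions where the Gaussian weight kills the $e^{O(\ell_1)}$ prefactors and a ``core'' region where the hitting probabilities from~\eqref{eqn:brownian-ballot-explicit} supply the decay, and compare the output to $\frac{zw}{t}$. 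The heuristic ``gap $\gg\sqrt{\ell_1}$'' is the right intuition, but it cannot be upgraded to a uniform $1-o_u(1)$ statement without this integration, because the conditional law of the endpoint positions is not simply a Gaussian of scale $\sqrt{\ell_1}$ — the barrier conditioning tilts it. There is also a subtler structural issue in your step for the denominator: Lemma~\ref{lem:Bram2.7} gives the conditional lower-barrier estimate with conditioning on $\UB{[\ell_1,\tl-\ell-\ell_1]}{\cdot}$, while you want a statement conditioned on $\UB{[0,\tl-\ell]}{\cdot}$. The monotonicity of Lemma~\ref{lem:Bram2.6} runs the wrong way for this change of interval (making the upper barrier hold on a longer interval makes the lower barrier \emph{harder} conditionally, not easier), so you cannot simply pass from the truncated-interval conditioning to the full-interval one; the paper sidesteps this by comparing the unconditional probabilities $\P(\B_{[\ell_1,\tl-\ell-\ell_1]})$ and $\P(\B_{[0,\tl-\ell]})$ directly.

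Finally, the claimed rationale for $\ell_1=\ell^{1/4}$ — that it is pinned by the requirement $\ell_1^{1/4}\gg\log\ell$ — is not correct: $\ell_1^{\delta}\gg\log\ell$ holds for any $\ell_1=\ell^{\epsilon}$ with $\epsilon>0$. As the paper's footnote notes, the binding constraint is $\ell^{1/3}-\sqrt2\,\ell_1-\ell_1^{2/3}\gg0$, which arises from Claim~\ref{claim:bridge-barrier-2}'s endpoint estimates and forces only $\ell_1=\ell^{\epsilon}$ for some $\epsilon<1/3$; the value $1/4$ is a convenient but non-canonical choice.
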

Note that the numerator in~\eqref{eqn:bridge-barrier-equiv} is greater than or equal to the denominator. 

\begin{proof}[Proof of Lemma~\ref{lem:bridge-barrier-equiv}]
Equation~\eqref{eqn:bridge-barrier-equiv} follows immediately from Claims \ref{claim:bridge-barrier-1} and \ref{claim:bridge-barrier-2} below.
\end{proof}

\begin{claim}
\label{claim:bridge-barrier-1}
For any fixed $y \in \R$, we have
\begin{align}
    \sup_{w \in [\ell^{1/3}, \ell^{2/3}]}
        \dfrac{
            \P_{\x(z),\tl-\ell}^{\y(w)}
            \pth{
           \UB{[\ell_1,\tl-\ell-\ell_1]}{B_0 }(W_\cdot)
            }
        }
        {
            \P_{\x(z),\tl-\ell}^{\y(w)} 
            \pth{
                \B_{[\ell_1,\tl-\ell-\ell_1]}(W_\cdot)
            }
        }
        \usim 1.
        \label{eqn:claim-bridge-barrier-1}
\end{align}
\end{claim}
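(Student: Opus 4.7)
My plan for Claim~\ref{claim:bridge-barrier-1}: write $T := \tl - \ell$, $\cL(s) := \tfrac{m_t}{t}(s+L) + y$, and let $\P := \P_{\x(z), T}^{\y(w)}$ throughout. Since $B_0(s) \geq \cL(s)$ on $[0,T]$, the ratio in~\eqref{eqn:claim-bridge-barrier-1} is at least $1$, so only its upper bound is nontrivial. The plan is to factor its reciprocal as
\[
\frac{\P\bigl(\B_{[\ell_1, T-\ell_1]}\bigr)}{\P\bigl(\UB{[\ell_1, T-\ell_1]}{B_0}\bigr)}
= \frac{\P\bigl(\UB{[\ell_1, T-\ell_1]}{\cL}\bigr)}{\P\bigl(\UB{[\ell_1, T-\ell_1]}{B_0}\bigr)}
\cdot
\P\!\left(\LB{[\ell_1, T-\ell_1]}{Q_z} \,\Big|\, \UB{[\ell_1, T-\ell_1]}{\cL}\right)
\]
and show each factor is $\usim 1$.

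For the first factor, I would introduce the enlarged barrier $\overline{\cL}(s) := \cL(s) + C_0 (s\wedge(T-s))^{1/4}$ for a constant $C_0 = C_0(\mathcal{K}_d)$ chosen large enough that $C_0 u^{1/4} \geq \log\ell + \mathcal{K}_d \log u$ for every $u \geq \ell^{1/4}$ (which holds for large $\ell$, since $u^{1/4}$ dominates $\log u$). On $[\ell_1, T-\ell_1]$ one then has $\cL \leq B_0 \leq \overline{\cL}$, whence the sandwich $\P(\UB{}{\cL}) \leq \P(\UB{}{B_0}) \leq \P(\UB{}{\overline{\cL}})$. After shifting via~\eqref{eqn:bridge-stoch-equiv} to a bridge from $0$ to $0$ (so that $a = b = 0$ is admissible in the hypothesis of Lemma~\ref{lem:Bram6.1}: the shifted function $\cL - f_{\x(z)}^{\y(w)}(\cdot;T)$ is linear with positive endpoints $\mathfrak{o}_t L + y + z$ and $w$, hence non-negative on $[0,T]$ for $L$ large), I would apply Lemma~\ref{lem:Bram6.1} with $\delta = 1/4$ to get $\P(\UB{}{\overline{\cL}})/\P(\UB{}{\cL}) \to 1$ as $\ell_1 \to \infty$ uniformly in the remaining parameters, and squeezing handles the first factor.

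For the second factor I would proceed in two steps. The same positivity of $\cL - f_{\x(z)}^{\y(w)}(\cdot;T)$ on $[0,T]$ (uniformly in $z \in [L^{1/6}, L^{2/3}]$ and $w \in [\ell^{1/3}, \ell^{2/3}]$) gives $f_{\x(z)}^{\y(w)}(s;T) \leq \cL(s)$ there, so the monotonicity~\eqref{eqn:Bram2.6-upper} in Lemma~\ref{lem:Bram2.6} yields
\[
\P\!\left(\LB{[\ell_1,T-\ell_1]}{Q_z} \,\Big|\, \UB{[\ell_1,T-\ell_1]}{\cL}\right)
\geq
\P\!\left(\LB{[\ell_1,T-\ell_1]}{Q_z} \,\Big|\, \UB{[\ell_1,T-\ell_1]}{f_{\x(z)}^{\y(w)}(\cdot;T)}\right).
\]
A direct expansion of~\eqref{eq.lowerbanana.Qdefn} shows $Q_z(s) \leq f_{\x(z)}^{\y(w)}(s;T) - \tfrac12 (s\wedge(T-s))^{2/3}$ on $[\ell_1, T-\ell_1]$: on the middle piece this reduces to the inequality $f_{\x(2L^{2/3})}^{\y(2\ell^{2/3})} - f_{\x(z)}^{\y(w)} \leq \tfrac12 (s\wedge(T-s))^{2/3}$, whose LHS equals $-(2L^{2/3}-z)(1-s/T) - (2\ell^{2/3}-w)(s/T) \leq 0$ (since $z \leq L^{2/3}$ and $w \leq \ell^{2/3}$); the two constant endpoints $s = \ell_1, T-\ell_1$ are handled similarly using $\ell_1 = \ell^{1/4} \ll L^{2/3}$. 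The right-hand side of the last display is therefore further bounded below by the Lemma~\ref{lem:Bram2.7} quantity with $C = 1/2$ and $\epsilon = 2/3 > 1/2$, which tends to $1$ as $\ell_1 \to \infty$, uniformly in the bridge endpoints.

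The main technical obstacle I foresee is bookkeeping rather than any single deep estimate: the bridge endpoints $\x(z), \y(w)$ grow with $L$ and $t$, so Lemma~\ref{lem:Bram6.1} (phrased for fixed $a, b$) must be applied after the shift~\eqref{eqn:bridge-stoch-equiv}, and one must verify that all uniformity claims in Lemmas~\ref{lem:Bram6.1},~\ref{lem:Bram2.6},~\ref{lem:Bram2.7} survive the $t$-dependent slopes of $\cL$, $B_0$, and the linear piece inside $Q_z$; once these are tracked, each step is a direct invocation of the corresponding lemma from Section~\ref{subsec:hittingP}.
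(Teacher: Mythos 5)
Your proof is correct and takes essentially the same approach as the paper's: the explicit two-factor decomposition of $\P(\B)/\P(\UB{}{B_0})$ into $\P(\UB{}{\cL})/\P(\UB{}{B_0})$ and $\P(\LB{}{Q_z}\mid\UB{}{\cL})$ is exactly equivalent to the paper's pair of estimates \eqref{eqn:claim-bridge-1:bram2.6+7} and \eqref{eqn:claim-bridge-1:bram6.1}, and the same three Bramson lemmas drive each factor. The only superficial differences are cosmetic: you use $\delta=1/4$ (rather than $1/3$) in Lemma~\ref{lem:Bram6.1} and the coefficient $1/2$ (rather than $1$) in front of $(s\wedge(T-s))^{2/3}$, both of which satisfy the relevant hypotheses.
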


\begin{claim}
\label{claim:bridge-barrier-2}
For any fixed $y \in \R$, we have
\begin{align}
    \sup_{w \in [\ell^{1/3}, \ell^{2/3}]}
        \dfrac{
            \P_{\x(z),\tl-\ell}^{\y(w)}
            \pth{
                \B_{[\ell_1,\tl-\ell-\ell_1]}(W_\cdot)
            }
        }
        {
            \P_{\x(z),\tl-\ell}^{ \y(w) } 
            \pth{
                \B_{[0,\tl-\ell]}(W_\cdot)
            }
        }
        \usim 1.
        \label{eqn:claim-bridge-barrier-2}
\end{align}
\end{claim}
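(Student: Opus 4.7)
The lower bound on the ratio in \eqref{eqn:claim-bridge-barrier-2} is immediate from the inclusion $\B_{[0,\tll-\ell]}(W_\cdot) \subseteq \B_{[\ell_1,\tll-\ell-\ell_1]}(W_\cdot)$. Writing $N$ and $D$ for the numerator and denominator in \eqref{eqn:claim-bridge-barrier-2} and setting $T := \tll - \ell$, the plan is to prove that $N - D = o_u(D)$. For the denominator, Lemma \ref{lem:brz-modified} applies: one checks that $Q_z(s) \leq f_{\x(z)}^{\y(w)}(s;T) - c - C(s\wedge(T-s))^{2/3}$ for fixed $c, C > 0$, which holds because the line joining $\x(2L^{2/3})$ to $\y(2\ell^{2/3})$ lies at least $(2L^{2/3}-z)(1-s/T) + (2\ell^{2/3}-w)(s/T) \geq \ell^{2/3}$ below the line joining $\x(z)$ to $\y(w)$. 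The lemma then yields $D \uasymp zw/T$ uniformly.

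The difference $N - D$ equals the bridge probability of the event that the corridor $[Q_z(\cdot), \tfrac{m_t}t(\cdot+L)+y]$ holds on $[\ell_1, T-\ell_1]$ yet is violated somewhere in $[0,\ell_1] \cup [T-\ell_1, T]$. A union bound splits this into four pieces indexed by (upper or lower barrier) $\times$ (left or right short interval). I will treat the upper-barrier violation on $[0, \ell_1]$ in detail; the other three pieces are analogous and yield even faster decay. By the Markov property at time $\ell_1$, this piece factors as $\int \rho_1(u)\, q_{\rm up}(u)\, P_{\rm mid}(u)\, du$, where $\rho_1$ is the marginal density of $W_{\ell_1}$ under the bridge law; $q_{\rm up}(u) = \exp\!\bigl(-2A(U(\ell_1)-u)/\ell_1\bigr)$ (valid for $u \leq U(\ell_1)$) comes from the Brownian-bridge reflection identity in Lemma \ref{lem:brownian-ballot-explicit}, with $A := U(0) - \x(z) = \fo_t L + z + y \geq L^{1/6}$ and $U(s) := \tfrac{m_t}t(s+L)+y$; and the conditional middle-barrier probability obeys $P_{\rm mid}(u) \ulesssim (U(\ell_1) - u)\,w/T$ via another application of Brownian ballot to the sub-bridge on $[\ell_1, T]$.

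Setting $B := U(\ell_1) - u$ and using that the bridge mean $\mu_1 := \x(z) + \ell_1(\y(w) - \x(z))/T$ obeys $U(\ell_1) - \mu_1 = A(1 + o(1))$ as $t \to \infty$, the two Gaussian-type exponents $-(u-\mu_1)^2/(2\ell_1)$ and $-2AB/\ell_1$ combine by completing the square into $-(A+B)^2/(2\ell_1)$, up to a lower-order correction. The resulting integral $\int_0^\infty B\, e^{-(A+B)^2/(2\ell_1)}\, dB \leq \ell_1\, e^{-A^2/(2\ell_1)}$ produces a total bound of order $(w\sqrt{\ell_1}/T)\exp(-A^2/(2\ell_1))$; since $A^2/\ell_1 \geq L^{1/3}/\ell^{1/4} \geq L^{7/24}$, this piece is $o_u(zw/T) = o_u(D)$. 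The three remaining pieces are handled identically: the lower-barrier violation on $[0, \ell_1]$ uses the gap $\x(z) - \x(2L^{2/3}) \geq L^{2/3}$ in place of $A$; the two pieces on $[T-\ell_1, T]$ use $w - \sqrt{2}\ell_1 - \ell_1^{2/3} \asymp \ell^{1/3}$ (upper) and $2\ell^{2/3} - w \asymp \ell^{2/3}$ (lower), the exponent $1/4$ in $\ell_1 = \ell^{1/4}$ being dictated precisely by the requirement, noted in the footnote to \eqref{eqn:def-ell1}, that the former gap remain of order $\ell^{1/3}$.

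The principal obstacle is the uniformity of the error as $t \to \infty$: a naive estimate on the violation probability alone gives only a $t$-independent factor like $\exp(-L^{7/24})$, while $D$ itself shrinks as $\Theta(1/t)$ for fixed $L$. Keeping $P_{\rm mid}(u)$ inside the integrand is essential, since it contributes precisely the missing $1/T$ factor; the completion of the square then extracts super-polynomial decay in $L$ (or $\ell$) from the interplay between the Gaussian density and the reflection-principle exit probability.
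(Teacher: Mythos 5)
Your proposal is correct and follows essentially the same strategy as the paper's proof: decompose $N-D$ (you by a four-way union over upper/lower $\times$ left/right, the paper by the first exit time $\rho \in [0,\ell_1)\cup(\tll-\ell_1,\tll]$), apply the Markov property at $\ell_1$ (resp.\ $\tll-\ell_1$) so that the central ballot probability supplies the crucial $1/t$ factor, and then combine the Gaussian bridge density with the reflection-principle exit probability. The one noteworthy stylistic difference is in how you extract decay: you complete the square to merge $\exp(-(u-\mu_1)^2/(2\ell_1))$ with $\exp(-2AB/\ell_1)$ into $\exp(-(A+B)^2/(2\ell_1))$, whereas the paper first completes the square only between the Gaussian and the exponential $e^{\pm\sqrt2 r}$ coming from the kernel ratio, and then splits the $r_1$- and $r_2$-domains into a ``deep tail'' region (Gaussian decay dominates, hitting probability bounded by $1$) and a ``near'' region (hitting probability dominates, Gaussian bounded by $1$); your single completed square is arguably cleaner and gives a marginally sharper exponent on the $[\tll-\ell_1,\tll]$ piece ($e^{-w^2/(2\ell_1)}$ versus the paper's $e^{-\ell_1}$), though both are more than sufficient. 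Your identification of the key structural point — that keeping the middle-bridge ballot factor inside the integral is what yields the $1/T$ needed to beat $D\uasymp zw/T$ — matches the paper's logic exactly, and your checks of the gap sizes ($A\gtrsim L^{1/6}$, $2L^{2/3}-z\geq L^{2/3}$, $w\gtrsim\ell^{1/3}$, $2\ell^{2/3}-w\gtrsim\ell^{2/3}$) against $\ell_1=\ell^{1/4}$ correctly explain the footnote's constraint on $\ell_1$.
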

Note that the numerators of the left-hand sides of~\eqref{eqn:claim-bridge-barrier-1} and~\eqref{eqn:claim-bridge-barrier-2} are greater than or equal to their respective denominators. 

\begin{proof}[Proof of Claim~\ref{claim:bridge-barrier-1}]
The idea of the proof is to apply Lemmas \ref{lem:Bram2.6}-\ref{lem:Bram6.1}.
First, note that for any $z \in \zint$ and $w \in [\ell^{1/3}, \ell^{2/3}]$, we have 
\begin{align}
    Q_z(s) \leq f_{\x(z)}^{\y(w)}(s;\tl- \ell) - (s\wedge (\tl-\ell-s))^{2/3}
    \label{eqn:claim-bridge-1:Qlowest}
\end{align}
for all $s \in [\ell_1, \tl-\ell-\ell_1]$.
This is clear because 
$
    f_{\x(z)}^{\y(w)}(s;\tl- \ell) 
    \geq 
    f_{\x(L^{2/3})}^{\y(\ell^{2/3})}(s;\tl- \ell) 
$ for all $s \in [0, \tl -\ell]$, and the latter expression is greater than or equal to $Q_z(s) +(s\wedge (\tl-\ell-s))^{2/3}$, for all $s \in [ \ell_1, \tl-\ell-\ell_1]$.\footnote{The statement is true for all $s \in [0, \tl-\ell]$, but we will only need it to hold on the smaller interval $[\ell_1, \tl-\ell-\ell_1]$.} 
Let $\mathbf{f}(s) := f_{\x(z)}^{\y(w)}(s;\tl- \ell) - (s\wedge (\tl-\ell-s))^{2/3}$.  
By Lemma~\ref{lem:Bram2.7}, we have that
\begin{align}
    \P_{\x(z),\tl-\ell}^{\y(w)} \Big (
    \LB{[\ell_1, \tl-\ell-\ell_1]}{\mathbf{f}}(W_\cdot) 
    \given 
    \UB{[\ell_1, \tl-\ell-\ell_1]}{f_{\x(z)}^{\y(w)}(\cdot;\tl- \ell)}(W_\cdot)
    \Big ) \to 1 \text{ as }  L \to \infty \, ,
    \label{eqn:claim-bridge-1:bram2.7}
\end{align}
uniformly over $\tl - \ell \geq 2\ell_1$ and over $w \in [\ell^{1/3}, \ell^{2/3}]$ and $z \in \zint$ (in fact, the left-hand side of~\eqref{eqn:claim-bridge-1:bram2.7} is constant in $w$ and $z$--- see the last sentence of Lemma~\ref{lem:Bram2.7}). 
Next, we apply Lemma~\ref{lem:Bram2.6}. In the notation of this lemma, we take
\begin{align*}
    \cL(s) &:=
    \begin{cases}
        -\infty, &s \in [0, \ell_1] \cup [\tl - \ell - \ell_1, \tl -\ell] \\
        \mathbf{f}(s), &\text{for } s \in [\ell_1, \tl-\ell-\ell_1] 
    \end{cases} \, ,
    \\
    \cU_1(s) &:= 
    \begin{cases}
        \infty, &s \in [0, \ell_1] \cup [\tl - \ell - \ell_1, \tl -\ell] \\
        f_{\x(z)}^{\y(w)}(s;\tl- \ell), &\text{for } s \in [\ell_1, \tl-\ell-\ell_1] 
    \end{cases} \, ,
    \text{ and }
    \\
    \cU_2(s) &:= 
    \begin{cases}
        \infty, &s \in [0, \ell_1] \cup [\tl - \ell - \ell_1, \tl -\ell] \\
        \frac{m_t}{t}(s+L) +y, &\text{for } s \in [\ell_1, \tl-\ell-\ell_1] 
    \end{cases} \, ,
\end{align*}
which yields
\begin{align}
    &\P_{\x(z),\tl-\ell}^{\y(w)} \pth{
    \LB{[\ell_1, \tl-\ell-\ell_1]}{\mathbf{f}}(W_\cdot) 
    \given 
    \UB{[\ell_1, \tl-\ell-\ell_1]}{f_{\x(z)}^{\y(w)}(\cdot;\tl- \ell)}(W_\cdot)
    } 
    \nonumber \\
    &
    \leq \P_{\x(z),\tl-\ell}^{\y(w)} \pth{
    \LB{[\ell_1, \tl-\ell-\ell_1]}{\mathbf{f}}(W_\cdot) 
    \given  
    \UB{[\ell_1, \tl-\ell-\ell_1]}{\frac{m_t}{t}(\cdot +L)+ y}(W_\cdot)
    } \, . 
    \label{eqn:claim-bridge-1:bram2.6-a}
\end{align}
Equation~\eqref{eqn:claim-bridge-1:Qlowest} states that $Q_z(s) \leq \mathbf{f}(s)$ for all $s \in [\ell_1, \tl - \ell - \ell_1]$, and so \eqref{eqn:claim-bridge-1:bram2.6-a} yields
\begin{align}
    &\P_{\x(z),\tl-\ell}^{\y(w)} \pth{
    \LB{[\ell_1, \tl-\ell-\ell_1]}{\mathbf{f}}(W_\cdot) 
    \given 
    \UB{[\ell_1, \tl-\ell-\ell_1]}{f_{\x(z)}^{\y(w)}(\cdot;\tl- \ell)}(W_\cdot)
    } \nonumber \\
    &\leq \P_{\x(z),\tl-\ell}^{\y(w)} \pth{
    \LB{[\ell_1, \tl-\ell-\ell_1]}{Q_z}(W_\cdot) 
    \given 
    \UB{[\ell_1, \tl-\ell-\ell_1]}{\frac{m_t}{t}(\cdot +L)+ y}(W_\cdot)
    } 
    \label{eqn:claim-bridge-1:bram2.6-b}
\end{align}
It then follows from~\eqref{eqn:claim-bridge-1:bram2.7} and~\eqref{eqn:claim-bridge-1:bram2.6-b} that 
\begin{align}
    \sup_{w \in [\ell^{1/3}, \ell^{2/3}]}
    \dfrac{ 
    \P_{\x(z),\tl-\ell}^{\y(w)} \pth{
        \UB{[\ell_1, \tl-\ell-\ell_1]}{\frac{m_t}{t}(\cdot +L)+ y}(W_\cdot)
    }
    }
    { 
    \P_{\x(z),\tl-\ell}^{\y(w)} \pth{
    \B_{[\ell_1, \tl-\ell-\ell_1]}(W_\cdot) 
    } 
    }
    \usim 1 \, .
    \label{eqn:claim-bridge-1:bram2.6+7}
\end{align}

Next, 
we are interested in applying Lemma~\ref{lem:Bram6.1}: in its notation, we take $a := \frac{m_t}{t}L+y$, $b := \y(0)$, 
$C := \mathcal{K}_d$, $\delta := 1/3$, $T := \tl- \ell$, $r := \ell_1$, and 
$\cL_{\tl-\ell}(s) := \frac{m_t}{t}(s+L)+y$.
Note that the condition in~\eqref{eqn:bram6.1-cond} holds with $r_0=0$ (since we are in the case $f_a^b(s; \tll) = \cL_{\tll}(s)$). 
Lemma~\ref{lem:Bram6.1}  then gives the following convergence uniformly over all~$z \in \zint$ and~$w\in[\ell^{1/3}, \ell^{2/3}]$:
\begin{align}
    \dfrac{ 
    \P_{\x(z), \tl-\ell}^{\y(w)} \Big(
    \UB{[\ell_1, \tl-\ell-\ell_1]}{\overline{\cL}_{\tl-\ell} }(W_\cdot) 
    \Big) 
    }
    {
    \P_{\x(z),\tl-\ell}^{\y(w)} \pth{
    \UB{[\ell_1, \tl-\ell-\ell_1]}{\frac{m_t}{t}(\cdot +L)+ y}(W_\cdot) 
    } 
    }
    \usim 1 \, .
    \label{eqn:claim-bridge-1:bramlem2}
\end{align}
Recall the definition of $B_0(s)$ from~\eqref{def:B0}, and consider $t$ and $L$ large enough such that, for all $s \in [\ell_1, \tl-\ell-\ell_1]$,
\begin{align*} 
\log \ell+ \mathcal{K}_d \log (s\wedge (\tl-\ell-s))_+ \leq \mathcal{K}_d (s\wedge (\tl-\ell-s))^{1/3} \,.  
\end{align*}
This implies that 
$
\frac{m_t}{t}(s +L)+ y \leq 
B_0(s) \leq \overline{\cL}_{\tl-\ell}(s)
$
for all $s \in [\ell_1, \tl-\ell-\ell_1]$ and for all~$t$ and~$L$ large enough, and thus from~\eqref{eqn:claim-bridge-1:bramlem2} we see
\begin{align}
    \sup_{w \in [\ell^{1/3}, \ell^{2/3}]}
    \dfrac{ 
    \P_{\x(z), \tl-\ell}^{\y(w)} \pth{
    \UB{[\ell_1, \tl-\ell-\ell_1]}{B_0 }(W_\cdot)
    } 
    }
    {
    \P_{\x(z),\tl-\ell}^{\y(w)} \pth{
    \UB{[\ell_1, \tl-\ell-\ell_1]}{\frac{m_t}{t}(\cdot +L)+ y}(W_\cdot) 
    } 
    }
    \usim 1 \,.
    \label{eqn:claim-bridge-1:bram6.1}
\end{align}
Equation~\eqref{eqn:claim-bridge-barrier-1} then follows from~\eqref{eqn:claim-bridge-1:bram2.6+7} and~\eqref{eqn:claim-bridge-1:bram6.1}.
\end{proof}

\begin{proof}[Proof of Claim~\ref{claim:bridge-barrier-2}]
From~\eqref{eqn:brz-modified}, we have 
\begin{align}
    \P_{\x(z),\tl-\ell}^{ \y(w) } 
            \pth{
                \B_{[0,\tl-\ell]}(W_\cdot)
            }
        \uasymp \frac{zw}{t} 
        \geq \frac{z \ell^{1/3}}{t}
        \, .
    \label{eqn:claim-barrier-2:good}
\end{align}
Define the hitting time $\rho :=  \inf\{ s: W_s \not \in [ Q_z(s), \frac{m_t}{t}(s+L)+y ] \}$. 
We wish to show that   
\begin{align*}
    \cP&:= \P_{\x(z),\tl-\ell}^{\y(w)} 
    \pth{
    \big \{ \rho \in [ 0,  \ell_1) \cup ( \tl - \ell - \ell_1, \tl -\ell] \big \}
    \bigcap
    \B_{[\ell_1, \tl-\ell-\ell_1]}
    (W_\cdot) 
    } 
\end{align*}
is negligible compared to the right-hand side of~\eqref{eqn:claim-barrier-2:good}. 
From a union bound, we may write
\begin{align}
    \cP &\leq 
    \P_{\x(z),\tl-\ell}^{\y(w)} 
    \pth{
    \big \{ \rho \in [ 0,  \ell_1) \big \}
    \bigcap
    \B_{[\ell_1, \tl-\ell-\ell_1]}
    (W_\cdot) 
    }     
    \nonumber \\
    &+
    \P_{\x(z),\tl-\ell}^{\y(w)} 
    \pth{
    \big \{
    \rho \in ( \tl - \ell - \ell_1, \tl -\ell]
    \big \}
    \bigcap
    \B_{[\ell_1, \tl-\ell-\ell_1]}
    (W_\cdot) 
    } .
    \label{eqn:claim-barrier-2:unionbd}
\end{align}
For an interval $I \subseteq \R$, a constant $\theta \in \R$, and a real-valued process $X$, define the event
\[
\cB^{\Bumpeq+\theta}_I(X_\cdot) =
    \UB{I}{\frac{m_t}{t}(\cdot+L+\theta)+y}(X_\cdot)
    \cap 
    \LB{I}{Q_z(\cdot+\theta)}(X_\cdot) \,.
\]    
Note that, for any $r_1, r_2 \in \R$, we have
\[
    \P_{r_1, \tl-\ell-2\ell_1}^{r_2} \pth{
    \cB^{\Bumpeq+\ell_1}_{[0, \tl-\ell-2\ell_1]}(W_\cdot)
    }
    =
    \P_{\x(z), \tl-\ell}^{\y(w)} \pth{
    \cB^{\Bumpeq}_{[\ell_1, \tl-\ell-\ell_1]}(W_\cdot)
    \given W_{\ell_1} = r_1,~W_{\tl-\ell-\ell_1} = r_2
    }\,.
\]
Then, integrating the right-hand side of~\eqref{eqn:claim-barrier-2:unionbd} over the positions $r_1$ and $r_2$ of $W$ at times $\ell_1$ and $\tll- \ell_1$, respectively, and utilizing the Markov property at times $\ell_1$ and $\tl-\ell-\ell_1$, we find
\begin{align}
    \cP  
    &\leq 
    p_{\tll}^W(\x(z),\y(w))^{-1}
    \int_{Q_z(\ell_1)}^{ \frac{m_t}{t}(L+\ell_1)+y} 
    \int_{ Q_z(\tl-\ell-\ell_1) }^{\frac{m_t}{t}(t-\ell-\ell_1)+y} 
    \!\!\d r_1
    \d r_2 
    ~p_{\ell_1}^W( \x(z), r_1)
    p_{\tl -\ell - 2\ell_1}^W(r_1, r_2)
    \nonumber 
    \\ 
    &\quad \times 
    p_{\ell_1}^W( r_2, \y(w))
    \P_{r_1, \tl-\ell-2\ell_1}^{r_2} \pth{
    \cB^{\Bumpeq+\ell_1}_{[0, \tl-\ell-2\ell_1]}(W_\cdot)
    }
    \pth{ \P_{\x(z), \ell_1}^{r_1}( \rho < \ell_1 ) 
    + \P_{r_2,\ell_1}^{\y(w)} ( \rho < \ell_1 ) }\nonumber\\
 &=: \cP^* \, ,
    \label{eqn:claim-barrier-2:bad.1}
\end{align}
From a trivial upper bound, equation~\eqref{eqn:brownian-ballot-sim}, 
and the fact that $\frac{m_t}{t}(t-\ell-\ell_1)+y - r_2 \leq 2 \ell^{2/3}$, 
we have
\begin{align}
    \P_{r_1, \tl-\ell-2\ell_1}^{r_2} \pth{
    \cB^{\Bumpeq+\ell_1}_{[0, \tl-\ell-2\ell_1]}(W_\cdot)
    }
    &\leq 
    \P_{r_1, \tl-\ell-2\ell_1}^{r_2} \pth{
    \UB{[0, \tl-\ell-2\ell_1]}{\frac{m_t}{t}(\cdot+L+\ell_1)+y }(W_\cdot)
    }
    \nonumber \\
    &\leq 
    \frac{ 2\ell^{2/3}(\frac{m_t}{t}(L+ \ell_1) +y - r_1) }{t} \, \, ,
    \label{eqn:claim-barrier-2:bad.barrier}
\end{align}
which holds uniformly over $r_1$ and $r_2$ in their respective ranges. 
Define $\rx:= r_1 - \x(z)$ and $\ry := r_2 - \y(w)$. Then
using the bound \eqref{eqn:claim-barrier-2:bad.barrier} in  \eqref{eqn:claim-barrier-2:bad.1}, changing variables from $(r_1, r_2)$ to $(\rx, \ry)$, and expanding the Gaussian densities involving $\rx$ and $\ry$ yields 
\begin{align}
    \cP^* 
    &\ulesssim
    \frac{\ell^{2/3}}{t}
    p_{\tll}^W(\x(z),\y(w))^{-1}
    \int_{z - 2L^{2/3}}^{\zeta}
    \d\rx
    ~(\zeta-\rx)
    e^{-\frac{\rx^2}{2\ell_1}} 
    \nonumber \\
    &\times \int_{w-2\ell^{2/3}}^{w - \frac{m_t}{t}\ell_1}
    \d\ry
    ~p_{\tll-2\ell_1}^W (r_1, r_2)
    e^{-\frac{\ry^2}{2\ell_1}} 
    \pth{ \P_{\x(z), \ell_1}^{r_1}( \rho < \ell_1 ) 
    + \P_{r_2,\ell_1}^{\y(w)} ( \rho < \ell_1 ) } \, ,
    \label{eqn:claim-barrier-2:integrate}
\end{align}
where we recall that $\fo_t := \frac{m_t}{t}-\sqrt{2}$, and define
\[
\zeta:= \zeta(t,L,z) = \frac{m_t}{t}(L+\ell_1)+y - \x(z) = z+ \frac{m_t}{t}\ell_1+y+\fo_t L \,.
\]
We remark that \eqref{eqn:claim-barrier-2:integrate} holds uniformly over $w$ in $[\ell^{1/3}, \ell^{2/3}]$ (in what follows, we always write ``uniformly in $w$" to mean ``uniformly over $w$ in $[\ell^{1/3}, \ell^{2/3}]$").
We proceed by examining the term
\[
p_{\tll}^W(\x(z),\y(w))^{-1} p_{\tll - 2\ell_1}^W(r_1,r_2) 
\]
in~\eqref{eqn:claim-barrier-2:integrate}. Consider the quantity
$
    c:= (\tll)/(\tll-2\ell_1)
$.
Then, uniformly in $w$,
\begin{align}
    \frac{p_{\tll - 2\ell_1}^W(r_1,r_2)}{p_{\tll}^W(\x(z),\y(w))}
    &\usim
    \exp \pth{
    \frac{c(\x(z)-\y(w))^2 - (r_1-r_2)^2}{2(\tll - 2\ell_1)}
    }\, .
    \label{eqn:claim-barrier-2-gaussratio}
\end{align}
Now, since $\x(z)$, $\rx$, and $\ry$ are all bounded in absolute value by a constant times $L^{2/3}$ for all $L$ and $t$ sufficiently large, we have 
\begin{align}
    c(\x(z)-\y(w))^2 - (r_1-r_2)^2 \lesssim
    L^{4/5} + (c-1)\y(w)^2 + 2\y(w) (\rx - \ry +(1-c) \x(z)) \, ,
    \label{eqn:claim-barrier-2-gaussexpand}
\end{align}
uniformly in $w$.
It follows from equations~\eqref{eqn:claim-barrier-2-gaussratio} and~\eqref{eqn:claim-barrier-2-gaussexpand} that 
\begin{align}
    \frac{p_{\tll - 2\ell_1}^W(r_1,r_2)}{p_{\tll}^W(\x(z),\y(w))}
    \ulesssim
    \exp \pth{
    \frac{(c-1)\y(w)^2 + 2\y(w) (\rx - \ry +(1-c) \x(z))}{2(\tll- 2\ell_1)}
    } \, ,
    \label{eqn:claim-barrier-2-gaussexpand2}
\end{align}
uniformly in $w$.
Note that $\y(w) = \sqrt{2}(\tll-2\ell_1) \big(1+f_1 \big)$, where $f_1:= f_1(t,L,w)$ satisfies 
\begin{align*}
    \sup_{w \in [\ell^{1/3}, \ell^{2/3}]} f_1 \ulesssim t^{-1} \log t \,. 
\end{align*}
This expression for $\y(w)$ and~\eqref{eqn:claim-barrier-2-gaussexpand2}, along with the relation $c-1 = 2\ell_1 (\tll-2\ell_1)^{-1}$, gives 
\begin{align}
    \sup_{w \in [\ell^{1/3}, \ell^{2/3}]} \frac{p_{\tll - 2\ell_1}^W(r_1,r_2)}{p_{\tll}^W(\x(z),\y(w))}
    \ulesssim 
    e^{2 \ell_1 + \sqrt{2}\ry - \sqrt{2}\rx } \,.
    \label{eqn:claim-barrier-2-finalgauss}
\end{align}
Substituting \eqref{eqn:claim-barrier-2-finalgauss} into \eqref{eqn:claim-barrier-2:integrate} and then completing the square 
\[
-\frac{r^2}{2\ell_1} \pm \sqrt{2}r = \ell_1-\frac{(r\mp \sqrt{2}\ell_1)^2}{2\ell_1}
\]
gives the following estimate uniformly over $w$:
\begin{align}
    \cP^* 
    &\ulesssim
    \frac{\ell^{2/3}e^{4\ell_1}}{t}
    \int_{z - 2L^{2/3}}^{\zeta}
    \d\rx
    ~(\zeta-\rx)
    e^{-\frac{(\rx+ \sqrt{2}\ell_1)^2}{2\ell_1}}
    \nonumber \\
    &\times \int_{w-2\ell^{2/3}}^{w - \frac{m_t}{t}\ell_1}
    \d\ry
    ~e^{-\frac{(\ry- \sqrt{2}\ell_1)^2}{2\ell_1}} 
    \pth{ \P_{\x(z), \ell_1}^{r_1}( \rho < \ell_1 ) 
    + \P_{r_2,\ell_1}^{\y(w)} ( \rho < \ell_1 ) } \,, 
    \label{eqn:claim-barrier-2:int-gauss-sub}
\end{align}
Note that, uniformly over $\rx \in [-\sqrt{2}\ell_1 - \ell_1^{2/3}, -\sqrt{2}\ell_1 + \ell_1^{2/3}]$, we have $\zeta - \rx \usim z$. It follows from Gaussian integration that
\begin{align}
    \int_{z - 2L^{2/3}}^{\zeta}
    (\zeta-\rx)
    e^{-\frac{(\rx+ \sqrt{2}\ell_1)^2}{2\ell_1}} 
    \d\rx
    &\usim
    z\ell_1^{1/2} \,, \text{and}
    \label{eqn:claim-barrier-2:full-int-r1}
    \\
    \int_{w-2\ell^{2/3}}^{w - \frac{m_t}{t}\ell_1}
    e^{-\frac{(\ry- \sqrt{2}\ell_1)^2}{2\ell_1}} ~\d\ry
    &\usim \ell_1^{1/2} \, .
    \label{eqn:claim-barrier-2:full-int-r2}
\end{align}
Writing the right-hand side of~\eqref{eqn:claim-barrier-2:int-gauss-sub} as a sum of two terms, applying \eqref{eqn:claim-barrier-2:full-int-r1} -- \eqref{eqn:claim-barrier-2:full-int-r2}, and then applying the trivial bound $\ell^{2/3} \ell_1^{1/2} \leq e^{\ell_1}$ for $L$ large enough yields
\begin{align}
    \cP^* \ulesssim \cP_1^* + \cP_2^* \,,
    \label{eqn:claim-barrier-2:p*}
\end{align}
uniformly in $w$, where
 \[   \cP_1^* := \frac{e^{5\ell_1}}{t} \int_{z - 2L^{2/3}}^{\zeta}
    (\zeta-\rx)
    e^{-\frac{(\rx+ \sqrt{2}\ell_1)^2}{2\ell_1}}
    \P_{\x(z), \ell_1}^{r_1}( \rho < \ell_1 )
    \d\rx
    =: \frac{e^{5\ell_1}}{t} \int_{z - 2L^{2/3}}^{\zeta} \mathfrak{I}_1 \d\rx
    \,,\]
     and
     \[
    \cP_2^* := \frac{z e^{5\ell_1}}{t}  \int_{w-2\ell^{2/3}}^{w - \frac{m_t}{t}\ell_1}
    e^{-\frac{(\ry- \sqrt{2}\ell_1)^2}{2\ell_1}} 
    \P_{r_2,\ell_1}^{\y(w)} ( \rho < \ell_1 ) \d\ry \\
    =: \frac{ze^{5\ell_1}}{t}  \int_{w-2\ell^{2/3}}^{w - \frac{m_t}{t}\ell_1} \mathfrak{I}_2 \d\ry \,.\]
The idea for bounding $\cP_1^*$ and $\cP_2^*$ will be the same. Deep enough into the tails of the Gaussian terms, the exponential decay will kill the $e^{5\ell_1}$ pre-factor, and so we may simply upper bound the hitting probability by $1$. Outside of these ``deep tails", the hitting probabilities will dominate, providing sufficient exponential decay so that the Gaussian term may  be upper bounded by $1$. 

We begin with the computations for  $\cP_1^*$.
Consider the intervals 
\[
\cI_1 := [z-2L^{2/3}, -\sqrt{2}\ell_1-L^{1/10}]\cup [\zeta- L^{1/10}, \zeta], \quad 
\cI_2 := (-\sqrt{2}\ell_1-L^{1/10}, \zeta- L^{1/10}) \, .
\]
On $\cI_1$, which has length $\abs{\cI_1} \ulesssim L^{2/3}$, we use the following bounds for all $L$ and $t$ sufficiently large:
\[
\zeta- \rx \leq 4L^{2/3}, \quad 
e^{- \frac{(\rx+\sqrt{2}\ell_1)^2}{2\ell_1}} \leq e^{-\frac{L^{1/5}}{2\ell_1}} \leq e^{-L^{1/10}}\,, \quad 
\P_{\x(z), \ell_1}^{r_1}(\rho<\ell_1) \leq 1\,.
\]
This yields
\begin{align}
    \frac{e^{5\ell_1}}{t}\int_{\cI_1} \mathfrak{I}_1 ~\d\rx \ulesssim \frac{L^{4/3}}{t} e^{5\ell_1-L^{1/10}} \ulesssim \frac{1}{t}e^{-L^{1/11}}\,,
    \label{eqn:claim-barrier-2:I1-1}
\end{align}
since $\ell_1 = \ell^{1/4} \leq L^{1/24}$\,.
On $\cI_2$, we find exponential decay of the hitting probability as follows. 
Consider the hitting times
 \[   \overline{\rho}_1 := \inf \{ s : W_s > \frac{m_t}{t}(s+L)+y \}\, \text{ and}\;
    \underline{\rho}_1 := \inf \{ s : W_s < Q_z(0)\}\,,\]
from which we obtain the union bound
\begin{align}
    \P_{\x(z),\ell_1}^{r_1}( \rho < \ell_1) 
    &\leq 
    \P_{\x(z),\ell_1}^{r_1} \pth{ \overline{\rho}_1 < \ell_1 }
    +\P_{\x(z),\ell_1}^{r_1} \pth{ \underline{\rho}_1 < \ell_1 } \,. \label{eqn:claim-barrier-2:hit1-union}
\end{align}
We  use~\eqref{eqn:brownian-ballot-explicit} to compute the hitting probabilities on the right-hand side of~\eqref{eqn:claim-barrier-2:hit1-union}:
\begin{align}
    \P_{\x(z),\ell_1}^{r_1} \pth{ \overline{\rho}_1 < \ell_1 }
    &=
    \exp \bigg(
    - \frac{2 \pth{ \frac{m_t}{t}L+y - \x(z)} \pth{\frac{m_t}{t}(L+\ell_1)+y-r_1}}{\ell_1}
    \bigg) \nonumber \\
    &\ulesssim
    \exp \bigg(
    - \frac{2z (\zeta-\rx)}{\ell_1}
    \bigg)
    \,,
    \label{eqn:claim-barrier-2:hit1-1}\\
    \P_{\x(z),\ell_1}^{r_1} \pth{ \underline{\rho}_1 < \ell_1 }
    &=
    \exp \bigg(
    - \frac{2 \pth{\x(z) - Q_z(0)}\pth{r_1-Q_z(0)}}{\ell_1}
    \bigg)
    \nonumber \\
    &\leq 
    \exp \bigg(
    - \frac{2L^{2/3}\pth{\rx+2L^{2/3}-z}}{\ell_1}\bigg) \,.
    \label{eqn:claim-barrier-2:hit1-2}
\end{align}
Note that on $\cI_2$, we have $L^{1/10} < \zeta- \rx$, from which \eqref{eqn:claim-barrier-2:hit1-union}---\eqref{eqn:claim-barrier-2:hit1-2} yield
\begin{align*}
    \P_{\x(z),\ell_1}^{r_1}( \rho < \ell_1) \usim \P_{\x(z),\ell_1}^{r_1} \pth{ \overline{\rho}_1 < \ell_1 } \ulesssim 
    e^{- 2zL^{1/10}/\ell_1} \ulesssim e^{-L^{1/10}}  \,.
\end{align*}
Also on $\cI_2$, which has length $\abs{\cI_2} \ulesssim L^{2/3}$, we have $\zeta - \rx \ulesssim z \leq L^{2/3}$. It follows from bounding the Gaussian term by $1$ and the last display that 
\begin{align}
    \frac{e^{5\ell_1}}{t}\int_{\cI_2}
    \mathfrak{I}_1 \d\rx &\ulesssim \frac{L^{4/3}}{t} e^{5\ell_1 - L^{1/10}} \ulesssim \frac{1}{t}e^{-L^{1/11}} \,,
    \label{eqn:claim-barrier-2:I1-2}
\end{align}
from which \eqref{eqn:claim-barrier-2:I1-1} and~\eqref{eqn:claim-barrier-2:I1-2} yield  
\begin{align}
    \cP_1^* \ulesssim \frac{1}{t}e^{-L^{1/11}} \,.
    \label{eqn:claim-barrier-2:p1}
\end{align}

We next turn our attention to bounding $\cP_2^*$ in a similar fashion.  $\cP_2^*$ depends on $w$, but it is easily seen from the calculations that follow  that all of our estimates are uniform over $w$; thus, we do not mention this uniformity any longer. 
This time, consider the following regions:
\[
\cI_4:= [w-2\ell^{2/3}, -2\sqrt{2}\ell_1] \cup [4\sqrt{2}\ell_1, w- (m_t/t)\ell_1]\,, \quad
\cI_5:= (-2\sqrt{2}\ell_1, 4\sqrt{2}\ell_1)\,.
\]
On $\cI_4$, which has length $\abs{\cI_4} \ulesssim \ell^{2/3}$, we have 
\[
e^{-\frac{(\ry- \sqrt{2}\ell_1)^2}{2\ell_1}} \leq e^{-9\ell_1}\,, \qquad \P_{r_2, \ell_1}^{\y(w)}(\rho < \ell_1) \leq 1\,,
\]
which yields 
\begin{align}
    \frac{ze^{5\ell_1}}{t} \int_{\cI_4} \mathfrak{J}_2 ~\d\ry \leq \frac{z}{t}e^{-4\ell_1} \,.
    \label{eqn:claim-barrier-2:I2-1}
\end{align}
On $\cI_5$, we find exponential decay of the hitting probability as in $\cI_2$. Consider the hitting times
\begin{align*}
    \overline{\rho}_2 &:= \inf \{ s: W_s > 
    \frac{m_t}{t}(s+\tl-\ell-\ell_1)+y \} \,, \text{ and}
    \\
    \underline{\rho}_2 &:= \inf \{ s: W_s < Q_z(\tll-\ell_1) \} \,.
\end{align*}
A union bound gives
\begin{align}
    \P_{r_2,\ell_1}^{\y(w)} ( \rho < \ell_1)
    &\leq 
    \P_{r_2,\ell_1} \pth{ \overline{\rho}_2 < \ell_1  }
    + 
    \P_{r_2, \ell_1} \pth{ \underline{\rho}_2 < \ell_1 } \,.
    \label{eqn:claim-barrier-2:hit2-union}
\end{align}
From~\eqref{eqn:brownian-ballot-explicit}, we find
\begin{align}
    \P_{r_2,\ell_1}^{\y(w)} \pth{ \overline{\rho}_2 < \ell_1  }
    &=
    \exp \bigg( -\frac{2\pth{w- \frac{m_t}{t}\ell_1 - \ry}w}{\ell_1}\bigg) \nonumber \\
    &\leq 
    \exp \bigg( -\frac{2\ell^{1/3}\pth{w- \frac{m_t}{t}\ell_1 - \ry}}{\ell_1}\bigg)
    \label{eqn:claim-barrier-2:hit2-1}
    \\
    \P_{r_2, \ell_1}^{\y(w)} \pth{ \underline{\rho}_2 < \ell_1 } 
    &= 
    \exp \bigg( - \frac{2 \pth{\ry- w+2\ell^{2/3}}(2\ell^{2/3}+w)}{\ell_1}\bigg) \nonumber 
    \\
    &\leq 
    \exp \bigg( - \frac{4\ell^{2/3} \pth{\ry- w+2\ell^{2/3}}}{\ell_1}\bigg) \,.
    \label{eqn:claim-barrier-2:hit2-2}
\end{align}
On $\cI_5$, $\ry \uasymp \ell_1$, so that the right-hand side of~\eqref{eqn:claim-barrier-2:hit2-1} dominates the right-hand side of~\eqref{eqn:claim-barrier-2:hit2-2}. It also follows $w- \frac{m_t}{t}\ell_1 - \ry \geq w/2$ for all $\ry \in \cI_5$, $w \in[\ell^{1/3}, \ell^{2/3}]$, and~$L$ and $t$ sufficiently large. Thus, 
\[
    \P_{r_2,\ell_1}^{\y(w)} ( \rho < \ell_1) \usim \P_{r_2,\ell_1}^{\y(w)} \pth{ \overline{\rho}_2 < \ell_1  } \ulesssim 
    e^{- \ell^{1/3}w/\ell_1} \ulesssim e^{-\ell^{1/3}}\,.
\]
From the previous display and bounding the Gaussian term in $\mathfrak{J}_2$ by $1$, we have
\begin{align}
    \frac{e^{5\ell_1}}{t} \int_{\cI_5} \mathfrak{I}_2 \d\ry \ulesssim 
    \frac{z}{t}e^{5\ell_1-\ell^{1/3}} \ulesssim
    \frac{z}{t}e^{-\ell_1}\,.
        \label{eqn:claim-barrier-2:I2-2}
\end{align}
Together, \eqref{eqn:claim-barrier-2:I2-1} and~\eqref{eqn:claim-barrier-2:I2-2} imply that
\begin{align}
    \cP_2^* \ulesssim \frac{ze^{5\ell_1}}{t} \int_{\cI_4} \mathfrak{I}_2 \d\ry \ulesssim  \frac{z}{t}e^{-\ell_1}\,.
    \label{eqn:claim-barrier-2:p2}
\end{align}

Now, from~\eqref{eqn:claim-barrier-2:bad.1}, \eqref{eqn:claim-barrier-2:p*}, \eqref{eqn:claim-barrier-2:p1}, and~\eqref{eqn:claim-barrier-2:p2}, we find that
\begin{align}
    \sup_{w \in [\ell^{1/3}, \ell^{2/3}]} \cP \ulesssim \cP_2^* \ulesssim \frac{z}{t} e^{-\ell_1} \,.
\end{align}
It follows from~\eqref{eqn:claim-barrier-2:good} that
\begin{align}
    \sup_{w \in [\ell^{1/3}, \ell^{2/3}]} \frac{\cP}{\P_{\x(z),\tl-\ell}^{ \y(w) } \pth{\B_{[0,\tl-\ell]}(W_\cdot)}} = o_u(1)\,,
\end{align}
where we recall the $o_u$ notation from Section~\ref{subsec:asymp-notation}. Equation \eqref{eqn:claim-bridge-barrier-2} follows.
\end{proof}

\subsection{Proof of Lemma~\ref{lem:barrier-equiv}}
\label{subsec:proof_bananaequiv}
We begin with the many-to-one lemma: for any branching Bessel particle $v \in \cN_{\tl -\ell}$, we have 
\begin{align}
    \E_{\x(z)}[\Gamma_{L,t}] 
    &=
    e^{\tl-\ell} \P_{\sqrt{2}L -z}
    \pth{
    F_{L,t}(v)
    }
    \,.
    \label{eqn:bar-equiv:m21}
\end{align}
For brevity, we will write 
\[
    \bar{F}_{L,t}(v) := 
    \UB{[0,\tl-\ell]}{B_0}(R_\cdot^{(v)})
    \cap
    \Big \{R^{(v)}_{\tll} > \frac{t}{\sqrt{d}} \Big \} \,,
\]
so that $F_{L,t}(v) = \bar{F}_{L,t}(v) \cap \fT_{\ell}(v)$. 
Using the tower property of conditional expectation and the Markov property of the Bessel process, we may rewrite the probability on the right-hand side of~\eqref{eqn:bar-equiv:m21} as 
\begin{align}
    \E_{\sqrt{2}L -z} \brak{
    \one_{\bar{F}_{L,t}(v)}
    \E \brak{ \one_{\fT_{\ell}(v)} \given \cF_{\tl-\ell} }
    } 
    =
    \E_{\sqrt{2}L -z} \brak{
    \one_{\bar{F}_{L,t}(v)}
    \P_{R_{\tll}^{v}}(R_{\ell}^* > m_t+y)
    }\,,
    \label{eqn:bar-equiv:markov}
\end{align}
where $R_r^*$ denotes an independent $d$-dimensional branching Bessel process. We next apply the Girsanov transform~\eqref{eq.girsanov}.
Define the event 
\[
    \widehat{F}_{L,t} := 
    \big \{W_s > 0, ~\forall s\in [0,\tll] \big\} \cap 
    \UB{[0,\tl-\ell]}{B_0}(W_\cdot) \cap
    \Big \{W_{\tll} > \frac{t}{\sqrt{d}} \Big \} \,.
\]
Then substituting \eqref{eqn:bar-equiv:markov} into \eqref{eqn:bar-equiv:m21} and then applying the Girsanov transform gives
\begin{align}
    &\E_{\x(z)}[\Gamma_{L,t}] \nonumber\\
    &=e^{\tl-\ell} 
    \E_{\x(z)} \brak{
    \pth{\frac{W_{\tl-\ell}}{\x(z)}}^{\alpha_d}
    \exp \pth{\int_0^{\tl-\ell} \frac{\alpha_d- \alpha_d^2}{W_s^2}\d s}
    \one_{\widehat{F}_{L,t}} 
    \P_{W_{\tll}}(R_{\ell}^* > m_t+y)
    }
    \, . \label{eq.lowerbanana.GirBes}
\end{align}
Since $d\geq 3$, we have $\alpha_d - \alpha_d^2 \leq 0$; thus, the $\exp(\cdot)$ term in the previous display is bounded above by $1$. We also use the upper bound $\one_{\{W_s > 0 , s \in [0,\tl -\ell]\}} \leq \one_{\{W_{\tl-\ell} > 0\}}$. 
Then, from the right-hand side of 
\eqref{eq.lowerbanana.GirBes}, we obtain the following expression by 
integrating over $w:= \y( W_{\tl-\ell} ) \in [-\log \ell, \y(\frac{t}{\sqrt{d}})]$ 
(recall the definition of $\y(\cdot)$ from~\eqref{def:xz-yw}: $w$ is the distance of $W_{\tll}$ below $\frac{m_t}{t}(t-\ell)+y$) and applying the Markov property at time $\tl-\ell$:
\begin{align}
   \E_{\x(z)}[\Gamma_{L,t}]
    &\leq
    e^{\tl - \ell} 
    \int_{- \log \ell}^{\y(\frac{t}{\sqrt{d}})}
    p_{\tl-\ell}^W(\x(z), \y(w))
    \bigg ( \frac{ \y(w)}{\x(z)} \bigg)^{\alpha_d} 
    \P_{\x(z)}^{ \y(w) } 
    \pth{
        \UB{[0,\tl-\ell]}{B_0}(W_\cdot)
    }
    \nonumber \\
    & \qquad \times \P_{\y(w)} \pth{ R_\ell^* > m_t+y
    }\d w 
   =: e^{\tl-\ell} \int_{-\log \ell}^{ \y(\frac{t}{\sqrt{d}})} \Phi_{t,L,z}(w) \d w \,.
    \label{eq.lowerbanana.upperbd1}
\end{align}
Thus, to prove \eqref{eq.1stmom.bananaequiv}, it suffices to show 
\begin{align}
    e^{\tl-\ell} \int_{-\log \ell}^{ \y(\frac{t}{\sqrt{d}})} \Phi_{t,L,z}(w) \d w
    \usim
    \E_{\x(z)} \brak{\bar{\Lambda}_{L,t}(z)}
    \,.
    \label{eq.lowerbanana.up=low}
\end{align}
We proceed with Claims~\ref{claim:muchlower_window_t-L} and~\ref{claim:restofwindow_t-L}, which show that the integral of $\Phi$ over  $w\not \in [\ell^{1/3}, \ell^{2/3}]$  in equation~\eqref{eq.lowerbanana.upperbd1} is negligible compared to the integral over  $w \in [\ell^{1/3}, \ell^{2/3}]$.

\begin{claim}\label{claim:muchlower_window_t-L}
Recall $\fM_{L,z}$ from~\eqref{def:M_Lz}.
We have
\begin{align}
    \frac{
    e^{\tl-\ell} \int_{\ell^{2/3}}^{ \y(\frac{t}{\sqrt{d}})} \Phi_{t,L,z}(w) \d w}{{\fM_{L,z}}} = o_u(1) \,.
    \label{eq.muchlower_window_t-L}
\end{align}
\end{claim}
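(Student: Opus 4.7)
The plan is to bound each of the four factors comprising $\Phi_{t,L,z}(w)$ and observe that when assembled and divided by $\fM_{L,z}$, all of the $t$- and $L$-dependence cancels exactly, leaving an integrand that is essentially a Gaussian in $w/\sqrt\ell$ with mean near zero.

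Since $\y(w)\ge t/\sqrt d$ over the range of integration, I would first apply Lemma~\ref{lem:F_w<c-ell} with $\epsilon=1/(2\sqrt d)$ to bound the Bessel tail probability by $\ulesssim\exp\bigl(\ell-(m_t+y-\y(w))^2/(2\ell)\bigr)$. Combining this with the Gaussian density factor $e^{\tl-\ell}p^W_{\tl-\ell}(\x(z),\y(w))$, the joint exponent in $\y(w)$ is a quadratic form which, after completing the square, equals
\[
-\frac{(m_t+y-\x(z))^2}{2\tl}-\frac{\tl\,u^2}{2\ell(\tl-\ell)}\,,\qquad u:=\y(w)-\y^*\,,
\]
where $\y^*:=(\ell\,\x(z)+(\tl-\ell)(m_t+y))/\tl$ is the straight-line interpolation at time $\tl-\ell$. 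Using the identity $m_t+y-\x(z)=\sqrt{2}\,\tl+\fc_d\log t+y+z$, a direct expansion yields the crucial asymptotic
\[
e^{\tl-(m_t+y-\x(z))^2/(2\tl)}\;\asymp\; t^{(4-d)/2}\,e^{-\sqrt{2}(y+z)}\,,
\]
in which the exponent $(4-d)/2 = -\sqrt{2}\fc_d$ is dictated by the choice $\fc_d=(d-4)/(2\sqrt{2})$.

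Next, the Girsanov ratio satisfies $(\y(w)/\x(z))^{\alpha_d}\ulesssim (t/L)^{\alpha_d}$, and the Brownian bridge barrier probability is controlled by the Brownian ballot theorem (Lemma~\ref{lem:brownian-ballot-explicit}, combined with Lemma~\ref{lem:Bram6.1} to absorb the sub-polynomial log correction in $B_0$) to give
\[
\P^{\y(w)}_{\x(z),\tl-\ell}\bigl(\UB{[0,\tl-\ell]}{B_0}(W_\cdot)\bigr)\ulesssim \frac{z(w+\log\ell)}{\tl}\,.
\]
Assembling these ingredients with the Gaussian prefactor $(2\pi(\tl-\ell))^{-1/2}\asymp t^{-1/2}$ and dividing by $\fM_{L,z}=\x(z)^{-\alpha_d}z\,e^{-(z+y)\sqrt{2}}$, the bookkeeping of powers gives $t^{(3-d)/2}\cdot(t/L)^{\alpha_d}\cdot(z/\tl)\cdot(L^{\alpha_d}/z) = 1$ (since $(3-d)/2+\alpha_d=1$), while the exponentials $e^{-\sqrt{2}(y+z)}$ cancel exactly against $e^{(z+y)\sqrt{2}}$. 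Uniformly in $z\in[L^{1/6},L^{2/3}]$, this yields
\[
\frac{e^{\tl-\ell}\Phi_{t,L,z}(w)}{\fM_{L,z}}\;\ulesssim\;(w+\log\ell)\,e^{-u^2/(2\ell)}\,.
\]

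Finally, a brief calculation gives $\y^*-\y(w)=w-\ell(z+y+\fc_d L\log t/t)/\tl$, so $u=-w+o(1)$ as $t\to\infty$ for each fixed $L$, uniformly over the relevant $z$ and $w\in[\ell^{2/3},\y(t/\sqrt d)]$. Hence $u^2/(2\ell)=w^2/(2\ell)(1+o(1))$, and (absorbing $\log\ell\ll\ell^{2/3}\le w$) the integral collapses to
\[
\int_{\ell^{2/3}}^{\infty} w\,e^{-w^2/(2\ell)}\,\d w\;=\;\ell\,e^{-\ell^{1/3}/2}\,,
\]
which tends to $0$ as $L\to\infty$ since $\ell=\ell(L)\to\infty$. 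The main technical subtlety—and the reason this estimate is delicate—is tracking the various $\fc_d\log t/t$ corrections in $m_t/t$ through the exponents; they must cancel cleanly in the final ratio, and this is precisely where the specific value of $\fc_d$ is essential.
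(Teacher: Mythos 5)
Your proposal is correct and follows essentially the same path as the paper: bound the Bessel tail via Lemma~\ref{lem:F_w<c-ell}, expand the Gaussian transition density and note that the specific choice of $\fc_d$ makes all $t$- and $L$-powers cancel against $\fM_{L,z}$, bound the barrier probability by a ballot-type estimate, and integrate. The paper additionally splits the integration range at $w=t^{1/3}$, using cruder estimates (barrier probability $\le 1$, crude Gaussian) for $w$ in $[t^{1/3},\y(t/\sqrt d)]$; your unified treatment via completing the square in $\y(w)$ is a slight streamlining. One point you gloss over: for $w$ so large that $zw\gtrsim \tl$, the hypotheses of the ballot-type lemmas you cite (in particular the requirement $(a-x)(b-y)\le \eta T$ in Lemma~\ref{lem:brz-lem2.1-bridge}, which underlies the truncated-interval ballot bound) do not apply, so the estimate $\ulesssim z(w+\log\ell)/\tl$ does not follow directly from them; however it holds trivially there since the left side is at most~$1$. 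With that one-line patch the argument goes through.
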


\begin{proof}[Proof of Claim~\ref{claim:muchlower_window_t-L}]
We first handle the integral of $\Phi_{t,L,z}(w)$ over $w$ in the interval $[t^{1/3}, \y(\frac{t}{\sqrt{d}})]$. 
For this integral, we may take cruder, simpler estimates.

Expanding the Gaussian term in~\eqref{eq.lowerbanana.upperbd1} gives
\begin{align}
    p_{\tl-\ell}^W \pth{\x(z), \y(w)}
    &\usim
    \frac{1}{\sqrt{2 \pi}} 
    t^{-\alpha_d + 1} 
    e^{-\tl + \ell}  
    e^{-(z+y) \sqrt{2}} 
    e^{w \sqrt{2}} 
    e^{- \frac{w^2}{2(\tl - \ell)}} 
    e^{ \frac{(y+z + \fc_d \log t)w}{\tl - \ell}} 
    =: \Upsilon_g
    \, ,
    \label{eq.1stmom.gaussianexact}
\end{align}
where we recall the definition of $\fc_d$ from~\eqref{def:m_t-c_d}.
From the bounds $\alpha_d \geq 0$, $w \leq 2(\tll)$ for all $t$ sufficiently larger than $\ell$, and $e^{-a} \leq 1$ for all $a \geq 0$, we have
\begin{align}
    \Upsilon_g
    &\ulesssim 
    e^{2z}t^{1+ 2\fc_d} e^{- \tl + \ell} e^{w \sqrt{2}} \, .
    \label{eqn:1stmom-claim1-gauss}
\end{align}
Next, from~\eqref{eqn:tail-bd-3sqrt2}, we find
\begin{align}
    \P_{\y(w)} ( R_{\ell}^* > m_t+y)  \leq C_d e^{- \sqrt{2} \fo_t \ell } e^{-\frac{w^2}{2\ell}} e^{- \frac{m_t}{t} w} 
    \label{eqn:1stmom-claim1-t13-smalltail}
\end{align}
(recall $\fo_t := \frac{m_t}{t}- \sqrt{2}$).
Finally, substituting the bounds in~\eqref{eqn:1stmom-claim1-gauss} and~\eqref{eqn:1stmom-claim1-t13-smalltail} into \eqref{eq.lowerbanana.upperbd1}
and upper bounding the barrier probability $\P_{\x(z)}^{\y(w)}\big(\UB{[0,\tll]}{B_0}(W_\cdot)\big)$ by $1$ yields the following: 

\begin{align*}
    \frac{ 
    e^{\tl-\ell} \int_{t^{1/3}}^{ \y(\frac{t}{\sqrt{d}})} \Phi_{t,L,z}(w) \d w}{\fM_{L,z}}
    &\ulesssim 
    e^{4L^{2/3}} t^{1+2\fc_d}e^{-t+\ell} \int_{t^{1/3}}^{ \y(\frac{t}{\sqrt{d}})} e^{-\fo_t w} e^{- \frac{w^2}{2\ell}} \d w   \nonumber \\
    &\ulesssim t^{\delta} e^{-t+\ell -\frac{t^{2/3}}{2\ell}} = o_u(1)
\end{align*}
where $\delta:= \delta(d)$ denotes a positive constant depending only on $d$. 

We now  handle the integral over $w \in [\ell^{2/3}, t^{1/3}]$. 
For $w \in [-  \log \ell, t^{1/3}]$, the terms in $\Phi_{t, L,z}(w)$ simplify greatly (we consider this larger interval of $w$ than what is addressed in the claim because the estimates that follow will be used again later).
From~\eqref{eq.1stmom.gaussianexact}, we have that, uniformly over $w \in [-\log \ell, t^{1/3}]$,
        \begin{align}
            p_{\tl-\ell}^W \pth{\x(z), \y(w)}
            &\usim
            \frac{1}{\sqrt{2\pi}}
            e^{-(\tl-\ell)} (t-\ell)^{-\alpha_d+ 1} e^{-(z+y)\sqrt{2}} e^{w \sqrt{2}} \, .
            \label{eq.lowerbanana.gaussianest}
        \end{align}
We also have
\begin{align}
    \y(w) \usim t \sqrt{2} \, ,
    \label{eq.lowerbanana.girsanovest}
\end{align}
uniformly over $w \in [-\log \ell, t^{1/3}]$.
From~\eqref{eqn:brz2.1-bridge}, the following estimate holds uniformly over $w \in [-\log \ell, t^{1/3}]$:
\begin{align}
    \P_{\x(z)}^{ \y(w) } 
    \pth{
        \UB{[0,\tl-\ell]}{B_0}(W_\cdot)
    }
    &\leq 
            \P_{\x(z)}^{ \y(w) } 
    \pth{
        \UB{[1,\tl-\ell-1]}{B_0}(W_\cdot)
    } \nonumber \\
    &\uasymp
            \P_{\x(z)}^{ \y(w) } 
    \pth{
        \UB{[1,\tl-\ell-1]}{ \frac{m_t}{t}(\cdot + L) +y +  \log \ell}(W_\cdot)
    }
            \uasymp 2\frac{z(1+w+  \log \ell)}{\tl-\ell} \,.
            \label{eq.lowerbanana.barrierest}
\end{align}
Now, substituting~\eqref{eq.lowerbanana.gaussianest},\eqref{eq.lowerbanana.girsanovest}, and~\eqref{eq.lowerbanana.barrierest}
        into~\eqref{eq.lowerbanana.upperbd1} yields 
        \begin{align}
            \frac{
                e^{\tl -\ell}
                \Phi_{t, L, z}(w)
            }
            {\fM_{L,z}}
            &\uasymp (1+w+  \log \ell) e^{w \sqrt{2}} 
            \P_{\y(w)}\pth{ R_{\ell}^* > m_t+y } 
            \,,
            \label{eqn:claim-barrier-equiv:intsimp-R}
        \end{align}
    %
uniformly over $w \in [-\log \ell, t^{1/3}]$. 
Now, note that uniformly over $w \in [\ell^{2/3}, t^{1/3}]$, we have
$(1+w+\log \ell) \usim w$ and $e^{-\fo_t w} \usim 1$. It then follows from~\eqref{eqn:1stmom-claim1-t13-smalltail} and the previous display that, uniformly over $w \in [-\log \ell, t^{1/3}]$, we have 
\begin{align*}
            \frac{
                e^{\tl -\ell}
                \Phi_{t, L, z}(w)
            }
            {\fM_{L,z}}
    &\ulesssim we^{-\frac{w^2}{2\ell}} \,.
\end{align*}
Thus,
\begin{align*}
   \frac{  \int_{\ell^{2/3}}^{t^{1/3}} e^{\tl -\ell} \Phi_{t, L, z}(w) \d w}{\fM_{L,z}}  \ulesssim \int_{\ell^{2/3}}^{t^{1/3}} we^{-\frac{w^2}{2\ell}} \d w \ulesssim e^{-\ell^{1/3}/2} = o_u(1) \,.
\end{align*}
This concludes the proof of the claim.
\end{proof}

\begin{claim}\label{claim:restofwindow_t-L}
We have 
\begin{align}
    \frac{e^{\tl-\ell} \int_{-\log \ell}^{\ell^{1/3}} \Phi_{t,L,z}(w) \d w}{\fM_{L,z}} = o_u(1) \, .
    \label{eqn:claim-restofwindow}
\end{align}
On the other hand,
\begin{align}
    \frac{e^{\tl-\ell} \int_{\ell^{1/3}}^{\ell^{2/3}} \Phi_{t,L,z}(w)\d w}{\fM_{L,z}}
    \usim
    \frac{2^{\frac{1+\alpha_d}{2}}}{\sqrt{\pi}}
    \int_{\ell^{1/3}}^{\ell^{2/3}} w e^{w \sqrt{2}} 
            \P\big( W_{\ell}^* > \sqrt{2}\ell + w \big) \d w
    \uasymp
    1 \,.
    \label{eq.main_window_t-L}
\end{align}
\end{claim}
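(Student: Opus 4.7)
The starting point for both parts is the integrand asymptotic derived in the course of proving Claim~\ref{claim:muchlower_window_t-L}: combining the Gaussian estimate~\eqref{eq.lowerbanana.gaussianest}, the Girsanov-factor estimate~\eqref{eq.lowerbanana.girsanovest}, and the barrier bound~\eqref{eq.lowerbanana.barrierest} in the definition of $\Phi_{t,L,z}$ yields, uniformly over $w\in[-\log\ell,t^{1/3}]$,
\[
    \frac{e^{\tl-\ell}\Phi_{t,L,z}(w)}{\fM_{L,z}}\uasymp (1+w+\log\ell)\,e^{w\sqrt{2}}\,\P_{\y(w)}\!\big(R_\ell^*>m_t+y\big).
\]
Since $m_t+y-\y(w)=\sqrt{2}\ell+\fo_t\ell+w$ stays within an interval of width $O(\ell)$ of $\sqrt{2}\ell$, Corollary~\ref{cor:F_ell_1d} identifies the Bessel tail with the one-dimensional BBM tail, and since $\fo_t\ell\to 0$ as $t\to\infty$ for $L$ fixed, the matching tail bounds of~\eqref{eq.mallein} and Corollary~\ref{cor:bram8.2} further give $\P_{\y(w)}(R_\ell^*>m_t+y)\usim\P(W_\ell^*>\sqrt{2}\ell+w)$.

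For~\eqref{eqn:claim-restofwindow}, the plan is to split $[-\log\ell,\ell^{1/3}]$ at $w_0:=-\tfrac{3}{2\sqrt{2}}\log\ell+O(1)$. On $[w_0,\ell^{1/3}]$, Corollary~\ref{cor:bram8.2} bounds the integrand above by $\ulesssim \ell^{-3/2}(1+w+\log\ell)^2$, whose integral is $\ulesssim \ell^{-3/2}(\log\ell+\ell^{1/3})^3 = o_u(1)$. On $[-\log\ell,w_0]$, one uses only the trivial bound $\P(W_\ell^*>\cdot)\leq 1$ together with $e^{w\sqrt{2}}\leq\ell^{-3/2}$, yielding a further $o_u(1)$ contribution.

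For~\eqref{eq.main_window_t-L}, on the window $w\in[\ell^{1/3},\ell^{2/3}]$ one has $(1+w+\log\ell)\usim w$, and the barrier estimate~\eqref{eq.lowerbanana.barrierest} (which at this stage only yields $\uasymp 2zw/(\tl-\ell)$ via Lemma~\ref{lem:brz-lem2.1-bridge}) can be upgraded to the sharp $\usim 2zw/(\tl-\ell)$ by applying the explicit ballot identity~\eqref{eqn:brownian-ballot-explicit} to the linear barrier $f_{\x(z)}^{\y(w)}(\cdot;\tl-\ell)$ (producing the exact prefactor $2$) and then passing to the curved barrier $B_0$ via Lemma~\ref{lem:Bram6.1}, with Lemma~\ref{lem:bridge-barrier-equiv} absorbing the boundary strips $[0,\ell_1]$ and $[\tl-\ell-\ell_1,\tl-\ell]$. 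Substituting gives the sharp pointwise asymptotic
\[
    \frac{e^{\tl-\ell}\Phi_{t,L,z}(w)}{\fM_{L,z}}\usim \tfrac{2^{(1+\alpha_d)/2}}{\sqrt{\pi}}\,w\,e^{w\sqrt{2}}\,\P(W_\ell^*>\sqrt{2}\ell+w)
\]
uniformly over $[\ell^{1/3},\ell^{2/3}]$, whose integration yields the first equivalence in~\eqref{eq.main_window_t-L}. The closing $\uasymp 1$ then follows by combining two matching ingredients: Corollary~\ref{cor:bram8.2} dominates the integrand by $\ell^{-3/2}w^2 e^{-w^2/(2\ell)}$, and the substitution $u=w/\sqrt\ell$ reduces the integral to $\int_0^\infty u^2 e^{-u^2/2}\,du<\infty$; conversely, the one-dimensional form of~\eqref{eq.mallein} applied on the subinterval $[\ell^{1/3},\ell^{1/2}]$ gives $\P(W_\ell^*>\sqrt{2}\ell+w)\ugtrsim \ell^{-3/2}w e^{-w\sqrt{2}}$, so the integrand is $\ugtrsim \ell^{-3/2}w^2$ and the integral over $[\ell^{1/3},\ell^{1/2}]$ is $\uasymp 1$. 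The main technical subtlety is the $\uasymp\to\usim$ upgrade of the barrier estimate in Part~2, which requires carefully tracking the absolute constant through Lemmas~\ref{lem:Bram6.1} and~\ref{lem:bridge-barrier-equiv}.
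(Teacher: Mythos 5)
Your proof follows essentially the same route as the paper's: the pointwise asymptotic of~\eqref{eqn:claim-barrier-equiv:intsimp-R}, Corollary~\ref{cor:F_ell_1d} to pass to the one-dimensional BBM tail, Corollary~\ref{cor:bram8.2} and~\eqref{eq.mallein} for the matching upper and lower bounds, and Lemma~\ref{lem:bridge-barrier-equiv} together with~\eqref{eqn:brownian-ballot-sim} to sharpen the barrier estimate to a $\usim$ on $[\ell^{1/3},\ell^{2/3}]$. The one minor gap is in the justification of $\P(W_\ell^*>\tfrac{m_t}{t}\ell+w)\usim\P(W_\ell^*>\sqrt{2}\ell+w)$: the two-sided tail bounds from~\eqref{eq.mallein} and Corollary~\ref{cor:bram8.2} only deliver $\uasymp$, not $\usim$, so one should instead invoke continuity of the F-KPP solution for fixed $\ell$, which produces an additive $o_u(1)$ error (as in~\eqref{eq.lowerbanana.intsimp1}) that is absorbed in the final asymptotics because the integral on the right-hand side of~\eqref{eq.main_window_t-L} is $\uasymp 1$.
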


\begin{proof}[Proof of Claim~\ref{claim:restofwindow_t-L}]
Recall \eqref{eqn:claim-barrier-equiv:intsimp-R}, and note that, uniformly over $w \in [-\log \ell, \ell^{2/3}]$, Corollary~\ref{cor:F_ell_1d} applies. Then substitute~\eqref{eqn:tailequiv} into \eqref{eqn:claim-barrier-equiv:intsimp-R}  to find that, uniformly over  $w \in [-\log \ell, \ell^{2/3}]$, we have 
\begin{align*}
            \frac{
                e^{\tl -\ell}
                \Phi_{t, L, z}(w)
            }
            {\fM_{L,z}}
            &\ \uasymp \
            (1+w+  \log \ell) e^{w \sqrt{2}} 
            \P\pth{ W_{\ell}^* > \frac{m_t}{t}\ell + w } \,.
        \end{align*}
%
Now, for each fixed $\ell$, the continuity of the F-KPP equation (see Section~\ref{subsec:bbm-est}) gives 
\[\abs{\P(W_{\ell}^* > \frac{m_t}{t}\ell+w) - \P(W_{\ell}^* > \sqrt{2}\ell+w)} = o(1)\,,\]
and so the last display gives 
\begin{align}
    \frac{
                e^{\tl -\ell}
                \Phi_{t, L, z}(w)
            }
            {\fM_{L,z}}
            &\uasymp
            (1+w+  \log \ell) e^{w \sqrt{2}} 
            \P\big( W_{\ell}^* > \sqrt{2}\ell + w \big) 
            +o_u(1)
            \,,
    \label{eq.lowerbanana.intsimp1}
\end{align}
uniformly over  $w \in [-\log \ell, \ell^{2/3}]$, where the $o_u(1)$ term may be bounded in absolute value by $(\ell^{2/3} + \log \ell)^2 e^{ \ell^{2/3} \sqrt{2}} \ell \fo_t$.
Now, integrating both sides of~\eqref{eq.lowerbanana.intsimp1} over $w\in [-\log \ell, \ell^{1/3}]$ and then applying the bound on $\P(W_{\ell^*}> \sqrt{2}\ell +w)$  from~\eqref{eqn:bram8.2-cor} yields 
\begin{align*}
    \frac{\int_{- \log \ell}^{\ell^{1/3}} 
                e^{\tl -\ell}
                \Phi_{t, L, z}(w) \d w
            }
            {\fM_{L,z}}
    &\ulesssim 
    \ell^{-\frac{3}{2}} \int_{-\log \ell}^{\ell^{1/3}} (1+w+\log \ell)(w+ \frac{3}{2\sqrt{2}}\log \ell) \d w 
    \ulesssim 
    \ell^{-\frac{1}{2}} \,,
\end{align*}
from which~\eqref{eqn:claim-restofwindow} follows.

We next consider the range $w \in [\ell^{1/3}, \ell^{2/3}]$. As a consequence of Lemma~\ref{lem:bridge-barrier-equiv} and the Brownian ballot estimate~\eqref{eqn:brownian-ballot-sim}, we have the following relation uniformly over $w \in [\ell^{1/3}, \ell^{2/3}]$:
\begin{align}
    \P_{\x(z), \tl-\ell}^{\y(w)}
            \pth{
            \UB{[0,\tl-\ell]}{B_0 }(W_\cdot)
            }  \usim \frac{2zw }{\tl-\ell}\,.
    \label{eqn:barrier-equiv:exact-barrier-est}
\end{align}
Over this range of $w$, \eqref{eqn:barrier-equiv:exact-barrier-est} replaces \eqref{eq.lowerbanana.barrierest}, which is a less precise estimate, and will therefore~\eqref{eqn:barrier-equiv:exact-barrier-est} gives a more precise version of~\eqref{eqn:claim-barrier-equiv:intsimp-R}. Indeed, substituting \eqref{eqn:tailequiv},
\eqref{eq.lowerbanana.gaussianest},
\eqref{eq.lowerbanana.girsanovest}, and 
\eqref{eqn:barrier-equiv:exact-barrier-est} into the definition of $\Phi_{t, L,z}(w)$ \eqref{eq.lowerbanana.upperbd1} yields 
\begin{align}
    \frac{e^{\tl -\ell}\Phi_{t, L, z}(w)}
    {\fM_{L,z}} 
    \usim 
    \frac{2^{\frac{1+\alpha_d}{2}}}{\sqrt{\pi}} w e^{w\sqrt{2}} \P\big(W_{\ell}^* > \sqrt{2}\ell+w\big)
    \label{eqn:Phi-sim-main}
\end{align}
from which we find
\begin{align}
    \frac{e^{\tl-\ell} \int_{\ell^{1/3}}^{\ell^{2/3}} \Phi_{t,L,z}(w) \d w}{\fM_{L,z}}
    \usim
    \frac{2^{\frac{1+\alpha_d}{2}}}{\sqrt{\pi}}
    \int_{\ell^{1/3}}^{\ell^{2/3}} w e^{w \sqrt{2}} 
            \P\big( W_{\ell}^* > \sqrt{2}\ell + w \big) \d w \, .
    \label{eqn:lowerbanana-nicePhi-int}
\end{align}
This gives the left asymptotic relation in~\eqref{eq.main_window_t-L}.

It remains to show the integral on the right-hand above is asymptotically equivalent to a constant.
For the lower-bound, note that for any~$\ell \geq 1$
and~$w \in [1, \ell^{1/2} - \frac{3}{2\sqrt{2}}\log \ell]$, 
\begin{align*}
    \P\big( W_{\ell}^* > \sqrt{2}\ell + w \big)
    \geq 
    C \ell^{-3/2} \pth{w+ \frac{3}{2\sqrt{2}} \log \ell} e^{-w\sqrt{2}} \, ,
\end{align*}
where $C>0$ is a constant depending only on $d$.
This follows from writing~$\sqrt{2}\ell+w$ as ~$m_\ell(1)+w+ \frac{3}{2\sqrt{2}}\log \ell$ and applying the lower bound of the $d=1$ version of~\eqref{eq.mallein}, where in the notation of~\eqref{eq.mallein} we take $t =\ell$ and $y = w+ \frac{3}{2\sqrt{2}}\log \ell$.
It follows that, for all $\ell \geq 1$, 
\begin{align*}
    \int_{\ell^{1/3}}^{\ell^{2/3}} w e^{w \sqrt{2}} 
            \P\big( W_{\ell}^* > \sqrt{2}\ell + w \big) \d w
    &\geq C \ell^{-3/2}
    \int_{\ell^{1/3}}^{\ell^{1/2} - \frac{3}{2\sqrt{2}} \log \ell} 
    w^2 \d w > C > 0 \, ,
\end{align*}
where here and in the rest of the proof of this claim, $C$ denotes a positive constant varying from occurrence to occurrence. 
For the upper bound, we use Corollary~\ref{cor:bram8.2}, which states for all $w \in [\ell^{1/3}, \ell^{2/3}]$ and for all $\ell$ sufficiently large, we have 
\begin{align*}
    \P\big( W_{\ell}^* > \sqrt{2}\ell + w \big)
    \leq C \ell^{-\frac{3}{2}} \pth{w+ \frac{3}{2\sqrt{2}} \log \ell} e^{-w\sqrt{2}} e^{- \frac{w^2}{2\ell}} \, .
\end{align*}
It follows that for all $\ell$ sufficiently large,
\begin{align*}
\int_{\ell^{1/3}}^{\ell^{2/3}} w e^{w \sqrt{2}} 
            \P\big( W_{\ell}^* > \sqrt{2}\ell + w \big) \d w
    \leq C \ell^{- \frac{3}{2}} \int_{\ell^{1/3}}^{\ell^{2/3}} w^2 e^{- \frac{w^2}{2\ell}}
    \leq C \,.
\end{align*}
Thus, we have shown 
\begin{align}
    \int_{\ell^{1/3}}^{\ell^{2/3}} w e^{w \sqrt{2}} 
            \P\big( W_{\ell}^* > \sqrt{2}\ell + w \big) \d w
    \uasymp 1 \, .
    \label{eqn:lowerbanana-tailint-asymp}
\end{align}
Now, \eqref{eqn:lowerbanana-nicePhi-int} and~\eqref{eqn:lowerbanana-tailint-asymp} imply that 
\begin{align*}
    e^{\tl-\ell} \int_{\ell^{1/3}}^{\ell^{2/3}} \Phi_{t,L,z}(w) \d w
    \uasymp
    \fM_{L,z} \,, 
\end{align*}
which gives the right asymptotic relation in~\eqref{eq.main_window_t-L}.
\end{proof}

From Claims~\ref{claim:muchlower_window_t-L} and~\ref{claim:restofwindow_t-L}, it follows that 
\begin{align}
    & e^{\tl-\ell} \int_{- \log \ell}^{\y(\frac{t}{\sqrt{d}})} \Phi_{t,L,z}(w) \d w
    \usim
    e^{\tl-\ell} \int_{\ell^{1/3}}^{\ell^{2/3}} \Phi_{t,L,z}(w)\d w 
    \nonumber \\
    &=e^{\tl - \ell} 
    \!\int_{\ell^{1/3}}^{\ell^{2/3}}
    \!\!\!p_{\tl-\ell}^W \big (\x(z), \y(w) \big )
    \Big ( \frac{ \y(w)}{\x(z)} \Big )^{\alpha_d} 
   \P_{\x(z),\tll}^{ \y(w) } 
    \pth{
        \UB{[0,\tl-\ell]}{B_0}(W_\cdot)
    }
    \P_{\y(w)} \pth{ R_\ell^* > m_t+y
    }\d w \,.
    \label{eqn:bar-equiv:Phi-smallint}
\end{align}
A consequence of Lemma~\ref{lem:bridge-barrier-equiv} is that, uniformly over $w \in [\ell^{1/3}, \ell^{2/3}]$, we have 
\[
    \P_{\x(z),\tll}^{\y(w)} \pth{\UB{[0,\tl-\ell]}{B_0}(W_\cdot)}
    \usim \P_{\x(z),\tll}^{\y(w)} \pth{ \B_{[0,\tll]}(W_\cdot)}\,.
\]
Thus, the last three displays give 
\begin{align*}
   & e^{\tl-\ell} \int_{- \log \ell}^{\y(\frac{t}{\sqrt{d}})} 
    \Phi_{t,L,z}(w) \d w \\
    &\usim 
    e^{\tl - \ell} 
   \!\! \int_{\ell^{1/3}}^{\ell^{2/3}}
    \!\!\!p_{\tl-\ell}^W \big (\x(z), \y(w) \big )
    \bigg ( \frac{ \y(w)}{\x(z)} \bigg )^{\alpha_d} 
    \P_{\x(z),\tll}^{\y(w)} \pth{ \B_{[0,\tll]}(W_\cdot)}
    \P_{\y(w)} \pth{ R_\ell^* > m_t+y
    }\d w \,,
\end{align*}
from which, by the Markov property, we find the right-hand side equal to
\begin{align}
    e^{\tl-\ell}\E_{\x(z)} \brak{
    \bigg ( \frac{ W_{\tll}}{\x(z)} \bigg )^{\alpha_d}
    \one_{\widehat{G}_{L,t}} 
    \P_{W_{\tll}}(R_{\ell}^* > m_t+y)
    }\,,
    \label{eqn:bar-equiv:undo-gir-1}
\end{align}
where we define
\[
\widehat{G}_{L,t} := \B_{[0,\tll]}(W_\cdot) \cap \{\y(W_{\tl-\ell}) \in [\ell^{1/3}, \ell^{2/3}]\} \,.
\]
From~\eqref{eqn:trivial-exp-gir}, we have
\[
\exp \bigg( \int_{0}^{\tll} \frac{\alpha_d-\alpha_d^2}{W_s^2}\d s \bigg) \usim 1 \,.
\]
We then have from~\eqref{eqn:bar-equiv:undo-gir-1} that $e^{\tl-\ell} \int_{- \log \ell}^{\y(\frac{t}{\sqrt{d}})} \Phi_{t,L,z}(w) \d w \usim $
\begin{align*}
    e^{\tl-\ell}\E_{\x(z)} \brak{
    \bigg ( \frac{ W_{\tll} }{\x(z)} \bigg )^{\alpha_d}
    \exp \bigg( \int_{0}^{\tll} \frac{\alpha_d-\alpha_d^2}{W_s^2}\d s \bigg)
    \one_{\widehat{G}_{L,t}}
    \P_{W_{\tll}}(R_{\ell}^* > m_t+y)
    }\,.
\end{align*}
We may now apply the Girsanov transform \eqref{eq.girsanov}, as in the ``reverse" direction of~\eqref{eq.lowerbanana.GirBes}, to the above display. On the event $\B_{[0,\tll]}(W_\cdot)$, we have
$\one_{\{W_s > 0,~\forall s\in[0,\tll] \}} = 1$, and so~\eqref{eq.girsanov} and the above display gives
\begin{align}
    e^{\tl-\ell} \int_{- \log \ell}^{\y(\frac{t}{\sqrt{d}})} 
    \Phi_{t,L,z}(w) \d w  
    &\usim
    e^{\tl-\ell}\E_{\x(z)} \brak{
    \one_{\bar{G}_{L,t}(v)}
    \P_{R^{(v)}_{\tll}}(R_{\ell}^* > m_t+y)
    } \, ,
\end{align}
for any branching Bessel particle $v \in \cN_{\tll}$, where
\[
\bar{G}_{L,t}(v) := \B_{[0,\tll]}(R^{(v)}_\cdot) \cap  \big\{ \y(R^{(v)}_{\tl-\ell}) \in [\ell^{1/3}, \ell^{2/3}] \big\}\,.
\]
We then have, from the Markov property and the many-to-one lemma (similar to~\eqref{eqn:bar-equiv:markov} and~\eqref{eqn:bar-equiv:m21}, respectively)
\begin{align*}
    e^{\tl-\ell} \int_{- \log \ell}^{\y(\frac{t}{\sqrt{d}})} 
    \Phi_{t,L,z}(w) \d w  
    &\usim \E_{\x(z)}\brak{\bar{\Lambda}_{L,t}}\,,
\end{align*}
which is exactly \eqref{eq.lowerbanana.up=low}. 
Thus, we have shown \eqref{eq.1stmom.bananaequiv}. 
Equation~\eqref{eq.1stmom.finalintegral} then follows from the last display,
the first line of~\eqref{eqn:bar-equiv:Phi-smallint}, and~\eqref{eq.main_window_t-L}.
\qed

\section{Proof of Lemma~\ref{lem:2ndmom}: a second moment analysis}
\label{sec:2ndmom}
In this section, we prove Lemma~\ref{lem:2ndmom} for  all  $d \geq 2$.

 \begin{proof}[Proof of Lemma~\ref{lem:2ndmom}]
Recall \eqref{def:xz-yw}. From the many-to-two lemma \eqref{eqn:many-to-two}, we have
\begin{align}
    \E_{\x(z)}[ \bar{\Lambda}_{L,t}^2 ] = 
    \E_{\x(z)}[ \bar{\Lambda}_{L,t} ] +
    2\int_{0}^{\tl-\ell} e^{\tl-\ell+s} 
    \P_{\x(z)} \big(    G_{L,t}(v) \cap G_{L,t}(v')  \big) \d s \, , 
\end{align}
where, for each $s$ on the right-hand side, $v$ and $v'$ denote  two particles in $\cN_{\tll}$ that split  at time $j_s:= \tl-\ell-s$.
In light of Lemma~\ref{lem:barrier-equiv}, it suffices to show that
\begin{align}
    \frac{\int_{0}^{\tl-\ell} e^{\tl-\ell+s} 
    \P_{\x(z)} \big(   G_{L,t}(v) \cap G_{L,t}(v') \big ) \d s}
    {
        \fM_{L,z}
    } 
    = o_{u}(1) \, .
    \label{eq.2ndmom.goal}
\end{align}
Fix $s \in (0,\tl-\ell)$ and fix two particles $v$ and $v' \in \cN_{\tll}$ that split from a single particle at time $j_s$. Then 
\begin{align}
    \P_{\x(z)}
    \pth{ G_{L,t}(v) \cap G_{L,t}(v') }
    &=  
    \E_{\x(z)}
    \bigg[
        \prod_{w \in \{v,v'\}}
        \one_{ 
        \B_{[0,\tl- \ell]}(R^{(u)}) \cap 
        \{ \y(R^{(w)}_{\tll}) \in [\ell^{1/3}, \ell^{2/3}] \} \cap 
        \mathfrak{T}_{\ell}(u))   
        }
    \bigg]
    \nonumber \\
    &= 
    \E_{\x(z)}
    \bigg[
    \one_{ 
    \B_{[0,j_s]}(R^{(v)}_\cdot)
    }
    \E 
    \Big[
        \one_{
        \bB_{[j_s, \tll]}(R_.^{(v)}) \cap
        \mathfrak{T}_{\ell}(v)
        }
        \given R^{(v)}_{j_s}
    \Big]^2
    \bigg]\,,
    \label{eqn:2mom:markov}
\end{align}
where
\[
    \bB_{[j_s, \tll]}(X_.)
    :=
    \B_{[j_s,\tl- \ell]}(X_\cdot) \cap \big\{ \y(X_{\tll}) \in [\ell^{1/3}, \ell^{2/3}] \big\}\,,
\]
for a real-valued process $X_. :[0,\infty) \to \R$.
We now apply the Girsanov transform to~\eqref{eqn:2mom:markov} to replace the Bessel process $(R_r^{(v)})_{r \in[0,j_s]}$ with a Brownian motion $(W_r)_{r\in[0,j_s]}$. Note that the indicator function in 
\eqref{eq.girsanov} is equal to $1$ on the event 
$\B_{[0,j_s]}(R^{(v)}_\cdot)$; further, the exponential term in 
\eqref{eq.girsanov} is bounded above by a constant depending only on $d$ for all $s>0$ and $\ell, t >0$.
Thus, we find from~\eqref{eqn:2mom:markov} that 
\begin{align}
\P_{\x(z)}
    \pth{ G_{L,t}(v) \cap G_{L,t}(v') } 
     \ulesssim \E_{\x(z)} \brak{
        \bigg( \frac{W_{j_s}}{\x(z)} \bigg)^{\alpha_d}
        \one_{ \B_{[0,j_s]}(W_\cdot) }
        \E \brak{\mathfrak{g}(W_{j_s})}^2
        }\,,
    \label{eqn:2mom:gir1}
\end{align}
uniformly over $s$, 
where
\[
    \mathfrak{g}(W_{j_s}) := 
\E \brak{ \one_{ \left \{
        \bB_{[j_s,\tl- \ell]}(R^{(v)}_\cdot) \cap 
        \mathfrak{T}_{\ell}(v)
        \right \} }
        \given R^{(v)}_{j_s} = W_{j_s}
    }\,.
\]
Applying the Markov property at time $\tll$ allows us to express $\mathfrak{g}(W_{j_s})$ as 
\begin{align*}
\E\brak{
        \one_{
        \bB_{[j_s,\tl- \ell]}(R^{(v)}_\cdot)
        }
        \P_{R^{(v)}_{\tll}}(R_\ell^*>m_t+y)
        \given R^{(v)}_{j_s} = W_{j_s}
    }\,.
\end{align*}
As before, we apply the Girsanov transform to replace the Bessel process~$(R^{(v)}_r)_{r\in[j_s, \tll]}$ with a Brownian motion $(\tilde{W}_r)_{r \in[j_s, \tll]}$ satisfying $\tilde{W}_{j_s} = W_{j_s}$. By the Markov property, the process $(\tilde{W}_r)_{r \in[j_s, \tll]}$ with $\tilde{W}_{j_s} = W_{j_s}$ is stochastically equivalent to~$(W_r)_{r \in[j_s, \tll]}$, and so the Girsanov transform \eqref{eq.girsanov} yields
\begin{align*}
    \mathfrak{g}(W_{j_s}) \usim
    \E \brak{
        \bigg( \frac{W_{\tl - \ell}}{W_{j_s}} \bigg)^{\alpha_d} 
        \one_{
        \bB_{[j_s, \tl- \ell]}(W_\cdot) }
        \P_{W_{\tll}}(R_{\ell}^*>m_t+y)
        \given W_{j_s} }\,,
\end{align*}
uniformly over $s \geq 0$ and $W_{j_s}$, where, as before, we have used that on $\bB_{[j_s, \tll]}(W_.)$, the indicator function and the exponential factor in~\eqref{eq.girsanov} are both $\usim 1$ (see~\eqref{eqn:trivial-exp-gir}).
From the last display and~\eqref{eqn:2mom:gir1}, we have 
\begin{align}
    \P_{\x(z)}
    \pth{ G_{L,t}(v) \cap G_{L,t}(v') } 
    \ulesssim \
    \E_{\x(z)}[\Upsilon_1(s)] \,,
    \label{eqn:2mom:gir}
\end{align}
where 
\begin{align*}
    \Upsilon_1(s) &:=
        \big( \x(z)W_{j_s} \big)^{-\alpha_d}
        \one_{ \B_{[0,j_s]}(W_\cdot) }
        \E \brak{
        W_{\tll}^{\alpha_d}
        \one_{ \bB_{[j_s, \tl- \ell]}(W_\cdot) }
        \P_{W_{\tll}}(R_{\ell}^*>m_t+y)
        \given  W_{j_s} }^2 \,.
\end{align*}
Define $\y_s(w) := \frac{m_t}t(j_s+L)+y-w$ for $w \in \R$. Recall
\eqref{eq.lowerbanana.Qdefn}. Then
expanding the right-hand side of~\eqref{eqn:2mom:gir} by integrating over the position of $w_s := \y_s(W_{j_s})$ gives 
\begin{align}
    \E_{\x(z)} \brak{ \Upsilon_1(s) } =
    \int_{0}^{\y_s(Q_z(j_s))}
    E_s^{(1)}(w_s)
    E_s^{(2)}(w_s)^2 \d w_s \,,
    \label{eq.2ndmom.PQintegral}
\end{align}
where 
\begin{align}
    E_s^{(1)}(w_s) &:=
    p_{j_s}^W \big(\x(z), \y_s(w_s) \big)
    \big( \x(z) \y_s(w_s) \big)^{-\alpha_d}
    \P_{\x(z),j_s}^{\y_s(w_s)}
    \big( \B_{[0,j_s]}(W_\cdot) \big)
    \label{eq.2ndmom.Pdef}
\end{align}
and
\begin{align}
    E_s^{(2)}(w_s) &:= 
    \E \brak{
        W_{\tll}^{\alpha_d}
        \one_{ \bB_{[j_s, \tl- \ell]}(W_\cdot) }
        \P_{W_{\tll}}(R_{\ell}^*>m_t+y)
        \given  W_{j_s} = \y_s(w_s)}^2 \,.
    \label{eq.2ndmom.Qdef}
\end{align}
We will estimate $\E_{\x(z)}[\Upsilon_1(s)]$ separately on three different intervals of $s$: $(0,\ell^{2/3}]$, $[\ell^{2/3},\tl-\ell-\ell^{2/3}]$, and~$[\tl-\ell-\ell^{2/3}, \tll)$. Before specifying any particular range of $s$, we provide some general estimates.

Note that for $w_s$ in the range of integration in~\eqref{eq.2ndmom.Pdef}, we have 
\begin{align}
    \y_s(w_s) \uasymp j_s+L \,, \label{eqn:2mom:yws}
\end{align}
uniformly over $s$ and $w_s$ (in what follows, we write ``uniformly over $w_s$" to imply ``uniformly over $w_s \in [0, \y_s\big(Q_z(j_s)\big)]$).
We can bound $E_s^{(1)}(w_s)$ from above by replacing the barrier event with the strictly larger event~$\UB{[0,j_s]}{\frac{m_t}{t}(\cdot+L)+y}(W_\cdot)$. Uniformly over $s$ and  $w_s$, we have from~\eqref{eqn:brownian-ballot-explicit} that
\begin{align}
    \P_{\x(z),j_s}^{\y_s(w_s)}
    \big( \B_{[0,j_s]}(W_\cdot) \big) \ulesssim \frac{w_s(z+y)}{j_s}\,.
    \label{eqn:2mom:E1-bridge}
\end{align}
We may then expand the Gaussian density in the right-hand side of~\eqref{eq.2ndmom.Pdef} and apply the estimates \eqref{eqn:2mom:yws} and~\eqref{eqn:2mom:E1-bridge} to find
\begin{align}
    E_s^{(1)}(w_s) \ulesssim 
    \fM_{L,z}
    (j_s+L)^{-\alpha_d} j_s^{-\frac{3}{2}}
    e^{-\frac{1}{2} \pth{\frac{m_t}{t}}^2 j_s}
    w_s e^{w_s \sqrt{2}}
    e^{- \frac{(w_s - (z+y))^2}{2j_s}}
    \, ,
    \label{eq.2ndmom.Eexpmiddle}
\end{align} 
uniformly over $s$ and $w_s$.
Towards an upper bound on $E_s^{(2)}(w_s)$, we begin with the inequality
\[
\one_{\bB_{[j_s,\tll]}(W_.)} \leq
\one_{B^{\frac{m_t}{t}(\cdot+L)+y}_{[j_s, \tl-\ell]}(W_\cdot) \cap \{ \y(W_{\tll})\in [\ell^{1/3}, \ell^{2/3}] \}} \,.
\]
Integrating over the position of $w_{\ell} := \y(W_{\tl- \ell})$ and applying the Markov property at time $j_s$ yields that $E_s^{(2)}(w_s) $ is at most
\begin{align}
  \label{eqn:2mom:Es2-int}
    \int_{\ell^{1/3}}^{\ell^{2/3}}  
    \!\!\!p_s^W\big(\y_s(w_s), \y(w_\ell) \big)
    \y(w_{\ell})^{\alpha_d}
    \P_{ \y_s(w_s),s }^{\y(w_{\ell})} 
    \Big( \UB{[0, s]}{\frac{m_t}{t}(\cdot+L+j_s)+y}(W_\cdot) \Big)
    \P_{\y(w_{\ell})}(R_{\ell}^*>m_t+y) \d w_{\ell} \,.
   \end{align}
In what follows, we write ``uniformly over $w_{\ell}$" to mean
``uniformly over $w_{\ell} \in [\ell^{1/3}, \ell^{2/3}]$." 
From~\eqref{eqn:brownian-ballot-explicit}, we have
\begin{align}
    \P_{ \y_s(w_s),s }^{\y(w_{\ell})} 
    \Big( \UB{[0, s]}{\frac{m_t}{t}(\cdot+L+j_s)+y}(W_\cdot) \Big) \ulesssim \frac{w_s w_{\ell}}{s} \,,
    \label{eqn:2mom:Es2-bridge}
\end{align}
uniformly over $s$, $w_{\ell}$, and $w_s$.
From Corollary~\ref{cor:F_ell_1d}, we have
\[
\P_{\y(w_{\ell})}(R_{\ell}^*>m_t+y) \usim \P(W_{\ell}^* > \frac{m_t}{t}\ell+ w_{\ell})\,,
\]
uniformly over $w_{\ell}$, and 
thus
\begin{align}
\P_{\y(w_{\ell})}(R_{\ell}^*>m_t+y) \usim \P(W_{\ell}^* > \sqrt{2}\ell+ w_{\ell})\,, \label{eqn:2mom:Es2-tail}
\end{align}
uniformly over $w_{\ell}$. 
By plugging~\eqref{eqn:brownian-ballot-explicit} and~\eqref{eqn:2mom:Es2-tail} in~\eqref{eqn:2mom:Es2-int}, along with the estimate $\y(w_{\ell}) \uasymp t$, which of course holds uniformly over~$w_{\ell}$, we can infer that
\begin{align}
    E_s^{(2)}(w_s)
    &\ulesssim 
    \frac{t^{\alpha_d} w_s }{s}
    \int_{\ell^{1/3}}^{\ell^{2/3}}  
    w_{\ell}
    p_s^W\big(\y_s(w_s), \y(w_\ell) \big) 
    \P(W^*_\ell > \sqrt{2} \ell +w_{\ell}) \d w_{\ell}
     \, ,
    \label{eq.2ndmom.Qintegral}
\end{align}
uniformly over $s$ and $w_s$. 
Note $w_{\ell} \uasymp w_{\ell} + \frac{3}{2\sqrt{2}} \log \ell$ uniformly over $w_{\ell}$, and thus expanding the Gaussian density in equation~\eqref{eq.2ndmom.Qintegral}  and substituting equation~\eqref{eqn:bram8.2-cor} yield
\begin{align}
    E_s^{(2)}(w_s)
    &\ulesssim
    \frac{e^{- \frac{1}{2}(\frac{m_t}{t})^2s}}{\ell^{\frac{3}{2}} s^{3/2} }
    t^{\alpha_d}
    w_s
    e^{-w_s \sqrt{2}}
    \int_{\ell^{1/3}}^{\ell^{2/3}}  
    w_{\ell}^2 
    e^{ - \frac{w_{\ell}^2}{2\ell}}
    e^{- \frac{(w_s- w_{\ell})^2}{2s}}
    \d w_{\ell} \, 
    \label{eq.2ndmom.Qexp} 
    \\
    &
    \ulesssim 
    \frac{e^{- \frac{1}{2}(\frac{m_t}{t})^2s}}{s^{3/2}}
    t^{\alpha_d}
    w_s
    e^{-w_s \sqrt{2}} \, ,
    \label{eq.2ndmom.Qexp_nointegral} 
\end{align}
uniformly over $s$ and $w_s$, where in the last line we have applied the trivial upper bound $e^{- \frac{(w_s-w_{\ell})^2}{2s}} \leq 1$ and then approximated (in the sense of $\uasymp$) the resulting Gaussian integral as $\ell^{3/2}$.
Now, bound the right-hand side of~\eqref{eq.2ndmom.Eexpmiddle} from above by taking the trivial upper bound $e^{- \frac{(w_s - (z+y))^2}{2j_s}} \leq 1$, and then substitute this bound  and~\eqref{eq.2ndmom.Qexp_nointegral} into~\eqref{eq.2ndmom.PQintegral}. This yields
\begin{align*}
    \E_{\x(z)}[\Upsilon_1(s)]
    &\ulesssim 
    \fM_{L,z}e^{-\frac{1}{2}\big(\frac{m_t}{t}\big)^2(\tl-\ell+s)}
    \Big( \frac{t^2}{j_s+L} \Big)^{\alpha_d} 
    (s^2j_s)^{-3/2}
    \int_0^{\y_s(Q_z(j_s))} w_s^3 e^{-w_s\sqrt{2}} \d w_s\,.
\end{align*}
Since
\begin{align}
    e^{-\frac{1}{2}\big(\frac{m_t}{t}\big)^2(\tl-\ell+s)} \usim e^{-(\tll+s)} t^{\frac{3}{2}-\alpha_d} e^{-\frac{\alpha_d s}{t}\log t} e^{\frac{3s}{2t}\log t}\,,
    \label{eqn:2mom:exponential}
\end{align}
the second-to-last display becomes
\begin{align}
\E_{\x(z)}[\Upsilon_1(s)]
    &\ulesssim 
    \fM_{L,z} e^{- (\tl - \ell + s)} 
    \frac{t^{\alpha_d}}{(j_s+L)^{\alpha_d} t^{\alpha_d s/t}}
    \frac{t^{3/2} t^{3s/(2t)}}{(j_s)^{3/2} s^{3} } 
    \int_0^{\y_s(Q_z(j_s))} 
     w_s^3 e^{-\sqrt{2} w_s} \d w_s\,  
    \nonumber \\
    &\uasymp 
    \fM_{L,z} e^{- (\tl - \ell + s)}  
    \frac{t^{3/2} t^{3s/(2t)}}{(j_s)^{3/2} s^{3} } 
    \frac{t^{\alpha_d}}{(j_s+L)^{\alpha_d} t^{\alpha_d s/t}} \, ,
    \label{eq.2ndmom.smiddle}
\end{align}
uniformly over $s$.
Note that uniformly over $s \geq 1$, we have 
\begin{align}
\frac{ t^{3s/(2t)}}{ s^{3} }   \ulesssim \frac{1}{s^{3/2}} \quad \text{and} \quad 
    \frac{t^{\alpha_d}}{(j_s+L)^{\alpha_d} t^{\alpha_d s/t}}
    \ulesssim 1 \, .
    \label{eqn:2mom:alphad-exp}
\end{align}
Applying the bounds in the last  display to~\eqref{eq.2ndmom.smiddle} yields the following, uniformly over $s\geq 1$:
\begin{align}
    \E_{\x(z)}[\Upsilon_1(s)]
    &\ulesssim 
    \fM_{L,z} e^{- (\tl - \ell + s)}  
    \frac{ t^{3/2} }{(sj_s)^{3/2} } 
    =: \cP_2(s)
    \, .
    \label{eqn:2mom:P2}
\end{align}
This estimate will be used for the second interval of $s$, that is, $[\ell^{2/3}, \tl -\ell-\ell^{2/3}]$. 

For $s \in [\tl- \ell-\ell^{2/3}, \tl - \ell)$, we may employ a simpler bound on $E_s^{(1)}(w_s)$ by upper bounding the barrier probability in~\eqref{eqn:2mom:E1-bridge} by $1$. Then, instead of~\eqref{eq.2ndmom.Eexpmiddle}, we have the following
\begin{align}
    E_s^{(1)}(w_s) 
    &\ulesssim z^{-1} \fM_{L,z} (j_s+ L)^{-\alpha_d} j_s^{-1/2} e^{- \frac{1}{2} \pth{\frac{m_t}{t}}^2 j_s } e^{w_s \sqrt{2}}  e^{- \frac{(w_s - (z+y))^2}{2j_s}} 
    \label{eq.2ndmom.Pexp_bigs}
\end{align}
uniformly over $s \in [\tl- \ell-\ell^{2/3}, \tl - \ell)$ and $w_s$.
Substituting~\eqref{eq.2ndmom.Qexp_nointegral} and~\eqref{eq.2ndmom.Pexp_bigs} into~\eqref{eq.2ndmom.PQintegral} yields
\begin{align*}
    &\E_{\x(z)}[\Upsilon_1(s)]\\
   & \ulesssim 
    \fM_{L,z}  z^{-1} e^{-\frac{1}{2}\big(\frac{m_t}{t}\big)^2(\tll +s)} \Big( \frac{t^2}{j_s+L} \Big)^{\alpha_d} s^{-3} 
        j_s^{-1/2} \int_0^{\y_s(Q_z(j_s))} 
        w_s^2 e^{ - \sqrt{2} w_s } e^{ - \frac{(w_s - (z+y))^2}{2j_s} } \d w_s 
    \,.
\end{align*}
Since $w_s^2 e^{-\sqrt{2}w_s}$ is uniformly bounded over $w_s$ and $s$, the integral over $w_s$ (including the $j_s^{-1/2}$ pre-factor) is also bounded over all $s$ by Gaussian integration. We may then apply the estimates in~\eqref{eqn:2mom:exponential} and \eqref{eqn:2mom:alphad-exp} to find
\begin{align}
    \E_{\x(z)}[\Upsilon_1(s)] &\ulesssim 
    \fM_{L,z} e^{- \tl - \ell + s} z^{-1}
    =: \cP_3(s)
    \, ,
    \label{eqn:2mom:P3}
\end{align}
uniformly over $s \in [\tl-\ell-\ell^{2/3} , \tl - \ell)$.

For $s \in (0, \ell^{2/3}]$, we employ a simpler bound on $E_s^{(2)}(w_s)$ by upper bounding the barrier probability in~\eqref{eqn:2mom:Es2-bridge} by $1$. Then, instead of~\eqref{eq.2ndmom.Qexp_nointegral}, we have
\begin{align}
    E_s^{(2)}(w_s) 
    &\ulesssim
    \ell^{-\frac{3}{2}}
    e^{- \frac{1}{2}(\frac{m_t}{t})^2s}
    t^{\alpha_d}
    e^{-w_s \sqrt{2}}
    \Big( 
    s^{-\frac{1}{2}}
    \int_{\ell^{1/3}}^{\ell^{2/3}}  
    w_{\ell} 
    e^{ - \frac{w_{\ell}^2}{2\ell}}
    e^{- \frac{(w_s- w_{\ell})^2}{2s}}
    \d w_{\ell}
    \Big)
    \nonumber \\
    &\ulesssim 
    \ell^{-\frac{5}{6}}
    e^{- \frac{1}{2}(\frac{m_t}{t})^2s}
    t^{\alpha_d}
    e^{-w_s \sqrt{2}} \, ,
    \label{eq.2ndmom.Qexp_ssmall}
\end{align}
uniformly over $s \in (0, \ell^{2/3}]$ and $w$.
Substituting~\eqref{eq.2ndmom.Eexpmiddle} and~\eqref{eq.2ndmom.Qexp_ssmall} into~\eqref{eq.2ndmom.PQintegral} and applying \eqref{eqn:2mom:exponential} and \eqref{eqn:2mom:alphad-exp} as before yields
\begin{align}
    \E_{\x(z)}[\Upsilon_1]
    &\ulesssim 
     \fM_{L,z} e^{- \tl - \ell + s}  
    \ell^{-\frac{5}{6}}
    =: \cP_1(s)
    \, ,
    \label{eqn:2mom:P1}
\end{align}
uniformly over $s \in (0, \ell^{2/3}]$.
Now, from~\eqref{eqn:2mom:gir}, we find 
\begin{align}
    &\frac{\int_{0}^{\tll} e^{\tll+s} \P_{\x(z)}\big( G_{L,t}(v) \cap G_{L,t}(v') \big) \d s}{\fM_{L,z}}
    \nonumber \\
    &\ulesssim 
    \frac{\int_{0}^{\ell^{2/3}} e^{\tll+s} \cP_1(s) \d s + \int_{\ell^{2/3}}^{\tl-\ell-\ell^{2/3}} e^{\tll+s} \cP_2(s) \d s+
    + \int_{\tl-\ell-\ell^{2/3}}^{\tll} e^{\tll+s} \cP_3(s) \d s}{\fM_{L,z}}
    \nonumber\\
    &\ulesssim
    \ell^{2/3}\ell^{-5/6} + \int_{\ell^{2/3}}^{\tl-\ell-\ell^{2/3}} \frac{t^{3/2}}{(sj_s)^{3/2}} \d s + \ell z^{-1} 
  \ulesssim \ell^{-1/6} + \ell^{-1/3} + \ell z^{-1}
   \ulesssim \ell^{-1/6}\,.
\end{align}
This gives~\eqref{eq.2ndmom.goal}, which concludes the proof.  
\end{proof}

\section{The two-dimensional case}
\label{sec-2d}
Throughout Sections~\ref{sec:window} and \ref{sec:1stmom}, we assumed $d \geq 3$. Then, $\alpha_d -\alpha_d^2 \leq 0$, and thus whenever we applied the Girsanov transform~\eqref{eq.girsanov},  
the $\exp(\cdot)$ factor could be bounded above by $1$. This was  the only situation in which we used the $d\geq 3$ assumption. In the $d=2$ case, this exponential factor in~\eqref{eq.girsanov} is handled by Proposition~\ref{prop:2d-lowerbarrier} below, which shows  
that the probability that there exists a particle in $\cN_t$ that 
reaches height $m_t(2)+y$ after falling, at some  $s \in [L, t]$, below a line $\cL(\cdot)$ of fixed, small slope  is $o_u(1)$ (recall \eqref{def:m_t-c_d}   and  Section~\ref{subsec:asymp-notation} for notation); then, the exponential Girsanov term becomes
$\exp( (\alpha_d-\alpha_d^2)\int_L^t \cL(u)^{-2}/2 \d u) \usim 1$.

For each $t>0$ and $v \in \cN_t$, couple $R_t^{(v)}$ with a two-dimensional Brownian motion $\mathcal{W}_t^{(v)}:=\big(W_t^{(v,1)}, W_t^{(v,2)}\big)$ such that $R_t^{(v)} = \|\mathcal{W}_t^{(v)}\|$. We will prove Proposition~\ref{prop:2d-lowerbarrier} by studying the behavior of $W_t^{(v,1)}$ and $W_t^{(v,2)}$.
Indeed, the following result (Lemma~\ref{lem:2d-lb}, depicted in Figure~\ref{fig:lemma-2d-lower-bound}) provides a crucial bound on the probability that a single coordinate is too low on a discrete set of times.

\begin{figure}
     \begin{tikzpicture}[>=latex,font=\small]
 \draw[->] (0, 0) -- (10, 0); 
  \draw[->] (0, -.5) -- (0, 5);
  \node (fig1) at (4.65,1.95) {
  \includegraphics[width=.67\textwidth]{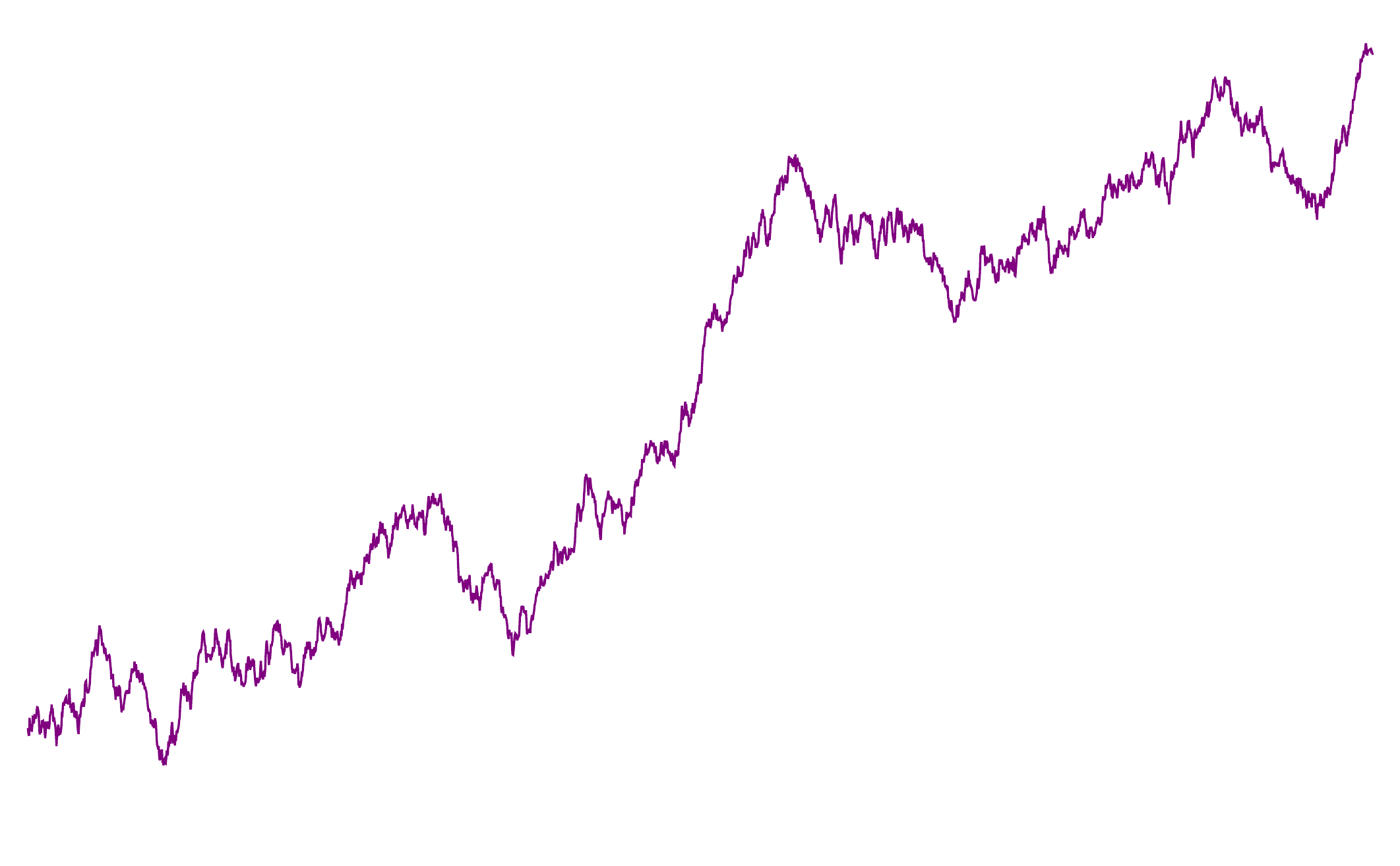}};
   \node[circle,fill=blue!45!purple,inner sep=1.75pt] (o) at (0, 0) {};
   \node[circle, fill=black, inner sep=1.75pt] (a1) at ( 2.5, 0.5 ) {};
  \node[circle, fill=black, inner sep=1.75pt,label={right:$\mathfrak{b}t$}] (a2) at ( 9.2, 1.82 ) {};
  \node[circle, fill=black, inner sep=1.75pt,label={right:$\frac{m_t+y}{\sqrt 2}$}] (y) at ( 9.2, 4 ) {};
  \node[blue!45!purple] at (6,4) {$W_s^{(v,1)}$};
  \draw[dashed](o)--(a1);
  \draw[thick](a1)--(a2);
  \draw (2.5,0.05) -- (2.5,-0.05) node[below] {$L$};
  \draw (3.2,0.05) -- (3.2,-0.05) node[below] {$k$};
  \draw (3.6,0.05) -- (3.6,-0.05) node[below,xshift=8pt] {$k+1$};
  \draw (9.2,0.05) -- (9.2,-0.05) node[below] {$t$};
  \end{tikzpicture}
     \caption{An event in the union in Lemma~\ref{lem:2d-lb}: a one-dimensional Brownian motion $W_s^{(v,1)}$ (purple) dips below the line $\cL(s) = \ep s$ (black) at some point on $[k, k+1]$, for $k \in [L,t]$, while still ending above $(m_t+y)/\sqrt{2}$ at time $t$.}
     \label{fig:lemma-2d-lower-bound}
 \end{figure}
 
\begin{lemma}\label{lem:2d-lb}
Define the interval $I_{[L,t-1]} := \llbracket L, t-1\rrbracket+t-1-\lfloor t-1\rfloor$  (this is the set $\{t-1 - \Z\} \cap [L,t-1]$). There exists a constant $\ep \in (0,1)$ such that for any $y \in \R$,
\begin{align}
    \P \bigg(
     \bigcup_{v \in \cN_t}\bigcup_{k\in I_{[L,t-1]}} \big \{W_k^{(v,1)} \in [0,\ep k], ~W_t^{(v,1)} > \frac{m_t+y}{\sqrt{2}}, ~R_t^{(v)} > m_t+y\big\}  
    \bigg) = o_u(1) \,.
    \label{eqn:2d-lb}
\end{align}
\end{lemma}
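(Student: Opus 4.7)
The plan is to fix $v\in\cN_t$ and $k\in I_{[L,t-1]}$, apply a union bound, and then use the many-to-one lemma (Lemma~\ref{lem:many-to-one}) on the two-dimensional Brownian motion $\mathcal W_s = (W_s^{(1)},W_s^{(2)})$ with $R_s=\|\mathcal W_s\|$. This reduces the problem to showing
\[
 e^t \sum_{k \in I_{[L,t-1]}} \P\bigl(W_k^{(1)}\in[0,\ep k],\ W_t^{(1)}>a,\ R_t>b\bigr) \;=\; o_u(1)
\]
with $a := (m_t+y)/\sqrt{2}$ and $b:=m_t+y$. Since $W^{(1)}$ and $W^{(2)}$ are independent and $R_t>b$ translates into the one-dimensional constraint $|W_t^{(2)}|>\sqrt{(b^2-(W_t^{(1)})^2)_+}$, I would condition on $w:=W_t^{(1)}$ and split into two cases according to whether $w$ exceeds $b$ or lies in $(a,b]$.

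For the case $w>b$, the $W^{(2)}$ constraint is vacuous and the event is implied by $\{\exists v:W_t^{(v,1)}>b\}$. Since $b = m_t + y$ exceeds $m_t(1)+y$ by $\frac{\log t}{2\sqrt{2}}$ (because $\fc_2=-1/\sqrt 2$ while $\fc_1=-3/(2\sqrt 2)$), the one-dimensional Mallein bound~\eqref{eq.mallein} (applied to the coordinate BBM $\{W_t^{(v,1)}\}$ with shift $y+\tfrac{\log t}{2\sqrt 2}$) gives
\[
 \P\bigl(\exists v:W_t^{(v,1)}>b\bigr)\;\lesssim\;\bigl(y+\tfrac{\log t}{2\sqrt 2}\bigr)\,t^{-1/2}e^{-\sqrt 2 y},
\]
which is $o_u(1)$ as $t\to\infty$. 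This handles the $w>b$ portion cleanly without ever invoking the dip constraint.

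For the case $w\in(a,b]$, I would condition on $W_t^{(1)}=w$ so that $W_k^{(1)}$ becomes a Gaussian bridge with mean $wk/t$ and variance $k(t-k)/t$; since $w>a$ and $\ep<1$ forces $\ep k$ to lie deep below the bridge mean, the bridge probability is at most $\exp\bigl(-(w-\ep t)^2 k/(2t(t-k))\bigr)$. Using the Gaussian tail $\P(|W_t^{(2)}|>\sqrt{b^2-w^2})\lesssim \sqrt{t/(b^2-w^2)}\,e^{-(b^2-w^2)/(2t)}$ and multiplying by the endpoint density $p_t(w)$, the $w^2$ terms in the exponents cancel beautifully, producing the clean residual
\[
 p_t(w)\cdot\bigl(\text{bridge}\bigr)\cdot \P(|W_t^{(2)}|>\sqrt{b^2-w^2})\;\lesssim\;\sqrt{\tfrac{kt}{(t-k)(b^2-w^2)}}\;e^{-b^2/(2t)-(w-\ep t)^2 k/(2t(t-k))}.
\]
The substitution $w=b\cos\phi$ converts $\int_a^b dw/\sqrt{b^2-w^2}$ into a $\phi$-integral of bounded length $\pi/4$, and using $(w-\ep t)\geq(1-\ep)t(1+o(1))$ uniformly on $(a,b)$ gives an exponential factor $e^{-(1-\ep)^2 tk/(2(t-k))}$.

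The main obstacle will be that the naive application of many-to-one leaves an excess factor of $t$ coming from $e^t\cdot e^{-b^2/(2t)}\sim t\,e^{-\sqrt 2 y}$, which is the standard overcount of the first-moment method for the 2D maximum. To remove it, I would intersect the event with the upper-barrier event $\{R_s^{(v)}\leq B(s)\ \forall s\in[L,t]\}$ from Lemma~\ref{lem:rough_parabola}, whose complement already contributes $o_u(1)$. The barrier translates into a barrier $W_s^{(v,1)}\leq B(s)$, and the corresponding Brownian-bridge ballot estimate (Lemma~\ref{lem:brz-modified}) provides an extra factor of order $1/t$ through the standard window/barrier mechanism, exactly offsetting the excess $t$. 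Once this is in place, the Case~I contribution is bounded by a constant multiple of $\sqrt{k/t}\,e^{-(1-\ep)^2 k/2}$ uniformly in $t$, which sums over $k\geq L$ to $O\bigl(\sqrt{L/t}\,e^{-(1-\ep)^2 L/2}\bigr)=o_u(1)$. Any fixed $\ep\in(0,1)$ works for this argument, and the choice of $\ep$ in the lemma is then dictated only by the later use in Proposition~\ref{prop:2d-lowerbarrier}.
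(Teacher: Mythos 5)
Your decomposition — many‑to‑one followed by a split on $w := W_t^{(v,1)}$ into the regimes $w>b$ and $w\in(a,b]$ — tracks the paper's, which splits at $b-1$ into the events $\mathbf{Dip}_k^1$ (for $w\le b-1$) and $\mathbf{Dip}_k^2$. Your treatment of $w>b$ via the one‑dimensional Mallein/Bramson bound is clean and correct: it really does give $O((y+\log t) t^{-1/2} e^{-\sqrt 2 y})$ with no dip constraint needed, which is arguably tidier than the paper's route (the paper handles $\mathbf{Dip}_k^2$ by many‑to‑one plus, for $k\lesssim\log t$, an intersection with the $B_4$ barrier from Lemma~\ref{lem:rough_parabola} applied to a single coordinate).

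The genuine gap is in your $w\in(a,b]$ case. You write that after intersecting with the upper barrier, "the corresponding Brownian‑bridge ballot estimate (Lemma~\ref{lem:brz-modified}) provides an extra factor of order $1/t$." That is true only when the \emph{terminal} gap $B_4(t)-w$ is $O(1)$ or $O(\mathrm{polylog}\,t)$, i.e.\ when $w$ is within a small window of $b$. But for $w$ near $a\approx t$, the terminal gap is $B_4(t)-w\approx(\sqrt 2-1)t$, so the ballot factor $\frac{(B_4(k)-W_k)(B_4(t)-w)}{t-k}$ is not $O(1/t)$ — it is $\Theta(k)$, which you must cap at $1$; the barrier gives no gain there. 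Moreover, the density cancellation $p_t^W(0,w)\cdot\P(|W_t^{(2)}|>\sqrt{b^2-w^2})\asymp(b^2-w^2)^{-1/2}e^{-b^2/(2t)}$ together with $e^t e^{-b^2/(2t)}\asymp t e^{-\sqrt 2 y}$ leaves you exactly the excess $t$ you identified, and the bridge factor $e^{-k(w-\ep t)^2/(2t(t-k))}\le e^{-(1-\ep)^2 k/2}$ is constant in $t$ at fixed $k$, so at $k=L$ fixed it does not compensate. Your claimed uniform bound $\sqrt{k/t}\,e^{-(1-\ep)^2k/2}$ therefore does not follow from the ingredients you cite; the region $w$ near $a$, where the barrier is slack, is precisely where your estimate is unjustified. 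This is the regime the paper treats as $\mathbf{Dip}_k^1$, \emph{without} the barrier, by a direct but delicate Gaussian computation (splitting further at $k\asymp\log t$); you would need to either adapt that computation or produce a genuinely new argument — for example one that uses a barrier on the \emph{modulus} $R^{(v)}$ (for which the terminal gap is $O(\log L)$, not $O(t)$) rather than on a single coordinate — to close the $w$-near-$a$ case.
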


\begin{proposition}
\label{prop:2d-lowerbarrier}
Let $\ep$ be the constant from Lemma~\ref{lem:2d-lb}. For all $y \in \R$, 
\begin{align}
    \P \bigg(
    \bigcup_{s \in [L,t]} \bigcup_{v \in \cN_t} \big \{R_s^{(v)} \leq  \tfrac12 \ep s,~R_t^{(v)} > m_t+y\big\}  
    \bigg) = o_u(1) \,.
    \label{eqn:2d-lowerbarrier}
\end{align}
\end{proposition}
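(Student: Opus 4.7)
The plan is to reduce the continuous-time, radial event in~\eqref{eqn:2d-lowerbarrier} to a discrete-time, coordinate-wise event controlled by Lemma~\ref{lem:2d-lb}. First, I would bound the Brownian modulus of continuity along the BBM tree via a union bound. For any $v\in\cN_t$ and $k\in I_{[L,t-1]}$, the path $\mathcal{W}^{(v)}_\cdot$ on $[k,k+1]$ coincides with that of its ancestor $w\in\cN_{k+1}$, so by the many-to-one lemma (Lemma~\ref{lem:many-to-one}),
\begin{equation*}
    \P\Big(\exists\, v\in\cN_t,\; k\in I_{[L,t-1]}:\; \sup_{s\in[k,k+1]}\|\mathcal{W}^{(v)}_s-\mathcal{W}^{(v)}_k\|>\tfrac{\ep k}{4}\Big)
    \leq \sum_{k\in I_{[L,t-1]}} e^{k+1}\,\P\Big(\sup_{s\in[0,1]}\|\mathcal{B}_s\|>\tfrac{\ep k}{4}\Big),
\end{equation*}
where $\mathcal{B}$ is a $2$-dimensional Brownian motion. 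Gaussian tail estimates bound each summand by $Ce^{k+1-c\,\ep^2 k^2}$, and summing over $k\geq L$ yields a total of order $e^{-c'L^2}=o_u(1)$.

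On the complement of this fluctuation event, suppose $v\in\cN_t$ and $s\in[L,t]$ satisfy $R^{(v)}_s\leq\tfrac12\ep s$ and $R^{(v)}_t>m_t+y$, and let $k\in I_{[L,t-1]}$ be such that $|s-k|\leq 1$ (which exists for $L$ large). Then
\begin{equation*}
    R^{(v)}_k \leq R^{(v)}_s+\|\mathcal{W}^{(v)}_s-\mathcal{W}^{(v)}_k\| \leq \tfrac12\ep(k+1)+\tfrac14\ep k \leq \ep k
\end{equation*}
for $L$ sufficiently large, so in particular $|W^{(v,i)}_k|\leq\ep k$ for both $i\in\{1,2\}$. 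Moreover, $R^{(v)}_t>m_t+y$ forces $|W^{(v,i^*)}_t|>(m_t+y)/\sqrt{2}$ for some $i^*\in\{1,2\}$. Since the joint law of $R^{(\cdot)}_\cdot$ and the BBM tree is invariant under the sign-flips $W^{(\cdot,i)}\mapsto-W^{(\cdot,i)}$ and under swapping coordinates $(W^{(\cdot,1)},W^{(\cdot,2)})\mapsto(W^{(\cdot,2)},W^{(\cdot,1)})$, a union bound over $i^*$ and over the signs of $W^{(v,i^*)}_k$ and $W^{(v,i^*)}_t$ reduces the remaining event to finitely many instances of Lemma~\ref{lem:2d-lb} (possibly in the trivially-extended variant where $W^{(v,1)}_k\in[-\ep k,0]$, whose proof is identical by the symmetry of a Brownian motion started at $0$). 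Each such instance contributes $o_u(1)$.

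The main point that requires care is the bookkeeping in this symmetry reduction: Lemma~\ref{lem:2d-lb} is phrased with the one-sided constraints $W^{(v,1)}_k\in[0,\ep k]$ and $W^{(v,1)}_t>0$, whereas the coordinate we isolate may have $W^{(v,1)}_k$ and $W^{(v,1)}_t$ of arbitrary signs. Of the four sign combinations, two match Lemma~\ref{lem:2d-lb} directly (after at most one reflection in the first coordinate), while the remaining two require the extended variant mentioned above. Apart from this bookkeeping, the argument is a direct reduction and no serious obstacle is anticipated beyond the standard modulus-of-continuity step.
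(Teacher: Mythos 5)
Your proof is correct and follows essentially the same route as the paper's: both reduce the radial, continuous-time event to the discrete-time, single-coordinate event of Lemma~\ref{lem:2d-lb} by discretizing time to $I_{[L,t-1]}$, paying for the continuous-to-discrete passage with a large-deviation estimate summed against the $e^{k+1}$ many-to-one weight, and invoking the sign and coordinate-swap symmetries of two-dimensional BBM. The implementation differs in two minor ways. First, you control the continuous-to-discrete error via a tree-wide modulus-of-continuity bound on $\sup_{s\in[k,k+1]}\|\mathcal{W}_s^{(v)}-\mathcal{W}_k^{(v)}\|$ (a one-endpoint sup estimate), whereas the paper conditions at both $k$ and $k+1$ and applies the Brownian-bridge ballot theorem to the dip below $\bar{\ep}(k+1)$; both give the required $e^{k+1}e^{-\Omega(k^2)}$ decay. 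Second, you symmetrize after discretizing rather than before, which cleanly exposes that Lemma~\ref{lem:2d-lb} as stated covers only the $W_k^{(v,1)}\geq 0$, $W_t^{(v,1)}>0$ sign pattern. Your proposed negative-window extension is indeed routine (the density $p_k^W(0,\cdot)$ is even in its second argument and $\P_{w_k}(W_{t-k}>\,\cdot\,)$ is increasing in $w_k$, so the estimates in the paper's proof of Lemma~\ref{lem:2d-lb} only improve), though you can also avoid it: on the complement of the fluctuation event, if $W_k^{(v,1)}<0$ while $W_t^{(v,1)}>(m_t+y)/\sqrt{2}$, take $k'$ to be the largest element of $I_{[L,t-1]}$ with $W_{k'}^{(v,1)}\leq 0$; the modulus bound then forces $k'<t-1$ and $W_{k'+1}^{(v,1)}\in(0,\ep(k'+1)]$, which is already within the scope of Lemma~\ref{lem:2d-lb}.
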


Proposition~\ref{prop:2d-lowerbarrier} implies that throughout Section~\ref{sec:window}, it suffices to consider (branching) Bessel particles that stay above $\cL(s) := \ep s$ on $[L,t]$, and that in Section~\ref{sec:1stmom}, it suffices to consider particles that stay above $\cL(s+L)$ on $[0, \tl]$. Thus, the $\exp(\cdot)$ factor in~\eqref{eq.girsanov} is $\usim 1$, and we can carry out the work in these sections essentially in the same way as for $d\geq 3$. We conclude by proving first the proposition, then the lemma.

\begin{proof}[Proof of Proposition~\ref{prop:2d-lowerbarrier}]
Set $\bar{\ep} := \ep/2$. 
Note that for any $0 < s \leq t$ and for any $v \in \cN_t$, 
\begin{align*}
    \P\bigg(\{ R_s^{(v)} \leq \bep s, ~R_t^{(v)} > m_t+y \} \bigg) \leq \P\Big(\bigcup_{i\in\{1,2\}}\mathcal{H}_i \Big)
    \leq 2\P(\mathcal{H}_1)
    \end{align*}
where
\[ \mathcal{H}_i = \Big\{|W_s^{(v,i)}|\leq \bep s, ~|W_t^{(v,i)}| > \frac{m_t+y}{\sqrt{2}}, ~R_t^{(v)}>m_t+y  \Big\}\,,\]
and in turn,
\begin{align}
    \P(\mathcal{H}_1) \leq 4 \P \bigg(
    \bigcup_{s \in [L,t]} \bigcup_{v\in \cN_t} \Big\{W_s^{(v,1)} \in [0,\bep], ~W_t^{(v,1)} > \frac{m_t+y}{\sqrt{2}}, ~R_t^{(v)}>m_t+y  \Big\}
    \bigg)
    \label{eqn:2d-lb:union}
\end{align}
where the inequality is by a union-bound and the reflection principle bound the above.
For each $s\in I_{[L,t-1]}$ we may increase the event $\{W_s^{(v,1)}\in[0,\bep s]\}$ to
$\{W_s^{(v,1)}\in[0,\ep s]\}$ and use Lemma~\ref{lem:2d-lb} followed by a union-bound to conclude that the right-hand of~\eqref{eqn:2d-lb:union} is at most
\begin{align}
    o_u(1)+ 
    4\sum_{k \in I_{[L,t-1]}} \P \bigg( \bigcup_{v \in \cN_t} \Big\{ \min_{s \in [k,k+1]} W_s^{(v,1)} \leq \bep (k+1) \Big\} 
    \cap \bigcap_{j\in\{k,k+1\}} \big\{ W_j^{(v,1)} > j \big\} \bigg)\,. 
    \label{eqn:2d-lb:discrete}
\end{align}
For each $k$, the event in the probability in the right-hand of~\eqref{eqn:2d-lb:discrete} 
only considers the process on the time interval $[0,k+1]$, whence the union over the particles $v\in \cN_t$ is equivalent to a union over $v\in \cN_{k+1}$. As usual, a union bound along with the many-to-one lemma (Lemma~\ref{lem:many-to-one}) imply that the expression in~\eqref{eqn:2d-lb:discrete} bounded above by 
\begin{align*}
    &o_u(1) + 4 \sum_{k \in I_{[L,t-1]}}e^{k+1} \P \bigg( \Big\{ \min_{s \in [k,k+1]} W_s^{(v,1)} \leq \bep(k+1) \Big\} 
    \cap \bigcap_{j\in\{k,k+1\}} \big\{ W_j^{(v,1)} > j \big\} \bigg)  \\
    &\leq 
    o_u(1) + 4 \sum_{k \in I_{[L,t-1]}}e^{k+1} \P_{k,1}^{k+1} \bigg( \min_{s \in [0, 1]} W_s^{(v,1)} \leq \bep(k+1) \Big)\,.
\end{align*}
From the Brownian ballot theorem \eqref{eqn:brownian-ballot-explicit}, the last line of the last display is equal to
\begin{align*}
    o_u(1) + 8 \sum_{k \in I_{[L,t-1]}}e^{k+1} e^{ -2 \big( (1-\bep)k(k+1) -2\bep (1-\bep)(k+1)  \big) } = o_u(1)\,.
\end{align*}
Thus, we have~\eqref{eqn:2d-lowerbarrier}.
\end{proof}

\begin{proof}[Proof of Lemma~\ref{lem:2d-lb}]
In what follows, we write  $\fm_t:= m_t+y$ for brevity; oftentimes, terms involving $y$ are bounded above by a constant.
For $v \in \cN_t$ and $k \in I_{[L,t-1]}$, define the event
\[
\mathbf{Dip}_k(v) := \big \{W_k^{(v,1)} \in [0,\ep k], ~W_t^{(v,1)} > \tfrac{\fm_t}{\sqrt{2}}, ~R_t^{(v)} > \fm_t\big\}\,,
\]
in terms of which we aim to show that $\P(\bigcup_{k\in I_{[L,t-1]}}\bigcup_{v\in \cN_t}\mathbf{Dip}_k(v))=o_u(1)$.
Via union bound, it suffices to show, for $i \in \{1,2\}$, 
\begin{align}
    \P \bigg( \bigcup_{v \in \cN_t}\bigcup_{k\in I_{[L,t-1]}}
    \mathbf{Dip}_k^i(v) \bigg)=o_u(1)\,,
    \label{eqn:2dlb:goal}
\end{align}
where 
\begin{align*}
    \mathbf{Dip}_k^1(v) &:= \{ W_k^{(v,1)} \in [0,\ep k], ~ W_t^{(v,1)} \in  \big(\tfrac{\fm_t}{\sqrt{2}}, \fm_t-1 \big], ~R_t^{(v)} > \fm_t \}\,,  \quad \text{and}
    \nonumber \\
    \mathbf{Dip}_k^2(v) &:= \{ W_k^{(v,1)} \in [0,\ep k], ~ W_t^{(v,1)} > \fm_t-1 \}
    \,.
\end{align*}
We begin by showing \eqref{eqn:2dlb:goal} for $i =1$.
As usual, a union bound and the many-to-one lemma (Lemma~
\ref{lem:many-to-one}) yield 
\begin{align}
    \P \bigg( \bigcup_{v \in \cN_t}\bigcup_{k\in I_{[L,t-1]}}
    \mathbf{Dip}_k^1(v) \bigg)
    &\leq 
    e^t \sum_{k \in I_{[L,t-1]}}  \P \big( \mathbf{Dip}_k^1(v) \big)\,.
\end{align}
Let us now study $\P(\mathbf{Dip}_k^1(v))$.
Integrating over $W_k^{(v,1)}$ and $w_t := W_t^{(v,1)} - \frac{\fm_t}{\sqrt{2}}$, applying the Markov property at times $k$ and $t$, and applying the symmetry relation $\P(|W_t^{(v,2)}| > x) = 2 \P(W_t^{(v,2)}>x)$ for any $x \geq 0$ gives  
\begin{align}
    \P\big(  \mathbf{Dip}_k^1(v)  \big)&= 2e^t \int_0^{\ep k} \d w_k 
    ~p_k^W(0, w_k)  
    \int_0^{(1- \frac{1}{\sqrt{2}})\fm_t-1} \d w_t 
    ~p_{t-k}^W \big(w_k, \frac{\fm_t}{\sqrt{2}}+w_t \big)
    \nonumber \\
    &\qquad \qquad \times \P\pth{W_t^{(v,2)} > \sqrt{\fm_t^2 - \big(\tfrac{\fm_t}{\sqrt{2}}+w_t\big)^2}}\,.
    \label{eqn:2dlb:main-2}
\end{align}
Expanding the last two terms in the integrand of~\eqref{eqn:2dlb:main-2} gives 
\begin{align}
    &p_{t-k}^W \bigg(w_k, \frac{\fm_t}{\sqrt{2}}+w_t \bigg)
    \uasymp (2\pi(t-k))^{-\frac{1}2} \exp \bigg( - \frac{\left( t - \frac{1}{2}\log t + (w_t - w_k) \right)^2}{2(t-k)} \bigg) 
    \nonumber \\
    &\uasymp (2\pi(t-k))^{-\frac{1}2} 
    \exp \pth{\frac{t\log t - t^2}{2(t-k)} - \frac{w_t^2}{2(t-k)}-w_t +\frac{\log t-2(k-w_k)}{2(t-k)} w_t} \,,
    \label{eqn:2dlb:t-term}
\end{align}
uniformly over $w_t$ and $w_k$,
where in the last line we have used
\[
\exp \pth{ - \frac{\frac{1}{4}(\log t)^2+ w_k^2 + w_k \log t}{2(t-k)}} \leq 1\,, 
\]
as well as 
\begin{align}
    \P\pth{W_t^{(v,2)} > \sqrt{\fm_t^2 - \big(\frac{\fm_t}{\sqrt{2}}+w_t\big)^2}}
    &\ulesssim 
    \Big( \Big(1- \frac{1}{\sqrt{2}}\Big)m_t-w_t \Big)^{-\frac{1}{2}} t^{-\frac{1}{2}} e^{-\frac{t}{2}} e^{w_t + \frac{w_t^2}{2t}} \,.
    \label{eqn:2dlb:w2-term}
\end{align}
Since
\begin{align}
\exp\Big(\! -\frac{t^2}{2(t-k)} - \frac{t}{2} - \frac{w_t^2}{2(t-k)} + \frac{w_t^2}{2t} \Big) \leq \exp \Big(\! -t - \frac{kt}{2(t-k)} \Big)\,,
    \label{eqn:2dlb:t-exp}
\end{align}
we have the  crude bound
\begin{align}
     \P\pth{W_t^{(v,2)} > \sqrt{\fm_t^2 - \big(\frac{\fm_t}{\sqrt{2}}+w_t\big)^2}} p_{t-k}^W \bigg(w_k, \frac{\fm_t}{\sqrt{2}}+w_t \bigg)
    &\ulesssim
    e^{-t+\frac{3 t\log t - 2kt}{4(t-k)}}
    \label{eqn:2dlb:crude-prod}
\end{align}
(we have taken $w_t \leq t/2$ and bounded several terms with negative exponent by $1$).
Then for $k \geq 6\log t$, we have 
$\P\big(  \mathbf{Dip}_k^1(v)  \big)\ulesssim t^{-5/4} e^{-t}$ uniformly over such $k$, so that 
\begin{align}
    e^t \sum_{\substack{k \in I_{[L,t-1]} \cap [6 \log t, t-1] }} \P\big(  \mathbf{Dip}_k^1(v)  \big) \ulesssim t^{-1/4}\,. 
    \label{eqn:2dlb-main2-logt-sum}
\end{align}
We now turn our attention to the sum over $k \leq 6 \log t$ of $\P\big(  \mathbf{Dip}_k^1(v)  \big)$. 
Over this range of $k$, we have $\frac{t}{t-k} = 1 + O(\frac{\log t}{t})$, and so from~\eqref{eqn:2dlb:t-term}---\eqref{eqn:2dlb:t-exp}, we find that $\P\big(  \mathbf{Dip}_k^1(v)  \big)$ is asymptotically bounded above (in the sense of $\ulesssim$, uniformly over $k \leq 6 \log t$) by 
\begin{align}
    &t^{-1} e^{-t-\frac{k}{2}}
    \int_0^{(1- \tfrac{1}{\sqrt{2}})\fm_t-1}
    \frac{e^{ \frac{\log t-2k}{2(t-k)} w_t }}
    {\sqrt{ \big( 1-\tfrac{1}{\sqrt{2}} \big) \fm_t -w_t  \big) }}
    \int_0^{\ep k} p_k^W(0,w_k) e^{\frac{w_k w_t}{t-k}} \d w_k \d w_t
    \nonumber \\
    &= t^{-1} e^{-t-\frac{k}{2}}
    \int_0^{(1- \tfrac{1}{\sqrt{2}})\fm_t-1}
    \frac{  e^{ \frac{\log t-2k}{2(t-k)} w_t + \frac{k w_t^2}{2(t-k)^2}}  }
    {\sqrt{ \big( 1-\tfrac{1}{\sqrt{2}} \big) \fm_t -w_t  \big) }}
    \bigg( \frac{1}{\sqrt{2 \pi k}}
    \int_0^{\ep k} 
    e^{\frac{  (w_k-\frac{k w_t}{t-k})^2  }
    {t-k}} \d w_k  
    \bigg) \d w_t 
    \nonumber \\
    &\ulesssim 
    t^{-1} \exp \Big(  -t-\frac{k}{2} + \frac{t(\log t)}{4(t-k)} + \frac{kt^2}{8(t-k)^2}  \Big) 
    \int_0^{(1- \tfrac{1}{\sqrt{2}})\fm_t-1}
    \frac{1}{\sqrt{ \big( 1-\tfrac{1}{\sqrt{2}} \big) \fm_t -w_t  \big) }} \d w_t
    \nonumber \\
    &\ulesssim t^{-\frac{1}{4}}e^{-t - \frac{3}{8}k}\,,
    \label{eqn:2dlb-main2-smallk}
\end{align}
where in moving from the first to the second line, we have expanded the Gaussian density~$p_k^W$ and completed the square in the exponent of $e$; and in moving from the  second to the third line, we have bounded the parenthetical expression by $1$ and bounded~$w_t$ by $t/2$.
Thus,
\begin{align}
    e^t \sum_{k \in I_{[L,t-1]}\cap[L, 6 \log t]} \P\big( \mathbf{Dip}_k^1(v) \big) \ulesssim t^{-1/4}e^{-\frac{3}{8}L} \,. 
    \label{eqn:2dlb-main2-smallk-sum}
\end{align}
Combining \eqref{eqn:2dlb-main2-logt-sum} and~\eqref{eqn:2dlb-main2-smallk-sum} yields \eqref{eqn:2dlb:goal} for $i =1$.

It remains to consider $i=2$. In this case, when $k \lesssim \log t$, we will make use of a barrier event to add decay.
Recall the barrier $B(s) := B_d(s)$ from~\eqref{def:B(s)}\,. We will make use of
\[
B_4(s) = \sqrt{2}s + \cC_4 \log \big(s \wedge (t-s) \big)_+ + \log L
\]
(the same argument would work using $B_3(s)$, but it will be convenient that $m_t(4)$ has no $\log t$ term since $\fc_4=0$).
Lemma~\ref{lem:rough_parabola} holds (in particular) for a four-dimensional Bessel process, and thus by coupling $W_s^{(v,1)}$ as a coordinate of a four-dimensional Brownian motion, \eqref{eqn:no_parabola_UB} gives
\begin{align}
    \P \bigg( \bigcup_{s \in [L, t]} \bigcup_{v \in \cN_s} \{W_s^{(v,1)} > B_4(s) \}  \} \bigg) = o_u(1)\,.
    \label{eqn:2dlb:parabola-o1}
\end{align}
Hence,
\begin{align}
    \P \bigg( \bigcup_{v \in \cN_t}\bigcup_{k\in I_{[L,t-1]}}
    \mathbf{Dip}_k^2(v) \bigg)
    &\leq  
    \mathbf{I} + \mathbf{II} + o_u(1) \,,
    \label{eqn:2dlb:parabola-unionbd}
\end{align}
where
\begin{align*}
    \mathbf{I} &:= \P \bigg( \bigcup_{v \in \cN_t}\bigcup_{k\in I_{[L,t-1]} \cap [5 \log t,t-1]  } 
    \mathbf{Dip}_k^2(v) \bigg) \,, \quad \text{and}
    \\
    \mathbf{II} &:= \P \bigg( \bigcup_{v \in \cN_t}\bigcup_{k\in I_{[L,t-1]} \cap [ L, 5 \log t] }
    \mathbf{Dip}_k^2(v) \cap \UB{[L,t]}{B_4} \big(  W_.^{(v,1)} \big) \bigg)\,.
\end{align*}
We begin by showing $\mathbf{I} = o_u(1)$.
Union bounds and the many-to-one lemma (Lemma~\ref{lem:many-to-one}) give
\begin{align}
    \mathbf{I} \leq 
     \sum_{\substack{k \in I_{[L,t-1]} \cap [5\log t, t-1]}}
    e^t \P \big( \mathbf{Dip}_k^2(v) \big)\,.
    \label{eqn:2dlb:parabola1-m21}
\end{align}
Let us now examine $\P \big( \mathbf{Dip}_k^2(v) \big)$. By integrating over $W_k$ and applying the Markov property at time $k$, we find
\begin{align}
    \P \big( \mathbf{Dip}_k^2(v) \big) &=
    \int_0^{\ep k}   
    p_k^W(0, w_k) \P_{w_k}(W_{t-k}  > \fm_t-1) \d w_k \,.
    \label{eqn:2dlb:main-1}
\end{align}
Consider the bound
\begin{align*}
    \P_{w_k}(W_{t-k} > \fm_t-1) 
    &\leq 
    \exp \bigg( - \frac{t^2}{t-k} - \frac{t\sqrt{2}}{t-k} \Big(- \frac{1}{\sqrt{2}}\log t - w_k + y-1 \Big) \bigg)
    \\
    &\uasymp t \exp \Big( -t +\sqrt{2}w_k - \frac{k}{t-k} \big (t- \log t - w_k \sqrt{2}  \big) \Big)\,,
\end{align*}
where the last line holds uniformly over $w_k \geq 0$.
Take $\ep \leq \frac{1}{2\sqrt{2}}$. Since $w_k \leq \ep k$, the above yields the following estimate, uniformly over $w_k \geq 0$:
\begin{align}
    \P_{w_k}(W_{t-k} > \fm_t-1)  \ulesssim t e^{-t - \frac{k}{3} + w_k \sqrt{2}}\    \label{eqn:2dlb:main-1-tail}\,.
\end{align}      
Expanding the Gaussian density in~\eqref{eqn:2dlb:main-1}, substituting the bound in~\eqref{eqn:2dlb:main-1-tail}, and completing the square in the exponent of $e$ 
\begin{align*}
    \P \big( \mathbf{Dip}_k^2(v) \big) \ulesssim
    t e^{-t - \frac{k}{3}} \Big( e^k k^{-\frac{1}{2}} \int_0^{\ep k} e^{- \frac{(w_k -\sqrt{2}k)^2}{2k}} \d w_k \Big) \leq t e^{-t - \big ( \frac{1}{3} + \frac{b^2}{2} - \sqrt{2} b \big )k } 
    \leq t e^{-t - \frac{k}{4}}\,,
\end{align*}
where the last inequality holds for all $\ep$ sufficiently small.
Thus,  from~\eqref{eqn:2dlb:parabola1-m21}, we have
\begin{align}
    \mathbf{I} \lesssim \sum_{ k \in I_{[L,t-1]} \cap [5\log t, t-1] } t e^{- \frac{k}{4} }  \ulesssim t^{- \frac{1}{4}} \,.
    \label{eqn:2dlb:main-1:log}
\end{align}

We conclude by showing $\mathbf{II} = o_u(1)$. Union bounds and Lemma~\ref{lem:many-to-one} (many-to-one) give
\begin{align}
    \mathbf{II} \leq 
    e^t \sum_{\substack{k \in I_{[L,t-1]} \cap [L, 5\log t]}}
    \P \Big (\mathbf{Dip}_k^2(v) \cap \UB{[L,t]}{B_4} \big(  W_.^{(v,1)} \big) \Big)\,.
    \label{eqn:2dlb:parabola2-m21}
\end{align}
Integrating over $w_k := B_4(k) - W_k^{(v,1)}$ and $w_t := B_4(t) - W_t^{(v,1)}$ and applying the Markov property at times $k$ and $t$ gives the following expression for $\P \Big (\mathbf{Dip}_k^2(v) \cap \UB{[L,t]}{B_4} \big(  W_.^{(v,2)} \big) \Big)$:
\begin{align}
    &\int_{B_4(k) - \ep k}^{B_4(k)} \d w_k ~p_k^W(0, B_4(k) - w_k) 
    \int_0^{B_4(t) - m_t+1} \d w_t ~p_{t-k}^W( B_4(k) -w_k, B_4(t) - w_t) \nonumber \\
    &\qquad \qquad \times \P_{B_4(k) -w_k, t-k}^{B_4(t) -w_t} \Big( \UB{[0,t-k]}{B_4(\cdot+k)} \big( W_.^{(v,1)} \big) \Big) \,.
    \label{eqn:2dlb:parabola2-integral}
\end{align}
Note that $w_k \uasymp k$ uniformly and $w_t \in [0, \frac{1}{\sqrt{2}}\log t + \log L - y +1]$. We use these bounds throughout the estimates that follow.
Replacing $B_4(\cdot+k)$ with $B_4(\cdot+k)+1$ in the barrier event above and then applying Lemma~\ref{lem:brz-modified} yields 
\begin{align}
    \P_{B_4(k) -w_k, t-k}^{B_4(t) -w_t} \Big( \UB{[0,t-k]}{B_4(\cdot+k)} \big( W_.^{(v,1)} \big) \Big) 
    \ulesssim kt^{-1}(1+w_t)\,,
    \label{eqn:2dlb:parabola2-barrier}
\end{align}
uniformly over $w_k$ and $w_t$. The Gaussian density $p_{k}^W$ in~\eqref{eqn:2dlb:parabola2-integral} is asymptotically equivalent (in the sense of $\uasymp$, uniformly over $w_k$) to 
\begin{align}
    k^{- \frac{1}{4} -\cC_4\sqrt{2} } 
    L^{- \sqrt{2}}
    \exp \bigg(\!\! - k - \frac{w_k^2}{2k} + \Big( \sqrt{2} + \frac{\cC_4 \log k + \log L}{k} \Big) w_k \bigg)\,, 
    \label{eqn:2dlb:gaussk}
\end{align}
while the density function $p_{t-k}^W$ is asymptotically bounded above (in the sense of $\ulesssim$, uniformly over $w_k$ and $w_t$) by 
\begin{align}
    k^{\cC_4 \sqrt{2}} (t-k)^{-\frac{1}{2}} 
    \exp \Big( -(t-k) - \sqrt{2}w_k + \sqrt{2}w_t \Big)\,. 
    \label{eqn:2dlb:gausst}
\end{align}
Substituting the bounds given by \eqref{eqn:2dlb:parabola2-barrier}---\eqref{eqn:2dlb:gausst} into \eqref{eqn:2dlb:parabola2-integral} yields
\begin{align*}
    &\P \Big (\mathbf{Dip}_k^2(v) \cap \UB{[L,t]}{B_4} \big(  W_.^{(v,2)} \big) \Big) \nonumber \\
    &\ulesssim 
    L^{-\sqrt{2}} k^{\frac{1}{2}} t^{-\frac{3}{2}} e^{-t}
    \int_{B_4(k) - \ep k}^{B_4(k)}  
    e^{- \frac{w_k^2}{2k} + \frac{\cC_4 \log k + \log L}{k} w_k } \d w_k
    \int_0^{B_4(t) - m_t+1} 
    (1+w_t) e^{ \sqrt{2} w_t}
    \d w_t 
    \nonumber \\
    &\ulesssim 
    k^{\frac{1}{2}}(\log t) t^{-\frac{1}{2}} e^{-t}
    \int_{B_4(k) - \ep k}^{B_4(k)}  
    e^{- \frac{(w_k - \cC_4 \log k - \log L)^2}{2k} } \d w_k 
   \ulesssim k^{\frac{1}{2}}(\log t) t^{-\frac{1}{2}} e^{-t} \,. 
\end{align*}
Thus, from~\eqref{eqn:2dlb:parabola2-m21}, we see that $\mathbf{II} \ulesssim k^{\frac{1}{2}}(\log t)^2 t^{-1/2} = o_u(1)$. This result along with \eqref{eqn:2dlb:main-1:log} and~\eqref{eqn:2dlb:parabola-unionbd} establish \eqref{eqn:2dlb:goal} for $i =2$, thereby concluding the proof of the lemma. 
\end{proof}

\appendix

\section{List of symbols}

{\renewcommand{\arraystretch}{1.2}
\begin{longtable}[t]{lp{0.73\textwidth}l}
\toprule
\multicolumn{3}{l}{Time parameters for the branching process} \\
\midrule
     $\cN_s$ & the set of particles alive at time $s$ & \S\ref{sec-subsecdef} \\
     \multicolumn{2}{l}{$\cN_r^v := \{ u \in \cN_{s+r} \,:\; u\mbox{ is a descendant of }v\}\,,  \mbox{ for } v\in\cN_s$}& 
     \S\ref{subsec-BDZ-method}
     \\
 $\tl := t- L$ &  &\eqref{eqn:def-ttilde}  \\
 $\ell \in [1,L^{1/6}]$ & any function $\ell(L)$ in this interval going to $\infty$ as $L\to\infty$ & \eqref{eqn:def-ell} \\
 $\ell_1 := \ell^{1/4}$ &  &\eqref{eqn:def-ell1} \\
\midrule
\multicolumn{3}{l}{Window notation}\\
\midrule
     \multicolumn{2}{l}{$I_L^{\win} := [\sqrt{2} L - L^{2/3}, \sqrt{2}L - L^{1/6}]$} & \eqref{eq:def-window} \\
     \multicolumn{2}{l}{$\cN_L^{\win} := \{ v \in \cN_L \,:\; R_L^{(v)} \in I_L^{\win} \}$} & \eqref{eq:def-window} \\
     \multicolumn{2}{l}{$z \in [L^{1/6}, L^{2/3}]$ any function $z(L)$ in this interval such that $\sqrt{2}L-z \in I_L^{\win}$}
      & \eqref{eqn:def-z} \\
\midrule
\multicolumn{3}{l}{Quantities depending on $t, L$, $z$, $y$, and $d$} \\
\midrule
     \multicolumn{2}{l}{$\alpha_d := (d-1)/2$}  &\eqref{def:m_t-c_d} \\
     \multicolumn{2}{l}{$\fc_d := \frac{d-4}{2\sqrt{2}}$}  & \eqref{def:m_t-c_d} \\
      \multicolumn{2}{l}{$m_t := \sqrt{2}t + \fc_d \log t$} 
      &\eqref{def:m_t-c_d} \\
     \multicolumn{2}{l}{$\fM_{L,z} :=  (\sqrt{2}L-z)^{-\alpha_d} ze^{-(z+y)\sqrt{2}}$}  &\eqref{def:M_Lz} \\
     \multicolumn{2}{l}{$\fo_t := \frac{m_t}{t}- \sqrt{2} = \fc_d \frac{\log t}{t}$}  &\eqref{eq:fo_t-def}  \\
     \multicolumn{2}{l}{$\x(a) := \sqrt{2}L-a$} & \eqref{def:xz-yw} \\ 
     \multicolumn{2}{l}{$\y(b) := \frac{m_t}{t}(t-\ell)+y-b$} & \eqref{def:xz-yw} \\
\midrule
\multicolumn{3}{l}{Processes} \\
\midrule
$\P_x(\cdot)$ & law of a process started from $x$ at time $0$ & \S\ref{subsec:hittingP} \\
  $\P_{x,T}^y(\cdot)$ & law of a process started from $x$ at time $0$ and ending at $y$ at time $T$ &
    \S\ref{subsec:hittingP} \\
 $p^X_s(x,y)$ &transition density of a Markov process $X_t$ at time $s$ given $X_0 =x$  &
    \S\ref{subsec:parabola-bounds} \\
\midrule
\multicolumn{3}{l}{Barrier functions and events} \\
\midrule
 $\UB{I}{f}(X_\cdot)$ & $\{X_s \leq f(s), ~\forall s \in I \}$&
   \eqref{eqn:notation-barrier}\\
 $\LB{I}{f}(X_\cdot)$ & $\{X_s \geq f(s), ~\forall s \in I\}$&
   \eqref{eqn:notation-barrier}\\
$f_a^b(s;T)$ & the linear function on $[0,T]$ with $f_a^b(0;T)=a$, $f_a^b(T;T) =b$ & \eqref{eq:def-f_a^b}\\
      $B(\cdot)$ 
    & barrier function defined on $[0,t]$ &\eqref{def:B(s)} \\
     $B_0(\cdot)$ & barrier function defined on $[0, \tll]$ &
     \eqref{def:B0} \\
    \multirow{2}{*}{$Q_z(\cdot)$} & barrier function that is  constant
    on $s \in [0, \ell_1]$, constant on $s \in [\tl -\ell-\ell_1, \tll]$, and far below $\frac{m_t}{t}(s+L)$ on $s\in(\ell_1,\tilde t-\ell-\ell_1)$ (see Fig.~\ref{fig:G-Lt-event})
& \multirow{2}{*}{\eqref{eq.lowerbanana.Qdefn}}\\
    $\B_I(X_\cdot)$ 
    & event: $X_s$ is bounded above by $\frac{m_t}{t}(s+L)+y$ and below by $Q_z(s)$ for $s \in I$
   & \eqref{eqn:def-upper/lowerbarrier} \\
  $\fT_r(v)$ & 
  event: a (branching Bessel) descendant of $v$ in $\cN_r^v$ exceeds $m_t+y$
  & \eqref{def:T-tailevent} \\
  $F_{L,t}(v)$ & event: $\UB{[0,\tll]}{B_0}(R^{(v)}_.)$, $R^{(v)}_{\tll}$ is lower-bounded by $t/\sqrt{d}$, and $\fT_{\ell}(v)$  & \eqref{def:F-event} \\
$G_{L,t}(v)$ & event: $\B_{[0,\tll]}(R^{(v)}_.)$, $R^{(v)}_{\tll}$ lies in a small interval below $\frac{m_t}{t}(\tll)+y$, and $\fT_{\ell}(v)$ & \eqref{def:G-event} \\
  $\Gamma_{L,t}$ & the number of $v \in \cN_{\tll}$ satisfying $F_{L,t}(v)$ & \eqref{def:simplefunctions}  \\
  $\bar{\Lambda}_{L,t}$ & the number of $v \in \cN_{\tll}$ satisfying $G_{L,t}(v)$ & \eqref{def:simplefunctions}  \\
\bottomrule
\end{longtable}
}

%



\bibliographystyle{imsart-number}
\bibliography{bbm}
\end{document}